\numberwithin{equation}{section}
\DeclareMathOperator\dig{div}
\DeclareMathOperator\rota{rot}
\newcommand{\ee}{\varepsilon}
\newcommand{\ro}{\rho}
\newcommand{\n}{\nabla}
\newcommand{\pt}{\partial_t}
\newcommand{\pal}{\partial_\alpha}
\newcommand{\pbe}{\partial_\beta}
\newcommand{\sumk}{\sum_{k=1}^{n-1} }
\newcommand{\sumpa}{\sum_{\alpha=1}^d\pal }
\newcommand{\sumab}{\sum_{\alpha,\beta=1}^d}
\newcommand{\suma}{\sum_{\alpha=1}^d}
\newcommand{\Rd}{\mathbb{R}^d}
\newcommand{\C}{\mathbb{C}}
\newcommand{\cd}{\frac{d}{2}}
\newcommand{\esp}[1]{\quad \text{#1} \quad}
\newcommand{\defn}{\overset{\rm def}{=}}
\newcommand{\wt}{\widetilde}
\newcommand{\U}{\mathcal{U}}
\newcommand{\cO}{\mathcal{O}}
\newcommand{\cC}{\mathcal{C}}
\newcommand{\cM}{\mathcal{M}}
\newcommand{\R}{\dot{R}}
\newcommand{\Dj}{\Delta_j}
\newcommand{\DDj}{\Dot{\Delta}_j}
\newcommand{\intd}{\int_{\mathbb{R}^d}}
\newcommand{\dm}[1]{2^{-j#1}}
\newcommand{\normede}[1]{\left\Vert #1\right\Vert_{L^2}}
\newcommand{\normep}[1]{\left\Vert #1\right\Vert_{L^p}}
\newcommand{\normeinf}[1]{\left\Vert #1\right\Vert_{L^\infty}}
\newcommand{\BH}[3]{\dot{B}^{#1}_{#2,#3}}
\newcommand{\NBH}[4]{\left\Vert {#4} \right\Vert_{\dot{B}^{#1}_{#2,#3}}}
\newcommand{\NBHc}[1]{\left\Vert {#1} \right\Vert_{\dot{B}^{\frac{d}{2}}_{2,1}}}
\newcommand{\NB}[4]{\left\Vert {#4} \right\Vert_{B^{#1}_{#2,#3}}}
\newcommand{\LpNB}[5]{\left\Vert {#4} \right\Vert_{L^{#5}_T(B^{#1}_{#2,#3})}}
\newcommand{\LpNBH}[5]{\left\Vert {#4} \right\Vert_{L^{#5}_T(\Dot{B}^{#1}_{#2,#3})}}
\newcommand{\TLpNB}[5]{\left\Vert {#4} \right\Vert_{\widetilde{L}^{#5}_T(B^{#1}_{#2,#3})}}
\newcommand{\B}[3]{B^{#1}_{#2,#3}}
\newtheorem{thm}{Theorem}[section]
\newtheorem{dfn}{Definition}[section]
\newtheorem{propo}{Proposition}[section]
\newtheorem{rmq}{Remark}[section]
\newtheorem{lem}{Lemma}[section]
\definecolor{mycolor}{HTML}{D35400}
\definecolor{hide}{HTML}{A0AC81}
\definecolor{myred}{HTML}{8A1538}
\begin{document}
\title[Global existence for partially diffusive systems]{Global existence  for multi-dimensional partially diffusive systems %, and long time asymptotics
}
\author{Jean-Paul Adogbo \& Raph\"{a}el Danchin }

\begin{abstract}
          In this work, we explore the global existence of strong solutions for a class of partially diffusive hyperbolic systems within the framework of critical homogeneous Besov spaces. Our objective is twofold: first, to extend our recent findings on the local existence presented in \cite{DanADOloc}, and second, to refine and enhance the analysis of Kawashima \cite{Kawashima83}.

To address the distinct behaviors of low and high frequency regimes, we employ a hybrid Besov norm approach that incorporates different regularity exponents for each regime. This allows us to meticulously analyze the interactions between these regimes, which exhibit fundamentally different dynamics.

A significant part of our methodology is based on the study of a Lyapunov functional, inspired by the work of Beauchard and Zuazua \cite{Beau11} and recent contributions \cite{BaratDan23, BaratDan22D1, BaratDan22M}. To effectively handle the high-frequency components, we introduce a parabolic mode with %faster time decay
better smoothing properties, which plays a central role in our analysis.

Our results are particularly relevant for important physical systems, such as the magnetohydrodynamics (MHD) system and the Navier-Stokes-Fourier equations.
\end{abstract}
\keywords{Hyperbolic-parabolic systems, Partial diffusion,  Critical regularity, global well-posedness, asymptotic behavior}
\subjclass[2010]{35M11, 35Q30.   76N10}
\maketitle
\section*{Introduction}
This work concerns  global-in-time strong solutions and the study of the large time asymptotics for the following class of $n\times n$ systems of PDEs: 
 \begin{equation}
	\label{system_diff}
	 S^0(U)\pt U +\suma{S^\alpha (U) \pal U} - \sumab{\pal (Y^{\alpha\beta}(U) \pbe U)}=f(U,\n U),
	\end{equation}
    with $\pt\defn \frac{\partial}{\partial_t}$ and for all $\alpha=1,\cdots, d$,  $\pal\defn \frac{\partial}{\partial x_\alpha} $.
 These systems  come into play  in the description of various  physical phenomena such as gas dynamics,  fluid models with chemical
reactions \cite{GioMat13}, fluids in the presence of a magnetic field \dots~ 
In System \eqref{system_diff} the smooth matrix-valued functions $S^\alpha$ ($\alpha=0,\cdots,d$), $Y^{\alpha\beta}$ ($\alpha,\beta=1,\cdots, d$) and vector-valued function $f$ are defined on some open subset $ \mathcal{U} $ of $\mathbb{R}^n$ ($n\ge 1$) and the unknown $U\in \mathbb{R}^n$  depends on the time variable $t\in \mathbb{{R}}_+$ and on the space variable $x\in \Rd$ with $d\geq2.$ Additional structure assumptions will be specified in the next section.
\medbreak
It is well known that classical systems of conservation laws (that is, \eqref{system_diff} with $f\equiv 0$ and $Y^{\alpha\beta}\equiv 0 $) supplemented with smooth data admit local-in-time strong solutions that may develop singularities (shock waves) in finite time even if the initial data are small perturbations of a constant solution (see, for instance, the works by Majda in \cite{Madja84}  and D. Serre in \cite{Serre97}). The picture changes when the second order operator in \eqref{system_diff} is strongly elliptic. Then, the global existence for
small and sufficiently regular perturbations of a constant solution $\overline{U} \in \mathcal{U}$
holds, as well as the convergence of the solution $U$ to $\overline{U}$ with the same decay rate as for the heat equation (see \cite{Kawashima83, SK85}).
\medbreak

In many physical systems however, that condition is not verified, and it is more reasonable to assume  that
up to change of coordinates, the symbol $Y(\omega, U)\defn \sum_{\alpha,\beta=1}^dY^{\alpha\beta}(U)\omega_\alpha\omega_\beta$ is a block-diagonal and nonnegative  matrix:
\begin{equation}    \label{structure:Y:alp:bet}
    Y(\omega, U)=
    \begin{pmatrix}
        0& 0\\
        0& Z(\omega, U)
    \end{pmatrix},\qquad\omega\in {\mathbb S}^{d-1},\quad U\in{\mathcal U},
\end{equation}
where $Z(\omega, U)$ is a $n_2\times n_2$ positive definite matrix, and $ 1\le n_2\le n$.
In other words, $Y(\omega, U)$ acts on some components of the unknown, while the other components are unaffected since they satisfy conservation laws. An informative example is the MHD system, where the mass is conserved (see Section \ref{sec:MHD}).  As remarked by A. Matsumura and T. Nishida in their seminal paper \cite{MatsmuraNishida80} dedicated to the equations of motion of viscous and heat conductive
gases, this indirect effect may be sufficient to ensure the global existence for small regular initial data. The resulting class of systems is named, depending on the authors and on the context, hyperbolic-parabolic or hyperbolic partially diffusive. 
\smallbreak
Hyperbolic  partially diffusive systems have been extensively studied since the pioneering work by S. Kawashima in 1983 in his PhD thesis \cite{Kawashima83} which is  the cornerstone of all theory. There, he pointed out sufficient conditions for local well-posedness for general (smooth enough) data, global existence for small data, and large time asymptotics. 
He also pointed out a condition on the structure of the matrices and functions involved in System \eqref{system_diff} allowing one to obtain the \textit{normal} form of the System \eqref{system_diff}. This condition enabled the author to prove the local well-posedness of the system \eqref{system_diff} for initial data belonging to $ H^s(\Rd) $, with $s>\cd+2$. 

A few years later, S. Kawashima and S. Shizuta in \cite{KawaSui88} exhibited two sufficient conditions to derive the normal form of system \eqref{system_diff}: the first one is the existence of a \textit{dissipative  entropy} which provides a symmetrization of the system that is compatible with the second order terms, the second condition stipulates that the null space of $ Y(\omega, U)$ is independent of $U\in \mathcal{U}$ and $\omega\in \mathbb{S}^{d-1}$ (condition N in \cite{KawaSui88}). Later, D. Serre in \cite{Serre09} pointed out that this second condition is equivalent to the range of $Y(\omega, U)$ being independent of $U \in \mathcal{U}$ and $\omega \in \mathbb{S}^{d-1}$, which is simpler to verify.  
 In \cite{Serr10}, he further improved Kawashima's local result from \cite{Kawashima83} by enlarging  the class of initial data to include those from inviscid conservation laws, specifically $ H^s(\Rd) $, with $s>\cd+1$. 
 \medbreak
 In \cite{DanADOloc}, we established the local well-posedness for System \eqref{system_diff} supplemented with initial data in nonhomogeneous Besov space of type
$B^{\sigma}_{2,1}.$ More precisely, the component of the initial data affected by the diffusion belongs to $B^s_{2,1}$ while the unaffected component is taken in $B^{s+1}_{2,1}.$ 
The regularity index can be any real number $s\geq\cd$
(and even $s\geq \cd-1$ for a particular class of systems) so that we can
consider initial data which are not Lipschitz. 
Let us also emphasize that our structural assumptions are weaker than those made in \cite{Serr10}. 
\medbreak
 In  \cite{Kawashima83}, the author exhibited  a rather simple sufficient condition  for global well-posedness
 of \eqref{system_diff} supplemented with initial data  which are  
 in the neighborhood of linearly stable constant solutions.
   It is nowadays called the (SK) (meaning Shizuta-Kawashima) condition. In the case where $S^0$ is the  Identity matrix,  it  exactly says that for the linearized system, the intersection between the kernel of the second order term (the symbol $Y(\cdot,\overline{U})$) and the set of all eigenvectors of the symmetric first order term is reduced to $\{0\}$ (see Section \ref{sec:2} for more details). 
 In \cite{Kawashima83,KawaSui88}, the equivalence between Condition (SK) and the existence of a \emph{compensating function} (that allows
 to work out a Lyapunov functional equivalent to a suitable Sobolev
 norm of the solution)  enabled the authors  to exhibit the global-in-time $L^2$ integrability properties of all the components of the solution and to get global-in-time solutions for initial data belonging to $H^s(\Rd)$ with $s>3+\cd\cdotp$ 
\medbreak
In \cite{SK85}, S. Shizuta and S. Kawashima pointed out that the existence of a compensating function is equivalent to the strict
dissipativity of the system. This means that, in Fourier space, the real parts of all eigenvalues of the matrix of the linearized system of \eqref{system_diff} around the reference solution are strictly positive. 
 In the same paper, the authors proved that if, in addition of being in a Sobolev space $H^s(\Rd)$ with large enough $s$, we assume the initial data $U_0$
to satisfy $(U_0-\overline U)\in L^p$ for some $1\leq p<2,$ then
  the decay rate of  $U(t)-\overline{U}$ in $L^2$ for $t$ going to $\infty$ is the same as for the heat equation, namely $(1+t)^{-\frac{d}{2}(\frac{1}{p}-\frac{1}{2})}.$ 
% the discrepancy of the initial data to the reference constant solution $\overline{U}$ belongs to some Lebesgue space $L^p(\Rd)$ with $p\in [1,2]$, then the global solution $U$ converges to $\overline{U}$ in $L^2$ with the same
%decay rate as for the heat equation, namely $(1+t)^{-\frac{d}{2}(\frac{1}{p}-\frac{1}{2})}$ , when t goes to infinity.
Since
then, more decay estimates have been proved under various assumptions; see, for example \cite{BianHanou07,XinXU21,XuKawa15}.
\medbreak
The condition (SK) is not sharp: several authors observed that it is not necessary for the existence of global strong solutions. For instance, in \cite{QuWang18}, P. Qu and Y. Wang established a global existence result in the case where exactly one eigenvector violates the condition (SK). In this respect, one can also mention the paper by C. Burtea, T. Crin-Barat, and J. Tan \cite{BurCrinTan23} dedicated to the relaxation limit of the Baer-Nunziato (BN) system, which does not verify the (SK) condition, and the recent work \cite{DanMucha24} by the second author and P. B. Mucha dedicated to the mathematical study of compressible Euler system with nonlocal pressure.

 Although Condition (SK) provides a compensating function that ensures global existence, along with estimates on the entire solution and its decay, it is difficult to verify for concrete systems. Another limitation is that it does not provide more specific information on the part of the solution experiencing diffusion, even though it is expected to have better properties than the whole solution (see \cite{Dan01glob} for the case of the compressible Navier-Stokes equations).

For partially dissipative systems,  K. Beauchard and E. Zuazua \cite{Beau11} pointed out 
a link between the (SK) condition and the Kalman maximal rank condition 
from control theory of ODEs (see  \cite{coron2007control, Zua05}) for the linearized system.
 They proved that for a symmetric hyperbolic partially dissipative system, the (SK) condition is equivalent to the Kalman maximal rank condition for the linearized system about a constant solution. They also introduced a Lyapunov functional that encodes sufficient information to recover most of the dissipative properties of the linearized system.
 This equivalence was generalized to any linear system in \cite{Danc22}. In \cite{BaratDan22M,BaratDan22D1,BaratDan23}, 
  T.~Crin-Barat and the second author leveraged the method of Beauchard and Zuazua to establish global existence and relaxation results in a \textit{critical regularity setting} for symmetric hyperbolic partially dissipative systems with data having different regularities in low and high frequencies.  An important new ingredient for achieving these improvements is to examine the time evolution of a \textit{damped mode} which corresponds to the part of the solution that experiences maximal dissipation at low frequencies.
\medbreak
The primary aim of the present work is to extend the results of Kawashima by broadening the class of initial data and weakening the smallness condition. 
Following our recent work \cite{DanADOloc} dedicated to the local well-posedness
and adapting the approach developed in  \cite{BaratDan22M} to the hyperbolic-parabolic setting, 
we shall  propose  a   `critical' functional framework that offers better control over the solution and, in particular, more accurate convergence rates to the equilibrium state as time goes to infinity. 
  Specifically, we will prove the global existence for small initial data in functional spaces with \emph{different} regularity exponents at low and high frequencies. 
  As for partially dissipative systems, Beauchard and Zuazua's approach will give us the information that the low frequencies (resp. high frequencies) of the solution of the linearized system behave like the heat flow (resp. are exponentially damped). 
  %Moreover, since we require the components of the unknown to have different regularity in high frequencies (like in \cite{Dan01glob}  for the Navier-Stokes system), we will apply Beauchard and Zuazua's approach to slightly altered system of \eqref{system_diff}  
  %to address the high frequencies analysis. 
  To achieve maximal regularity for the component affected by diffusion, we will introduce the \textit{parabolic mode}, analogous to the damped mode in the hyperbolic partially dissipative case \cite{BaratDan22M}.
\medbreak
This work is arranged as follows. The first section is dedicated to presenting the structure of the class of symmetric hyperbolic partially diffusive systems we shall consider, 
constructing a low and high frequencies Lyapunov functional that will play a key role in our global results and exhibiting a \textit{parabolic mode} that
will allow us to recover the full regularizing effect for  
the components of the solution experiencing the diffusion. In Section \ref{subs:main:resu:NHHB}, we state the main results of the paper: global existence and decay estimates.  Proving a first global existence result and time decay estimates for general partially diffusive systems satisfying the (SK) condition will be the goal of Section \ref{sec:proof:global:exis}. In Section \ref{sec:proof:global:crit}, under additional assumptions of structure (which are satisfied by the compressible Navier-Stokes system), we obtain a global existence result
at the critical regularity level. In the last section, we apply the results to the MHD system. Some technical results are proved or recalled in Appendix. 

\medbreak\noindent{\bf Notation.} As usual, $C$ designates a generic harmless constant, the value of which depends on the context. 
We sometimes use the short notation $A\lesssim B$ to mean that $A\leq CB.$

For any normed space $X,$ real number $\ro \in [1,\infty]$ and $T\in[0,\infty],$
we denote by $L^\rho(0,T;X)$ (or sometimes just $L^\rho_T(X)$) the set
of measurable functions $z:(0,T)\to X$ such that $\Vert z\Vert_{L^\ro_T(X)}= \Vert \Vert z(t,\cdot)\Vert_{X} \Vert_{L^\ro(0,T)}$ is finite. The notation $\cC_b(\mathbb R_+;X)$ stands for the set of continuous bounded functions from $\mathbb R_+$ to $X.$

%%%%%%%%%%%%%%%%%%%%%%%%%%%%%%%%%%%%%%%%%%%%%%%%%%%%%%%

\section{Assumptions and approaches}\label{sec:2}
In this section, we specify the class of partially diffusive systems we aim
to study, and present the tools allowing to get the main results.

\subsection{Normal form}

Let $\overline{U} \in \mathcal{U}$ be a constant solution of \eqref{system_diff}. 
Then, $V\defn U-\overline{U}$ satisfies 
  \begin{equation}	\label{Eq_b}
	 S^0(U)\pt V +\sum_{\alpha}{S^\alpha (U) \pal V} -\sum_{\alpha,\beta}{ \pal (Y^{\alpha\beta}(U) \pbe V)}=f(U,\n U).\end{equation}
In what follows, we assume that the matrices of System
\eqref{system_diff} and the external force $f$ satisfy:
 \medbreak\paragraph{\bf Assumption D}
 \begin{itemize}
    \label{cond:D}
    \item[\textbf{(D1)}] The matrix $S^0(U)$ is symmetric, positive definite  for every $U\in \mathcal{U}$, and has the form
$$ S^0(U)=
      \begin{pmatrix}
S^{0}_{11}(U) & 0\\
0 &  S^0_{22}(U)
\end{pmatrix},$$
where $ S^{0}_{11}(U) $ (resp. $S^{0}_{22}(U)$) is a $n_1\times n_1$ (resp. $n_2\times n_2$) 
matrix, with $n_1+n_2=n$.\smallbreak
      \item[\textbf{(D2)}] \label{cond:DD:2} The matrices $S^\alpha(U)$ are symmetric for every $U\in \mathcal{U}$ and $\alpha=1,\cdots d$.\smallbreak
      \item[\textbf{(D3)}] For every $U\in \mathcal{U}$ and $\alpha,\beta=1,\cdots d$, the matrices $Y^{\alpha\beta}(U)  $ have the form 
      \begin{equation}\label{def:Y:al:be}
       Y^{\alpha\beta}(U)=\begin{pmatrix}
0_{n_1} & 0\\
0 &  Z^{\alpha\beta}(U)
\end{pmatrix},
  \end{equation}
  where $  Z^{\alpha\beta}(U)$ is a $n_2\times n_2$ matrix.
     Furthermore,  the operator  $\sumab Z^{\alpha\beta}(U)\pal\pbe$ is strongly elliptic in the sense:
	      \begin{equation}
	    \label{strong_elli}
	    \sum_{\alpha,\beta=1}^{d} \sum_{i,j\ge n_1+1}{\xi_\alpha\lambda_i\xi_\beta\lambda_jZ_{ij}^{\alpha\beta}  (U) }  \ge c_1(U) |\xi|^2|\lambda|^2, \quad\forall \xi \in \mathbb{R}^d, \lambda\in \mathbb{R}^{n_2},
	    \end{equation}
 for some positive continuous function $c_1$ defined on ${\mathcal U}$.\smallbreak
      \item[\textbf{(D4)}] $f$ has components 
      \begin{equation}
\label{globa:f=f(f1,f2)}
      f(U,\n U)\defn \begin{pmatrix}
f^1(U,\n U)\\
f^2(U,\n U)
\end{pmatrix}
      \defn
\begin{pmatrix}
0_{n_1}\\
Q(U,\n U,\n U)
\end{pmatrix}
  \end{equation}
  with  $Q$  quadratic in terms of $\n U$ (i.e.   $Q$  is a combination of terms  $\upsilon(U)\n U\!\bigotimes\!\n U$).
  \end{itemize}
System \eqref{system_diff} under assumptions (i), (ii), (iii)
and (iv) is said to be of the normal form (\cite{KawaSui88,Serre09}).
\begin{rmq}
    \label{rmq:f:gene}
    The assumptions on $f$ depend on the regularity framework: in \cite{Kawashima83,SK85}, since the solution is more regular, $f^1$ (resp. $f^2$) can be any function of 
    $U,\nabla U^2$ (resp. $U,\nabla U$).
    
Back to our setting, Theorems \ref{thm:glob:cri} and \ref{thm:decay:sucri} below are in fact valid whenever
    \begin{enumerate}
        \item $f^1= f^1(U,\n U^2)$ is a combination of terms 
        $L^1V^1 + \upsilon_1(U)V\bigotimes V +\upsilon_2(U) V\bigotimes \n V^2$, with
     $L^1$ a nonpositive matrix,    
                and        $\upsilon_1, \upsilon_2$ some smooth functions,
        \item  $f^2$ is a combination of terms of type 
        $L^2V^2 + \upsilon_3(U,\n V^1)\n V\bigotimes\n V+ Q_2(V,\n V^1) $, where  
        $L^2$ is a nonpositive  matrix,   
        $Q_2$ is quadratic  and $\upsilon_3$ is a smooth function.  
    \end{enumerate}
\end{rmq}
 
%%%%%%%%%%%%%%%%%%%%%%%%%%%%%%%%%%%%%%%%%%%%%%%%%%%%%%%%%%%%%%%%
     
  \subsection{The Shizuta-Kawashima and Kalman rank conditions}
 \label{subs:SK:Kalman}
%In order to specify the main tools allowing to prove the global well-posedness for System \eqref{Eq_b} and to present the overall strategy, 
Let us consider the  following  linearization  of \eqref{Eq_b}:
\begin{equation}	\label{eq:lindiff0}
\overline{S}^0\pt V+  \sum_{\alpha=1}^d\overline{S}^\alpha \pal V-\sum_{\alpha,\beta=1}^d\overline{Y}^{\alpha\beta} \partial_{\alpha}\partial_{\beta}V=F,
	\end{equation}
where, from now on, we use  for any function $S$ the notation  $\overline{S}\defn S(\overline{U}).$
Note that \eqref{Eq_b} corresponds to \eqref{eq:lindiff0}
with  $F\defn F^1+F^2+F^t+f,$ where 
\begin{equation}\begin{aligned}
    \label{F_T:F_1:F_2}
            &F^t\defn(\overline{S}^0-S(U))\pt V,\  &&F^1\defn(\overline{S}^\alpha-S^\alpha(U)) \sumpa  V,\\
        &F^2\defn\sumab \pal(r^{\alpha\beta}\pbe V) \esp{ and} &&r^{\alpha\beta}\defn(\overline{Y}^{\alpha\beta}-Y^{\alpha\beta}(U)).
    \end{aligned}\end{equation}
     Let $\xi\in\Rd$ stand for the Fourier variable
corresponding the space variable $x\in\Rd$. 
In the Fourier space, System \eqref{eq:lindiff0} recasts into
\begin{equation}
	\label{eq:lindiff}
\overline{S}^0\pt \widehat{V}+  i\sum_{\alpha=1}^d\overline{S}^\alpha \xi_\alpha \widehat{V}+ \sum_{\alpha,\beta=1}^d\overline{Y}^{\alpha\beta}\xi_\alpha\xi_\beta\widehat{V}=\widehat{ F}.
	\end{equation}
 Set $\xi=\rho\omega$ where $\ro\ge 0$ and $\omega\in \mathbb{S}^{d-1}$. We define
 \begin{equation}
     \label{A:B_ome:def}
A_\omega\defn i\sum_{\alpha=1}^d\overline{S}^\alpha \omega_\alpha
 \esp{ and }
B_\omega\defn\sum_{\alpha,\beta=1}^d\overline{Y}^{\alpha\beta}\omega_\alpha\omega_\beta= \sum_{\alpha,\beta=1}^d
\begin{pmatrix}
0_{n_1\times n_1} & 0_{n_1\times n_2}\\
0_{n_2\times n_1} &  \overline{Z}^{\alpha\beta}\omega_\alpha\omega_\beta
\end{pmatrix}\cdotp
 \end{equation}
 \smallbreak
  With the above notation, taking $F=0$ to simplify the presentation, System \eqref{eq:lindiff0} becomes
 \begin{equation}
	\label{eqs:lindiff}
\overline{S}^0\pt \widehat{V}+  (\rho A_\omega+\rho^2B_\omega) \widehat{V} =0.
	\end{equation}
Under Assumption \textbf{D}, we observe that for all $\omega \in \mathbb{S}^{d-1}$, the matrix $ A_\omega$ is skew Hermitian, while the matrix $ B_\omega$ is nonnegative.
\medbreak
System \eqref{eq:lindiff0} is a particular case of the more general 
class of  systems:
 \begin{align}
     \label{lin:eq:gene}
     S\pt V+\mathcal{A}(D)V+ \mathcal{B}(D)V=0, \esp{where}
 \end{align}
 \begin{itemize}
 \item $  S$ is a Hermitian positive definite matrix.
     \item $A(D)$ is a homogeneous (matrix-valued) Fourier multiplier of degree $a$ that satisfies
     \begin{align}
         \label{null:A:sky}
         \mathcal{R}e(\mathcal{A}(\omega)\eta\cdot \eta)=0, \; \forall \omega \in \mathbb{S}^{d-1} \esp{and} \eta\in \mathbb{C}^n
     \end{align}
where $\cdot$ designates the Hermitian scalar product in $ \mathbb{C}^n $.
\item  $B(D)$ is an homogeneous (matrix-valued) Fourier multiplier of degree $b$, such that,
for some positive real number $\kappa$,
\begin{align}
         \label{posi:ope:B:}
         \mathcal{R}e(\mathcal{B}(\omega)\eta\cdot \eta)\ge \kappa |\mathcal{B}(\omega)\eta|^2, \; \forall \omega \in \mathbb{S}^{d-1} \esp{and} \eta\in \mathbb{C}^n.
     \end{align}
 \end{itemize}
 Observe that System \eqref{eqs:lindiff} corresponds to the case $a=1$ and $b=2$.
\medbreak
 Taking the real part of the inner product of \eqref{lin:eq:gene} with $\widehat{V}$, 
 then using \eqref{null:A:sky} yields
 \begin{equation}
     \label{eq:classic_ener}
  \frac12    \frac{d}{dt}\left( S \widehat{V}(\xi)\cdot\widehat{V}(\xi)\right) 
  + \mathcal{R}e\Bigl(\mathcal{B}(\xi)\widehat{V}(\xi)\cdot\widehat{V}(\xi)\Bigr) =0
  \esp{for all}\xi \in \Rd.
 \end{equation}
On the one hand, since the matrix $S$ is Hermitian positive definite, we have
\begin{equation}
    \label{equi_nor2_V}
     S \widehat{V}\cdot\widehat{V} \simeq |\widehat{V}|^2.
\end{equation}
On the other hand, the positivity \eqref{posi:ope:B:} of $ \mathcal{B}$   guarantees that 
\begin{equation}
    \label{elliall_V_b}
    \mathcal{R}e\bigl(  \mathcal{B}(\xi)\widehat{V}(\xi)\cdot\widehat{V}(\xi)\bigr) \ge \kappa |\xi|^{-b} |\mathcal{B(\xi)}\widehat{V}|^2=\rho^b|\mathcal{B}_\omega\widehat{V}|^2,
\end{equation}
where, for  $\omega\in {\mathbb S}^{d-1},$ we denoted
 \begin{align}     \label{def:A:B:N:M_omega}
     \mathcal{A}_\omega\defn  \mathcal{A}(\omega)\esp{and}
 \mathcal{B}_\omega\defn  \kappa\mathcal{B}(\omega).
 %\;\;    \mathcal{N}_\omega= S^{-1}\mathcal{A}_\omega,\;\; \mathcal{M}_\omega=S^{-1} \mathcal{B}_\omega. 
 \end{align}
Hence \eqref{eq:classic_ener} finally gives us some constant 
$C_S$ depending only on $S$ such that for  all $t\in{\mathbb R}_+,$
\begin{equation}
\label{classical:ener}
     \normede{\widehat V(t)}^2+\int^t_0 \intd |\xi|^{b}|\mathcal{B}_\omega \widehat{V}|^2d\xi\le C_S\normede{\widehat V_0}^2.
\end{equation}

 If, for some $\omega\in \mathbb{S}^{d-1},$  the matrix $\mathcal{B}_\omega$ has a strictly lower rank than $n$, then the last inequality does not ensure dissipation on all components of $V$. We are going to make some assumptions that allow us to recover the decay of the missing components of the solution. Following Beauchard and Zuazua in \cite{Beau11}; Crin-Barat and the second author in \cite{BaratDan22M,Danc22}, we introduce a Lyapunov functional to track the diffusion of the solution to \eqref{eqs:lindiff}.
%In light of Beauchard -Zuazua's method in \cite{Beau11} or recently in %\cite{Danc22,BaratDan22M},  
We have the following result (we postpone the proof in the appendix):
\begin{lem}
\label{lem:derI}
Let  $\mathcal{N}_\omega= S^{-1}\mathcal{A}_\omega$ and $\mathcal{M}_\omega=S^{-1} \mathcal{B}_\omega$ for all $\omega\in\mathbb S^{d-1}.$
There exist positive parameters $\varepsilon_0,\cdots\varepsilon_{n-1}$ (that are defined inductively and can be taken arbitrarily small) and a Lyapunov functional 
\begin{multline}
    \label{def:Lya:r:ome}
    L_{\ro,\omega}(\widehat{V})\defn  S \widehat{V}\cdot\widehat{V}
   \! +\!\min\Big( \frac{1}{\kappa\ro^{a-b}},\kappa\rho^{a-b}\Big) \mathcal{I}_\omega\\\esp{with}\!\! \mathcal{I}_\omega\defn
    \sumk \ee_k   \mathcal{R}e (\mathcal{M}_\omega \mathcal{N}_\omega^{k-1} \widehat{V} \cdot \mathcal{M}_\omega \mathcal{N}_\omega^{k} \widehat{V})
\end{multline}
such that the following inequalities  hold for some positive $c$ and $C,$
and all $\ro>0$ and $\omega\in{\mathbb S}^{d-1}$:
\begin{align}
    \label{dt:I}
     &\frac{d}{dt} \mathcal{I}_\omega+ \frac{1}{2}\ro^a \sum_{k=1}^{n-1}\varepsilon_k |\mathcal{M}_\omega \mathcal{N}_\omega^{k} \widehat{V}|^2\le C\ee_0 \max\Big(\ro^a, \frac{\ro^{2b-a}}{\kappa^2}\Big)  |\mathcal{M}_\omega\widehat{V}|^2,\\
    \label{dt:L_xi}
   & \frac{d}{dt} L_{\ro,\omega}(\widehat{V})+c\min(\frac{\ro^{b}}{\kappa}, \kappa \ro^{2a-b}) 
   \biggl(|\mathcal{M}_\omega \widehat V|^2+
   \sum_{k=1}^{n-1}\varepsilon_k |\mathcal{M}_\omega \mathcal{N}_\omega^{k} \widehat{V}|^2\biggr)\le 0,
\end{align}
with, additionally, 
\begin{align}    \label{eqi:lya:ro:omega}
    C^{-1} | \widehat{V}|^2\le  L_{\ro,\omega}(\widehat{V})\le C| \widehat{V}|^2.
\end{align}
\end{lem}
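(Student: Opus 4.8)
The plan is to establish the three claims of Lemma \ref{lem:derI} in the order \eqref{dt:I}, then \eqref{eqi:lya:ro:omega}, then \eqref{dt:L_xi}, exploiting that the underlying ODE is $S\pt\widehat V + (\ro^a\mathcal A_\omega + \ro^b\mathcal B_\omega)\widehat V = 0$, i.e. $\pt\widehat V = -\ro^a\mathcal N_\omega\widehat V - \ro^b\mathcal M_\omega\widehat V$ after multiplying by $S^{-1}$. The Hermitian structure of $S$ makes $\mathcal N_\omega$ skew-adjoint and $\mathcal M_\omega$ self-adjoint with respect to the inner product $\langle x,y\rangle_S\defn Sx\cdot y$; I would set up everything in that inner product so the algebra stays clean. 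First I would verify the equivalence \eqref{eqi:lya:ro:omega}: since $\mathcal I_\omega$ involves only finitely many bounded operators applied to $\widehat V$, one has $|\mathcal I_\omega|\le C_0|\widehat V|^2$ for a constant $C_0$ depending on $S,\mathcal A_\omega,\mathcal B_\omega$ (uniform in $\omega$ by compactness of $\mathbb S^{d-1}$), and since the prefactor $\min(\tfrac1{\kappa\ro^{a-b}},\kappa\ro^{a-b})\le 1$, the correction term in $L_{\ro,\omega}$ is bounded by $C_0|\widehat V|^2$; choosing the $\ee_k$ small enough (this is where "arbitrarily small" is used) forces $C_0\min(\dots)\le \tfrac12 c_S$ where $c_S|\widehat V|^2\le S\widehat V\cdot\widehat V$, giving the lower bound, and the upper bound is immediate.

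Next, the core computation is \eqref{dt:I}. Differentiating each summand $\ee_k\,\mathcal Re\langle \mathcal N_\omega^{k-1}\mathcal M_\omega\widehat V,\dots\rangle$ — more precisely $\ee_k\mathcal Re(\mathcal M_\omega\mathcal N_\omega^{k-1}\widehat V\cdot\mathcal M_\omega\mathcal N_\omega^k\widehat V)$ — along the flow and substituting $\pt\widehat V$, the key cancellation is the standard Beauchard–Zuazua telescoping: the term coming from the $-\ro^a\mathcal N_\omega\widehat V$ part, because $\mathcal N_\omega$ is skew-adjoint, produces $-\ro^a\ee_k(|\mathcal M_\omega\mathcal N_\omega^k\widehat V|^2 - |\mathcal M_\omega\mathcal N_\omega^{k-1}\widehat V|^2)$ up to commutator remainders; summing over $k=1,\dots,n-1$ with a suitable inductive choice $\ee_1\gg\ee_2\gg\cdots\gg\ee_{n-1}$ the "$+|\mathcal M_\omega\mathcal N_\omega^{k-1}\widehat V|^2$" at level $k$ is absorbed by the "$-|\mathcal M_\omega\mathcal N_\omega^k\widehat V|^2$" at level $k-1$, leaving the good negative term $\tfrac12\ro^a\sum_k\ee_k|\mathcal M_\omega\mathcal N_\omega^k\widehat V|^2$ and a single uncontrolled term at $k=1$ proportional to $\ro^a\ee_1|\mathcal M_\omega\widehat V|^2$. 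The contributions from the $-\ro^b\mathcal M_\omega\widehat V$ part of $\pt\widehat V$, being of the form $\ro^b$ times bounded operators on $|\mathcal M_\omega\widehat V|^2$ and $|\mathcal M_\omega\mathcal N_\omega^j\widehat V|^2$, are bounded by $C\ee_0\ro^b(|\mathcal M_\omega\widehat V|^2 + \sum_k\ee_k|\mathcal M_\omega\mathcal N_\omega^k\widehat V|^2)$ up to a further shrinking of the $\ee_k$; rescaling $\ro^b = \ro^a\cdot\ro^{b-a}$ and using $\ro^b\le\kappa^{-1}\max(\ro^a\cdot\kappa,\ro^{2b-a}\kappa^{-1})$-type bookkeeping produces the stated right-hand side $C\ee_0\max(\ro^a,\ro^{2b-a}/\kappa^2)|\mathcal M_\omega\widehat V|^2$. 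Throughout, uniformity in $\omega\in\mathbb S^{d-1}$ comes from compactness and the continuity of $\omega\mapsto(\mathcal A_\omega,\mathcal B_\omega)$; crucially the Kalman/(SK) rank hypothesis is what guarantees that $\{\mathcal M_\omega\mathcal N_\omega^k\widehat V\}_{k=0}^{n-1}$ controls all of $|\widehat V|$ — though for \eqref{dt:I}–\eqref{dt:L_xi} as stated one only needs the algebraic identities, the rank condition enters when this lemma is later combined with \eqref{classical:ener}.

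Finally, for \eqref{dt:L_xi} I would combine \eqref{eq:classic_ener} (which after using \eqref{elliall_V_b} gives $\tfrac12\tfrac{d}{dt}(S\widehat V\cdot\widehat V) + \ro^b|\mathcal M_\omega\widehat V|^2\le 0$, up to the constant $\kappa$ bookkeeping hidden in $\mathcal M_\omega=\kappa^{1/2}S^{-1}\mathcal B_\omega$ — I will be careful with that normalization) with \eqref{dt:I} multiplied by the scalar $\mu\defn\min(\tfrac1{\kappa\ro^{a-b}},\kappa\ro^{a-b})$. The point is that $\mu\cdot C\ee_0\max(\ro^a,\ro^{2b-a}/\kappa^2) = C\ee_0\min(\ro^b/\kappa,\kappa\ro^{2b-2a+b})$-type quantity is, by direct case analysis on whether $\ro^{a-b}\kappa\lessgtr 1$, bounded by $C\ee_0\ro^b$ (the dissipation already available from the energy identity); hence choosing $\ee_0$ small enough the bad term $\mu C\ee_0\max(\dots)|\mathcal M_\omega\widehat V|^2$ is absorbed into $\tfrac12\ro^b|\mathcal M_\omega\widehat V|^2$, and what remains on the negative side is $\tfrac12\ro^b|\mathcal M_\omega\widehat V|^2 + \tfrac12\mu\ro^a\sum_k\ee_k|\mathcal M_\omega\mathcal N_\omega^k\widehat V|^2$. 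Observing that $\min(\ro^b,\mu\ro^a) = \min(\ro^b,\min(\ro^b/\kappa,\kappa\ro^{2a-b})) \simeq \min(\ro^b/\kappa,\kappa\ro^{2a-b})$ yields the common factor $c\min(\ro^b/\kappa,\kappa\ro^{2a-b})$ in front of $(|\mathcal M_\omega\widehat V|^2 + \sum_k\ee_k|\mathcal M_\omega\mathcal N_\omega^k\widehat V|^2)$, which is exactly \eqref{dt:L_xi}. The main obstacle I anticipate is neither conceptual nor the telescoping (which is routine) but the careful tracking of the powers of $\ro$ and of $\kappa$ through the $\min$/$\max$ expressions so that all four displayed inequalities come out with precisely the stated exponents uniformly for \emph{all} $\ro>0$ — in particular handling the two regimes $\ro^{a-b}\gtrless\kappa^{-1}$ (equivalently low versus high frequencies) on an equal footing, which is the whole reason the $\min(\tfrac1{\kappa\ro^{a-b}},\kappa\ro^{a-b})$ weight is built into $L_{\ro,\omega}$ in the first place.
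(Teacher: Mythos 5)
Your proposal is correct and follows essentially the same route as the paper: the Beauchard--Zuazua telescoping for $\mathcal I_\omega$ with inductively small $\varepsilon_k$, the basic energy identity coming from \eqref{null:A:sky}--\eqref{posi:ope:B:}, and their combination through the weight $\min\bigl(\tfrac1{\kappa\rho^{a-b}},\kappa\rho^{a-b}\bigr)$ with absorption of the bad term by taking $\varepsilon_0$ small. The only difference is that the paper first rescales time and frequency ($\tau=t\rho^{b}/\kappa$, $r=\kappa\rho^{a-b}$) to reduce to the normalized system $v'+(r\mathcal N_\omega+\mathcal M_\omega)v=0$ and then quotes the telescoping inequality from \cite{BaratDan22M,Danc22}, whereas you rederive it in the original variables; your minor $\kappa$-bookkeeping slips (e.g. writing $\mathcal M_\omega=\kappa^{1/2}S^{-1}\mathcal B_\omega$, or ``$\rho^b$'' instead of $\rho^b/\kappa$ for the dissipation furnished by the energy identity) are harmless, since in both regimes the weight times $\max(\rho^a,\rho^{2b-a}/\kappa^2)$ equals exactly $\rho^b/\kappa$, so the absorption and the final prefactor $\min(\rho^b/\kappa,\kappa\rho^{2a-b})$ come out as stated.
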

  
 The question now is whether the rate of dissipation in \eqref{dt:L_xi} can be compared to $|\widehat{V}|^2$. We get a positive answer if we work under the following (SK) (for Shizuta and Kawashima) condition.  Let us recall this condition stated in \cite{SK85}.
\begin{dfn}\label{dfn:SK}
Let $n\in \mathbb{N}$, $N, M$  be $n\times n$ matrices with complex coefficients.
The pair $(N, M)$ is said to satisfy  the (\text{SK}) condition  if we have at the same time $M \phi =0 $ and $\lambda \phi+N \phi =0$ for some $\lambda\in \mathbb{R}$, if and only if $\phi=0_{\mathbb{C}^n}$.
  %We say that the linear system \eqref{lin:eq:gene} verifies condition (SK) if the pair {\color{blue}$(\mathcal{N}_\omega, \mathcal{M}_\omega)$} satisfies this condition for all $\omega \in  \mathbb{S}^{d-1}$.
\end{dfn}
%  Roughly speaking, (SK) condition for System \eqref{lin:eq:gene} means that \textit{the %intersection between the kernel of the (matrix) product of the $b$-order term and $S^{-1}$, and %the set of all eigenvectors of the (matrix) product of the skew-symmetric $a-$order term and %$S^{-1}$ is reduced to}  ${0}$. 

  In \cite{Beau11},  K. Beauchard and E. Zuazua have highlighted an interesting connection between Condition (SK) and the Kalman criterion for observability in the theory of linear ODEs, that is recalled in the following lemma.
\begin{lem}
    \label{lem:SK_diss}
    Let $N$ and $M$ be two $n\times n$ complex valued matrices. %Assume that $B$ is nonnegative and $A$ is skewsymmetric in the meaning in the following sense.There exists $\kappa>0$ such that:
    %\begin{equation}
     %   \label{def:Nn:Ske}
    %    Re(A\eta\cdot \eta)=0 \esp{and}  Re(B\eta\cdot \eta)\ge \kappa|B\eta|^2 ,\; \forall \omega\in \mathcal{S}^{d-1} ,\eta\in \mathcal{C}^n. 
       % \end{equation}
        Then, the following properties are equivalent:
        \begin{enumerate}
            \item For all positive $\ee_0,\dots \ee_{n-1}$, we have $\sum_{l=0}^{n-1}\ee_l|MN^l\eta|^2>0$ for all $\eta\in  \mathbb{S}^{n-1}$.
            \item $(N, M)$ satisfies the Kalman rank condition: the rank of the $n^2\times n$ matrix $
            \begin{pmatrix}
                &M\\
                &MN\\
                &\dots\\
                &MN^{n-1}
            \end{pmatrix}
            $
            is equal to $n$.
            \item $(N, M)$ satisfies Condition (SK).
            %the intersection between ${\rm ker} M$ and the set of all eigenvectors of $N $ is %reduced to $0$.
         %   \item All eigenvalues of $A+B$ have positive real parts.
        \end{enumerate}
        \end{lem}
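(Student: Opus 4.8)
The plan is to prove the three-way equivalence by a cycle of implications, say $(1)\Rightarrow(2)\Rightarrow(3)\Rightarrow(1)$, exploiting that the Kalman condition (2) is the natural pivot: it is the linear-algebraic statement that the only $N$-invariant subspace contained in $\ker M$ is $\{0\}$.

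\textbf{Step 1: $(1)\Rightarrow(2)$.} I would argue by contraposition. If the Kalman matrix has rank strictly less than $n$, then $\bigcap_{l=0}^{n-1}\ker(MN^l)\neq\{0\}$, so there is a unit vector $\eta$ with $MN^l\eta=0$ for all $0\le l\le n-1$. Choosing \emph{any} positive weights $\ee_0,\dots,\ee_{n-1}$ then gives $\sum_{l=0}^{n-1}\ee_l|MN^l\eta|^2=0$, which negates (1). (One direction of (1) for \emph{some} choice of weights would already follow; the quantifier ``for all positive $\ee_l$'' makes this implication the easy one.)

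\textbf{Step 2: $(2)\Rightarrow(3)$.} Suppose the Kalman rank condition holds and let $\phi\in\mathbb{C}^n$ satisfy $M\phi=0$ and $N\phi=-\lambda\phi$ for some real $\lambda$. Then $\phi$ is an eigenvector of $N$, so $N^l\phi=(-\lambda)^l\phi$, hence $MN^l\phi=(-\lambda)^l M\phi=0$ for every $l$. Thus $\phi$ lies in the kernel of the whole Kalman matrix, which by (2) is trivial, so $\phi=0$; this is exactly Condition (SK).

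\textbf{Step 3: $(3)\Rightarrow(1)$.} This is the substantive direction. Assume (SK) holds and suppose, for contradiction, that there exist positive weights $\ee_0,\dots,\ee_{n-1}$ and a unit vector $\eta$ with $\sum_{l=0}^{n-1}\ee_l|MN^l\eta|^2=0$. Since each term is nonnegative and each $\ee_l>0$, this forces $MN^l\eta=0$ for all $0\le l\le n-1$. By the Cayley--Hamilton theorem, $N^n$ (and every higher power) is a linear combination of $I,N,\dots,N^{n-1}$, so in fact $MN^l\eta=0$ for \emph{all} $l\ge 0$; equivalently, the $N$-cyclic subspace $W\defn\mathrm{span}\{\eta,N\eta,N^2\eta,\dots\}$ is a nonzero $N$-invariant subspace contained in $\ker M$. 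Restricting $N$ to $W$ over $\mathbb{C}$, it has an eigenvalue $\mu\in\mathbb{C}$ with eigenvector $\psi\in W\subset\ker M$, so $M\psi=0$ and $N\psi=\mu\psi$. The remaining point — and the main obstacle — is that Definition \ref{dfn:SK} only allows a \emph{real} $\lambda$, whereas $\mu$ need not be real; one must check that the hypotheses on $N$ and $M$ (in our application $N=\mathcal N_\omega=S^{-1}\mathcal A_\omega$ with $\mathcal A_\omega$ such that $S^{-1}\mathcal A_\omega$ is skew-adjoint for the inner product $S\,\cdot\,\cdot$, so its spectrum is purely imaginary, i.e. $iN$ has real spectrum) make the argument go through — typically by replacing $N$ by $iN$, or by observing directly that the statement of Lemma \ref{lem:SK_diss} is invariant under $N\mapsto cN$ for $c\neq 0$ and that a generic complex eigenvalue can be rotated onto the real axis without affecting the vanishing of $MN^l\eta$. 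Once the real-eigenvalue issue is reconciled with Definition \ref{dfn:SK}, $(N,M)$ fails (SK), contradicting the assumption, and (1) follows. I would expect the cleanest writeup to fold Steps 1–3 together and spend most of the prose on Step 3, since that is where Cayley--Hamilton, the invariant-subspace/eigenvector extraction, and the reconciliation with the real-$\lambda$ convention all enter.
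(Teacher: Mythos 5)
The paper itself gives no proof of Lemma \ref{lem:SK_diss}: it is recalled from Beauchard--Zuazua \cite{Beau11}, so your argument has to stand on its own. Your Steps 1 and 2 are correct, and the skeleton of Step 3 (positivity of the weights forces $MN^l\eta=0$ for $l\le n-1$, Cayley--Hamilton upgrades this to all $l$, the cyclic subspace $W\subset\ker M$ is $N$-invariant, and $N|_W$ has an eigenvector $\psi\in\ker M$) is exactly the classical Hautus/PBH argument, which is the expected route.

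The genuine gap is the point you flag and then leave unresolved: the eigenvalue $\mu$ of $\psi$ need not be real, whereas Definition \ref{dfn:SK} quantifies only over $\lambda\in\mathbb{R}$, and neither of your proposed remedies works. Properties (1) and (2) are indeed invariant under $N\mapsto cN$, $c\neq0$, but Condition (SK) with real $\lambda$ is \emph{not}, so ``rotating the eigenvalue onto the real axis'' produces an eigenvector of $cN$, not a contradiction with hypothesis (3) for the original pair $(N,M)$; invoking invariance of the Lemma itself is circular. In fact, under the literal real-$\lambda$ reading and for arbitrary complex matrices, the implication $(3)\Rightarrow(1)$ is false: take $N=iI_2$ and $M=\mathrm{diag}(1,0)$; no nonzero $\phi$ satisfies $M\phi=0$ and $\lambda\phi+N\phi=0$ with $\lambda\in\mathbb{R}$ (the only candidate is $\lambda=-i$), so (SK) holds, yet $MN^le_2=0$ for all $l$, so (1) and (2) fail. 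The correct repair is to read Condition (SK) with $\lambda$ ranging over $\C$ (equivalently: no eigenvector of $N$ lies in $\ker M$), which is how the paper itself uses the condition in the proof of Lemma \ref{lem:SK:SK'}, and which is consistent with \cite{Beau11}, where the relevant matrix is symmetric so all eigenvalues are real anyway; note also that for the pairs $(N_\omega,M_\omega)$ of \eqref{def:matbf:N:M} the spectrum of $N_\omega$ is purely imaginary, so the real-$\lambda$ reading would render (SK) nearly vacuous there. With the complex-$\lambda$ reading, your Step 3 closes immediately once $\psi$ is extracted, and Step 2 goes through verbatim; as written, however, the reconciliation you defer is precisely the missing step, and the fixes you sketch would not supply it.
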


      It turns out that this connection had been noticed before by D. Serre in Chapter 6 of \cite{Serre08cours}. Here, Condition (SK) becomes apparent when investigating the question of convergence toward the equilibrium state of the system \eqref{lin:eq:gene}. More precisely, relying on iterated commutators (\`a la H\"{o}rmander in his theory of the hypoellipticity) between the hyperbolic and dissipative parts, D. Serre justifies, 
      in
      the one-dimensional case, that $\mathfrak{N}_\omega=\{0\}$ (with  $\mathfrak{N}_\omega$ being the intersection of the kernels of $\mathcal{M}_\omega, \mathcal{M}_\omega \mathcal{N}_\omega,\cdots, \mathcal{M}_\omega \mathcal{N}_\omega^{n-1}$) is a necessary condition for decay, which is indeed equivalent to Condition (SK) and the Kalman rank condition. 
 %Recall that a solution $V$ of \eqref{lin:eq:gene} has no decay if $  \mathcal{B}_\omega V=0 $. Note that Serre remarked also that $ \mathfrak{A}_\omega$ is stable by $\mathcal{N}_\omega$.
 
    Still considering  System \eqref{lin:eq:gene}, assuming that the (SK) condition
    is satisfied by $(\mathcal{N}_\omega, \mathcal{M}_\omega)$  at every 
    point of the unit sphere $ \mathbb{S}^{d-1}$, we gather
from the compactness of $ \mathbb{S}^{d-1},$   \eqref{dt:L_xi} 
and \eqref{eqi:lya:ro:omega} that there exists a constant $ c_*>0 $ such that
        \begin{equation}
    \label{1:dt:L_ro_om}
    \frac{d}{dt}L_{r,\omega}(\widehat{V})+c_* \min\Bigl(\frac{\ro^{b}}{\kappa}, \kappa\ro^{a-b}\Bigr) L_{r,\omega}(\widehat{V}) \le 0.
\end{equation}
This implies, owing to \eqref{eqi:lya:ro:omega}, that 
 \begin{align}     \label{V(xi)=eV_0}
     \widehat{V}(t,\xi)\le C e^{-ct\min(\frac{|\xi|^{b}}{\kappa}, \kappa|\xi|^{a-b}) }  \widehat{V_0}(\xi),\quad t\in\R_+,\quad \xi\in\R^d \end{align}
which reveals a  different behavior of the solution in high and low frequencies.
In particular, as it corresponds to the case $a=1$ and $b=2,$ 
 the linear system \eqref{eq:lindiff0} behaves as a parabolic system for low frequencies, and has a damped behavior (but no gain  of  derivative)  for high frequencies.

Note also that  integrating \eqref{1:dt:L_ro_om} on $\Rd,$ omitting the second term and
 using Fourier-Plancherel theorem ensures that 
$${\mathcal L}(V)\defn (2\pi)^{-d}\int_0^\infty\!\!\int_{\mathbb S^{d-1}} L_{r,\omega}(\widehat{V})
\, r^{d-1} \, d\sigma(\omega)\,dr$$
 is  a Lyapunov functional for System \eqref{lin:eq:gene}, that is equivalent 
 to $\|V\|_{L^2}^2.$

\subsection{Parabolic mode}\label{para:mode:subsec}

Observe that under Assumption \textbf{D}, System \eqref{Eq_b} can be rewritten: 
 \begin{equation}
     \label{Eq_b:V1:V2}
     \begin{cases}
      S^0_{11}(U) \pt V^1 +\sum_{\alpha=1}^d\left(S^\alpha_{11} (U) \pal V^1+S^\alpha_{12} (U) \pal V^2\right) =0 \\[0,2cm]
     S^0_{22}(U) \pt V^2 +\sum_{\alpha=1}^d\left(S^\alpha_{21} (U) \pal V^1+S^\alpha_{22} (U) \pal V^2\right)-\sum_{ \substack{\alpha,\beta=1}}^d{ \pal (Z^{\alpha\beta}(U) \pbe V^2)}\\
     \hspace{12cm}=  f^2(U,\n U)
     \end{cases}
 \end{equation}
  with $V^1:\mathbb{R}_+\times \mathbb{R}^d\to \mathbb{R}^{n_1}$ and 
  $V^2:\mathbb{R}_+\times\mathbb{R}^d\to \mathbb{R}^{n_2}$
  \begin{align}
      \label{def:matrice:bloc}
      S^0(U)=
      \begin{pmatrix}
S^{0}_{11}(U) & 0\\
0 &  S^0_{22}(U)
\end{pmatrix}
\esp{and}
S^\alpha(U) =
\begin{pmatrix}
S^\alpha_{11}(U) & S^\alpha_{12}(U)\\
S^\alpha_{21}(U) &  S^\alpha_{22}(U)
\end{pmatrix}\cdotp
  \end{align}
%Using the notation introduced in  \eqref{globa:f=f(f1,f2)} and \eqref{F_T:F_1:F_2}, $V^2$ satisfies  the parabolic system 
%\begin{multline}   \label{part:V2}
         %\overline{S}^0_{22}\pt V^2  - \sumab\overline{Z}^{\alpha \beta}\pal\pbe V^2= ( %\overline{S}^0_{22}-S^0_{22}(U))\pt V^2-\sum_{\alpha}S^\alpha_{21}(U) \pal V^1\\
       %  - \sum_{\alpha}S^\alpha_{22}(U) \pal V^2 +\sumab\pal(r^{\alpha\beta}\pbe V^2)+f^2.
%\end{multline}
Owing to Assumption \eqref{strong_elli}, for fixed $V^1,$ the second equation of \eqref{Eq_b:V1:V2} is of parabolic type. 
Like for the heat equation, one can thus expect a gain of two space derivatives 
for $V^2,$ compared to e.g. the source term $f^2.$
From Beauchard-Zuazua's analysis however, we cannot track this smoothing
property: we only get exponential decay for high frequencies of $V^2$.
 
 Introducing a  suitable ``parabolic mode'' will help us to recover this information. 
 To do so, let us  isolate the linear part of the second equation of \eqref{Eq_b:V1:V2}, and rewrite it as:
\begin{align}
    \label{lin:part:V2}
    \begin{split}
         \overline{S}^0_{22}\pt V^2+ \sum_{\alpha}\overline{S}^\alpha_{21} \pal V^1+ \sum_{\alpha}\overline{S}^\alpha_{22} \pal V^2  - \sumab\overline{Z}^{\alpha \beta}\pal\pbe V^2=h
    \end{split}
\end{align}
with $h=h^t+h^{22}+h^{21}+h^2+f^2$ and 
\begin{equation}
    \label{def:h}
    \begin{aligned}
        & h^t\defn (\overline{S}^0_{22}-S^0_{22}(U))\pt V^2;\; &&\quad h^{21}\defn \sum_{\alpha}(\overline{S}^\alpha_{21}-S^\alpha_{21}(U) )\pal V^1;\; \\
    &h^{22}\defn\sum_{\alpha}(\overline{S}^\alpha_{22}- S^\alpha_{22}(U))\pal V^2; \;
    &&\quad h^2\defn \sum_{\alpha,\beta}\pal(r^{\alpha\beta}\pbe V^2).
    \end{aligned}
\end{equation}
%In the Fourier space, the above equation becomes:
%\begin{align}    \label{lin:pa:Fou:V2}    \begin{split}
 %        \overline{S}^0_{22}\pt \widehat{V}^2+ i\sum_{\alpha}\overline{S}^\alpha_{21} \xi_\alpha \widehat{V}^1+ i\sum_{\alpha}\overline{S}^\alpha_{22} \xi_\alpha \widehat{V}^2  + \sumab\overline{Z}^{\alpha \beta}\xi_\alpha\xi_\beta \widehat{V}^2=\widehat{h}.
%    \end{split}\end{align}
In the Fourier space, if we set $\xi=\rho\omega$ with $\rho\ge 0$ and $\omega\in \mathbb{S}^{d-1}$,  we can rewrite \eqref{lin:part:V2} as follows:
\begin{align}
    \label{l:pa:Fou:V2}
    \begin{split}
         \overline{S}^0_{22}\pt \widehat{V}^2+ i\rho( S_{21}(\omega) \widehat{V}^1+  S_{22}(\omega) \widehat{V}^2)  + \rho^2 Z(\omega) \widehat{V}^2=\widehat{h}\end{split}
\end{align}
where 
\begin{align}
    \label{def:S:21:22:Z}
     S_{21}(\omega)\defn  \sum_{\alpha}\overline{S}^\alpha_{21}\omega_\alpha,\;\; S_{22}(\omega)\defn  \sum_{\alpha}\overline{S}^\alpha_{22} \omega_\alpha \esp{and} Z(\omega)\defn \sumab\overline{Z}^{\alpha \beta}\omega_\alpha\omega_\beta. 
\end{align}
After extending the above functions on $\Rd\setminus\{0\}$ by homogeneity of degree $0$, 
we set:
\begin{align}
    \label{def:S:21:22:ZD}
    S_{21}(D) z \defn \mathcal{F}^{-1}(S_{21} \widehat{z}),\;\;  S_{22}(D) z \defn \mathcal{F}^{-1}(S_{22} \widehat{z}) \esp{and} Z(D) z \defn \mathcal{F}^{-1}(Z\widehat{z}).
\end{align}
 For all $\omega\in \mathbb{S}^{d-1},$  $Z(\omega)$ is  an invertible matrix (in fact from \eqref{strong_elli} the Kernel of $ Z(\omega)\in \mathcal{M}_{n_2}({\mathbb C}) $ is ${0_{{\mathbb C}^{n_2}}}$), we define $W$ from its Fourier transform as follows: 
\begin{align*}
    %\label{widehatW:def}
    \widehat{W}(\xi)&\defn i\rho^{-1}(Z(\omega)^{-1}( S_{21}(\omega) \widehat{V}^1+  S_{22}(\omega)\widehat{V}^2)  +\widehat{V}^2\\&= \rho^{-2} (Z(\omega))^{-1} 
    \bigl(\widehat{h}-    \overline{S}^0_{22}\pt \widehat{V}^2\bigr)\cdotp
\end{align*}
In other words,  we have
\begin{align}
    \label{W:def}
    \begin{split}
    W &\defn i|D|^{-1}(Z(D))^{-1}\left( S_{21}(D) V^1+  S_{22}(D)V^2\right)  +V^2\\
    &= (-\Delta)^{-1} (Z(D))^{-1} \bigl(h-\overline{S}^0_{22}\pt V^2\bigr)\cdotp
    \end{split}
\end{align}
Then, we discover that $W$ satisfies the following parabolic equation
\begin{equation}
    \label{widehatW:eq}
    \overline{S}^0_{22}\pt W -\Delta Z(D) W= i |D|^{-1}\overline{S}^0_{22}(Z(D))^{-1}\pt\left( S_{21}(D) V^1+  S_{22}(D) V^2 \right)+h,
\end{equation}
and thus  expect $W$  to have better regularity properties in high frequencies than the whole solution.

Although a similar idea has been used before in the context of the compressible Navier-Stokes equations (see \cite{Haspot11} and the references therein), or for
particular cases of symmetric hyperbolic partially dissipative system (see \cite{BaratDan22M, lemarie2023parabolic})  it seems to be new for the general class that is considered here.

%%%%%%%%%%%%%%%%%%%%%%%%%%%%%%%%%%%%%%%%%%%%%%%%%%%%%%%%%%%%%%%%%%%%%%%

\subsection{Transfer of integrability  from  the parabolic to the  hyperbolic mode}\label{subsec:V1:hf}
Inserting $V^2= W-i|D|^{-1}(Z(D))^{-1}\left( S_{21}(D) V^1+  S_{22}(D)V^2\right) $ in the 
first equation of \eqref{Eq_b:V1:V2}, we observe that $V^1$ is solution of the following symmetric hyperbolic system:
\begin{align}
    \label{eq:V1=f(W)}
     S^0_{11}(U) \pt V^1 +\sum_{\alpha=1}^dS^\alpha_{11} (U) \pal V^1+ S_{12}(D)Z(D)^{-1}S_{21}(D)V^1 =G^{11}+G^{12}+G^{13}
\end{align}
with
\begin{align}
    \label{def:G11:12:13}
    \begin{split}
    G^{11}&\defn \suma \bigl( \overline{S}^\alpha_{12}- S_{12}^\alpha(U)\bigr)\pal V^2,\\
     G^{12}&\defn - S_{12}(D)Z(D)^{-1}S_{22}(D) V^2 \esp{and} G^{13}\defn -i|D|S_{12}(D)  W .
       \end{split}
\end{align}
Putting all the linear terms in the left-hand side,  \eqref{eq:V1=f(W)} reads
\begin{multline}
    \label{eq:V1:lin=f(W)}
     \overline{S}^0_{11} \pt V^1 +\sum_{\alpha=1}^d \overline{S}^\alpha_{11} \pal  V^1+ S_{12}(D)Z(D)^{-1}S_{21}(D)V^1 =G^{1}\defn \sum_{k=1}^5 G^{1k}\\
\esp{with}   
    G^{14}\defn  \sum_{\alpha=1}^d \bigl( \overline{S}^\alpha_{11}-S^\alpha_{11}(U)\bigr) \pal V^1 \esp{and} G^{15}\defn  \bigl(\overline{S}^0_{11}-S^0_{11}(U)\bigr) \pt V^1.
\end{multline}
Given that $ Z(\omega)$ is invertible, we have
$$\text{rank}\bigl(S_{12}(\omega)Z(\omega)^{-1}S_{21}(\omega)\bigr) = \text{rank}\bigl(S_{21}(\omega)\bigr)\esp{for all}\omega\in\mathbb S^{d-1}.$$
Since the $0-$order operator $S_{12}(D)Z(D)^{-1}S_{21}(D) $ is responsible of the long time behavior of $V^1$,  the fact that $\text{rank}\bigl(S_{12}(\omega)Z(\omega)^{-1}S_{21}(\omega)\bigr)=n_1$
 for all $\omega\in \mathbb{S}^{d-1}$ guarantees decay  of $V^1.$
  This is actually equivalent to asserting that $\text{rank}\bigl(S_{21}(\omega)\bigr)=n_1$ for all $\omega\in \mathbb{S}^{d-1}$.
However, this assumption is not strictly necessary. In fact, under the assumption \eqref{strong_elli} and denoting 
\begin{equation}\begin{aligned}
    \label{def:matbf:N:M}
    & M_\omega\defn ( \overline{S^0})^{-1}B_\omega, \;\; N_\omega= ( \overline{S^0})^{-1}A_\omega, \esp{with} B_\omega, A_\omega \text{  defined in } \eqref{A:B_ome:def}\\
    &\mathbf{N}_\omega\defn  i(\overline{S}^0_{11})^{-1} S_{11}(\omega)=  i(\overline{S}^0_{11})^{-1}\suma \overline{S}^\alpha_{11}\omega_\alpha \!\esp{and}\!  \mathbf{M}_\omega\defn ( \overline{S}^0_{11})^{-1} S_{12}(\omega)Z(\omega)^{-1}S_{21}(\omega)
\end{aligned}
\end{equation}
we observe that the pair $(M_\omega, N_\omega)$  satisfies the (SK) condition if and only the pair $(\mathbf{M}_\omega , \mathbf{N}_\omega)$ does (see Lemma \ref{lem:SK:SK'} for more details). Moreover, the operator $\sum_{\alpha=1}^d \overline{S}^\alpha_{11} \pal$  is skew-Hermitian in the sense of \eqref{null:A:sky}, while $S_{12}(D)Z(D)^{-1}S_{21}(D)$ is elliptic (not necessarily strictly) in the sense of \eqref{posi:ope:B:}.  As a result, the Inequality \eqref{V(xi)=eV_0} (with $a=1$ and $b=0$) ensures that 
\begin{align}
    \label{est:V1(xi)}
    \widehat{V^1}(t,\xi)\le C e^{-ct\min( 1, |\xi|^2) }  \widehat{V^1_0}(\xi)\cdotp
\end{align}
where $c$ and $C$ are positive constants. In particular, the linear equation \eqref{eq:V1=f(W)} (with $G^1=0$) exhibits damped behavior in high frequencies.

%%%%%%%%%%%%%%%%%%%%%%%%%%%%%%%%%%%%%%%%%%%%%%%%%%%%%%%%%%%%%%%%%%

\section{Main results}\label{subs:main:resu:NHHB}
  
Our first goal is to prove a global existence result for System \eqref{Eq_b}
supplemented with small initial data having different regularity in low and high frequencies. 
In order to find out a suitable functional framework for solving \eqref{Eq_b}, 
we have to remember that a part of the system is first order hyperbolic. 
Hence conservation of regularity is closely related to 
the control of the norm of $\nabla V$ in $L^1_{loc}(\mathbb R_+;L^\infty)$
(see e.g. \cite[Chap. 4]{HajDanChe11}).  
Consequently, we have to assume that $V_0$ belongs to a Banach space $X$
such that the corresponding solution to the linearized system \eqref{eq:lindiff0} 
belongs to the set $L^1_{loc}({\mathbb R}_+;C^{0,1})$. 
Following our recent  work \cite{DanADOloc}, we shall assume in a first time that the high frequencies of $(V_0^1,V^2_0)$ belong to\footnote{The reader is referred to Appendix \ref{appendix:LP}
for the definition of Besov spaces, and of the Littlewood-Paley decomposition.}  $\BH{\cd+1}{2}{1}\times\BH{\cd}{2}{1}.$ There is some freedom in the choice of the regularity index $s$ for the low frequencies part $V_0^l$.  A rather natural choice is $s=\cd-1$ because, as seen in subsection \ref{subs:SK:Kalman}, we have parabolic properties of the system in low frequencies. Hence starting from $V_0^l\in \BH{\cd-1}{2}{1}$ we expect that $\n V^l\in L^1_{loc}(\mathbb R_+;\BH{\cd}{2}{1})$, and thus, by embedding, $\n V^l\in L^1_{loc}(\mathbb R_+;L^\infty)$.   Combining these properties and the dissipative properties
of the system in high frequencies we get a control of the gradient of the solution, which is the key to preventing  finite time blow-up (see \cite{DanADOloc} ). 

The following general global existence result for System \eqref{Eq_b} in this framework
may be seen as an improvement of those in \cite{Kawashima83,SK85,GioMat13}.
\begin{thm}
    \label{thm:glob:cri}
  Let Assumption \textbf{D} be in force.  Assume that $d\ge 2$ 
  and that the pair $(N_\omega,M_\omega)$ defined in \eqref{def:matbf:N:M} satisfies the (SK) condition, for all $\omega\in \mathbb{S}^{d-1}$. 
    Then, there exists a positive constant $\alpha$ such that for all 
     $U_0$ with range in $\U$
    and such that  $V_0=(V_0^1, V^2_0)\defn U_0- \overline{U}$
    with    
    $V_0^1\in \BH{\cd-1}{2}{1}\cap\BH{\cd+1}{2}{1}$ and $ V_0^2\in \BH{\cd-1}{2}{1}\cap\BH{\cd}{2}{1} $ satisfies
\begin{align}
    \label{small:cnd:whole:space}
  \Vert V_0^1 \Vert_{ \BH{\cd-1}{2}{1}\cap\BH{\cd+1}{2}{1} }+  \Vert V_0^2 \Vert_{ \BH{\cd-1}{2}{1}\cap\BH{\cd}{2}{1} }\le \alpha,
\end{align}
System \eqref{Eq_b} supplemented with initial data $ (V_0^1,V^2_0)$ admits a unique global solution~$V$ in 
\begin{multline*}
    E:=\left\{V=( V^1,V^2): V^l\in \mathcal{C}_b(\mathbb{R}_+,\BH{\cd-1}{2}{1} )\cap L^1(\mathbb{R}_+, \BH{\cd+1}{2}{1}),\;\; \right.\\
    \left.\;\; V^{1,h}\in  \mathcal{C}_b(\mathbb{R}_+,\BH{\cd+1}{2}{1} )\cap L^1(\mathbb{R}_+, \BH{\cd+1}{2}{1}),\;\;  V^{2,h}\in  \mathcal{C}_b(\mathbb{R}_+,\BH{\cd}{2}{1} )\cap L^1(\mathbb{R}_+, \BH{\cd+2}{2}{1})\right\},
\end{multline*}
where, for any tempered distribution $Z,$ $Z^l$ (resp. $Z^h$)
denotes the low (resp. high) frequency part of $Z$ (see the definition in \eqref{def:LF:HF:}).
\medbreak
Moreover, there exists an explicit Lyapunov functional equivalent to 
$$\Vert V^1 \Vert_{ \BH{\cd-1}{2}{1}\cap\BH{\cd+1}{2}{1} }+  \Vert V^2 \Vert_{ \BH{\cd-1}{2}{1}\cap\BH{\cd}{2}{1} }$$ and a  constant $C$ depending only on the matrices $S^0, S^\alpha$ and  $Y^{\alpha\beta}$ such that
\begin{align}
    \label{ine:lya:V:LF:HF:BH}
     \mathbb{V}(t)\le C \mathbb{V}_0,  \esp{  } t>0
\end{align}
where (see \eqref{def:NBH:LF:HF} for the definition of $\|\cdot\|^l_{\dot B^{s}_{2,1}}$
and $\|\cdot\|^h_{\dot B^{s}_{2,1}}$):
\begin{equation}
     \label{ mathbb{V}:def}
    \mathbb{V}(t)\defn \Vert V \Vert^{l}_{L^\infty_t(\BH{\frac{d}{2}-1}{2}{1})}+  \Vert V^1\Vert^{h}_{L^\infty_t(\BH{\frac{d}{2}+1}{2}{1})}+ \Vert V^2\Vert^{h}_{L^\infty_t(\BH{\frac{d}{2}}{2}{1})}+ \Vert V\Vert_{L^1_t(\BH{\frac{d}{2}+1}{2}{1})}+\Vert V^2\Vert_{L^1_t(\BH{\frac{d}{2}+2}{2}{1}) }
    %\red{+\Vert W \Vert^{l}_{L^1_t( \BH{\frac{d}{2}}{2}{1})}}+ \Vert W \Vert^{h}_{L^1_t( \BH{\frac{d}{2}+2}{2}{1}) }
    .\end{equation}
\end{thm}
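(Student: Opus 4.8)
\medskip\noindent\emph{Strategy of the proof.}
The plan is to produce the solution by a standard approximation procedure --- a Friedrichs truncation, or the iterative scheme of \cite{DanADOloc}, for which local-in-time existence and a blow-up criterion are already available --- and then to spend essentially all the effort on \emph{uniform} a priori estimates in the space $E$. Concretely, I would aim at a closed inequality of the form
\begin{equation*}
\mathbb V(t)\le C_0\,\mathbb V_0+C_0\bigl(\mathbb V(t)^2+\mathbb V(t)^3\bigr),\qquad t\ge 0,
\end{equation*}
with $C_0$ depending only on $S^0,S^\alpha,Y^{\alpha\beta}$ and $\overline U$. Granting this, a standard continuity argument finishes the proof: if $\mathbb V_0\le\alpha$ with $\alpha$ small enough and $\mathbb V(t)\le 2C_0\mathbb V_0$ on the maximal interval $[0,T^*)$, then the inequality forces $\mathbb V(t)\le \tfrac32 C_0\mathbb V_0<2C_0\mathbb V_0$, so by the blow-up criterion $T^*=+\infty$; this also gives \eqref{ine:lya:V:LF:HF:BH}, the ``explicit Lyapunov functional'' being simply the sum of the frequency-localised energies constructed below, which are built so as to be equivalent to $\mathbb V$. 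Uniqueness and time-continuity of the components would then follow from a routine stability estimate for the difference of two solutions in a space with one derivative less, exactly as in \cite{DanADOloc}.

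For the low frequencies I would work with the full unknown $V$, reading \eqref{Eq_b} as the linear system \eqref{eq:lindiff0} with source $F=F^t+F^1+F^2+f$ from \eqref{F_T:F_1:F_2}. Localising with $\dot\Delta_j$ for $j\le J_0$, applying the Lyapunov functional $L_{\rho,\omega}$ of Lemma \ref{lem:derI} with $(a,b)=(1,2)$ (here the hypothesis that $(N_\omega,M_\omega)$ satisfies (SK) is used to compare the dissipation rate with $|\widehat V|^2$), integrating \eqref{1:dt:L_ro_om} over the annulus $\{|\xi|\simeq 2^j\}$ where $\min(\rho^2,1)\simeq 2^{2j}$, and summing with the weight $2^{j(\frac d2-1)}$ together with \eqref{eqi:lya:ro:omega}, gives
\begin{equation*}
\Vert V\Vert^{l}_{L^\infty_t(\BH{\cd-1}{2}{1})}+\Vert V\Vert^{l}_{L^1_t(\BH{\cd+1}{2}{1})}\lesssim \mathbb V_0+\Vert F\Vert^{l}_{L^1_t(\BH{\cd-1}{2}{1})}.
\end{equation*}
The parabolic gain of two derivatives here is exactly what makes the divergence-form contribution $F^2=\sumab \pal(r^{\alpha\beta}\pbe V)$ affordable at the level $\BH{\cd-1}{2}{1}$.

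For the high frequencies I would run three coupled estimates. First, $V^1$ solves the symmetric hyperbolic system \eqref{eq:V1:lin=f(W)} with the nonnegative order-$0$ damping $S_{12}(D)Z(D)^{-1}S_{21}(D)$; since $(N_\omega,M_\omega)$ satisfies (SK) iff $(\mathbf{N}_\omega,\mathbf{M}_\omega)$ does (Lemma \ref{lem:SK:SK'}), the Beauchard--Zuazua energy of Lemma \ref{lem:derI} with $(a,b)=(1,0)$, $\dot\Delta_j$-localised for $j\ge J_0$ and summed with weight $2^{j(\frac d2+1)}$, produces
\begin{equation*}
\Vert V^1\Vert^{h}_{L^\infty_t(\BH{\cd+1}{2}{1})}+\Vert V^1\Vert^{h}_{L^1_t(\BH{\cd+1}{2}{1})}\lesssim \mathbb V_0+\Vert G^1\Vert^{h}_{L^1_t(\BH{\cd+1}{2}{1})}.
\end{equation*}
Second, the parabolic mode $W$ of \eqref{W:def} solves the heat-type equation \eqref{widehatW:eq}; maximal-regularity estimates for $\overline{S}^0_{22}\pt-\Delta Z(D)$ in $\BH{\cd}{2}{1}$ bound
\begin{equation*}
\Vert W\Vert^{h}_{L^\infty_t(\BH{\cd}{2}{1})}+\Vert W\Vert^{h}_{L^1_t(\BH{\cd+2}{2}{1})}
\end{equation*}
by $\mathbb V_0$ plus the $L^1_t(\BH{\cd}{2}{1})$-norm of the right-hand side of \eqref{widehatW:eq}, which --- after substituting $\pt V^2$ from the second line of \eqref{Eq_b:V1:V2} --- consists of order-$0$ operators applied to $\nabla V^1,\nabla V^2$, carrying a small prefactor at frequencies $|\xi|\gtrsim 2^{J_0}$, plus the quadratic pieces of $h$. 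Third, from $V^2=W-i|D|^{-1}Z(D)^{-1}\bigl(S_{21}(D)V^1+S_{22}(D)V^2\bigr)$ the last term has net order $-1$, hence contributes at most $\simeq 2^{-J_0}\Vert V^2\Vert^{h}_{L^1_t(\BH{\cd+2}{2}{1})}$, absorbed for $J_0$ large; this closes
\begin{equation*}
\Vert V^2\Vert^{h}_{L^\infty_t(\BH{\cd}{2}{1})}+\Vert V^2\Vert^{h}_{L^1_t(\BH{\cd+2}{2}{1})}
\end{equation*}
in terms of $\mathbb V_0$, the $V^1$- and $W$-norms above, and nonlinear terms. Making this interlocking work --- i.e.\ checking that the couplings $G^{13}=-i|D|S_{12}(D)W$, the right-hand side of \eqref{widehatW:eq}, and the defining relation \eqref{W:def} always involve a net gain (or at worst no loss) of derivatives, so that at high frequency they come with a factor $2^{-J_0}$ --- together with the divergence structure of $F^2,h^2,G^{11}$, is the technical heart of the argument and the step I expect to be the main obstacle.

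It remains to control all the source terms. By Assumption \textbf{D} (or the wider structure of Remark \ref{rmq:f:gene}), each of $F^t,F^1,F^2,f$, each $G^{1k}$ and each piece of $h$ is either quadratic in $V$ or of the type $\upsilon(U)\,V\otimes\nabla V$ or $\upsilon(U)\,\nabla V\otimes\nabla V$ with $\upsilon$ smooth. Using the product and composition estimates in hybrid Besov spaces recalled in the Appendix, the embeddings $\BH{\cd}{2}{1}\hookrightarrow L^\infty$ and $\BH{\cd+1}{2}{1}\hookrightarrow \mathrm{Lip}$, and splitting each product along the low/high decomposition, one checks that every such term is bounded in the relevant $L^1_t(\BH{\sigma}{2}{1})$-norm by $C(\mathbb V(t)^2+\mathbb V(t)^3)$; the non-standard indices $\cd-1$ (low) and $\cd+1$, resp.\ $\cd$, for the high frequencies of $V^1$, resp.\ $V^2$, are chosen precisely so that all these products land in the right spaces, and this bookkeeping is the second place where some care is needed. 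Combining the low- and high-frequency estimates then gives the announced inequality for $\mathbb V(t)$, and the bootstrap described in the first paragraph concludes.
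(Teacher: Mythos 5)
Your proposal follows essentially the same route as the paper: low frequencies treated with the Beauchard--Zuazua functional of Lemma \ref{lem:derI} (with $(a,b)=(1,2)$) under the (SK) condition, the high frequencies of $V^1$ via the damped hyperbolic system \eqref{eq:V1:lin=f(W)} together with Lemma \ref{lem:SK:SK'}, the parabolic mode $W$ by maximal regularity for \eqref{widehatW:eq}, recovery of $V^{2,h}$ from the algebraic relation \eqref{W:def} with absorption of the order~$-1$ remainder for large truncation index, then a bootstrap on a functional equivalent to $\mathbb V$, construction by regularizing the data and invoking the local theory and continuation criterion of \cite{DanADOloc}, and uniqueness by a stability estimate one derivative lower.

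The one place where your proposed mechanism would not quite deliver is the step you yourself flag as the main obstacle: the coupling $G^{13}=-i|D|S_{12}(D)W$ in the $V^1$ equation. Measured where you need it, namely in $L^1_t(\BH{\cd+1}{2}{1})$ for the high frequencies, this term is exactly of the size of the $W$-dissipation $\Vert W\Vert^{h}_{L^1_t(\BH{\cd+2}{2}{1})}$: there is no net gain of derivatives and no $2^{-J_0}$ prefactor, so it cannot be made small by taking the frequency threshold large, contrary to what you suggest for the other couplings. The paper's fix is not a frequency gain but a weighting of the blocks of the total functional: the high-frequency $V^1$ energy enters $\widetilde{\mathcal L}$ in \eqref{de:lya:fina} with a small constant $\varepsilon$, so that the resulting contribution $\varepsilon C\int_0^t\Vert W\Vert^{h}_{L^1_t(\BH{\cd+2}{2}{1})}$ is absorbed by the dissipation furnished by Proposition \ref{pro:es:W}, while the genuinely small terms are absorbed by choosing $N_0$ large (see \eqref{es:lya:1} and \eqref{es:ee:kapa_0}). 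With this standard $\varepsilon$-weighting device added, your scheme closes exactly as in Section \ref{sec:proof:global:exis}; the rest of your outline (including the cruder closure $\mathbb V(t)\le C_0\mathbb V_0+C_0(\mathbb V^2+\mathbb V^3)$, which follows from the paper's sharper inequality of the type $\widetilde{\mathcal L}(t)+c\int_0^t\mathcal H\le\widetilde{\mathcal L}(0)+C\int_0^t(\mathcal L+\mathcal L^2)\mathcal H$) is sound.
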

%\begin{rmq}    \label{rmq:impro:kawa}
 %   The above result is an improvement of those in \cite{Kawashima83,SK85,GioMat13}.
%\end{rmq}
The following statement specifies the decay rates of the above solutions. 
\begin{thm}
    \label{thm:decay:sucri}
    Under the hypotheses of Theorem \ref{thm:glob:cri} and if, additionally, $V_0\in \BH{-\sigma_1}{2}{\infty}$ for some  $ 1-d/2<\sigma_1\le d/2 $ then, there exists a constant $C$ depending only on $\sigma_1$ and such that
    \begin{align}
        \label{est:prog:NBH(V):r=inf}
        \NBH{-\sigma_1}{2}{\infty}{V(t)}\le C \NBH{-\sigma_1}{2}{\infty}{V_0}, \esp{for all } t\ge 0.
    \end{align}
    Furthermore, denoting 
    \begin{align*}
        {\langle t \rangle}\defn \sqrt{1+t^2}\esp{and} C_0\defn  \NBH{-\sigma_1}{2}{\infty}{V_0}^l + \lVert V^1_0 \rVert_{ \BH{\cd+1}{2}{1}}+ \lVert V^2_0 \rVert_{\BH{\cd}{2}{1}}^h
    \end{align*}
    the following decay estimates hold true (where $W$ has been defined according to \eqref{W:def}):
    \begin{align}\label{decay1}
     & \sup_{t\ge 0}  \NBH{\sigma}{2}{1}{  {\langle t \rangle}^\frac{\sigma_1+\sigma}{2} V(t) }^l \le C C_0 \esp{if} -\sigma_1< \sigma\le \cd-1,\\ \label{decay2}
     & \sup_{t\ge 0}  \NBH{\cd}{2}{1}{  {\langle t \rangle}^\frac{\sigma_1+d/2-1}{2} \bigl(W(t), V^2(t)\bigr) }^h \le C C_0,\\\label{decay3}
     &\sup_{t\ge 0}  \NBH{\cd+1}{2}{1}{  {\langle t \rangle}^\frac{\sigma_1+d/2-1}{2} V^1(t) }^h \le C C_0.
    \end{align}
\end{thm}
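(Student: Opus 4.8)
The plan is to upgrade the global bounds of Theorem~\ref{thm:glob:cri} by coupling them with the linear decay information of Subsection~\ref{subs:SK:Kalman} --- namely \eqref{V(xi)=eV_0} (with $a=1$, $b=2$) for the full unknown $V$, \eqref{est:V1(xi)} for the hyperbolic component $V^1$, and the parabolic character of \eqref{widehatW:eq} for the mode $W$ --- through a continuation argument on time-weighted hybrid Besov norms. Everywhere one uses that, by \eqref{ine:lya:V:LF:HF:BH}, the critical norm of $V$ (that is, $\dot B^{d/2-1}_{2,1}\cap\dot B^{d/2+1}_{2,1}$ in low frequencies and $\dot B^{d/2+1}_{2,1}\times\dot B^{d/2}_{2,1}$ for the high frequencies of $(V^1,V^2)$) stays globally of size $\lesssim\alpha$.

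\textbf{Step 1 (propagation of $\dot B^{-\sigma_1}_{2,\infty}$).} First, applying $\dot\Delta_j$ to \eqref{Eq_b}, using the block-wise Lyapunov functional of Lemma~\ref{lem:derI} (from which \eqref{V(xi)=eV_0} was derived) together with Duhamel's formula, one gets for all $j$
\[
\|\dot\Delta_j V(t)\|_{L^2}\lesssim e^{-ct\min(2^{2j},1)}\|\dot\Delta_j V_0\|_{L^2}+\int_0^t e^{-c(t-s)\min(2^{2j},1)}\|\dot\Delta_j F(s)\|_{L^2}\,ds,
\]
where $F$ collects the nonlinear terms of \eqref{F_T:F_1:F_2}--\eqref{globa:f=f(f1,f2)} (see also Remark~\ref{rmq:f:gene}). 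Multiplying by $2^{-j\sigma_1}$, taking the supremum over $j$ and bounding the time convolution by $\|\dot\Delta_j F\|_{L^1_t(L^2)}$, the matter reduces to a nonlinear estimate of the schematic form
\[
\sup_j 2^{-j\sigma_1}\|\dot\Delta_j F\|_{L^1_t(L^2)}\lesssim \|V\|_{L^1_t(\dot B^{d/2+1}_{2,1})}\ \sup_{[0,t]}\|V\|_{\dot B^{-\sigma_1}_{2,\infty}};
\]
the window $1-d/2<\sigma_1\le d/2$ is exactly what makes the paraproduct/remainder decompositions of the quadratic-in-$\nabla V$ terms and of the variable-coefficient terms $(\overline{S}^\alpha-S^\alpha(U))\partial_\alpha V$, $(\overline{S}^0-S^0(U))\partial_t V$, $\partial_\alpha(r^{\alpha\beta}\partial_\beta V)$ licit in negative index. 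Since $\|V\|_{L^1_t(\dot B^{d/2+1}_{2,1})}\le C\alpha$, absorbing (after shrinking $\alpha$ if necessary) gives \eqref{est:prog:NBH(V):r=inf}.

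\textbf{Steps 2--3 (decay rates).} For the low frequencies the damping factor in Duhamel is the heat-type $e^{-ct2^{2j}}$; writing $2^{j\sigma}e^{-ct2^{2j}}=2^{j(\sigma+\sigma_1)}e^{-ct2^{2j}}\cdot 2^{-j\sigma_1}$ with $\sigma+\sigma_1>0$ and summing over low frequencies, the data contribute $C\langle t\rangle^{-\frac{\sigma+\sigma_1}{2}}\|V_0\|^l_{\dot B^{-\sigma_1}_{2,\infty}}$ to $\|V^l(t)\|_{\dot B^\sigma_{2,1}}$. For the high frequencies, \eqref{est:V1(xi)} gives genuine exponential damping $e^{-ct}$ of $V^1$, while \eqref{widehatW:eq} shows that $W$ solves a parabolic equation, so its high frequencies are damped like $e^{-ct2^{2j}}$; in both cases the exponential beats any polynomial weight, so the data part of \eqref{decay2}--\eqref{decay3} is harmless. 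One then introduces the time-weighted functional $\mathcal D(t)$ whose definition combines the left-hand sides of \eqref{decay1}--\eqref{decay3}, and establishes $\mathcal D(t)\le C\,C_0+C\alpha\,\mathcal D(t)$: the point is that in each quadratic contribution one factor can be placed in the (small, non-decaying) critical norm, of size $\lesssim\alpha$, and the other in $\mathcal D(t)$, so that the nonlinear terms are $\lesssim\alpha\,\mathcal D(t)$ and get absorbed, a conclusion made rigorous by a continuity argument starting from the finiteness of $\mathcal D$ on compact time intervals. Finally, the high-frequency bound on $V^2$ in \eqref{decay2} is read off from those on $W$, $V^1$ and $V^2$ through the algebraic identity \eqref{W:def}, the operator $|D|^{-1}(Z(D))^{-1}$ being harmless at high frequencies; and the $\partial_t$-term in the source of \eqref{widehatW:eq} is dealt with either by integrating by parts in time or by substituting the evolution equation of $V^2$.

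\textbf{Main obstacle.} The heart of the matter is the simultaneous bookkeeping of the negative-index product estimates of Step~1 and of the convolutions of time weights in Steps~2--3. Because the diffusion acts only on $V^2$ while $f$ is quadratic in $\nabla U$, one has to exploit carefully the block structure of Assumption~\textbf{D}, the additional smoothing carried by the parabolic mode $W$, and the precise range $1-d/2<\sigma_1\le d/2$ --- integrability of the time weights at the upper end, validity of the paraproduct bounds at the lower end --- so that all these estimates close at once.
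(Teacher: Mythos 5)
Your first step is essentially the paper's: the paper also propagates the $\BH{-\sigma_1}{2}{\infty}$ norm by running the low-frequency Lyapunov inequality \eqref{0:lya:lf:est} through Lemma \ref{lem_der_int} (your Duhamel formulation is equivalent) and then estimating $f,F^1,F^2,F^t$ with the negative-index product law \eqref{product:propo:uniq} and the composition estimate \eqref{comp:uv:propo:inf}, the window $1-d/2<\sigma_1\le d/2$ being used exactly as you say. One correction there: the nonlinear bound is not purely of your schematic form $\|V\|_{L^1_t(\BH{\cd+1}{2}{1})}\sup\NBH{-\sigma_1}{2}{\infty}{V}$. The terms $F^2$ and $f$ produce contributions of the type $\bigl(\NBH{-\sigma_1}{2}{\infty}{V}^l+\NBH{\cd}{2}{1}{V}^h\bigr)\NBH{\cd+1}{2}{1}{V}$, whose second piece is \emph{not} proportional to the negative norm; the paper handles it through its time-integrability (a consequence of \eqref{est:fin:GE:T0}) and closes with Gronwall rather than by absorption alone. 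This is easily repaired, but it matters for how you close.

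For the decay rates your route genuinely departs from the paper's, and as written it has a gap. The paper performs no time-weighted Duhamel bootstrap at all: since the functional $\widetilde{\mathcal L}$ of \eqref{de:lya:fina} is nonincreasing and satisfies a.e. the differential inequality with dissipation in $\NBH{\cd+1}{2}{1}{V}^{l,N_0}$, $\NBH{\cd+1}{2}{1}{V^1}^{h,N_0}$, $\NBH{\cd+2}{2}{1}{W}^{h,N_0}$, it interpolates each dissipated norm against the uniform $\BH{-\sigma_1}{2}{\infty}$ bound of Step 1 (the Xin--Xu trick) to obtain the closed ODE $\frac{d}{dt}\widetilde{\mathcal L}+cC_0^{-\theta_0/(1-\theta_0)}\widetilde{\mathcal L}^{1/(1-\theta_0)}\le0$, whose integration gives \eqref{deca:gn:lf:hf}; then \eqref{decay2}--\eqref{decay3} follow from \eqref{com:int:W:V:22} and \eqref{decay1} by interpolation with \eqref{est:prog:NBH(V):r=inf}. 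Your weighted-norm scheme is a recognized alternative, but its decisive step, $\mathcal D(t)\le CC_0+C\alpha\,\mathcal D(t)$, is asserted rather than established, and it is precisely where all the work lies: the heat-kernel/damping convolutions with the weights $\langle s\rangle^{-\beta}$ (splitting at $t/2$, borderline non-integrable weights when $\sigma_1$ is close to $d/2$), the fact that several factors you would like to call ``small critical norms'' (e.g. $\NBH{\cd+2}{2}{1}{V^2}$ or $\NBH{\cd+1}{2}{1}{V^2}^h$) are only controlled in $L^1$ in time and not pointwise, and the fact that $\mathcal D$ contains no weighted high-frequency control of $V^2$ or $W$ above regularity $\cd$, although the sources of the $V^1$-equation (e.g. $(\overline S^\alpha_{12}-S^\alpha_{12}(U))\pal V^2$) must be measured at level $\cd+1$. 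You yourself label this bookkeeping the ``main obstacle'' instead of resolving it, so the proposal stops short of a proof; either these convolution estimates must be carried out (they can be, with a functional enlarged or coupled with the unweighted $L^1_t$ bounds), or one should switch to the paper's interpolation argument, which bypasses them entirely.
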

\begin{rmq}
\label{rmq:nosmall:BH:deca}
    Note  that $\NBH{-\sigma_1}{2}{\infty}{V}$ can be arbitrarily
large: only $ \Vert V_0^1 \Vert_{ \BH{\cd-1}{2}{1}\cap\BH{\cd+1}{2}{1} }+  \Vert V_0^2 \Vert_{ \BH{\cd-1}{2}{1}\cap\BH{\cd}{2}{1} }$ has to be small.
\end{rmq}
It is known since the second author in \cite{Dan01glob}  
on the compressible Navier-Stokes system that global well-posedness and time decay estimates hold for initial data less regular than those in Theorem \ref{thm:glob:cri}:
%In particular the author considers initial data that are less decaying at infinity: 
regularity $\BH{\cd-1}{2}{1}$ and $\BH{\cd}{2}{1}$ for the high frequencies of the velocity and
of the density. However, this alternative framework requires stronger structure assumptions on the system. We here assume that $\U=\U^1\times\mathbb R^{n_2}$ for some open subset $\U^1$ of
$\mathbb R^{n_1}$ and:
%the global existence result for initial data (density and velocity ) belonging to the the following space: while the low frequencies of data belong to $\BH{\cd-1}{2}{1}$, the  high frequencies of the initial velocity (resp. density ) belong to  (resp. ). 
 \paragraph{\bf Assumption E} \label{assumption:cri}
 \medbreak
\begin{itemize}
  \item[(\textbf{E1})] $S^{0}_{11}={\rm Id}$ and  
  $S^{0}_{22}$ depends only (and smoothly) on $U^1\in\U^1$ 
 and has range in the set of $n_2\times n_2$ positive definite matrices,
     \item[(\textbf{E2})]for all $   \alpha=1,\cdots, d,$ the matrix $S^\alpha(U)$ is symmetric, and
   \begin{itemize}
       \item  all the maps  $S^{\alpha}_{11}$ are affine with respect to $U^2$
       and independent of $U^1$,
       \item   all the maps $S^{\alpha}_{21}$ are independent of $U^2$ and 
       depend smoothly on $U^1,$
       \item the smooth maps $ S^{\alpha}_{11},S^{\alpha}_{22}: (U^1,U^2)\in \mathcal{U}\subset \mathbb{R}^{n} \to \mathcal{M}_{n_2}(\mathbb R)$ are affine in~$U^2,$
       %the second argument,
   \end{itemize}
    
     \item[(\textbf{E3})]  all the  maps $ Y^{\alpha\beta}$ have the form \eqref{def:Y:al:be},
    are independent of $U^2,$ smooth with respect to $U^1$ 
     and the ellipticity property \eqref{strong_elli} is satisfied,
     \item[(\textbf{E4})]  $f\equiv0$. 
    \end{itemize}
    Let us emphasize that, as for our local existence result (see \cite[Thm 1.6]{DanADOloc}), Assumption \textbf{E}  is essentially needed to get estimates for the high frequencies of the solutions of \eqref{Eq_b}. Note that these assumptions are fulfilled by the barotropic compressible Navier-Stokes equations. 
\begin{thm}
    \label{thm:glo:L2:cri}
    Let  Assumption {\bf E}  be in force and assume  that the pair $(N_\omega,M_\omega)$ defined in \eqref{def:matbf:N:M} satisfies the (SK) condition, for all $\omega\in \mathbb{S}^{d-1}$.
    Then, there exists a positive constant $\alpha$ such that for all $U_0=(U_0^1,U_0^2)$ with range in $\U^1\times \mathbb R^{n_2}$
    and such that  $V_0=(V_0^1, V^2_0)\defn U_0- \overline{U}$
is in $(\BH{\cd-1}{2}{1}\cap \BH{\cd}{2}{1})\times\BH{\cd-1}{2}{1}$ and satisfies
\begin{align}
    \label{small:data:cri:thm}
    \lVert V^1_0\rVert_{\BH{\cd-1}{2}{1}\cap \BH{\cd}{2}{1}} +\NBH{\cd-1}{2}{1}{V^2_0}\le \alpha,
\end{align}
System \eqref{Eq_b} supplemented with initial data $V_0$ admits a unique global-in-time solution in 
the subspace $\mathcal{E}$ of functions $V=(V^1,V^2)$ of $\cC_b(\mathbb{R}_+; \BH{\cd-1}{2}{1}\cap \BH{\cd}{2}{1} )\times \cC_b(\mathbb{R}_+; \BH{\cd-1}{2}{1})$ such that
\begin{align*}
    V^{1,l}\in  L^1(\mathbb{R}_+, \BH{\cd+1}{2}{1}),\!\quad V^{1,h}\in  L^1(\mathbb{R}_+, \BH{\cd}{2}{1}), \!\quad  V^{2}\in L^1(\mathbb{R}_+, \BH{\cd+1}{2}{1})\!\esp{and}\! \partial_tV^2 \in L^1(\mathbb{R}_+, \BH{\cd-1}{2}{1}).
\end{align*}
Moreover, there exists an explicit Lyapunov functional equivalent to $  \lVert V^1\rVert_{\BH{\cd-1}{2}{1}\cap \BH{\cd}{2}{1}} +\NBH{\cd-1}{2}{1}{V^2} $ and 
we have:
\begin{multline}
    \label{lya:cri:thm}
    \widetilde{\mathbb{V}}(t) \le C  \widetilde{\mathbb{V}}(0) \esp{where}
    \widetilde{\mathbb{V}}(t)\defn \Vert V \Vert^{l}_{L^\infty_t(\BH{\frac{d}{2}-1}{2}{1})}+  \Vert V^1\Vert^{h}_{L^\infty_t(\BH{\frac{d}{2}}{2}{1})}+ \Vert V^2\Vert^{h}_{L^\infty_t(\BH{\frac{d}{2}-1}{2}{1})}\\
        + \Vert \partial_tV^2\Vert_{L^1_t(\BH{\frac{d}{2}-1}{2}{1})}
    +\Vert V^1\Vert_{L^1_t(\BH{\frac{d}{2}+1}{2}{1})}^l+ \Vert V^1\Vert_{L^1_t(\BH{\frac{d}{2}}{2}{1})}^h.
\end{multline}
%Finally, if, additionally, $V_0\in \BH{-\sigma_1}{2}{\infty}$ for some $\sigma_1\in ]1-d/2,d/2]$ then \eqref{est:prog:NBH(V):r=inf} is satisfied as well as the decay
%estimates that follow, with $d/2-1$ and $d/2$ instead of $d/2$ and
%$d/2+1$ for the last two estimates, with $C_0$ replaced by $ \NBH{-\sigma_1}{2}{\infty}{V_0}^l + \lVert V^1_0 \rVert_{\BH{\cd}{2}{1}}^h+ \lVert V^2_0 \rVert_{\BH{\cd-1}{2}{1}}^h $.
\end{thm}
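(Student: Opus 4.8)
The plan is to prove Theorem \ref{thm:glo:L2:cri} by the usual Friedrichs (or iteration) scheme combined with uniform a priori estimates coming from a carefully built Lyapunov functional, exactly in the spirit of \cite{BaratDan22M} but adapted to the parabolic second-order term. Concretely, I would first set up the linearized/iterated problem: given a smooth approximate solution $V_n=(V_n^1,V_n^2)$, solve the linear system obtained by freezing the coefficients at $V_n$ (or at $\overline U$ plus a perturbation), using the parabolic smoothing of the $V^2$-equation and the symmetric-hyperbolic structure of the $V^1$-equation. The local-in-time well-posedness of such a scheme in the space $\mathcal E$ is already available from \cite{DanADOloc} under Assumption \textbf{E}, so the real content is the \emph{global} uniform bound $\widetilde{\mathbb V}(t)\le C\widetilde{\mathbb V}(0)$ for the iterates, from which existence on $[0,\infty)$, uniqueness and continuity follow by standard compactness/contraction arguments.

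The heart of the proof is therefore the a priori estimate \eqref{lya:cri:thm}. I would split frequencies with the cutoff from \eqref{def:LF:HF:}. In \emph{low frequencies}, the system behaves parabolically (see \eqref{V(xi)=eV_0} with $a=1,b=2$): I would use the Lyapunov functional $\mathcal L(V)$ built from $L_{\rho,\omega}$ in Lemma \ref{lem:derI} localized in each dyadic block $\dot\Delta_j$, producing $\|V\|^l_{\dot B^{d/2-1}_{2,1}}$ controlled by its initial value plus $\|V\|^l_{\dot B^{d/2+1}_{2,1}}$ as the dissipation term, against which the quadratic nonlinear terms (products estimated by the paraproduct/Bony calculus in critical Besov spaces, using $\dot B^{d/2-1}_{2,1}\cdot \dot B^{d/2+1}_{2,1}\hookrightarrow \dot B^{d/2-1}_{2,1}$ type estimates and the fact that $f\equiv0$) are absorbed once $\widetilde{\mathbb V}$ is small. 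In \emph{high frequencies}, I would work with the \emph{parabolic mode} $W$ from \eqref{W:def}: equation \eqref{widehatW:eq} gives $W$ a genuine heat-type gain of two derivatives, so $W^h\in L^\infty_t(\dot B^{d/2-1}_{2,1})\cap L^1_t(\dot B^{d/2+1}_{2,1})$; then the algebraic identity $V^2=W-i|D|^{-1}Z(D)^{-1}(S_{21}(D)V^1+S_{22}(D)V^2)$ transfers this to $V^2$, and $\partial_t V^2$ is read off from \eqref{l:pa:Fou:V2}. For $V^{1,h}$ one uses \eqref{est:V1(xi)}: the $0$-order operator $S_{12}(D)Z(D)^{-1}S_{21}(D)$ together with the (SK) condition (equivalently the Kalman rank condition for $(\mathbf N_\omega,\mathbf M_\omega)$ via Lemma \ref{lem:SK:SK'} and Lemma \ref{lem:SK_diss}) forces exponential damping of $V^{1,h}$, hence $L^\infty_t\cap L^1_t$ control at the regularity $\dot B^{d/2}_{2,1}$. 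Summing the dyadic pieces and closing the bootstrap with the smallness of $\alpha$ yields \eqref{lya:cri:thm}.

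The main obstacle, as in all critical-regularity hyperbolic-parabolic problems, is the estimate of the \emph{nonlinear source terms in high frequency}, in particular the terms involving $\nabla V^1$ (the hyperbolic component has no smoothing), together with the commutator terms coming from freezing the variable coefficients $S^0_{22}(U^1)$, $S^\alpha_{ij}(U)$ and $Z^{\alpha\beta}(U^1)$. This is precisely why Assumption \textbf{E} is imposed: the affine dependence of the $S^\alpha$'s on $U^2$ and the independence of $S^\alpha_{11}$, $S^\alpha_{21}$, $Z^{\alpha\beta}$ on $U^2$ keep all products inside the range of the paraproduct estimates in $\dot B^{d/2-1}_{2,1}$ and $\dot B^{d/2}_{2,1}$, and let one handle $\partial_t V^2$ through the equation rather than through an independent estimate. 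I would therefore devote most of the technical work to: (i) commutator estimates of the form $\|[\,\dot\Delta_j, a\,]\,\partial_\alpha z\|_{L^2}$ with $a$ a smooth function of $V^1$, summed in $\dot B^{d/2-1}_{2,1}$ resp. $\dot B^{d/2}_{2,1}$; (ii) the composition estimate for $S^0_{22}(U^1)^{-1}$, $Z(D)^{-1}$ acting on Besov spaces; and (iii) checking that the transfer identity for $V^2$ and the $W$-equation do not lose the $d/2-1$ low-frequency regularity. Once these are in place, the closure of the Lyapunov inequality and the passage to the limit in the approximation scheme are routine, following \cite{DanADOloc,BaratDan22M}.
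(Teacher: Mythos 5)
Your plan is essentially the paper's own proof: dyadic Beauchard--Zuazua Lyapunov functionals for the low frequencies, the parabolic mode $W$ (transferred back to $V^2$ and to $\partial_t V^2$ through the equation) together with the damped $V^{1,h}$ governed by the zero-order operator $S_{12}(D)Z(D)^{-1}S_{21}(D)$ and the (SK)/Kalman condition in high frequencies, with Assumption \textbf{E} used exactly where you say, to close the critical product and commutator estimates. The only point you treat as ``routine'' that the paper cannot: passing from the uniform critical bound to \emph{global} approximate solutions requires the dedicated continuation criterion of Lemma \ref{lem:blow:cri}, since the critical estimate controls $V^1$ only in $\dot B^{\frac d2}_{2,1}$ while the regularized solutions live at higher nonhomogeneous regularity --- a frozen-coefficient iteration as you propose would avoid this, at the cost of redoing the estimates for the linearized system and proving contraction in a weaker norm.
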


%%%%%%%%%%%%%%%%%%%%%%%%%%%%%%%%%%%%%%%%%%%%%%%%%%%%%%%%

\section{Proof of Theorem \ref{thm:glob:cri}}
\label{sec:proof:global:exis}
This section is dedicated to proving the global existence and uniqueness of strong solutions for System \eqref{system_diff}, supplemented with initial data close to the reference solution $\overline{U}$, in the general case where Assumption \textbf{D} and the (SK) condition are satisfied.

The core of the proof involves establishing a priori estimates, with the remaining steps (proving existence and uniqueness) being more classical. As previously explained, our strategy revolves around deriving a Lyapunov functional in the style of Beauchard and Zuazua, which is equivalent to the norm we seek to control. This is then combined with the study of the \textit{parabolic mode} $W$, defined in \eqref{W:def}, to close the estimates.

% This section is devoted to proving the global existence and uniqueness  of strong solutions for
 %System \eqref{Eq_b} supplemented with initial data that are close to the reference solution  in the general case
%where the structural assumption \textbf{D} and (SK) condition are satisfied.  \\
%The crux of the proof lies in establishing a priori estimates, the other steps (proving existence and uniqueness) being more classical. As explained before, our strategy consists in deriving  a Lyapunov functional in Beauchard-Zuazua's style, that is equivalent to the norm that we aim at controlling, then to combine with the study of the parabolic mode $W$ defined in \eqref{W:def} so as to close the estimates. 
%\label{sec:glo:exis}
\subsection{Establishing the a priori estimates}
\label{susec:priori:est:glob}
Throughout this section, we fix some bounded open subset $\cO$  such that 
$\overline{\cO}\subset\U,$ and 
assume that we are given a smooth (and decaying) solution $V$ of \eqref{Eq_b} on $[0,T]\times \Rd$ with $V_0\defn U_0-\overline{U}$ as initial data, satisfying 
\begin{equation}
    \label{smal:cnd:V}
    \sup_{t\in [0,T]}\NBH{\frac{d}{2}}{2}{1}{V(t)} \ll 1\esp{and} U([0,T]\times\Rd)\subset\overline\cO.
\end{equation}
We shall use repeatedly that, owing to the embedding $\BH{\frac{d}{2}}{2}{1} \hookrightarrow L^\infty$, we have also
\begin{equation}
    \label{smal:inf:V}
    \sup_{t\in [0,T]} \normeinf{V(t)} \ll 1.
\end{equation}
%From now on,  for all $t>0$ we shall denote $(c_j(t))_{j\in \mathbb{Z}}$ various  nonnegative  sequences that verify $\left\Vert (c_j(t))\right\Vert_{l^1(\mathbb{Z})}=1$. We denote by  $C$  a harmless positive constant, the value of which depends on the context. 

From now on,  $(c_j)_{j\in \mathbb{Z} }$ stands for a positive sequence that verifies $\left\Vert (c_j)\right\Vert_{l^1(\mathbb{Z})}\le1$. 
\medbreak

Since the system exhibits distinct behavior in the low- and high-frequency regimes at the linear level (see Subsection \ref{subs:SK:Kalman}), it is natural to analyze it within these two contexts. But first, we need to prove the following bounds of time derivatives.
\medbreak\paragraph{$\bigstar$ \textit{Estimates for $\pt V$}:}
\begin{lem}
    \label{lem:ptV}
    Let V be a smooth and sufficiently decaying solution  of \eqref{Eq_b} on $[0,T]\times\Rd$ satisfying \eqref{smal:cnd:V}, Assumption \textbf{D} and Condition (SK). Then, for all  $\kappa\in ]-\frac{d}{2},\frac{d}{2}]$, we have:
\begin{align}
    \label{pt:V:1:est}
         \NBH{\kappa}{2}{1}{ \pt V^1 } &\lesssim \NBH{\kappa}{2}{1}{\nabla V}\\
         %+     \NBH{\cd}{2}{1}{ V}\NBH{\kappa}{2}{1}{\n V}\\%+ \textcolor{red}{\NBH{\cd+1}{2}{1}{V} \NBH{\kappa+1}{2}{1}{V^2}}
    \label{pt:V:2:est}
\esp{and}
         \NBH{\kappa}{2}{1}{ \pt V^2 } &\lesssim \NBH{\kappa}{2}{1}{\nabla V}+
      \NBH{\kappa+2}{2}{1}{V^2}+\NBH{\cd}{2}{1}{\n V}\NBH{\kappa}{2}{1}{\n V}.%+ \textcolor{red}{\NBH{\cd+1}{2}{1}{V} \NBH{\kappa+1}{2}{1}{V^2}}.
\end{align}
\end{lem}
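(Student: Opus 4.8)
The statement is a pair of estimates for $\pt V^1$ and $\pt V^2$ obtained simply by isolating the time derivative from System \eqref{Eq_b:V1:V2} and using product laws in Besov spaces. The plan is to work directly with the two scalar-block equations in \eqref{Eq_b:V1:V2}, invert the positive definite matrices $S^0_{11}(U)$ and $S^0_{22}(U)$, and estimate each term on the right-hand side in $\BH{\kappa}{2}{1}$ for $\kappa\in(-d/2,d/2]$. Throughout one uses the smallness \eqref{smal:cnd:V}--\eqref{smal:inf:V}, so that $U$ takes values in the fixed compact set $\overline\cO$; hence all the smooth matrix-valued functions $S^\alpha_{ij}(U)$, $Z^{\alpha\beta}(U)$, and in particular $(S^0_{11}(U))^{-1}$, $(S^0_{22}(U))^{-1}$, are bounded, with their ``$-\,\overline{S}$'' variations controlled in $\BH{d/2}{2}{1}$ by $\NBH{d/2}{2}{1}{V}$ via the standard composition estimate for Besov spaces (I would cite the relevant composition/product lemmas recalled in the Appendix).

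\textbf{Step 1 (estimate for $\pt V^1$).} From the first equation of \eqref{Eq_b:V1:V2},
\[
\pt V^1 = -\,(S^0_{11}(U))^{-1}\sum_{\alpha=1}^d\bigl(S^\alpha_{11}(U)\pal V^1 + S^\alpha_{12}(U)\pal V^2\bigr).
\]
Each summand is a product of a bounded smooth function of $U$ (an $L^\infty$-multiplier whose $\BH{d/2}{2}{1}$ norm is $O(1)$) with a component of $\nabla V$, which lies in $\BH{\kappa}{2}{1}$. Since $\BH{d/2}{2}{1}$ is an algebra and acts by multiplication on $\BH{\kappa}{2}{1}$ for $|\kappa|\le d/2$ (here is exactly where the restriction $\kappa\in(-d/2,d/2]$ is used), the product law gives $\NBH{\kappa}{2}{1}{(S^0_{11}(U))^{-1}S^\alpha_{1j}(U)\,\pal V}\lesssim \NBH{\kappa}{2}{1}{\nabla V}$, with the implicit constant depending only on $\cO$ and on the matrices. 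Summing over $\alpha$ and over $j\in\{1,2\}$ yields \eqref{pt:V:1:est}. (One minor care: when $\kappa=-d/2$ the endpoint product law still holds in $\BH{-d/2}{2}{1}$ by the usual paraproduct decomposition, since one factor sits in $\BH{d/2}{2}{1}\hookrightarrow L^\infty$; this is the reason the interval is half-open on the left.)

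\textbf{Step 2 (estimate for $\pt V^2$).} From the second equation of \eqref{Eq_b:V1:V2},
\[
\pt V^2 = (S^0_{22}(U))^{-1}\Bigl(-\sum_{\alpha=1}^d\bigl(S^\alpha_{21}(U)\pal V^1 + S^\alpha_{22}(U)\pal V^2\bigr) + \sum_{\alpha,\beta=1}^d \pal\bigl(Z^{\alpha\beta}(U)\pbe V^2\bigr) + f^2(U,\nabla U)\Bigr).
\]
The first-order terms are handled exactly as in Step 1, contributing $\NBH{\kappa}{2}{1}{\nabla V}$. For the second-order block $\pal(Z^{\alpha\beta}(U)\pbe V^2) = Z^{\alpha\beta}(U)\pal\pbe V^2 + (\pal Z^{\alpha\beta}(U))\pbe V^2$: the first piece is a bounded multiplier times $\nabla^2 V^2\in\BH{\kappa}{2}{1}$, giving $\NBH{\kappa+2}{2}{1}{V^2}$; the second piece is $(\nabla Z^{\alpha\beta}(U))\otimes\nabla V^2$, a product of two gradients, which by the product law is $\lesssim\NBH{d/2}{2}{1}{\nabla V}\,\NBH{\kappa}{2}{1}{\nabla V}$ (put the factor carrying $d/2$ regularity in the algebra space), accounting for the quadratic term. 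Finally $f^2 = Q(U,\nabla U,\nabla U)$ is by Assumption (D4) a combination of $\upsilon(U)\,\nabla U\otimes\nabla U$, i.e. a bounded function of $U$ times a product of two gradients, again bounded by $\NBH{d/2}{2}{1}{\nabla V}\,\NBH{\kappa}{2}{1}{\nabla V}$ by the same product law. Collecting all contributions and absorbing the $O(1)$ prefactor $(S^0_{22}(U))^{-1}$ gives \eqref{pt:V:2:est}.

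\textbf{Main obstacle.} There is no deep difficulty here: the only point requiring care is the bookkeeping at the endpoint $\kappa=-d/2$ and the correct placement of factors in the bilinear estimates so that one always multiplies an $\BH{d/2}{2}{1}$ (algebra) factor against an $\BH{\kappa}{2}{1}$ factor, rather than needing $\kappa>d/2$; this is precisely why the hypothesis restricts $\kappa$ to $(-d/2,d/2]$ and why the quadratic terms appear with one norm at index $d/2$ and the other at index $\kappa$. I would make sure to quote the precise forms of the product and composition estimates from the Appendix so that these steps are fully justified.
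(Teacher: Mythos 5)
Your proof is correct and follows essentially the same route as the paper: isolate $\pt V^1$ and $\pt V^2$ from the two block equations of \eqref{Eq_b:V1:V2}, use the Leibniz decomposition of the second-order term, and conclude with the product law \eqref{product_propo2}, the composition estimates (applied to the variations $S(U)-\overline S$), Assumption \textbf{(D4)} for $f^2$, and the smallness condition \eqref{smal:cnd:V}. The only quibble is your parenthetical claim that the product law would still hold at the endpoint $\kappa=-d/2$ in the $\ell^1$ setting, which is not what \eqref{product_propo2} asserts (only the $\ell^\infty$ version \eqref{product:propo:uniq} survives there); this is harmless since the lemma excludes that endpoint anyway.
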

\begin{proof}
 Proving   \eqref{pt:V:1:est} relies on product and composition estimates
(see Inequalities \eqref{product_propo2} and \eqref{comp:uv:propo:r=1}), 
on the smallness condition \eqref{smal:cnd:V} and on the fact that 
$$\partial_tV^1=-\sum_\alpha\Bigl(\wt S_{11}^\alpha(U)\partial_\alpha V^1+ \wt S_{12}^\alpha(U)\partial_\alpha V^2\Bigr)\esp{with}  \wt S_{1k}^\alpha\defn
 (S_{11}^0)^{-1}S_{1k}^\alpha\esp{for} k=1,2. $$
Similarly, for proving \eqref{pt:V:2:est}, we use that denoting $\wt S_{2k}^\alpha\defn (S_{22}^0)^{-1}S_{2k}^\alpha\esp{for} k=1,2,$ we have
\begin{multline*}
	%\label{sysdif::modif}
	 \pt V^2 = -\sum_{\alpha}{ \Bigl( \wt S^\alpha_{21} (U) \pal V^1+  \wt S^\alpha_{22} (U) \pal V^2}\Bigr)\\
  +\sum_{\alpha,\beta}{ (S^0_{22})^{-1}(U)\pal (Z^{\alpha\beta}(U) \pbe V^2)  }+ (S^0_{22})^{-1}(U)f^2 \cdotp
	\end{multline*}
 So, using \eqref{product_propo2} and  \eqref{comp:uv:propo:r=1} yields 
 \begin{align*}
     \NBH{\kappa}{2}{1}{ \pt V^2 } \lesssim \biggl( 1+\NBH{\frac{d}{2}}{2}{1}{V}\biggr) \biggl( \NBH{\kappa}{2}{1}{\nabla V}+ \NBH{\kappa}{2}{1}{f^2}+
      \sumab \NBH{\kappa}{2}{1}{\pal(Z^{\alpha\beta}(U) \pbe V^2)}\biggr)\cdotp
 \end{align*}
Using Leibniz formula in order to decompose the second order term as
 \[ \pal\Bigl(Z^{\alpha\beta}(U) \pbe V^2\Bigr)=  Z^{\alpha\beta}(U) \pal\pbe V^2+ \pal((Z^{\alpha\beta}(U)) \pbe V^2  ,\]
  and remembering  Assumption \textbf{D} for $f^2$  yields
 \begin{multline*}
     \NBH{\kappa}{2}{1}{ \pt V } \lesssim \left( 1+\NBH{\frac{d}{2}}{2}{1}{V}\right) \biggl(\NBH{\kappa}{2}{1}{\nabla V}+ (1+\NBH{\frac{d}{2}}{2}{1}{V})\NBH{\cd}{2}{1}{\n V}\NBH{\kappa}{2}{1}{\n V}\\+
       ( 1+\NBH{\frac{d}{2}}{2}{1}{V}) \NBH{\kappa+2}{2}{1}{V^2} +\NBH{\cd+1}{2}{1}{V} \NBH{\kappa+1}{2}{1}{V^2}\biggr)\cdotp
 \end{multline*}
 We get then \eqref{pt:V:2:est} by using the smallness condition \eqref{smal:cnd:V}. 
 \end{proof}

\paragraph{$\bigstar$ \textit{Low frequencies analysis}:}
%{\color{blue} Cette partie est a enlever
%Let us recall that the linearized system of  \eqref{Eq_b} reads
%\begin{equation}\label{sys:difu:ts}
	% \overline{S}^0\pt V +\sum_{\alpha}{\overline{S}^\alpha \pal V}-\sum_{\alpha,\beta=1}^d\overline{Y}^{\alpha\beta} \partial_{\alpha\beta}V = F_t+F_2+F_1+f\end{equation}
 % where $F_t,F_2,F_1$ have been defined in \eqref{F_T:F_1:F_2} and $f$ in %\eqref{globa:f=f(f1,f2)}.   }
Using from now on the notation $Z_j\defn \DDj Z$, $F_j\defn \DDj F$ and applying to \eqref{eq:lindiff0}  the operator $\DDj$, we get
\begin{align}
\label{eq:lh:V}
     \overline{S}^0\pt V_j +\sum_{\alpha}{\overline S^\alpha\pal V_j}-\sum_{\alpha,\beta=1}^d\overline{Y}^{\alpha\beta} \partial_{\alpha}\partial_\beta V_j = F_{j} .
 \end{align}
 %where  \begin{equation}
 %    \label{F_12t_j}
 %  V_j=\DDj V,\esp{} F_{t,j}\defn \DDj F_{t}, \esp{}  F_{2,j}\defn \DDj F_{2}, \esp{} %F_{1,j}= \DDj F_1,  \esp{} f_{j}= \DDj f. %\sum_{\alpha=1}^d  [\DDj, S^\alpha(U)]\pal V.
 %\end{equation}
 Taking the $L^2(\mathbb{R}^d,\mathbb{R}^n)$ scalar product with $V_j$, integrating by parts in the second term, and using the symmetry of matrices $\overline S^\alpha$ leads to:
 \begin{align*}
     \begin{split}
           \frac{d}{dt}\intd\overline{S}^0 V_j\cdot V_j -\sumab\intd \overline{Z}^{\alpha\beta}\pal\pbe V^2_j\cdot V^2_j=\intd F_{j} \cdot V_j.
     \end{split}
 \end{align*}
 Next, the strong ellipticity  property \eqref{strong_elli} and Fourier-Plancherel formula ensure that 
 \begin{equation}
 \label{1:es:hf:}
     \frac{d}{dt}\intd\overline{S}^0 \widehat{V_j}\cdot \widehat{V_j} +\overline{c_1} \intd |\xi|^2|\widehat{V^2_j}|^2d\xi\le 
     \normede{\widehat F_{j}}\normede{\widehat V_j}
     \hbox{ with }\  \overline{c_1} \defn c_1(\overline{U}).
 \end{equation}
 Inequality \eqref{1:es:hf:} provides  a $L^2$-in-time control only for $V^2$. In order to 
 track  diffusion effects for the whole solution, 
 we proceed as explained in Subsection \ref{subs:SK:Kalman} 
 %with $\alpha=1$ and $\beta=2$, 
 introducing the functional $L_{\ro,\omega}$ defined in \eqref{def:Lya:r:ome}. 
 After integration on $\Rd,$ this amounts to considering 
\begin{equation}    \label{df:I:j:low}
    I_j \defn  \sumk \ee_k {\rm  Re} \intd (M_\omega N_\omega^{k-1} \widehat{V_j}\cdot M_\omega N_\omega^{k} \widehat{V_j})
\end{equation}
(where $N_\omega$ and $M_\omega$ have been defined in \eqref{def:matbf:N:M}) and 
$$ L_{\ro,\omega} (\widehat{V}_j)\defn  \overline{S}^0 \widehat{V}_j\cdot \widehat{V}_j + \min( \rho,\rho^{-1} )  I_j .$$
Leveraging Lemma \ref{lem:derI} with $a=1$ and $b=2$  and  remembering that ${\rm Supp}\, \widehat{V}_j\subset \{ \frac{3}{4}2^{j}\le |\xi|\le \frac{8}{3}2^j\} $ (hence $ |\xi|=\rho \simeq 2^j$ on ${\rm Supp} \,\widehat{V}_j$), we get (note that the only change lies in the harmless additional source term $F_j$):
\begin{equation}
\label{low:lya:eqj}
    \frac{d}{dt}I_j +2^{j} \sumk \ee_k \intd|M_\omega N_\omega^{k} \widehat{V}_j|^2 \le C\ee_0\max(2^{j},2^{3j})\intd| M_\omega  \widehat{V}_j|^2+C \normede{(\overline{S}^0)^{-1}\widehat F_j}\normede{\widehat V_j}
\end{equation}
provided we choose $\ee_1,\cdots,\ee_{n-1}$ small enough
%The only change lies in the (harmless) additional source term $\mathbf{F} $. 
and, from \eqref{eqi:lya:ro:omega}, we have
\begin{equation}
    \label{lya:fun:def:l}
   \mathcal{L}_j^l\defn (2\pi)^{-d}\intd  L_{\ro,\omega} (\widehat{V}_j) \simeq  \normede{V_j}^2, \;\; \forall\;  j \in \mathbb{Z} .
\end{equation}
Putting together Inequalities  \eqref{1:es:hf:}, \eqref{low:lya:eqj}, and remembering that \eqref{lya:fun:def:l} holds true,  we get
\begin{multline}
    \label{est:LV:lf:1}
      \frac{d}{dt}  \mathcal{L}_j^l + 2^{2j}c \intd|\widehat{V^2_j}|^2 + \min(1, 2^{2j}) \sumk \ee_k \intd |M_\omega N_\omega^{k} \widehat{V}_j|^2 \le C\ee_02^{2j} \intd | M_\omega  \widehat{V}_j|^2\\
      +C \normede{F_j } \sqrt{\mathcal{L}_j^l}.
\end{multline}
Choosing $\varepsilon_0,\cdots, \varepsilon_{n-1}$ according to Lemma \ref{lem:derI} yields the following inequality:
\begin{align*}    %\label{ine:lead:SK}
    \mathcal{N}_V& \defn 2^{2j}c\intd|\widehat{V^2_j}|^2  +\min(1, 2^{2j}) \sumk \ee_k \intd |M_\omega N_\omega^{k} \widehat{V}_j|^2 - C\ee_02^{2j}\intd | M_\omega  \widehat{V}_j|^2\\
   &\ge  \min(1, 2^{2j}) \sum_{k=0}^{n-1} \ee_k \intd |M_\omega N_\omega^{k} \widehat{V}_j|^2 .
\end{align*}
Lemma \ref{lem:SK_diss} ensures that, since condition (SK) is satisfied, the quantity on the right-hand side of the above inequality is positive for
any choice of positive parameters $ \varepsilon_0, \cdots, \varepsilon_{n-1} $.  Consequently, if we use Fourier-Plancherel identity and the equivalence \eqref{lya:fun:def:l}, we see that, for all $ j\in \mathbb{Z}$,
\begin{align}    \label{get:diffu:low}
     \mathcal{N}_V \ge c\min(1, 2^{2j})  \mathcal{L}_j^l.\end{align}
\begin{propo}
\label{pro:low:est:Vj:ge}
Let V be a smooth and sufficiently decaying solution of \eqref{Eq_b} on $[0,T]\times\Rd$ satisfying \eqref{smal:cnd:V}, Assumption \textbf{D} and Condition (SK). Then, for all  $s\in ]-\frac{d}{2},\frac{d}{2}]$ and $j\in \mathbb{Z}$, we have:
\begin{multline}
    \label{lo:fr:est:s}
              \frac{d}{dt}  \mathcal{L}_j^l  \!+\! \min(1, 2^{2j})  \mathcal{L}_j^l \\\le 
               c_j \dm{s}\sqrt{\mathcal{L}_j^l}\biggl(
\NBH{\cd+1}{2}{1}{ V}\NBH{s+1}{2}{1}{V}   \!+\!\NBHc{V}\left( \NBH{s+1}{2}{1}{ V}+\NBH{s+2}{2}{1}{V^2}\right)\biggr)\cdotp
    \end{multline}
\end{propo}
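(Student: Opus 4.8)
The plan is to upgrade the Lyapunov estimate \eqref{est:LV:lf:1} into the stated inequality by (i) absorbing the "bad" term $C\ee_0 2^{2j}\int|M_\omega\widehat V_j|^2$ into the dissipative terms on the left using Condition (SK), and (ii) bounding the source contribution $C\|F_j\|_{L^2}\sqrt{\mathcal L_j^l}$ by a frequency-localized version of the nonlinear right-hand side. Step (i) is already prepared: combining \eqref{est:LV:lf:1} with the coercivity bound \eqref{get:diffu:low} that follows from Lemma \ref{lem:SK_diss}, we obtain
\begin{equation*}
\frac{d}{dt}\mathcal L_j^l + c\min(1,2^{2j})\,\mathcal L_j^l \le C\|F_j\|_{L^2}\sqrt{\mathcal L_j^l},
\end{equation*}
so after dividing by $\sqrt{\mathcal L_j^l}$ (or arguing on $\sqrt{\mathcal L_j^l+\delta}$ and letting $\delta\to0$) everything reduces to estimating $\|F_j\|_{L^2}=\|\DDj F\|_{L^2}$, where $F=F^t+F^1+F^2+f$ as in \eqref{F_T:F_1:F_2}, in the form $c_j 2^{-js}$ times the nonlinear factor displayed in \eqref{lo:fr:est:s}.

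The core of the work is therefore the paraproduct/commutator analysis of $F$. Here I would treat each piece separately. For $F^1=(\overline S^\alpha-S^\alpha(U))\partial_\alpha V$, a composition estimate (the stated \eqref{comp:uv:propo:r=1}) gives $\|\overline S^\alpha-S^\alpha(U)\|_{\BH{\cd}{2}{1}}\lesssim \|V\|_{\BH{\cd}{2}{1}}$ under the smallness assumption \eqref{smal:cnd:V}, and then a product estimate in Besov spaces (the stated \eqref{product_propo2}) with one factor at regularity $\cd$ and the other at $s+1$ yields $\|\DDj F^1\|_{L^2}\lesssim c_j 2^{-js}\|V\|_{\BH{\cd}{2}{1}}\|\nabla V\|_{\BH{s+1}{2}{1}}$, which is dominated by the second group in \eqref{lo:fr:est:s} after noting $\|V\|_{\BH{\cd}{2}{1}}\le\|V\|_{\BH{\cd+1}{2}{1}}\cap$-type control and $\|\nabla V\|_{\BH{s+1}{2}{1}}\simeq\|V\|_{\BH{s+2}{2}{1}}$ — or rather, keeping the cheaper factor, by $\NBHc{V}\NBH{s+1}{2}{1}{V}$ once one converts $\partial_\alpha V$ into regularity $s+1$ of $V$, i.e. regularity $s$ of $\nabla V$; I must be careful to match exactly the indices $\NBH{s+1}{2}{1}{V}$ appearing in the statement. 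The term $F^t=(\overline S^0-S^0(U))\partial_t V$ is handled the same way but now one invokes Lemma \ref{lem:ptV}: $\|\partial_t V^1\|_{\BH{s}{2}{1}}\lesssim\|\nabla V\|_{\BH{s}{2}{1}}$ and $\|\partial_t V^2\|_{\BH{s}{2}{1}}\lesssim\|\nabla V\|_{\BH{s}{2}{1}}+\|V^2\|_{\BH{s+2}{2}{1}}+\NBHc{\nabla V}\NBH{s}{2}{1}{\nabla V}$, which is precisely the source of the term $\NBHc{V}\NBH{s+2}{2}{1}{V^2}$ and the cubic-looking contribution $\NBH{\cd+1}{2}{1}{V}\NBH{s+1}{2}{1}{V}$ in \eqref{lo:fr:est:s}. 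Finally $F^2=\sum\partial_\alpha(r^{\alpha\beta}\partial_\beta V^2)$ with $r^{\alpha\beta}=\overline Y^{\alpha\beta}-Y^{\alpha\beta}(U)$: expand by Leibniz, use composition for $r^{\alpha\beta}$ and a product estimate with $V^2$ at regularity $s+2$, paying the one derivative of $\partial_\alpha$ outside, and $f=f(U,\nabla U)$ with $f^2$ quadratic in $\nabla U$ per Assumption \textbf{D} is a standard product of two $\nabla V$ factors giving $\NBHc{\nabla V}\NBH{s}{2}{1}{\nabla V}$, i.e. the $\NBHc{V}\NBH{s+1}{2}{1}{V}$-type term.

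The main obstacle, as usual in critical Besov low-frequency estimates for hyperbolic-parabolic systems, is bookkeeping the regularity indices so that every nonlinear term genuinely fits into the two structures allowed on the right of \eqref{lo:fr:est:s} — in particular making sure the endpoint $s=\cd$ does not create a forbidden $\BH{\cd}{2}{1}\cdot\BH{\cd}{2}{1}\to\BH{\cd}{2}{1}$ product (which fails) but only products where at least one factor sits strictly below $\cd$ in the relevant index, which is exactly why the range is $s\in\,]-\cd,\cd]$ and why $d\ge2$ is needed (so that $\cd-1>-\cd$, giving room). The summability constant $c_j$ with $\|(c_j)\|_{\ell^1}\le1$ is produced, as usual, by the fact that each product estimate distributes a single $\ell^1$-summable sequence, and at the end one factors out $2^{-js}$ uniformly. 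No commutator term appears here because we linearized around the constant state $\overline U$, so $\overline S^\alpha$ is constant and $\DDj$ commutes with it exactly — this is what keeps the estimate clean. I would also remark that the same computation at $s=\cd-1$ is the one actually fed into the global bootstrap, but proving it for the whole range $]-\cd,\cd]$ costs nothing extra and is needed later for the decay estimates of Theorem \ref{thm:decay:sucri}.
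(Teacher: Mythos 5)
Your proposal follows essentially the same route as the paper: start from the differential inequality obtained by combining \eqref{est:LV:lf:1} with the (SK)-based coercivity \eqref{get:diffu:low}, then reduce to bounding $\|F_j\|_{L^2}$ term by term, treating $F^1$ and $F^2$ via the product and composition estimates (with the Leibniz split for $F^2$), $F^t$ via Lemma \ref{lem:ptV}, and $f$ via its quadratic structure in $\nabla U$. The only minor blemish is bookkeeping: the quadratic $f$-contribution $\NBHc{\nabla V}\NBH{s}{2}{1}{\nabla V}$ lands in the first group $\NBH{\cd+1}{2}{1}{V}\NBH{s+1}{2}{1}{V}$ of \eqref{lo:fr:est:s} rather than the $\NBHc{V}$-group, but this does not affect correctness.
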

\begin{proof}
 For proving \eqref{lo:fr:est:s}, we first remark that Inequalities  \eqref{est:LV:lf:1} and \eqref{get:diffu:low} ensure that
 \begin{align}
     \label{0:lya:lf:est}
     \begin{split}
          \frac{d}{dt}  \mathcal{L}_j^l  + c\min(1, 2^{2j})  \mathcal{L}_j^l \le 
      C \normede{F_j } \sqrt{\mathcal{L}_j^l}.
     \end{split}
 \end{align}
% Then it is matter of estimating  the r.h.s of \eqref{est:LV:lf:1}.
% Using  the embedding $\BH{\frac{d}{2}}{2}{1} \hookrightarrow L^\infty$ and \eqref{smal:cnd:V} give us
% \begin{equation}
 %    \label{F1:est:st}
  %  \sum_{\alpha}\normeinf{\pal S^\alpha(U)} \lesssim \NBH{\cd}{2}{1}{\nabla V}.
 %\end{equation}
The rest of the proof consists in bounding  the r.h.s of \eqref{0:lya:lf:est}, which requires  product estimates in homogeneous Besov spaces. Thanks to \eqref{product_propo2} and Proposition \ref{propo_compo_BH}, remembering the definition 
of $F^1$ in \eqref{F_T:F_1:F_2}, we discover that for all $s\in ]-\frac{d}{2},\frac{d}{2}]$ :
\begin{align}
\label{F_1:est:st}
 \|F^{1}\|_{\dot B^s_{2,1}} &\lesssim \suma\NBHc{ \overline{S}^\alpha-S^\alpha(U)}\NBH{s}{2}{1}{\n V}\nonumber\\
    & \lesssim \NBHc{ V}\NBH{s}{2}{1}{\n V}.
\end{align}
To handle $F^t$, we use \eqref{pt:V:1:est} and \eqref{pt:V:2:est} with $\kappa=s$ and  Inequality \eqref{product_propo2}.
%embedding $\BH{\frac{d}{2}}{2}{1} \hookrightarrow L^\infty$.
We get:
\begin{align}      \|F^{t}\|_{\dot B^s_{2,1}}
     &\lesssim\NBHc{V} \NBH{s}{2}{1}{\pt V}\nonumber\\
    \label{F_t:est:st}
     &\lesssim \NBHc{V}\Bigl(\NBH{s}{2}{1}{\nabla V}+  \NBH{s+2}{2}{1}{V^2}+
     \NBH{s+1}{2}{1}{V} \NBH{\frac{d}{2}+1}{2}{1}{V}\Bigr)\cdotp
\end{align}
 To handle $F^2$, we use Leibniz identity:
 \begin{align}
    \label{decom:F2<=Leib}
    \pal\Bigl( r^{\alpha\beta}(U)\pbe V^2\Bigr) =  \pal\Bigl( r^{\alpha\beta}(U)\Bigr)\pbe V^2 +  r^{\alpha\beta}(U)  \pal\pbe V^2,
\end{align}
% \begin{align*}
 %    \normede{F_{2,j}} &\lesssim \sumab c_j\dm{s}\NBH{s+1}{2}{1}{r^{\alpha\beta}\pbe V}
 %\end{align*}
 then  Inequalities   \eqref{product_propo2}  and   \eqref{compo:propo:base}.
 %embedding   $\BH{\frac{d}{2}}{2}{1} \hookrightarrow L^\infty$ 
 We get
\begin{align}    \label{F_2:est:st}
     \|F^{2}\|_{\dot B^s_{2,1}} \lesssim \Bigl( \NBH{\cd+1}{2}{1}{V} \NBH{s+1}{2}{1}{V^2}+ \NBH{\frac{d}{2}}{2}{1}{V} \NBH{s+2}{2}{1}{V^2}\Bigr)\cdotp
\end{align}
The  term $f$ can be bounded from product estimate \eqref{product_propo2}, and the form of $f$ in \eqref{globa:f=f(f1,f2)}:
\begin{align}    \label{f:est:st}
     \|f\|_{\dot B^s_{2,1}} \lesssim  (1+\NBHc{V}) \NBH{\cd}{2}{1}{\n V}\NBH{s}{2}{1}{\n V}\lesssim  \NBH{\cd}{2}{1}{\n V}\NBH{s}{2}{1}{\n V}.
\end{align}
Putting together \eqref{0:lya:lf:est},  \eqref{F_1:est:st}, \eqref{F_t:est:st} and \eqref{F_2:est:st} yields the desired estimate. 
 \iffalse
 \begin{comment}
\begin{align}
 \label{f:V12:es:hf:j}
 \begin{split}
    &\frac{d}{dt} \mathcal{V}_j+2^{2j}\normede{V^2_j}^2 \lesssim 
    \frac{1}{2}2^{2j} \intd \left( \NBHc{\nabla V} + \NBH{\frac{d}{2}+2}{2}{1}{V^2}+(1+\NBH{\frac{d}{2}+1}{2}{1}{V})\NBH{\frac{d}{2}+1}{2}{1}{V^2}\right)  V^1_j\cdot V^1_j\\
     &+2^{2j} 2^{-(\frac{d}{2}+1)} c_j \left(\NBHc{\nabla V}\NBH{\frac{d}{2}}{2}{1}{\nabla V^1}+ \NBH{\frac{d}{2}+2}{2}{1}{ V^2}+ (1+\NBH{\cd+1}{2}{1}{ V})\NBH{\frac{d}{2}+1}{2}{1}{ V^2}\right)\cdot V^1_j\\
    &+ \dm{\frac{d}{2}} M c_j\left(  \NBH{\frac{d}{2}+1}{2}{1}{V} \NBH{\frac{d}{2}+1}{2}{1}{V^2}+ \NBH{\frac{d}{2}}{2}{1}{V} \NBH{\frac{d}{2}+2}{2}{1}{V^2} \right.\\ 
    &\left. +\Vert V \Vert_{\BH{\frac{d}{2}}{2}{1}}\left( \Vert \nabla V \Vert_{\BH{\frac{d}{2}}{2}{1}}+\Vert  V^2 \Vert_{\BH{\frac{d}{2}+2}{2}{1}}+( 1+\Vert  V \Vert_{\BH{\frac{d}{2}+1}{2}{1}}) \NBH{\frac{d}{2}+1}{2}{1}{V^2}\right)+\NBH{\frac{d}{2}+1}{2}{1}{V}   \right) \cdot V^2_j
     \end{split}
 \end{align}
  \end{comment}
 \fi
\end{proof}
%{\color{blue}\paragraph{Cas de la dimension $d=1$}
%We are unable to estimate  $F_1$ and $F_t,f$ in $\BH{-\frac{1}{2}}{2}{1}$
% using the product estimate \eqref{product_propo2}. An alternative approach is to assume $f=0$ and  $S^0=1$, which leads to $F_t=0$. However, we still do not have a way to control the term $F_1$.}
%Let us now turn to the analysis in high frequencies.

\paragraph{$\bigstar$ \textit{High frequencies analysis}:} 
As pointed out in Section \ref{subs:main:resu:NHHB}, we need to control (at least the high frequencies part of)  $\Vert\n V\Vert_{L^1_t(L^\infty)}.$
In our Besov spaces setting, this amounts to bounding 
$ V$ in $L^1_T(\BH{\cd+1}{2}{1})$. To achieve it, 
we have to exhibit the optimal smoothing properties of the system
for $V^2.$ It turns out to be more convenient to rewrite 
the whole system as a coupling between the  \textit{parabolic unknown} $W$ defined in \eqref{W:def}
and the \textit{hyperbolic unknown}~$V^1.$

Before proceeding, we introduce for any $q\times q$ symmetric positive definite
matrix $A$ and $L^2(\Rd;\mathbb R^q)$ function $X$ the notation
 \begin{equation}\label{eq:weight}
     \lVert X\rVert_{L^2_{A}}\defn \biggl(\intd A X\cdot X\biggr)^{1/2}\cdotp
    \end{equation}

$ \bullet$ \textbf{\textit{Parabolic mode.} }
Our aim here is to establish  the following result of 
 parabolic maximal regularity for  $W.$ %in the high frequencies regime. 
\begin{propo}
    \label{pro:es:W}
    Let $V$ be a smooth (decaying) solution of \eqref{Eq_b} on $[0,T]\times \Rd$ satisfying \eqref{smal:cnd:V}. Assume that $\sigma\in ]-\frac{d}{2},\frac{d}{2}]$. 
    There exists a constant $\bar c_1$  depending only on the coefficients
    of the system at $\overline U$ such that, denoting 
    $ W_j\defn \DDj W$   
 we have for all $t\in[0,T]$ and  $j\in \mathbb{Z}$,  
    \begin{multline}    \label{W:es:-1}
         2^{j\sigma}\lVert W_j(t) \rVert_{L^2_{\overline{S}^0_{22}}}+ \bar c_12^{(\sigma+2)j}\int^t_0 \normede{W_j}\le 2^{j\sigma}\lVert W_j(t) \rVert_{L^2_{\overline{S}^0_{22}}}\\
         +C\int^t_0 c_j \biggl(2^{-j}\Bigl(\NBH{\sigma+1}{2}{1}{  V}+\NBH{\sigma+2}{2}{1}{ V^2}\Bigr)
 + \NBHc{V} \Bigl(\NBH{\sigma+1}{2}{1}{V}+\NBH{\sigma+2}{2}{1}{ V^2}+\NBH{\sigma+1}{2}{1}{ V}\NBH{\frac{d}{2}+1}{2}{1}{ V}\Bigr)\biggr)\cdotp
    \end{multline}
\end{propo}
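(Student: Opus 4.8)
The plan is to run a frequency-localized energy estimate on the parabolic equation \eqref{widehatW:eq} satisfied by $W$. First I would apply $\DDj$ to \eqref{widehatW:eq} to obtain
\begin{equation*}
\overline{S}^0_{22}\pt W_j-\Delta Z(D)W_j = i|D|^{-1}\overline{S}^0_{22}(Z(D))^{-1}\pt\bigl(S_{21}(D)V^1_j+S_{22}(D)V^2_j\bigr)+h_j,
\end{equation*}
then take the $L^2(\Rd;\mathbb R^{n_2})$ scalar product with $W_j$. The term $-\int_{\Rd}\Delta Z(D)W_j\cdot W_j$ is, by Fourier-Plancherel and the strong ellipticity \eqref{strong_elli} of $Z$ (the exact analogue of the computation leading to \eqref{1:es:hf:}), bounded below by $\bar c_1 2^{2j}\normede{W_j}^2$ for a constant $\bar c_1$ depending only on $c_1(\overline U)$ and $\overline S^0_{22}$. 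Since $\overline S^0_{22}$ is symmetric positive definite, $\frac12\frac{d}{dt}\lVert W_j\rVert^2_{L^2_{\overline S^0_{22}}}=\int_{\Rd}\overline S^0_{22}\pt W_j\cdot W_j$, so dividing by $\lVert W_j\rVert_{L^2_{\overline S^0_{22}}}\simeq\normede{W_j}$ yields
\begin{equation*}
\frac{d}{dt}\lVert W_j\rVert_{L^2_{\overline S^0_{22}}}+\bar c_1 2^{2j}\normede{W_j}\lesssim \normede{h_j}+2^{-j}\bigl\lVert \overline S^0_{22}(Z(D))^{-1}\pt(S_{21}(D)V^1_j+S_{22}(D)V^2_j)\bigr\rVert_{L^2},
\end{equation*}
and integrating in time over $[0,t]$ and multiplying by $2^{j\sigma}$ gives the structure of \eqref{W:es:-1}; what remains is to bound the two source contributions in $\dot B^\sigma_{2,1}$.

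Second, I would treat the ``linear-looking'' source $i|D|^{-1}\overline S^0_{22}(Z(D))^{-1}\pt(S_{21}(D)V^1+S_{22}(D)V^2)$. The operators $|D|^{-1}$, $(Z(D))^{-1}$, $S_{21}(D)$, $S_{22}(D)$ are, after the degree-$0$ homogeneous extension, $0$-order Fourier multipliers except for the gain $|D|^{-1}$; on a dyadic block of size $2^j$ they act as bounded multipliers composed with a factor $2^{-j}$. Hence its $\dot B^\sigma_{2,1}$ norm is $\lesssim 2^{-j}c_j(\NBH{\sigma+1}{2}{1}{V}+\NBH{\sigma+2}{2}{1}{V^2})$ once I replace $\pt V^1,\pt V^2$ using Lemma \ref{lem:ptV} with $\kappa=\sigma$ (the $\pt V^2$ bound brings in the $\NBH{\sigma+2}{2}{1}{V^2}$ and the quadratic term $\NBH{\cd}{2}{1}{\nabla V}\NBH{\sigma}{2}{1}{\nabla V}$, which is absorbed into the stated right-hand side since $\NBH{\sigma}{2}{1}{\nabla V}\le\NBH{\sigma+1}{2}{1}{V}$ and $\NBH{\cd}{2}{1}{\nabla V}\le\NBH{\cd+1}{2}{1}{V}$). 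This accounts for the first group of terms in \eqref{W:es:-1}, and I should be a little careful about whether the $2^{-j}$ factor is needed or whether one simply uses that the multiplier has degree $-1$; I will keep the $2^{-j}$ as written since it is harmless at high frequencies and can be absorbed at low frequencies by the $\dot B^\sigma_{2,1}$ summation.

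Third, and this is the genuinely technical part, I would estimate $\lVert h_j\rVert_{L^2}$ summed against $2^{j\sigma}c_j$, i.e. bound $\|h\|_{\dot B^\sigma_{2,1}}$ where $h=h^t+h^{22}+h^{21}+h^2+f^2$ as in \eqref{def:h}. Each piece is a ``variable coefficient minus constant'' times a first or second derivative of $V$: $h^{21}$ and $h^{22}$ are handled by the product estimate \eqref{product_propo2} and the composition estimate \eqref{compo:propo:base}/Proposition \ref{propo_compo_BH}, giving $\lesssim \NBHc{V}\NBH{\sigma}{2}{1}{\nabla V}\le\NBHc{V}\NBH{\sigma+1}{2}{1}{V}$; $h^t=(\overline S^0_{22}-S^0_{22}(U))\pt V^2$ is bounded using \eqref{pt:V:2:est} with $\kappa=\sigma$, producing $\NBHc{V}(\NBH{\sigma+1}{2}{1}{V}+\NBH{\sigma+2}{2}{1}{V^2}+\NBH{\sigma+1}{2}{1}{V}\NBH{\cd+1}{2}{1}{V})$, exactly the terms appearing inside the parenthesis multiplied by $\NBHc{V}$ in \eqref{W:es:-1}; $h^2=\sum_{\alpha,\beta}\pal(r^{\alpha\beta}\pbe V^2)$ is expanded by Leibniz as in \eqref{decom:F2<=Leib} and bounded by $\NBH{\cd+1}{2}{1}{V}\NBH{\sigma+1}{2}{1}{V^2}+\NBHc{V}\NBH{\sigma+2}{2}{1}{V^2}$, both of which sit inside the stated right-hand side (the first via $\NBH{\cd+1}{2}{1}{V}\NBH{\sigma+1}{2}{1}{V^2}\lesssim\NBHc{V}\NBH{\sigma+1}{2}{1}{V}\cdot(\ldots)$ — more precisely it is already of the form $\NBH{\cd+1}{2}{1}{V}\NBH{\sigma+1}{2}{1}{V}$ up to the smallness of $V$, hence absorbed after using \eqref{smal:cnd:V}); and $f^2=Q(U,\nabla U,\nabla U)$ is quadratic in $\nabla U$ so \eqref{product_propo2} and \eqref{globa:f=f(f1,f2)} give $\lesssim(1+\NBHc{V})\NBH{\cd}{2}{1}{\nabla V}\NBH{\sigma}{2}{1}{\nabla V}\lesssim\NBHc{V}\NBH{\sigma+1}{2}{1}{V}\NBH{\cd+1}{2}{1}{V}$, again of the listed form. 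Collecting all pieces, multiplying by $2^{j\sigma}$, extracting the $\ell^1$ summable sequence $c_j$ from the Besov-norm bounds (here the hypothesis $\sigma\in\,]-\tfrac d2,\tfrac d2]$ is used to make the product estimates valid), and integrating over $[0,t]$ gives \eqref{W:es:-1}. The main obstacle is purely bookkeeping: matching the output of the product/composition estimates for the five pieces of $h$ plus the time-derivative source against the precise combination of norms on the right-hand side of \eqref{W:es:-1}, in particular making sure every term either already appears there or is absorbable using the smallness \eqref{smal:cnd:V} and the embeddings $\NBH{\sigma}{2}{1}{\nabla V}\le\NBH{\sigma+1}{2}{1}{V}$, $\NBH{\cd}{2}{1}{\nabla V}\le\NBH{\cd+1}{2}{1}{V}$.
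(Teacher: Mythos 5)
Your proposal follows essentially the same route as the paper's proof: apply $\DDj$ to \eqref{widehatW:eq}, use Fourier--Plancherel and \eqref{strong_elli} for the lower bound $\bar c_12^{2j}\normede{W_j}^2$, use Bernstein for the degree $-1$ multiplier to pick up the $2^{-j}$ factor, bound $\pt V^1,\pt V^2$ via Lemma \ref{lem:ptV} with $\kappa=\sigma$, estimate the five pieces of $h$ with \eqref{product_propo2} and the composition laws, and finish with the Lemma \ref{lem_der_int} device (your ``divide by $\normede{W_j}$'' step) before integrating in time. Your bookkeeping claims that the quadratic terms (e.g. the $\NBH{\cd}{2}{1}{\n V}\NBH{\sigma}{2}{1}{\n V}$ contribution of $\pt V^2$ and of $f^2$) are ``absorbed'' into the stated right-hand side are no looser than the paper's own write-up, whose pre-integration display retains $2^{-j}\NBH{\sigma+1}{2}{1}{V}\NBH{\cd+1}{2}{1}{V}$ that the final statement \eqref{W:es:-1} silently drops, so the proposal is acceptable as matching the paper's argument.
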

\begin{proof}
    Applying $\DDj$ to \eqref{widehatW:eq} and taking the scalar product with $W_j$  yields (see \eqref{def:h} for the definition of $h$):
    \begin{multline}
    \label{W:es:0}
     \dfrac{d}{dt}\intd \overline{S}^0_{22}W_j\cdot W_j- \intd \Delta Z(D) W_j\cdot W_j \\
        \leq\left( \normede{\DDj|D|^{-1}\overline{S}^0_{22}(Z(D))^{-1}\pt\left( S_{21}(D) V^1+  S_{22}(D) V^2 \right)}+\normede{h_j}\right)\normede{W_j}.
    \end{multline}
    Since Operator $ |D|^{-1}\overline{S}^0_{22}(Z(D))^{-1} $ is 
    homogeneous of degree $-1$,  Bernstein inequality yields
    \begin{align*}        
        \normede{\DDj|D|^{-1}\overline{S}^0_{22}(Z(D))^{-1}\pt\left( S_{21}^\omega(D) V^1+  S_{22}(D) V^2 \right)} \lesssim  2^{-j}\normede{\DDj\pt\left( S_{21}(D) V^1+  S_{22}(D) V^2 \right)}.
    \end{align*}
    Also,  thanks to Fourier-Plancherel theorem and \eqref{strong_elli}, we have:
    \begin{align*}
        -\intd\Delta Z(D) W_j\cdot W_j&= (2\pi)^{-d}\intd |\xi|^2 Z(\xi) \widehat{W}_j(\xi)\cdot \widehat{W}_j(\xi) \ge 2^{2j}\overline{c}_1 \normede{W_j}^2.
    \end{align*}
Hence,  we deduce that 
    \begin{multline}   \label{W:es:1}
        \dfrac{d}{dt}\intd \overline{S}^0_{22}W_j\cdot W_j+ 2^{2j}\overline{c}_1 \normede{W_j}^2\\\lesssim \Bigl(2^{-j}\normede{\DDj\pt\left( S_{21}(D) V^1+  S_{22}(D) V^2 \right)}
        +\normede{h_j}\Bigr)\normede{W_j}.
    \end{multline}
    There remains to look at the right-hand side of \eqref{W:es:1}.
As \eqref{smal:cnd:V} is satisfied, the inequalities \eqref{product_propo2}, \eqref{pt:V:1:est} and \eqref{pt:V:2:est} give that for all $\sigma \in ]-\frac{d}{2},\frac{d}{2}]$
\begin{align}
    \label{est:h^t}
    \normede{\DDj h^t} &\lesssim \dm{\sigma}c_j \NBHc{V}\NBH{\sigma}{2}{1}{\pt V^2}\nonumber\\
    &\lesssim  \dm{\sigma} c_j\NBHc{V} (\NBH{\sigma}{2}{1}{\nabla  V}+\NBH{\cd}{2}{1}{\n V}\NBH{\sigma}{2}{1}{\n V}+\NBH{\sigma+2}{2}{1}{ V^2}).
\end{align}
For bounding $ h^{21}$ and $h^{22}$ we use \eqref{product_propo2}. We have for all $\sigma \in ]-\frac{d}{2},\frac{d}{2}]$:
\begin{align}
    \label{h^t:est}
    \normede{\DDj h^{21}}+ \normede{\DDj h^{22}} 
    %&\lesssim \dm{\sigma}c_j \NBHc{V}(\NBH{\sigma+1}{2}{1}{ V^1}+\NBH{\sigma+1}{2}{1}{ V^2})\nonumber\\
    &\lesssim \dm{\sigma} c_j\NBHc{V}\NBH{\sigma+1}{2}{1}{ V}.
\end{align}
The control of $h^2$  relies on Inequality \eqref{product_propo2}. Then, for all $\sigma \in ]-\frac{d}{2},\frac{d}{2}],$ we have
\begin{align}
    \label{h2:est}
    \normede{\DDj h^{2}} &\lesssim \dm{\sigma} c_j\NBHc{V}(\NBH{\sigma+2}{2}{1}{ V^2}+\NBH{\sigma+1}{2}{1}{ V}\NBH{\frac{d}{2}+1}{2}{1}{ V^2}).
\end{align}
Finally recalling that the operators $ S_{21}(D)$ and $S_{22}(D)$  are $0$-order, the term $ \DDj \pt( S_{21}(D) V^1+  S_{22}(D) V^2 )$ may be bounded as follows: 
\begin{align}
\label{est:ptS(D)V=ptV}
    \normede{\DDj \pt( S_{21}(D) V^1+  S_{22}(D) V^2 )} \lesssim \normede{\pt \DDj V}
\end{align}
which becomes using \eqref{pt:V:1:est}, \eqref{pt:V:2:est} and the smallness condition \eqref{smal:cnd:V}
\begin{align*}
  %  \label{pt():est}
   \normede{\DDj \pt( S_{21}(D) V^1+  S_{22}(D) V^2 )} &\lesssim \dm{\sigma} c_j  (\NBH{\sigma}{2}{1}{\nabla  V}+ \NBH{\cd}{2}{1}{\n V}\NBH{\sigma}{2}{1}{\n V}+\NBH{\sigma+2}{2}{1}{ V^2}).
\end{align*}
The term ${f}$ has been bounded in the proof of Proposition \ref{pro:low:est:Vj:ge}. In fact, from \eqref{f:est:st}, we have,
\begin{align}
    \label{f:est:st:hf}
     \normede{f_{j}} \lesssim c_j\dm{\sigma}  \NBH{\cd}{2}{1}{\n V}\NBH{\sigma}{2}{1}{\n V}.
\end{align}
Putting all this information together 
%and using \eqref{f:est:st:hf},
we deduce that for all $\sigma \in ]-\frac{d}{2},\frac{d}{2}]$ and $j\in \mathbb{Z}$
\begin{multline*}
            \dfrac{d}{dt}\intd \overline{S}^0_{22}W_j\cdot W_j+ 2^{2j}\overline{c}_1 \normede{W_j}^2\\\lesssim 2^{-j\sigma}c_j\left( 2^{-j}\Bigl(  \NBH{\sigma+1}{2}{1}{  V}+\NBH{\sigma+2}{2}{1}{ V^2}+\NBH{\sigma+1}{2}{1}{ V}\NBH{\frac{d}{2}+1}{2}{1}{ V}\Bigr) \right. \\
\left. +\NBHc{V} \Bigl(\NBH{\sigma+1}{2}{1}{V}+\NBH{\sigma+2}{2}{1}{ V^2}+\NBH{\sigma+1}{2}{1}{ V}\NBH{\frac{d}{2}+1}{2}{1}{ V}\Bigr)\right)\normede{W_j}.
    \end{multline*}
    Using Lemma \ref{lem_der_int} with $X=\overline{S}^0_{22}W_j\cdot W_j$ 
    gives us the desired estimate.
\end{proof}

$ \bullet$ \textbf{\textit{Estimate for $V^1$.} }
Here we will look at $V^1$ as the solution of the symmetric hyperbolic equation \eqref{eq:V1=f(W)} whose linearized part is given by the left-hand side of \eqref{eq:V1:lin=f(W)}.
\iffalse
\begin{comment}
\begin{align}
    \label{eq:V1:lin=f(W)}
     \overline{S}^0_{11} \pt V^1 +\sum_{\alpha=1}^d \overline{S}^\alpha_{11} \pal V^1+ S_{12}(D)Z(D)^{-1}S_{21}(D)V^1 =G^{1}
\end{align}
with $G^1\defn G^{11}+G^{12}+G^{13}+G^{14}+G^{15}$ and
\begin{align}
    \label{def:G1}
    G^{14}\defn & \sum_{\alpha=1}^d \biggl( \overline{S}^\alpha_{11}-S^\alpha_{11}(U)\biggr) \pal V^1, \esp{ } G^{15}\defn  \biggl(\overline{S}^0_{11}-S^0_{11}(U)\biggr) \pt V^1
\end{align}
and $ G^{11}, G^{12}, G^{13} $ have been defined in \eqref{def:G11:12:13}.\\
\end{comment}
\fi
 Let $V^1_j\defn \DDj V^1.$
Applying Operator $S^0_{11}(U)\DDj(S^0_{11}(U))^{-1}$ to   \eqref{eq:V1=f(W)}  yields
\begin{equation}
  \label{j:full_dif:V1:2}
	S^0_{11}(U)\pt V^1_j + S^\alpha_{11}(U) \pal V^1_j + S_{12}(D)Z(D)^{-1}S_{21}(D)V^1_j = R^{11}_j+\sum_{k=1}^2 H_j^{k}+\sum_{k=1}^3 G^{1k}_j,
	 \end{equation}
  with  
  \begin{equation}
     \label{df:R^12_j}
     \begin{split}
 &  G^{1k}_j\defn S^0_{11}(U)\DDj \bigl( (S^0_{11}(U))^{-1}G^{1k}\bigr), \;\;  R^{11}_j\defn S^0_{11}(U) \sum_{\alpha=1}^d\left[ \DDj,\wt S^\alpha_{11}(U)\right]\pal V^1,
      \end{split}
 \end{equation}
 where  $G^{11}, G^{12}$ and $G^{13}$ are defined in \eqref{def:G11:12:13}, 
and 
 \begin{equation}
     \label{df:H^12_j}
     \begin{split}
 &  H^{1}_j\defn \biggl( \overline{S^0_{11}}- S^0_{11}(U)  \biggr)\DDj \biggl( (S^0_{11}(U))^{-1} S_{12}(D)Z(D)^{-1}S_{21}(D)V^1 \biggr),\\
 &H^2_j\defn \overline{S^0_{11}} \DDj \biggl( \biggl( (\overline{S^0_{11}})^{-1}- (S^0_{11}(U))^{-1}\biggr) S_{12}(D)Z(D)^{-1}S_{21}(D)V^1 \biggr)\cdotp
      \end{split}
 \end{equation}
 %\red{Je trouve $[\DDj, S^0_{11}(U)](S^0_{11}(U))^{-1} S_{12}(D)Z(D)^{-1}S_{21}(D)V^1$}
% {\color{blue} On a aussi
%\begin{multline*}
%[\DDj, S^0_{11}(U)](S^0_{11}(U))^{-1} S_{12}(D)Z(D)^{-1}S_{21}(D)V^1\\
%=\overline{S^0_{11}}\;\overline{S^0_{11}}^{-1}\DDj S_{12}(D)Z(D)^{-1}S_{21}(D)V^1- S^0_{11}(U)\DDj S^0_{11}(U)^{-1}S_{12}(D)Z(D)^{-1}S_{21}(D)V^1\\
%=\overline{S^0_{11}}\;\overline{S^0_{11}}^{-1}\DDj S_{12}(D)Z(D)^{-1}S_{21}(D)V^1 + H^1_j - %\overline{S^0_{11}} \DDj S^0_{11}(U)^{-1} S_{12}(D)Z(D)^{-1}S_{21}(D)V^1\\
%=H^1_j+H^2_j\end{multline*} }
 
Taking the $L^2$ inner product of \eqref{j:full_dif:V1:2}  with $V^1_j$, applying Leibniz' formula in the first term, integrating by parts in the second term, using the symmetry of the matrices $S^0_{11}$ and $S^\alpha_{11}$ and the fact that ${}^T\!S^\alpha_{12}=S^\alpha_{21}$  yields
\begin{multline*}
   \frac{1}{2}  \frac{d}{dt}\intd S^0_{11}(U) V_j^1\cdot V^1_j+ \intd Z(D)^{-1}S_{21}(D)V^1_j\cdot S_{21}(D)V^1_j\\
     =  \frac{1}{2} \intd \biggl( \partial_t S^0_{11}(U)+ \sum_{\alpha}\pal (S^\alpha_{11} (U))\biggr)  V_j^1\cdot V_j^1
     +\intd (R^{11}_j+H^1_j+H^2_j+G^{11}_j+ G^{12}_j+G^{13}_j)\cdot V^1_j.
 \end{multline*}
 Next, the strong ellipticity of operator $Z(D)$  (see \eqref{strong_elli}) ensures that
 \begin{multline}
 \label{2:es:hf:j}
    \frac{1}{2} \frac{d}{dt}\intd S^0_{11}(U) V_j^1\cdot V^1_j+ \overline{c}_1\normede{S_{21}(D)V^1_j}^2 \\
     \le  \frac{1}{2} \intd \biggl( \partial_t S^0_{11}(U)+ \sum_{\alpha}\pal (S^\alpha_{11} (U))\biggr)  V_j^1\cdot V_j^1
     +\intd (R^{11}_j+H^1_j+H^2_j+G^{11}_j+ G^{12}_j+G^{13}_j)\cdot V^1_j.
 \end{multline}
As $S^0(U)$ is a uniformly  positive definite matrix on $\cO,$ using \eqref{smal:inf:V} gives
\begin{equation}
\label{def:mathcal(V)}
    \mathcal{V}_j^1  \defn \intd S^0_{11}(U) V_j^1\cdot V^1_j \simeq \normede{V^1}^2,
\end{equation}
and thus
\begin{multline}
 \label{V12:es:hf:j}
 \frac{1}{2}\frac{d}{dt} \mathcal{V}_j^1+ \overline{c}_1 \normede{S_{21}(D)V^1_j}^2 \le \frac{1}{2}  \biggl( \normeinf{\partial_t S^0_{11}(U)} \\
 +\sum_{\alpha}\normeinf{\pal (S^\alpha_{11} (U))}\biggl) \mathcal{V}_j^1
+\normede{(R^{11}_j,H^1_j+H^2_j,G^{11}_j, G^{12}_j,G^{13}_j)}\sqrt{\mathcal{V}_j^1} .
 \end{multline}
 If the rank of $S_{21}(\omega)$ is equal to $n_1$	
  for all $\omega\in \mathbb{S}^{d-1}$, then  Inequality \eqref{V12:es:hf:j} guarantees decay for all components of $V^1$. Since that rank assumption has not been made, we take advantage of Beauchard-Zuazua's method developed in subsection \ref{subs:SK:Kalman}, this time for the equation obtained when applying the operator $\DDj$ to \eqref{eq:V1:lin=f(W)}. The only change lies in the (harmless) additional source terms $G^1$  defined in \eqref{def:G11:12:13} and in \eqref{eq:V1:lin=f(W)}.
  
  Now, using  the notation introduced   in  \eqref{def:matbf:N:M}, it holds:
\begin{equation}
    \label{sys:j_mor_reg}
     \pt \widehat{V^1_j} + (\rho \mathbf{N}_\omega+  \mathbf{M}_\omega) \widehat{V_j^1}= (\overline{S^0_{11}})^{-1}\widehat{G^1_j}.
\end{equation}
%\red{Pb \'eventuel de sym\'etrie. Faut-il laisser $\overline S_{11}^0~?$} {\color{blue} pas vraiment on n'a pas besoin de la symetrie pour estimer $ \mathbf{I}_j$ }
%\begin{align}
 %   \label{}
  %  \mathbf{N}_\omega\defn  i\overline{S}^0_{11} S_{11}(\omega) \esp{and}  \mathbf{M}_\omega\defn  \overline{S}^0_{11} S_{12}(\omega)Z(\omega)^{-1}S_{21}(\omega)\cdotp
%\end{align}
Let us further set:
\begin{equation}
    \label{mathbf{I}:def  }
    \mathbf{I}_j\defn \sumk \ee_k {\rm  Re} \intd(\mathbf{M}_\omega \mathbf{N}_\omega^{k-1} \widehat{V^1_j}\cdot \mathbf{M}_\omega \mathbf{N}_\omega^{k} \widehat{V^1_j})
    \esp{and}\mathcal{L}_j^{1,h}  \defn \mathcal{V}_j^1+  \min{(2^j,2^{-j})}\mathbf{I}_j.
\end{equation}
 Lemma \ref{lem:derI} (with $a=1$ and $b=0$) gives  for suitable
  $\ee_1,\cdots,\ee_{n-1},$
\begin{equation}
    \label{lya:fun:def:h}
    \mathcal{L}_j^{1,h}  \simeq \normede{V^1_j}^2
\end{equation}
and, 
%\begin{equation}\label{hf:corre:ine}
 %   \frac{d}{dt}\mathbf{I}_j +2^{j} \sumk \ee_k |\mathbf{M}_\omega \mathbf{N}_\omega^{k} \widehat{V^1_j}|^2 \le C\ee_0\min(2^{j},2^{-j})| \mathbf{M}_\omega  
  %  \widehat{V^1_j}|^2+ C|\widehat{G^1_j}||\widehat{V^1_j}|\end{equation}
%integrating  \eqref{hf:corre:ine} over $\Rd$, multiplying it by $ \min{(2^j,2^{-j})} $ and adding to \eqref{V12:es:hf:j}, and, finally, 
using Fourier-Plancherel theorem, for all $j\in \mathbb{Z}$:
\begin{multline}
    \label{hi:lya:ful}
 \frac{d}{dt}\mathcal{L}_j^{1,h}+ \overline{c}_1 2^{2j}\normede{S_{21}(D)V^1_j}^2
 %-C\ee_02^{2j} \int_{{\mathbb R}^d} |\mathbf{M}_\omega  \widehat{\mathbf{V}}_j|^2
 \\+ \min{(1,2^{2j})} \sumk \ee_k \int_{{\mathbb R}^d} |\mathbf{M}_\omega \mathbf{N}_\omega^{k} \widehat{V^1_j}|^2
    \le C  \biggl( \normeinf{\partial_t S^0_{11}(U)}+ \sum_{\alpha}\normeinf{\pal (S^\alpha_{11} (U))}\\
    +\normede{\left(R^{11}_j,G^{11}_j, G^{12}_j,G^{13}_j\right)}\biggr) \mathcal{L}_j^{1,h}
     +\left( \min(2^j,2^{-j})\normede{ G^1_j}\right)\sqrt{\mathcal{L}_j^{1,h}}.
\end{multline}
At this stage,  we have to remember (see Lemma \ref{lem:SK:SK'})
that  if the pair $(N_\omega,M_\omega)$ satisfies the (SK) condition then 
so does  $(\mathbf{N}_\omega,\mathbf{M}_\omega),$ which  guarantees  that
\begin{align} \label{sk:appl:HF}
    %\label{def:mathbf:cal:N}
    \mathcal{N}_{V^1}&\defn\overline{c}_1 2^{2j}\normede{S_{21}(D)V^1_j}^2 
 + \min{(1,2^{2j})} \sumk \ee_k \int_{{\mathbb R}^d} |\mathbf{M}_\omega \mathbf{N}_\omega^{k} \widehat{V^1_j}|^2
 %\\&\ge   \min{(1,2^{2j})}\sum_{k=0}^{d-1}\ee_k \int_{{\mathbb R}^d}|\mathbf{M}_\omega \mathbf{N}_\omega^{k} \widehat{V^1_j}|^2.
 \gtrsim \min{(1,2^{2j})} \mathcal{L}_j^{1,h}.
\end{align}
%On the other hand, if the pair $(N_\omega,M_\omega)$ satisfies the (SK) condition then 
%so does  $(\mathbf{N}_\omega,\mathbf{M}_\omega),$  according to Lemma \ref{lem:SK:SK'}, and if we %keep in mind Inequality \eqref{lya:fun:def:h}, we discover that
%\begin{align} \label{sk:appl:HF}
 %   \mathcal{N}_{V^1}\simeq \min{(1,2^{2j})} \mathcal{L}_j^{1,h}.\end{align}
 After suitably bounding the r.h.s. of \eqref{hi:lya:ful}, these considerations will lead to
\begin{propo}\label{pro:est:HF:BFV}
Assume that $V$ satisfies the assumptions of Proposition \ref{pro:low:est:Vj:ge}. Then the following inequality holds true  for all $j\in \mathbb{Z}$:
\begin{align*}
    \begin{split}
        \frac{d}{dt}\mathcal{L}_j^{1,h}+ \min(1,&2^{2j})\mathcal{L}_j^{1,h}\lesssim   \biggl( \NBH{\cd+1}{2}{1}{V}+\NBH{\cd+2}{2}{1}{V^2}+ \NBH{\cd+1}{2}{1}{V}\NBH{\cd+1}{2}{1}{V^2}\biggr)\mathcal{L}_j^{1,h}\\
        &+\normede{ (V^2_j, 2^j W_j) }\sqrt{\mathcal{L}_j^{1,h}}+ 2^{-j(\cd+1)} c_j \biggl( \NBH{\cd}{2}{1}{V}\NBH{\cd+2}{2}{1}{ V^2}+ \NBH{\cd+1}{2}{1}{V}^2\biggr)\sqrt{\mathcal{L}_j^{1,h}}\\
  &+ c_j 2^{-j(\cd+1)}\biggl(\NBH{\cd}{2}{1}{V}\NBH{\cd+1}{2}{1}{V^1}+ \NBH{\cd+1}{2}{1}{V}\NBH{\cd}{2}{1}{V^1}\biggr)\sqrt{\mathcal{L}_j^{1,h}}\\
&  + \min(2^j,2^{-j})\biggl( \normede{ (V^2_j, 2^j W_j) }+ c_j \dm{\cd}  \NBH{\cd+1}{2}{1}{V}\NBH{\cd}{2}{1}{ V}\biggr)\sqrt{\mathcal{L}_j^{1,h}}\cdotp
    \end{split}
\end{align*}
\end{propo}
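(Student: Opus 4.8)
The plan is to start from Inequality \eqref{hi:lya:ful}, which already incorporates the outcome of Lemma \ref{lem:derI} applied to System \eqref{sys:j_mor_reg}, and to convert its left-hand side into a genuine damping term before estimating the source terms on the right-hand side. Since $(N_\omega,M_\omega)$ satisfies the (SK) condition for every $\omega\in\mathbb S^{d-1}$, so does $(\mathbf N_\omega,\mathbf M_\omega)$ (Lemma \ref{lem:SK:SK'}); Lemma \ref{lem:SK_diss} together with the compactness of $\mathbb S^{d-1}$ then gives \eqref{sk:appl:HF}, so that plugging \eqref{sk:appl:HF} into the left-hand side of \eqref{hi:lya:ful} produces a term $c\min(1,2^{2j})\mathcal L_j^{1,h}$ there. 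The proof then reduces to bounding, on the right-hand side of \eqref{hi:lya:ful}, the coefficient factors $\normeinf{\pt S^0_{11}(U)}$ and $\normeinf{\pal(S^\alpha_{11}(U))}$ (which multiply $\mathcal L_j^{1,h}$), the localised source terms $R^{11}_j,G^{11}_j,G^{12}_j,G^{13}_j$, and the quantity $\min(2^j,2^{-j})\normede{G^1_j}$, the last four being paired with $\sqrt{\mathcal L_j^{1,h}}\simeq\normede{V^1_j}$. Throughout, the smallness assumption \eqref{smal:cnd:V} (hence \eqref{smal:inf:V}) and the fact that $U$ ranges in the fixed compact $\overline{\cO}\subset\U$ will be used to absorb higher-order contributions and to justify the composition estimates.

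First I would treat the coefficient factors. By the chain rule and the boundedness of the derivatives of $S^0_{11}$ and $S^\alpha_{11}$ on $\overline{\cO}$ one gets $\normeinf{\pal(S^\alpha_{11}(U))}\lesssim\normeinf{\nabla V}\lesssim\NBH{\cd+1}{2}{1}{V}$ via the embedding $\BH{\cd}{2}{1}\hookrightarrow L^\infty$, and $\normeinf{\pt S^0_{11}(U)}\lesssim\normeinf{\pt V}$. The last term is controlled exactly as in the proof of Lemma \ref{lem:ptV}: expressing $\pt V^1$ and $\pt V^2$ from System \eqref{Eq_b:V1:V2} and using the same product and composition estimates yields $\normeinf{\pt V}\lesssim\NBH{\cd+1}{2}{1}{V}+\NBH{\cd+2}{2}{1}{V^2}+\NBH{\cd+1}{2}{1}{V}\NBH{\cd+1}{2}{1}{V^2}$, which is precisely the $\mathcal L_j^{1,h}$-coefficient in the statement.

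Next I would handle the source terms one at a time. For the commutator term $R^{11}_j=S^0_{11}(U)\suma\left[\DDj,\wt S^\alpha_{11}(U)\right]\pal V^1$ a standard commutator estimate (see \cite[Chap.\ 2]{HajDanChe11}) combined with the composition estimate \eqref{compo:propo:base} for $\wt S^\alpha_{11}(U)$ and the boundedness of $S^0_{11}(U)$ gives $\normede{R^{11}_j}\lesssim c_j\,2^{-j(\cd+1)}\bigl(\NBH{\cd}{2}{1}{V}\NBH{\cd+1}{2}{1}{V^1}+\NBH{\cd+1}{2}{1}{V}\NBH{\cd}{2}{1}{V^1}\bigr)$. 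For $G^{11}_j=S^0_{11}(U)\DDj\bigl((S^0_{11}(U))^{-1}G^{11}\bigr)$ one rewrites $S^0_{11}(U)\DDj(S^0_{11}(U))^{-1}$ as $\DDj$ plus a commutator and applies \eqref{product_propo2} and \eqref{compo:propo:base} to $G^{11}=\suma(\overline{S}^\alpha_{12}-S^\alpha_{12}(U))\pal V^2$, obtaining $\normede{G^{11}_j}\lesssim c_j\,2^{-j(\cd+1)}\bigl(\NBH{\cd}{2}{1}{V}\NBH{\cd+2}{2}{1}{V^2}+\NBH{\cd+1}{2}{1}{V}^2\bigr)$. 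For $G^{12}_j$ and $G^{13}_j$ one uses that $S_{12}(D)Z(D)^{-1}S_{22}(D)$ is a matrix-valued Fourier multiplier that is homogeneous of degree $0$ and smooth on $\mathbb S^{d-1}$, hence bounded on $L^2$ on each dyadic block (and commuting with $\DDj$ and with $S^0_{11}(U)$ up to lower-order remainders), while $|D|S_{12}(D)$ is homogeneous of degree $1$; this gives $\normede{(G^{12}_j,G^{13}_j)}\lesssim\normede{(V^2_j,2^jW_j)}$ modulo terms absorbed by \eqref{smal:cnd:V}. Finally $\normede{G^1_j}$ adds the contributions of $G^{14}=\suma(\overline{S}^\alpha_{11}-S^\alpha_{11}(U))\pal V^1$, bounded by \eqref{product_propo2}--\eqref{compo:propo:base}, and of $G^{15}=(\overline{S}^0_{11}-S^0_{11}(U))\pt V^1$, bounded via \eqref{pt:V:1:est}, so that $\normede{G^1_j}\lesssim\normede{(V^2_j,2^jW_j)}+c_j\,2^{-j\cd}\NBH{\cd+1}{2}{1}{V}\NBH{\cd}{2}{1}{V}$.

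Inserting all these bounds into the differential inequality produced by \eqref{hi:lya:ful} and \eqref{sk:appl:HF}, and using \eqref{smal:cnd:V} one last time to discard the remaining higher-order pieces, yields exactly the asserted estimate. The main obstacle, as in the analysis of the parabolic mode $W$ carried out in Proposition \ref{pro:es:W}, will be the bookkeeping around the nonlocal operators $S_{12}(D),Z(D)^{-1},S_{21}(D),S_{22}(D)$: one has to verify that, being $0$-homogeneous and smooth away from the origin, they commute with $\DDj$ and with multiplication by $S^0_{11}(U)$ up to remainders that the smallness condition controls, and to track carefully the powers of $2^j$ and the weight $\min(2^j,2^{-j})$ when distributing them among the numerous source terms on the right-hand side of \eqref{hi:lya:ful}.
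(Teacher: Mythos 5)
Your strategy is the same as the paper's: localize \eqref{eq:V1=f(W)} with $S^0_{11}(U)\DDj(S^0_{11}(U))^{-1}$, use Lemma \ref{lem:SK:SK'} and Lemma \ref{lem:SK_diss} to convert the dissipation into \eqref{sk:appl:HF}, control the coefficient factors through Lemma \ref{lem:ptV} and the embedding $\BH{\cd}{2}{1}\hookrightarrow L^\infty$, and bound $R^{11}_j$, $G^{11}_j,\dots,G^{13}_j$ and $\min(2^j,2^{-j})\normede{G^1_j}$ with the commutator, product and composition estimates of the appendix; your final tally coincides with the asserted inequality. The one step that would not survive as written is your claim that the remainders created when the conjugation $S^0_{11}(U)\DDj(S^0_{11}(U))^{-1}$ is pushed through the nonlocal operator $S_{12}(D)Z(D)^{-1}S_{21}(D)$ are ``lower-order'' and ``absorbed by the smallness condition''. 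These remainders are the terms $H^1_j,H^2_j$ of \eqref{df:H^12_j}; they are of the same size as the other sources, cannot be discarded, and must be estimated explicitly (as in \eqref{est:H1:j}--\eqref{est:H2:j} with $\sigma=\cd+1$, using \eqref{product_propo4}, \eqref{comp:uv:propo:r=1} and the $0$-homogeneity of the multiplier). They are precisely the origin of the term $c_j2^{-j(\cd+1)}\bigl(\NBH{\cd}{2}{1}{V}\NBH{\cd+1}{2}{1}{V^1}+\NBH{\cd+1}{2}{1}{V}\NBH{\cd}{2}{1}{V^1}\bigr)\sqrt{\mathcal{L}_j^{1,h}}$ in the proposition, which you list but attribute to $R^{11}_j$; the genuine commutator bound \eqref{comm:est:a:b} (with $\sigma=\cd+1$) for $R^{11}_j$ rather gives $c_j2^{-j(\cd+1)}\NBH{\cd+1}{2}{1}{V}\NBH{\cd+1}{2}{1}{V^1}$, which is filed under the $\NBH{\cd+1}{2}{1}{V}^2$ contribution of the second line. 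Once $H^1_j$ and $H^2_j$ are isolated and estimated (and the $R^{11}_j$ bound corrected accordingly), your argument assembles the inequality exactly as the paper does.
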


\begin{proof}
% It suffices to treat the nonlinear terms appearing in \eqref{hi:lya:ful}.
Combining \eqref{hi:lya:ful} and \eqref{sk:appl:HF}, we deduce that
\begin{multline}
    \label{hf:lya:redc:ful}
     \frac{d}{dt}\mathcal{L}_j^{1,h} +\min(1,2^{2j}) \mathcal{L}_j^{1,h}
    \le C    \biggl( \normeinf{\partial_t S^0_{11}(U)}+ \sum_{\alpha}\normeinf{\pal (S^\alpha_{11} (U))}\biggr) \mathcal{L}_j^{1,h}\\
    +\normede{\left(R^{11}_j,H^1_j,H^2_j,G^{11}_j, G^{12}_j,G^{13}_j\right)}\sqrt{\mathcal{L}_j^{1,h}}
     + \min(2^j,2^{-j})\normede{ G^1_j}\sqrt{\mathcal{L}_j^{1,h}}.
    \end{multline}
Let us now look at the right-hand side of \eqref{hf:lya:redc:ful}.
For the first term, the embedding $\BH{\frac{d}{2}}{2}{1} \hookrightarrow L^\infty$ and \eqref{pt:V:1:est} and \eqref{pt:V:2:est} (with $\kappa=\cd$) allow us to get
\begin{align}
\label{inf:es:hf:p_t}
 \normeinf{\partial_t S^0_{11}(U)}
 + \sum_{\alpha}\normeinf{\pal (S^\alpha_{11} (U))}&\lesssim   \normeinf{\pt V}+ \normeinf{\nabla V}
 \nonumber\\
 &\lesssim \NBHc{\nabla V} + \NBH{\cd+2}{2}{1}{V^2}+\NBH{\frac{d}{2}+1}{2}{1}{V}\NBH{\frac{d}{2}+1}{2}{1}{V^2}.\end{align}
The other terms may be treated by using commutator and product estimates in homogeneous Besov spaces. First, combining \eqref{compo:propo:base}  and Proposition \ref{propo_commutator-BH}, one has %for all $\cd\in [\frac{d}{2},\frac{d}{2}+1],$ 
\begin{align}
\label{es:G11:j}
    \normede{ R^{11}_j } &\lesssim 2^{-(\cd+1)} c_j \NBHc{\nabla V}\NBH{\cd}{2}{1}{\nabla V^1}.
\end{align}
Next, using \eqref{product_propo4} combined with \eqref{compo:propo:base} and the smallness condition \eqref{smal:cnd:V}, one gets
\begin{equation}\label{es:G12:j}
   \normede{ G^{11}_j } \lesssim 2^{-j\sigma} c_j \biggl( \NBH{\cd}{2}{1}{V}\NBH{\sigma}{2}{1}{\n V^2}+ \NBH{\sigma}{2}{1}{V}\NBH{\cd}{2}{1}{\n V^2}\biggr)\esp{for all} \sigma>0.
\end{equation}
Moreover, from Bernstein's Inequality, we have
\begin{equation}    \label{G_2:est:sigma}
         \normede{ (G^{12}_j, G^{13}_j) } \lesssim   \normede{ (V^2_j, 2^j W_j) }.
     \end{equation}
Bounding $G^{14}_j$ relies on product estimate \eqref{product_propo4} and \eqref{compo:propo:base}. We get 
\begin{align}
\label{G:14:est:st:1}
     \normede{G^{14}_j} &\lesssim c_j \dm{\cd} \Vert V \Vert_{\BH{\frac{d}{2}}{2}{1} }
     \Vert \n V^1 \Vert_{\BH{\frac{d}{2}}{2}{1}}.
\end{align}
Next, we write that, by virtue of  Proposition \ref{propo_produc_BH} and Inequality  \eqref{compo:propo:base}
\begin{align*}
     \normede{G^5_j} \lesssim c_j \dm{\cd} \NBHc{V} \NBH{\cd}{2}{1}{\pt V^1}
\end{align*}
which, combined with \eqref{pt:V:1:est} with $\kappa=\cd $  and  \eqref{smal:cnd:V} leads to 
%\begin{align}    \label{G_t:est:st}
%     \normede{G^t_j} &\lesssim c_j \dm{\cd}\Vert V \Vert_{\BH{\frac{d}{2}}{2}{1} } \left( 
%     \Vert \nabla V \Vert_{\BH{\frac{d}{2}}{2}{1}}+\Vert V \Vert_{\BH{\frac{d}{2}}{2}{1} }\Vert  V \Vert_{\BH{\frac{d}{2}+1}{2}{1}}\right),    \end{align}
%which in turn, using \eqref{smal:cnd:V}, becomes, 
\begin{align}    \label{G:15:est:st:1}
         \normede{G^{15}_j} \lesssim c_j \dm{\cd} \Vert V \Vert_{\BH{\frac{d}{2}}{2}{1} }
     \Vert V \Vert_{\BH{\frac{d}{2}+1}{2}{1}}.\end{align}
Taking $\sigma=\cd$ in \eqref{es:G12:j}  and combining with  
\eqref{G_2:est:sigma}, \eqref{G:14:est:st:1}, \eqref{G:15:est:st:1}, we get that
\begin{align}
    \label{es=G1:cd}
         \normede{G^{1}_j}  \lesssim \normede{ (V^2_j, 2^j W_j) }+ c_j \dm{\cd}  \NBH{\cd+1}{2}{1}{V}\NBH{\cd}{2}{1}{ V}\cdotp
\end{align}
     
Taking advantage of Bernstein's inequality, we can bound the terms $H_j^1$ and $H^2_j$ in $L^2$. On the one hand, using H\"{o}lder inequality combined with the embedding  $\BH{\frac{d}{2}}{2}{1} \hookrightarrow L^\infty$ one may write
\begin{align*}
    \normede{H^1_j}\lesssim \NBH{\cd}{2}{1}{V}\normede{\DDj \biggl( (S^0_{11}(U))^{-1} S_{12}(D)Z(D)^{-1}S_{21}(D)V^1 \biggr)}\cdotp
\end{align*}
Now, the product estimate \eqref{product_propo4}, the composition estimate \eqref{comp:uv:propo:r=1} and the fact that the operator $S_{12}(D)Z(D)^{-1}S_{21}(D) $ is homogeneous of degree $0$ ensure that: for all $\sigma>0$
\begin{align*}
    \normede{H^1_j}\lesssim c_j 2^{-j\sigma}\NBH{\cd}{2}{1}{V}\biggl((1+\NBH{\cd}{2}{1}{V})\NBH{\sigma}{2}{1}{V^1}+ \NBH{\sigma}{2}{1}{V}\NBH{\cd}{2}{1}{V^1}\biggr)\cdotp
\end{align*}
which combined with the smallness condition \eqref{smal:cnd:V} implies that
\begin{align}
    \label{est:H1:j}
    \normede{H^1_j}\lesssim c_j 2^{-j\sigma}\NBH{\cd}{2}{1}{V}
    \biggl(\NBH{\sigma}{2}{1}{V^1} + \NBH{\sigma}{2}{1}{V}\NBH{\cd}{2}{1}{V^1}\biggr)\cdotp
\end{align}
On the other hand, the product estimate \eqref{product_propo4}, the composition estimate \eqref{comp:uv:propo:r=1} and the fact that the operator $S_{12}(D)Z(D)^{-1}S_{21}(D) $ is homogeneous of degree $0$ provide, for all $\sigma>0$ 
\begin{align}
    \label{est:H2:j}
    \normede{H^2_j}\lesssim c_j 2^{-j\sigma}(\NBH{\cd}{2}{1}{V}\NBH{\sigma}{2}{1}{V^1}+ \NBH{\sigma}{2}{1}{V}\NBH{\cd}{2}{1}{V^1})\cdotp
\end{align}

 Finally,  plugging \eqref{inf:es:hf:p_t}, \eqref{es:G11:j}, \eqref{G_2:est:sigma}, \eqref{es:G12:j}, \eqref{est:H1:j},\eqref{est:H2:j} (with $\sigma=\cd+1$), \eqref{es=G1:cd} into \eqref{hf:lya:redc:ful}, yields the desired inequality.
 
 %\eqref{G2122:est:j},\eqref{G_t:est:st},\eqref{G_2:est:sigma}, \eqref{es:G12:j}, \eqref{es:G11:j}, \eqref{inf:es:hf:p_t} into \eqref{V12:es:hf:j} and  taking $\sigma=\frac{d}{2}$ yields  \eqref{hi:fr:est:s} .
\end{proof}

$\bigstar$ \textbf{\textit{Global a priori estimates.}} 
Propositions \ref{pro:low:est:Vj:ge}, \ref{pro:es:W} and \ref{pro:est:HF:BFV} will enable us to derive bounds on $V$ in the space $E$ (defined in Theorem \ref{thm:glob:cri})  in terms of the initial data.

Before proceeding, introducing the following notation (with $N_0\in\mathbb N$) is in order:
\begin{align}
  \label{lya:LF:HF:et:gain}
    \begin{split}
         & \mathcal{L}\defn   \sum_{j\le N_0}2^{j(\cd-1)} \sqrt{\mathcal{L}^l_j}+  \sum_{j> N_0}2^{j(\cd+1)} \sqrt{\mathcal{L}_j^{1,h}}+ \sum_{j> N_0}2^{j\cd}\lVert W_j\rVert_{L^2_{\overline{S}^0_{22}}},\\
          & \mathcal{H}\defn   \NBH{\cd+1}{2}{1}{V}^{l,N_0} +\NBH{\frac{d}{2}+1}{2}{1}{V^1}^{h,N_0}+\NBH{\frac{d}{2}+2}{2}{1}{W}^{h,N_0}
    \end{split}
\end{align}
  with
    \begin{align}        \label{def:NBH:HF:N0}
        \NBH{s}{2}{1}{z}^{l,N_0}\defn \sum_{j\le N_0} 2^{js}\|\DDj z\|_{L^2}
        \esp{and} \NBH{s}{2}{1}{z}^{h,N_0}\defn \sum_{j> N_0} 2^{js}\|\DDj z\|_{L^2}. 
        %\NBH{s}{2}{1}{z}^{h,N_0}\le \NBH{s}{2}{1}{z}^h  .
    \end{align}
    Note that \eqref{lya:fun:def:l}, \eqref{lya:fun:def:h} and  the fact that $ \overline{S}^0_{22}\simeq I_{n_2} $ guarantee that 
    \begin{align}
\label{equi:lya}
\mathcal{L}\simeq \NBH{\cd-1}{2}{1}{V}^{l,N_0}+ \NBH{\frac{d}{2}+1}{2}{1}{V^1}^{h,N_0}+ \NBH{\frac{d}{2}}{2}{1}{W}^{h,N_0}\cdotp
    \end{align}
Furthermore, \eqref{W:def} and Bernstein inequalities give 
\begin{align}
    \label{comp:Wj:Vj}
    \normede{W_j}\lesssim \normede{V_j},\;\; \text{for all}\;\;   j> 0  \esp{and} \normede{W_j}\lesssim 2^{-j}\normede{V_j},\;\; \text{for all}\;\; j\le 0,
\end{align}
and for all $s\in \mathbb{R}$
\begin{align*}
   \NBH{s}{2}{1}{V^2}^{h,N_0}&\le   \NBH{s}{2}{1}{W}^{h,N_0}+  \NBH{s}{2}{1}{|D|^{-1}(Z(D))^{-1}\left( S_{21}(D) V^1+  S_{22}(D)V^2\right)}^{h,N_0}\nonumber\\
    & \le \NBH{s}{2}{1}{W}^{h,N_0}+ C \NBH{s-1}{2}{1}{V^1}^{h,N_0}+C2^{-N_0} \NBH{s}{2}{1}{V^2}^{h,N_0}.
\end{align*}
Therefore for $N_0$ satisfying 
\begin{align}    \label{cnd:N0:1} 2^{-N_0}C \le \frac{1}{2},\end{align} 
one has
\begin{align}    \label{com:int:W:V:22}    \begin{split}
 & \NBH{\frac{d}{2}+2}{2}{1}{V^2}^{h,N_0}
    \lesssim \NBH{\frac{d}{2}+2}{2}{1}{W}^{h,N_0}+  \NBH{\frac{d}{2}+1}{2}{1}{V^1}^{h,N_0}\lesssim \mathcal{H}, \\
    &  \NBH{\frac{d}{2}}{2}{1}{V^2}^{h,N_0}
    \lesssim \NBH{\frac{d}{2}}{2}{1}{W}^{h,N_0}+  \NBH{\frac{d}{2}-1}{2}{1}{V^1}^{h,N_0}\lesssim \mathcal{L} \cdotp     \end{split}\end{align}
Note that in particular, we  have
 \begin{align}
\label{equi:lyabis}
\mathcal{L}\simeq \NBH{\cd-1}{2}{1}{V}^{l}+ \NBH{\frac{d}{2}+1}{2}{1}{V^1}^{h}+ \NBH{\frac{d}{2}}{2}{1}{V^2}^{h}
    \end{align}
    so that is enough to bound  the Lyapunov functional $\mathcal{L} $.
    \medbreak
Let us further recall the following inequalities: 
\begin{align}
    \label{emb:lh:hf:BH}
    \NBH{s}{2}{1}{z}^{l,N_0}\le C 2^{N_0(s-s')} \NBH{s'}{2}{1}{z}^{l,N_0} \esp{and} \NBH{s'}{2}{1}{z}^{h,N_0}\le C 2^{N_0(s'-s)} \NBH{s}{2}{1}{z}^{h,N_0} \esp{for} s'\le s,
\end{align}
and the following interpolation inequalities  for all $\varsigma\in [0,1]$: 
\begin{align}
    \label{int:ine:BH:cd}
   \Vert V^2 \Vert_{\BH{3\varsigma+\cd-1}{2}{1} }\le \NBH{\cd-1}{2}{1}{V^2}^{1-\varsigma}\NBH{\cd+2}{2}{1}{V^2}^\varsigma \esp{and}   \Vert V^1 \Vert_{\BH{\cd}{2}{1}}\le \sqrt{\NBH{\cd-1}{2}{1}{V^1}\NBH{\cd+1}{2}{1}{V^1}}\,.
\end{align}
Our goal is to bound ${\mathcal L}(t)$ and $\int_0^t{\mathcal H}(t)$ for all $t\in\mathbb R_+,$ in terms of the initial data.
Let us start with the low frequencies. As $d\ge 2$, taking $s=\cd-1$ in Inequality \eqref{lo:fr:est:s}, applying lemma \ref{lem_der_int}, multiplying by $2^{j(\frac{d}{2}-1)}$,  summing up on $j\le N_0$ and denoting 
\begin{align*}
    \mathfrak{V}^{l,N_0}\defn \sum_{j\le N_0}2^{j(\cd-1)} \sqrt{\mathcal{L}^l_j},
\end{align*}
\iffalse
\begin{comment}
using
\eqref{lya:fun:def:l} which leads to 
\begin{align}
    \label{equi:lya:lf:L2(V)}
    \sum_{j\le N_0}2^{js} \sqrt{\mathcal{L}^l_j} \simeq   \sum_{j\le N_0}2^{js} \normede{V_j}, \;\; \forall\; s\in \mathbb{R}\; \text{and}\; N_0\in \mathbb{Z}
\end{align}
\end{comment}
\fi
we arrive for some positive constants $c_0,C$ and all $t\in\mathbb R_+$ at
\begin{equation*}
      \mathfrak{V}^{l,N_0}(t) +c_0\int^t_0 \NBH{\frac{d}{2}+1}{2}{1}{V(t)}^{l,N_0} \le  \mathfrak{V}^{l,N_0}(0)+ C\int^t_0\NBH{\cd}{2}{1}{V} 
       \|V\|_{\dot B^{\frac d2}_{2,1}\cap \dot B^{\frac d2+1}_{2,1}}\cdotp    \end{equation*}
     Next, after using the interpolation inequalities \eqref{int:ine:BH:cd} combined with \eqref{emb:lh:hf:BH}, \eqref{com:int:W:V:22} and \eqref{equi:lya}, the previous inequality gets simplified into
  \begin{align}    \label{2:lo:fr:est:s}
        \mathfrak{V}^{l,N_0}(t)+c_0\int^t_0 \NBH{\frac{d}{2}+1}{2}{1}{V(t)}^{l,N_0} \leq
         \mathfrak{V}^{l,N_0}(0)+ \int^t_0\mathcal{L}\mathcal{H}.   \end{align}
Let us next focus on the high frequencies. Let
\begin{equation*}
    \mathfrak{V}^{1,h,N_0}\defn  \sum_{j> N_0}2^{j(\cd+1)} \sqrt{\mathcal{L}_j^{1,h}}. 
\end{equation*}
Combining  Proposition \ref{pro:est:HF:BFV}, Lemma \ref{lem_der_int}, multiplying by $2^{j(\cd+1)}$ and summing up on $j>N_0$ gives (for some positive constants $c'_0$ and $C$)
\begin{align*}
    \begin{split}
        \mathfrak{V}^{1,h,N_0}(t)&+ c'_0\int^t_0  \NBH{\cd+1}{2}{1}{V^1}^{h,N_0}\le  \mathfrak{V}^{1,h,N_0}(0)+ C\int^t_0 \NBH{\cd+1}{2}{1}{V}\NBH{\cd}{2}{1}{ V}\\
        &+C\int^t_0  \bigl( \NBH{\cd+1}{2}{1}{V}+\NBH{\cd+2}{2}{1}{V^2}+ \NBH{\cd+1}{2}{1}{V}\NBH{\cd+1}{2}{1}{V^2}\bigr) \NBH{\cd+1}{2}{1}{V^1}^{h,N_0}\\
        &+2^{-N_0}C\int^t_0\NBH{\cd+2}{2}{1}{V^2}^{h,N_0}+C\int^t_0\NBH{\cd+2}{2}{1}{W}^{h,N_0}+C\int^t_0 \bigl( \NBH{\cd}{2}{1}{V}\NBH{\cd+2}{2}{1}{ V^2}+ \NBH{\cd+1}{2}{1}{V}^2\bigr)\\
  &+C\int^t_0\bigl(\NBH{\cd}{2}{1}{V}\NBH{\cd+1}{2}{1}{V^1} + \NBH{\cd+1}{2}{1}{V}\NBH{\cd}{2}{1}{V^1}\bigr)\cdotp
    \end{split}
\end{align*}
Hence, since \eqref{com:int:W:V:22} is satisfied,  using  \eqref{emb:lh:hf:BH}, \eqref{int:ine:BH:cd} and \eqref{equi:lya} enables us to simplify the previous Inequality as  
\begin{multline*}
      \mathfrak{V}^{1,h,N_0}(t)+ c'_0\int^t_0  \NBH{\cd+1}{2}{1}{V^1}^{h,N_0}\le  \mathfrak{V}^{1,h,N_0}(0)+ C\int^t_0\mathcal{L}\mathcal{H}+ C\int^t_0\mathcal{L}^2\mathcal{H}\\
      +2^{-N_0}C\int^t_0\NBH{\cd+2}{2}{1}{V^2}^{h,N_0}+C\int^t_0\NBH{\cd+2}{2}{1}{W}^{h,N_0} \cdotp
\end{multline*}
 Using again \eqref{com:int:W:V:22} for the penultimate term in the right-hand side of the previous Inequality, we end up with,
\begin{multline}
    \label{est:V1:hf:fina}
      \mathfrak{V}^{1,h,N_0}(t)+ c'_0\int^t_0  \NBH{\cd+1}{2}{1}{V^1}^{h,N_0}\le \mathfrak{V}^{1,h,N_0}(0)+ C\int^t_0\mathcal{L}\mathcal{H}+ C\int^t_0\mathcal{L}^2\mathcal{H}\\
      + 2^{-N_0}C \int^t_0\mathcal{H}+ C\int^t_0\NBH{\cd+2}{2}{1}{W}^{h,N_0} \cdotp
\end{multline}
 Next, let us simplify as much as possible the estimate given by Proposition \ref{pro:es:W}. Taking $\sigma=\frac{d}{2}$ in Inequality \eqref{W:es:-1}, summing up on $j> N_0$ 
 and setting
 $$\mathfrak{W}^{h,N_0}\defn \sum_{j>N_0}2^{\cd j} \lVert W_j\rVert_{L^2_{\overline{S}^0_{22}}}
 $$
 provides 
\begin{multline*}
    %\label{W:es:3}
           \mathfrak{W}^{h,N_0}(t)+  \widetilde{c}_0\int^t_0\NBH{\frac{d}{2}+2}{2}{1}{W}^{h,N_0}\le \mathfrak{W}^{h,N_0}(0)+ 2^{-N_0}C\int^t_0\left(   \NBH{\frac{d}{2}+1}{2}{1}{  V}+ \Vert V^2\Vert_{ \BH{\frac{d}{2}+2}{2}{1} } \right)+ C\int_0^t \NBH{\frac{d}{2}+1}{2}{1}{ V}^2\\
  + C\int_0^t\NBHc{V} \left(\NBH{\frac{d}{2}+1}{2}{1}{V}+\Vert V^2\Vert_{\BH{\frac{d}{2}+2}{2}{1} }+\NBH{\frac{d}{2}+1}{2}{1}{ V}^2\right),
    \end{multline*}
   where $\widetilde{c}_0$ and $C$ are  positive constants.
Next, using  \eqref{emb:lh:hf:BH}, \eqref{int:ine:BH:cd} combined with \eqref{com:int:W:V:22} and eventually the smallness condition \eqref{smal:cnd:V}, we can eliminate some redundant terms.
We get
\begin{equation}
    \label{W:es:3}
          \mathfrak{W}^{h,N_0}(t)+ \widetilde{c}_0\int^t_0 \NBH{\frac{d}{2}+2}{2}{1}{W}^{h,N_0}\le  \mathfrak{W}^{h,N_0}(0)
          + C\int^t_0 \bigl(\mathcal{L}+\mathcal{L}^2\bigr)\mathcal{H}
        + 2^{-N_0}C\int^t_0  \mathcal{H}.
      \end{equation}
For some  $0<\varepsilon<1$ that will be fixed hereafter, denote
\begin{align}
    \label{de:lya:fina}
    \begin{split}
    \widetilde{\mathcal{L}}&\defn  \mathfrak{V}^{l,N_0} + \varepsilon  \mathfrak{V}^{1,h,N_0}+  \mathfrak{W}^{h,N_0}\cdotp
     \end{split}
\end{align}
 Then, putting together  \eqref{2:lo:fr:est:s}, \eqref{est:V1:hf:fina} and \eqref{W:es:3} we can find a constant $\kappa_0$ such that  
\begin{multline}
    \label{es:lya:1}
          \widetilde{\mathcal{L}}(t)+  \kappa_0\int^t_0 \NBH{\frac{d}{2}+1}{2}{1}{V(t)}^{l,N_0}+\varepsilon \kappa_0\int^t_0  \NBH{\cd+1}{2}{1}{V^1}^{h,N_0}+ \kappa_0\int^t_0\NBH{\frac{d}{2}+2}{2}{1}{W}^{h,N_0} \le  \widetilde{\mathcal{L}}(0)\\
         + C\int^t_0 \bigl(\mathcal{L}+\mathcal{L}^2\bigr)\mathcal{H}
        +2^{-N_0}C\int^t_0  \mathcal{H} +\varepsilon  C\int^t_0\NBH{\cd+2}{2}{1}{W}^{h,N_0}. 
    \end{multline}
   Choosing   $\varepsilon$ and $N_0$ so that
   \begin{align}
       \label{es:ee:kapa_0}
       \varepsilon C < \kappa_0/2\esp{and} 2^{-N_0}C\le \frac{\kappa_0}{4},
   \end{align}
 the last two terms of \eqref{es:lya:1}  may be absorbed by left-hand side. We deduce that 
   
   \begin{multline}
    \label{W:es:5}
    \widetilde{\mathcal{L}}(t)+  \frac{\kappa_0}{2}\int^t_0 \NBH{\frac{d}{2}+1}{2}{1}{V(t)}^{l,N_0}+\frac{\kappa_0}{4} \int^t_0  \NBH{\cd+1}{2}{1}{V^1}^{h,N_0}+ \frac{\kappa_0}{2}\int^t_0\NBH{\frac{d}{2}+2}{2}{1}{W}^{h,N_0} \\\le  \widetilde{\mathcal{L}}(0)
         +          C\int^t_0 \bigl(\mathcal{L}+\mathcal{L}^2\bigr)\mathcal{H}.
    \end{multline}
  
We claim that there exists $\alpha>0$ such that if $\widetilde{\mathcal{L}}(0)<\alpha$ then, for all $t \in  [0,T]$, the left-hand side of Inequality \eqref{W:es:5} is smaller than $\widetilde{\mathcal{L}}(0)$.
Indeed, let us choose $\alpha< \min(1,\frac{\kappa_0}{16C} )$ so that $ \mathcal{L}(0)<\alpha $ implies that \eqref{smal:cnd:V} is satisfied, and set 
\begin{equation}   \label{def:T0:GS}
    T_0\defn \sup\Big\{ T_1\in [0,T],  \text{ such that } \sup_{0\le t\le T_1}\widetilde{\mathcal{L}}(t)\le \alpha\Big\}\cdotp
\end{equation}
The above set is nonempty (as $0$ is in it) and contains its supremum since $\widetilde{\mathcal{L}}$ is continuous
(remember that we assumed that $V$ is smooth). Hence, starting from \eqref{W:es:5}, we have
\begin{align*}
 \widetilde{\mathcal{L}}(t)+  \frac{\kappa_0}{2}\int^t_0 \NBH{\frac{d}{2}+1}{2}{1}{V(t)}^{l,N_0}+\frac{\kappa_0}{4} \int^t_0  \NBH{\cd+1}{2}{1}{V^1}^{h,N_0}+ \frac{\kappa_0}{2}\int^t_0\NBH{\frac{d}{2}+2}{2}{1}{W}^{h,N_0}
 %&\le \widetilde{\mathcal{L}}(0)+C\int^t_0( \alpha+\alpha^2)\mathcal{H} \\
         &\le   \widetilde{\mathcal{L}}(0)+2C \alpha\int^t_0 \mathcal{H}.
\end{align*}
    Using the smallness of $ \widetilde{\mathcal{L}}(0)$, one may conclude that $ \widetilde{\mathcal{L}}<\alpha$ on $[0,T_0]$. As $\widetilde{\mathcal{L}}$ is
continuous, we must have $T_0 = T$ and  thus the following estimate  holds on $[0, T ]$:
\begin{align}
    \label{est:fin:GE}
    \widetilde{\mathcal{L}}(t)+  \frac{\kappa_0}{8}\int^t_0 \NBH{\frac{d}{2}+1}{2}{1}{V(t)}^{l,N_0}+\frac{\kappa_0}{8} \int^t_0  \NBH{\cd+1}{2}{1}{V^1}^{h,N_0}+ \frac{\kappa_0}{8}\int^t_0\NBH{\frac{d}{2}+2}{2}{1}{W}^{h,N_0} &\le  \widetilde{\mathcal{L}}(0).
\end{align}

Clearly, time $t=0$ does not play any particular role, and one can apply the same
argument on any sub-interval of $[0, T]$ which leads  for $0\le t_0\le t\le T$ to
\begin{align}
    \label{est:fin:GE:T0}
    \widetilde{\mathcal{L}}(t)+  \frac{\kappa_0}{8}\int^t_{t_0} \NBH{\frac{d}{2}+1}{2}{1}{V(t)}^{l,N_0}+\frac{\kappa_0}{8} \int^t_{t_0}  \NBH{\cd+1}{2}{1}{V^1}^{h,N_0}+ \frac{\kappa_0}{8}\int^t_{t_0}\NBH{\frac{d}{2}+2}{2}{1}{W}^{h,N_0} &\le  \widetilde{\mathcal{L}}(t_0).
\end{align}
Note that the estimates \eqref{est:fin:GE}, \eqref{com:int:W:V:22} and the fact that $2^{js}\simeq 2^{js'}$ for all $s,s'\in [\cd-1,\cd+2]$ and $ 0\le j\le N_0$ ensure that 
\begin{align}
    \label{est:V:E}
    \lVert V \rVert_{E_t}\le C \biggl(
     \NBH{\cd-1}{2}{1}{V(t_0)}^l+\NBH{\cd+1}{2}{1}{V^2(t_0)}^h+\NBH{\cd+1}{2}{1}{V^1(t_0)}^h \biggr)\esp{for all } 0\le t_0\le t\le T
\end{align}
where $ E_t$ is the space defined in Theorem \ref{thm:glob:cri} pertaining to the interval 
$[0,t]$.\smallbreak
Owing to \eqref{comp:Wj:Vj} and \eqref{est:V:E} the low frequencies of $W$ also satisfy, for all $ 0\le t_0\le t\le T$ 
 \begin{align}
        \label{est:W:LF:V0}
        \NBH{\cd}{2}{1}{W(t)}^l+ \int^t_{t_0} \NBH{\cd+2}{2}{1}{W}^l\le  C \biggl(
     \NBH{\cd-1}{2}{1}{V(t_0)}^l+\NBH{\cd+1}{2}{1}{V^2(t_0)}^h+\NBH{\cd+1}{2}{1}{V^1(t_0)}^h \biggr)\cdotp
    \end{align}

%%%%%%%%%%%%%%%%%%%%%%%%%%%%%%%%%%%%%%%%%%%%%%%%%%%%%%%%%%%%%%

 \subsection{Proving the existence and uniqueness parts   of  Theorem \ref{thm:glob:cri}}
 \label{subsec:exis:glob:uni:BH}
 Having the a priori estimate \eqref{est:fin:GE}  at hand, constructing a global solution obeying Inequality \eqref{ine:lya:V:LF:HF:BH} for any data $V_0$ satisfying \eqref{smal:cnd:V} follows
from rather standard arguments. First, in order to benefit from the classical theory of symmetric partially hyperbolic diffusive systems, 
we remove the very low and very high frequencies of $Z_0$ so as to have initial
data belonging to nonhomogeneous Besov spaces.
More precisely, we set for all $p\in \mathbb{N}$, 
\begin{align}
    \label{def:smo:data:BH}
    V_{0,p}\defn (\Dot{S}_{p}-\Dot{S}_{-p}) V_0 \esp{with} \Dot{S}_q\defn \chi (2^{-q}D),\quad q\in\mathbb Z.
\end{align}
For each $p\in \mathbb{N}$, and $s\ge \cd+2$,
the data $  V_{0,p}=( V_{0,p}^1, V_{0,p}^2)$ belong to the \emph{nonhomogeneous} Besov space
$\B{s+1}{2}{1}\times\B{s}{2}{1}$ (see the definition in \eqref{def:NBH:NB}). Consequently, \cite[Theorem 1.2]{DanADOloc} provides us with a unique maximal
solution $(V^1_p,V^2_p)\in \cC([0,T^p[; \B{s+1}{2}{1}\times\B{s}{2}{1})$. Moreover $(V^1_p,V^2_p)\in \cC^1([0,T^p[; \B{s-1}{2}{1}\times\B{s-2}{2}{1})$. 
Taking advantage of the a priori estimates \eqref{est:W:LF:V0} and \eqref{est:V:E}, and denoting by $\mathbb{V}_p$ the function $\mathbb{V}$ defined in \eqref{ mathbb{V}:def} pertaining to $V_p$, we get
$ \mathbb{V}_p\le C \mathbb{V}_{0,p} $ as long as $V_p$ satisfies the smallness condition \eqref{smal:cnd:V}. Owing to the definition of $V_{0,p}$, we have
$ \mathbb{V}_{0,p}\le C \mathbb{V}_{0} $  and, obviously, $\Vert V_p \Vert_{L^\infty_{T^p}(\B{\cd}{2}{1})}\le \mathbb{V}_{p}(T^p)$. Hence, using a classical bootstrap 
argument, one can conclude that, if $\mathbb{V}_{0}$ is small enough, then
\begin{align}
    \label{est:mathbb:V:V0}
    \mathbb{V}_{p}(t)\le C \mathbb{V}_{0} \text{   for all } t\in [0,T^p[.
\end{align}
In order to show that the solution $V_p$ is global (that is $T^p=\infty$), we take advantage of the continuation criterion in \cite[Rem. 1.3]{DanADOloc}. In fact the interpolation inequality \eqref{int:ine:BH:cd}, embedding Inequality \eqref{emb:lh:hf:BH} and eventually the estimate \eqref{est:mathbb:V:V0} imply that if, for some $ T<\infty$
\begin{align}
    \label{blow:Vn:BH}
    \begin{split}
   & \bullet U_p([0,T)\times \Rd) \text{  is contained  in a compact subset of         } \mathcal{U},\\
 %& \bullet \int^{T^p}_0(\normeinf{\n V_p}+\normeinf{\n V_p}\normeinf{\n V_p^2})<\infty,\\
&\bullet  \int^{T}_0\Bigl(\normeinf{\n V_p}^2+ \normeinf{\pt V_p}\Bigr)<\infty,
   \end{split}
\end{align}
then the solution $V_p$ may be continued beyond $[0, T^p[$. 
Note that \eqref{blow:Vn:BH} is satisfied, owing to \eqref{est:mathbb:V:V0} and embedding.
Hence $T^p=\infty$ and \eqref{est:mathbb:V:V0} is thus satisfied for all time.
 %that $\tV_p\defn V_{p+1}-V_p$ is solution \eqref{eq:tV_n} with initial data $\tV_{0,p}\defn V_{0,p+1}-V_{0,p} $. We aim at proving that $\tV_p$ is
\smallbreak 
To prove the convergence of $(V_p)$ to some $V$, one may for instance show that 
for all $T>0,$ $(V_p)$ is a Cauchy sequence in the space
\begin{align*}
    \mathcal{F}_T\defn\{ (V^1,V^2)\in L^\infty(0,T;\BH{\cd}{2}{1}\times\BH{\cd-1}{2}{1}) \text{  and  } V^2 \in L^1(0,T;\BH{\cd}{2}{1})\}.
\end{align*}
Adapting the proof of \cite[section 2.4]{DanADOloc} to our case where instead of nonhomogeneous Besov space, we use the corresponding homogeneous Besov spaces ensures that $(V_p)$ is a Cauchy sequence in $\mathcal{F}_T$ and thus has a limit $V$ in this space, and then passing to the
limit in \eqref{Eq_b} is straightforward. Compactness arguments can also be used (see \cite[Chap. 4]{HajDanChe11}).

%we are working in homogeneous Besov spaces (use the product law $\BH{\cd}{2}{1}\times \BH{\cd-1}{2}{1} \rightarrow \BH{\cd-1}{2}{1}$, take $L^\ro_T(\BH{s}{2}{1}) $ instead of $\widetilde{L}^\ro_T(\BH{s}{2}{1})$ and $T$ small if necessary ) ensures that $(V_n)_n$ is a Cauchy sequence in $F$ and thus has a limit $V$ in that space, and passing to the limit in \eqref{Eq_b} is straightforward. 
Furthermore, time continuity of the solution and $\mathbb{V}(T)\le C\mathbb{V}_0 $ (for V),  for all $T >0$ may be obtained by  adapting the arguments of \cite[Chap. 4]{HajDanChe11}. This completes the proof of the existence part of Theorem \ref{thm:glob:cri}. As for uniqueness, it suffices to adapt  the proof of \cite[section 2.4]{DanADOloc}.

%%%%%%%%%%%%%%%%%%%%%%%%%%%%%%%%%%%%%%%%%%%%%%%%%%%%%%%
\subsection{Proof of Theorem \ref{thm:decay:sucri}}
The overall strategy is inspired by the joint work of the second author with T. Crin-Barat in \cite[Theorem 2.2]{BaratDan22M},  and Z. Xin and J. Xu in \cite{XinXU21}.

$\blacksquare$ \textit{First step: Uniform bounds in $\BH{-\sigma_1}{2}{\infty}$.} 
The proof of Inequality \eqref{est:prog:NBH(V):r=inf} starts from estimate \eqref{0:lya:lf:est} on the Lyapunov functional $\mathcal{L}^l_j$ that has been defined
on  \eqref{lya:fun:def:l}. 
After using Lemma \ref{lem_der_int} omitting the second (nonnegative) term 
of the left-hand side, then multiplying by $2^{-j\sigma_1}$ and taking the supremum on $\mathbb{Z}$, we end up for all $t\ge 0$ with 
\begin{align}
    \label{est:B:sig:1:gen}
   \NBH{-\sigma_1}{2}{\infty}{V(t)}\le  \NBH{-\sigma_1}{2}{\infty}{V_0}+\int^t_0  \NBH{-\sigma_1}{2}{\infty}{(f,F^1,F^2,F^t )},
\end{align}
where $F^1,$ $F^2$ and $F^t$ have been defined in \eqref{F_T:F_1:F_2}, and $f$ 
satisfies Assumption \textbf{D}.
\smallbreak
In order to bound the term $F^1$, we use Inequality \eqref{product:propo:uniq}. It holds for $ \cd<\sigma_1\le \cd$,
\begin{align*}
    \NBH{-\sigma_1}{2}{\infty}{F^1}\lesssim \suma  \NBH{-\sigma_1}{2}{\infty}{S^\alpha(U)-\overline{S}^\alpha }  \NBH{\cd}{2}{1}{\n V}.
\end{align*}
In order to bound $S^\alpha(U)-\overline{S}^\alpha  $ in $\BH{-\sigma_1}{2}{\infty}$, one cannot use directly Inequality \eqref{compo:propo:base} as $ -\sigma_1 $ may be
negative. However, applying \eqref{comp:uv:propo:inf} we can still obtain if \eqref{smal:cnd:V} is satisfied,
\begin{align}
\label{comp:est:nega:applic}
    \NBH{-\sigma_1}{2}{\infty}{S^\alpha(U)-\overline{S}^\alpha } \lesssim    \NBH{-\sigma_1}{2}{\infty}{V },
\end{align}
whence
\begin{align*}
    \NBH{-\sigma_1}{2}{\infty}{F^1}\lesssim   \NBH{-\sigma_1}{2}{\infty}{V }\NBH{\cd+1}{2}{1}{ V}.
\end{align*}
Next, remembering the form of $F^2$, then combining Inequality \eqref{product:propo:uniq} (recall that $1-d/2<\sigma_1\le d/2$) and the same composition estimate (applied to $r^{\alpha\beta}(U)$) as in \eqref{comp:est:nega:applic} yields
\begin{align*}
     \NBH{-\sigma_1}{2}{\infty}{F^2}&\lesssim \sum_{\beta=1}^d   \NBH{-\sigma_1+1}{2}{\infty}{r^{\alpha\beta}(U)\pbe V^2 }\\&\lesssim \NBH{1-\sigma_1}{2}{\infty}{V} \NBH{\cd}{2}{1}{\n V^2}\lesssim  (\NBH{-\sigma_1}{2}{\infty}{V}^l+  \NBH{\cd}{2}{1}{V}^h) \NBH{\cd+1}{2}{1}{ V^2}.
\end{align*}
Concerning $F^t$, we have, keeping \eqref{pt:V:1:est} and \eqref{pt:V:2:est} in mind that
\begin{align*}
     \NBH{-\sigma_1}{2}{\infty}{F^t} &\lesssim   \NBH{-\sigma_1}{2}{\infty}{ V } \NBH{\cd}{2}{1}{\pt V}\\
     &\lesssim  \NBH{-\sigma_1}{2}{\infty}{ V } (\NBH{\cd+1}{2}{1}{V}+ \NBH{\cd+2}{2}{1}{V^2}+ \NBH{\cd+1}{2}{1}{V}^2).
\end{align*}
 Similarly, the term $f$ may be bounded as follows: 
\begin{align*}
    \NBH{-\sigma_1}{2}{\infty}{f}&\lesssim (1+\NBH{\cd}{2}{1}{V})  \NBH{-\sigma_1}{2}{\infty}{ \n V\otimes\n V }\\
   & \lesssim \NBH{-\sigma_1}{2}{\infty}{ \n V }  \NBH{\cd}{2}{1}{ \n V}\lesssim (\NBH{-\sigma_1}{2}{\infty}{  V }^l + \NBH{\cd}{2}{1}{  V }^h) \NBH{\cd+1}{2}{1}{ V}.
\end{align*}
\iffalse
\begin{comment}
Concerning $F^t$, we have that
\begin{align*}
    \NBH{-\sigma_1}{2}{\infty}{F^t}\lesssim  \NBH{\cd}{2}{1}{V}  \NBH{-\sigma_1}{2}{\infty}{ \pt V }.
\end{align*}
Meanwhile, from the decomposition \eqref{eq:lindiff} there holds
\begin{align*}
    \NBH{-\sigma_1}{2}{\infty}{\pt V}\le C ( \NBH{-\sigma_1}{2}{\infty}{(\n V,\n^2 V^2)})+  \NBH{-\sigma_1}{2}{\infty}{(F_1,F_2,F^t;f)}
\end{align*}
Hence, using the smallness condition \eqref{small:cnd:whole:space}, which implies that $\NBHc{V}\leq 1$, we deduce
\begin{align*}
     \NBH{-\sigma_1}{2}{\infty}{F^t}&\lesssim  \NBH{\cd}{2}{1}{V}  \NBH{-\sigma_1}{2}{\infty}{(\n V,\n^2 V^2)}+  \NBH{-\sigma_1}{2}{\infty}{(F_1,F_2,f)}\\
     &\lesssim \NBH{\cd}{2}{1}{V}(   \NBH{-\sigma_1}{2}{\infty}{ V }^l+\NBH{\cd}{2}{1}{V^1}^h+ \NBH{\cd+1}{2}{1}{V^2}^h)+  \NBH{-\sigma_1}{2}{\infty}{(F_1,F_2,f)}.
\end{align*}
\end{comment}
\fi
Thus, regrouping all those estimates, we obtain
\begin{align*}
  \NBH{-\sigma_1}{2}{\infty}{V(t)}\le  \NBH{-\sigma_1}{2}{\infty}{V_0}+\int^t_0  \mathcal{Y}_1+ \int^t_0 \NBH{-\sigma_1}{2}{\infty}{V} \mathcal{Y}_2
\end{align*}
with
\begin{align*}
    \mathcal{Y}_1  \defn \NBH{\cd}{2}{1}{V}^h \NBH{\cd+1}{2}{1}{V}
    %+\NBH{\cd}{2}{1}{  V } \NBH{\cd+1}{2}{1}{ V^2}^h
    \esp{and}
    \mathcal{Y}_2\defn \NBH{\cd+1}{2}{1}{V}+  \NBH{\cd+1}{2}{1}{V}^2+ \NBH{\cd+2}{2}{1}{V^2}.
\end{align*}
Since \eqref{est:fin:GE:T0} is satisfied, one can prove that  $ \mathcal{Y}_1,  \mathcal{Y}_2 \in L^1(\mathbb{R}_+).$
 Applying Gronwall inequality completes the proof of \eqref{est:prog:NBH(V):r=inf}.

Let us highlight that one has to justify that if $V_0$ is in $\BH{-\sigma_1}{2}{\infty}$ (in addition to \eqref{small:cnd:whole:space}), then
the solution constructed in Theorem \eqref{thm:glob:cri} is in $ \BH{-\sigma_1}{2}{\infty} $ for all time. This may be checked by following the construction scheme of the previous subsection. 

\medbreak
$\blacksquare$ \textit{Second step : proof of generic decay estimates.}
The functional
$$\widetilde{\mathcal{L}}: t\in \mathbb{R}_+ \mapsto  \mathfrak{V}^{l,N_0}(t) + \varepsilon  \mathfrak{V}^{1,h,N_0}(t)+  \mathfrak{W}^{h,N_0}(t),$$
 defined in \eqref{de:lya:fina}, with $\varepsilon$ and $N_0$ satisfying \eqref{es:ee:kapa_0}, is nonincreasing, as Inequality \eqref{est:fin:GE:T0} is satisfied. 
 Consequently $\widetilde{\mathcal{L}}$
is differentiable almost everywhere. It is worth noting that, as, \eqref{equi:lya}, the Inequalities \eqref{lya:fun:def:l}, \eqref{lya:fun:def:h} and  the fact that $ \overline{S}^0_{22}\simeq I_{n_2} $ imply that 
    \begin{align}
\label{equi:lya:tidle}
\widetilde{\mathcal{L}}\simeq \NBH{\cd-1}{2}{1}{V}^{l,N_0}+ \NBH{\frac{d}{2}+1}{2}{1}{V^1}^{h,N_0}+ \NBH{\frac{d}{2}}{2}{1}{W}^{h,N_0}\cdotp
\end{align}
Furthermore, one proved  in \eqref{est:fin:GE:T0} that there exists a positive constant $\kappa_0>0$ such that $\text{ for all } 0\le t_0\le t$. 
\begin{align*}
     \widetilde{\mathcal{L}}(t)+  \frac{\kappa_0}{8}\int^t_{t_0} \NBH{\frac{d}{2}+1}{2}{1}{V}^{l,N_0}+\frac{\kappa_0}{8} \int^t_{t_0}  \NBH{\cd+1}{2}{1}{V^1}^{h,N_0}+ \frac{\kappa_0}{8}\int^t_{t_0}\NBH{\frac{d}{2}+2}{2}{1}{W}^{h,N_0} &\le  \widetilde{\mathcal{L}}(t_0) \cdotp  
\end{align*}
%with  $\mathcal{H}$ defined in  \eqref{lya:LF:HF:et:gain}. 
From this, we can deduce that
\begin{align*}
    \frac{d}{dt}  \widetilde{\mathcal{L}} + \frac{\kappa_0}{8}\NBH{\frac{d}{2}+1}{2}{1}{V}^{l,N_0}+\frac{\kappa_0}{8}  \NBH{\cd+1}{2}{1}{V^1}^{h,N_0}+ \frac{\kappa_0}{8} \NBH{\frac{d}{2}+2}{2}{1}{W}^{h,N_0}\le 0\esp{a. e.  on}  \mathbb{R}_+. 
\end{align*}

Granted with this information and \eqref{est:prog:NBH(V):r=inf}, one can prove the decay estimates of Theorem \eqref{thm:decay:sucri} by
following the interpolation argument of \cite{XinXU21}. The starting point is that, provided, $-\sigma_1<d/2-1$, we have the following  interpolation inequality:
\begin{align*}
    \NBH{\cd-1}{2}{1}{V}^{l,N_0}\lesssim \biggl(  \NBH{-\sigma_1}{2}{1}{V}^{l,N_0} \biggr)^{\theta_0} \biggl(  \NBH{\cd+1}{2}{1}{V}^{l,N_0} \biggr)^{1-\theta_0} \text{     with   } \theta_0\defn \frac{2}{d/2+1+\sigma_1} \cdotp
\end{align*}
Inequality \eqref{est:prog:NBH(V):r=inf} thus implies that
\begin{align*}
\NBH{\cd+1}{2}{1}{V}^{l,N_0} \gtrsim   \biggl(  \NBH{\cd-1}{2}{1}{V}^{l,N_0} \biggr)^{\frac{1}{1-\theta_0}}
    \biggl(  \NBH{-\sigma_1}{2}{1}{V_0}^{l,N_0} \biggr)^{-\frac{\theta_0}{1-\theta_0}}.
\end{align*}
Similarly, one has, as $ 1-\sigma< \cd,$
\begin{align*}
\NBH{\cd+2}{2}{1}{W}^{h,N_0} \gtrsim  \biggl(  \NBH{\cd}{2}{1}{W}^{h,N_0} \biggr)^{\frac{1}{1-\theta_0}}
    \biggl(  \lVert W_0 \rVert_{\BH{1-\sigma_1}{2}{1}}^{h,N_0} \biggr)^{-\frac{\theta_0}{1-\theta_0}}\gtrsim  \biggl(  \NBH{\cd}{2}{1}{W}^{h,N_0} \biggr)^{\frac{1}{1-\theta_0}}
    \biggl(  \lVert W_0 \rVert_{\BH{\cd}{2}{1}}^{h,N_0} \biggr)^{-\frac{\theta_0}{1-\theta_0}}.
\end{align*} 
For the high frequencies term $\NBH{\cd+1}{2}{1}{V^1}^{h,N_0}$, using the estimate of Theorem \ref{thm:glob:cri}, one can just write:
\begin{align*}
\NBH{\cd+1}{2}{1}{V^1}^{h,N_0} \gtrsim  \biggl(  \NBH{\cd+1}{2}{1}{V^1}^{h,N_0} \biggr)^{\frac{1}{1-\theta_0}}
    \biggl(  \lVert V^1_0 \rVert_{\BH{\cd-1}{2}{1}\cap \BH{\cd+1}{2}{1}} \biggr)^{-\frac{\theta_0}{1-\theta_0}}\cdotp
\end{align*}
Hence, remembering \eqref{equi:lya:tidle}, we see that there exists a (small) constant $c$ such that
\begin{align*}
    \frac{d}{dt} \widetilde{\mathcal{L}}+c C_0^{-\frac{\theta_0}{1-\theta_0}} \widetilde{\mathcal{L}}^{\frac{1}{1-\theta_0}} \le 0 \ \text{     with   } \ C_0\defn  \NBH{-\sigma_1}{2}{1}{V_0}^{l,N_0} + \lVert V^1_0 \rVert_{\BH{\cd-1}{2}{1}\cap \BH{\cd+1}{2}{1}}+ \lVert W \rVert_{\BH{\cd}{2}{1}}^{h,N_0}.
\end{align*}
Integrating, this gives us
\begin{align*}
     \widetilde{\mathcal{L}(}t)\le \biggl(  1+ c\frac{\theta_0}{1-\theta_0}\left( \frac{\mathcal{L}(0)}{C_0}\right)^{\frac{\theta_0}{1-\theta_0}}t \biggr)^{1-\frac{1}{\theta_0}}\widetilde{\mathcal{L}}(0),
\end{align*}
whence, using again \eqref{equi:lya:tidle}, 
\begin{align}
    \label{deca:gn:lf:hf}
    \NBH{\cd-1}{2}{1}{V(t)}^{l,N_0}+  \NBH{\cd}{2}{1}{W(t)}^{h,N_0}+  \NBH{\cd+1}{2}{1}{V^1(t)}^{h,N_0}\lesssim (1+t)^{-\alpha_1}\mathbb{V}_0\, \text{     with  } \alpha_1\defn \frac{d/2-1+\sigma_1}{2}\cdotp
\end{align}
The previous Inequality combined with \eqref{com:int:W:V:22} provides
\eqref{decay2} and \eqref{decay3}. Inequality \eqref{decay1} follows from  Inequalities \eqref{deca:gn:lf:hf} and \eqref{est:prog:NBH(V):r=inf}, and interpolation.

%%%%%%%%%%%%%%%%%%%%%%%%%%%%%%%%%%%%%%%%%%%%%%%%%%%% 

\section{Proof of Theorem \ref{thm:glo:L2:cri}}\label{sec:proof:global:crit}
This section is dedicated to the proof of a refinement of Theorem \ref{thm:glob:cri} in the case where System \eqref{Eq_b} meets the additional conditions outlined in Assumption \textbf{E}. Let $V=(V^1,V^2)$ be a smooth (and decaying) solution of System \eqref{Eq_b} under Assumption \textbf{E}, satisfying $(\bar U^1+V^1)\in \mathcal{O}^1$ 
for some bounded open subset  $\mathcal{O}^1$ such that $\overline{\mathcal{O}^1}\subset\U^1.$
We assume in addition that
\begin{align}
    \label{smal:cnd:criti:V}
    \sup_{0\le t\le T} \NBH{\cd}{2}{1}{V^1(t)}\ll 1, \esp{for all} T<T^*,
\end{align}
which owing to the Besov embedding $ \BH{\cd}{2}{1} \hookrightarrow L^\infty$ implies that
\begin{align}
    \label{smal:inf:cnd:criti:V}
    \sup_{0\le t\le T} \normeinf{V^1(t)} \ll  1 \esp{for all} T<T^*.
\end{align} 
%\red{Note that the regularities \eqref{regu:V:cri} and \eqref{regu:V:cri:1} are sufficient to perform the calculations that follow.}\\

Let us start with the following result.
\medbreak
\paragraph{$\spadesuit$ \emph{Estimate for $\pt V$:} }
\begin{lem}
    \label{lem:est:pt:cri}
    Let $-\cd<\kappa\le \cd.$
     Under the assumptions of Theorem \ref{thm:glo:L2:cri} and \eqref{smal:cnd:criti:V}, we have for all  $j\in \mathbb{Z},$
   \begin{align}
       \label{est:pt:V:cri}
       \begin{split}
       &\bullet 2^{j\kappa}\normede{ \pt V^1_j }\lesssim 2^{j\kappa}\normede{\n V_j}+c_j\biggl( \NBH{\cd}{2}{1}{V^2}\NBH{\kappa}{2}{1}{\n V^1}+ \NBH{\cd}{2}{1}{V^1}\NBH{\kappa}{2}{1}{\n V^2}\biggr),\\
       &\bullet 2^{j\kappa}\normede{ \pt V^2_j }\lesssim 2^{j\kappa}\normede{\n V_j}  +c_j\biggl(\NBHc{V^1}\NBH{\kappa}{2}{1}{\n V^1} +\NBH{\cd}{2}{1}{V}\NBH{\kappa}{2}{1}{\n V^2}\\
       &\hspace{8cm}+ \NBH{\kappa+2}{2}{1}{ V^2}
       + \NBH{\kappa+1}{2}{1}{V^1}\NBH{\cd+1}{2}{1}{ V^2}\biggr)\cdotp
        \end{split}
   \end{align}
\end{lem}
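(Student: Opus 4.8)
The plan is to follow the scheme of Lemma \ref{lem:ptV}, but to exploit the affine/triangular structure granted by Assumption \textbf{E} so as to peel off the constant-coefficient part of the equations and reach the sharper, frequency-localized bounds. Since $S^0_{11}=\mathrm{Id}$ and $f^2\equiv0$ under Assumption \textbf{E}, the two lines of \eqref{Eq_b:V1:V2} yield the purely algebraic identities
\[
\pt V^1=-\suma\bigl(S^\alpha_{11}(U)\pal V^1+S^\alpha_{12}(U)\pal V^2\bigr),
\]
\[
\pt V^2=-(S^0_{22}(U))^{-1}\biggl[\suma\bigl(S^\alpha_{21}(U)\pal V^1+S^\alpha_{22}(U)\pal V^2\bigr)-\sumab\pal\bigl(Z^{\alpha\beta}(U)\pbe V^2\bigr)\biggr],
\]
where, by (E1) and \eqref{smal:inf:cnd:criti:V}, $(S^0_{22}(U))^{-1}$ is a well-defined smooth function of $V^1$ taking values in a compact set of positive definite matrices. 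Note that no time derivative survives on the right-hand sides, which is precisely why these estimates are unconditional.

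Next I would decompose every coefficient as its value at $\overline U$ plus a remainder. By (E2)--(E3) and the symmetry ${}^T\!S^\alpha_{21}=S^\alpha_{12}$, the matrices $S^\alpha_{21},S^\alpha_{12},Z^{\alpha\beta},S^0_{22}$ depend smoothly on $U^1$ only, while $S^\alpha_{11}$ is linear in $V^2$ (and $U^1$-independent) and $S^\alpha_{22}$ is affine in $V^2$ with $U^1$-smooth coefficients; hence, by the composition estimates \eqref{comp:uv:propo:r=1}, \eqref{compo:propo:base} together with the smallness \eqref{smal:cnd:criti:V}, each remainder is bounded in $\dot B^{d/2}_{2,1}$ by $\NBHc{V^1}$ (resp.\ by $\NBHc{V}$ for $S^\alpha_{11}-\overline S^\alpha_{11}$ and $S^\alpha_{22}-\overline S^\alpha_{22}$), and likewise for $(S^0_{22}(U))^{-1}-(\overline S^0_{22})^{-1}$. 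For the diffusion term I would use Leibniz, $\pal\bigl(Z^{\alpha\beta}(U)\pbe V^2\bigr)=Z^{\alpha\beta}(U)\pal\pbe V^2+\bigl(DZ^{\alpha\beta}\bigr)(U^1)\,\pal V^1\,\pbe V^2$. Then I would apply $\DDj$ and estimate term by term: the constant-coefficient first-order pieces commute with $\DDj$, so they contribute $\lesssim\normede{\n V_j}$, i.e.\ the $2^{j\kappa}\normede{\n V_j}$ term; the constant-coefficient diffusion piece $(\overline S^0_{22})^{-1}\overline Z^{\alpha\beta}\pal\pbe V^2$ gives, by Bernstein's inequality, $2^{j\kappa}\normede{\DDj(\cdot)}\lesssim c_j\NBH{\kappa+2}{2}{1}{V^2}$; and all remainder terms, being products of a coefficient remainder with a first- or second-order derivative of $V$, are handled by the homogeneous Besov product estimates \eqref{product_propo2}, \eqref{product_propo4}, placing the low-regularity factor in $\dot B^{\kappa}_{2,1}$ (legitimate since $-\cd<\kappa\le\cd$) and the remainder in the algebra $\dot B^{d/2}_{2,1}$. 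This produces $c_j\NBHc{V^2}\NBH{\kappa}{2}{1}{\n V^1}$ and $c_j\NBHc{V^1}\NBH{\kappa}{2}{1}{\n V^2}$ for the $\pt V^1$ bound; and, for the $\pt V^2$ bound, $c_j\NBHc{V^1}\NBH{\kappa}{2}{1}{\n V^1}$, $c_j\NBH{\cd}{2}{1}{V}\NBH{\kappa}{2}{1}{\n V^2}$, the bilinear term $c_j\NBH{\kappa+1}{2}{1}{V^1}\NBH{\cd+1}{2}{1}{V^2}$ (from $(DZ^{\alpha\beta})(U^1)\pal V^1\,\pbe V^2$, with $\pal V^1\in\dot B^{\kappa}_{2,1}$ and $\pbe V^2\in\dot B^{d/2}_{2,1}$), plus $c_j\NBHc{V^1}\NBH{\kappa+2}{2}{1}{V^2}$ coming from $(Z^{\alpha\beta}(U)-\overline Z^{\alpha\beta})\pal\pbe V^2$, which \eqref{smal:cnd:criti:V} absorbs into $c_j\NBH{\kappa+2}{2}{1}{V^2}$. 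Summing over the finitely many terms gives \eqref{est:pt:V:cri}.

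The main (rather mild) obstacle is the index bookkeeping in the product estimates: for each product one must check that the two chosen regularity exponents both lie in $]-\cd,\cd]$ and add up to a positive number, which is exactly where the hypothesis $-\cd<\kappa\le\cd$ is used, and one must make sure the variable-coefficient second-order term $(Z^{\alpha\beta}(U)-\overline Z^{\alpha\beta})\pal\pbe V^2$ is absorbed into the linear contribution $\NBH{\kappa+2}{2}{1}{V^2}$ via smallness rather than left as an uncontrolled term. Beyond that, the argument is a routine repetition of the product/composition machinery already used in the proofs of Lemma \ref{lem:ptV} and Proposition \ref{pro:es:W}.
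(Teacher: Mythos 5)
Your proposal is correct and follows essentially the same route as the paper: write $\pt V^1$ and $\pt V^2$ algebraically from \eqref{Eq_b:V1:V2} (using $S^0_{11}=\mathrm{Id}$, $f^2=0$), split each coefficient into its value at $\overline U$ plus a remainder, exploit the structure of Assumption \textbf{E} (coefficients depending only on $U^1$, or affine in $U^2$, treated via \eqref{compo:propo:base}, \eqref{comp:uv:propo:r=1} and \eqref{comp:u_v_1:2:BH:ine:prop:1}), apply the product laws \eqref{product_propo2}--\eqref{product_propo4} with the low-regularity factor at index $\kappa\in\,]-\tfrac d2,\tfrac d2]$, and absorb the variable-coefficient second-order remainder via the smallness \eqref{smal:cnd:criti:V}. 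The only cosmetic difference is that you carry out the frequency localization (the $c_j$ bookkeeping and the Bernstein step for the constant-coefficient diffusion term) explicitly, whereas the paper states the bounds at the level of full Besov norms, which is equivalent.
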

\begin{proof}
     The second inequality of \eqref{est:pt:V:cri} relies on the following identity:
\begin{multline*}
     \pt V^2 = -(S^0_{22}(U^1))^{-1}\sum_{\alpha}{\left(S^\alpha_{21} (U^1) \pal V^1+ S^\alpha_{22} (U) \pal V^2\right)} \\
     +(S^0_{22}(U^1))^{-1}\sum_{\alpha,\beta}{ \pal (Z^{\alpha\beta}(U^1)) \pbe V^2  }+ \sum_{\alpha,\beta}{  (S^0_{22}(U^1))^{-1} (Z^{\alpha\beta}(U^1) \pal\pbe V^2)  }.
\end{multline*}
 Combining Inequalities \eqref{smal:cnd:criti:V}, \eqref{product_propo2} and  \eqref{compo:propo:base} we discover that for all $\alpha,\beta,$
 \begin{align*}
    & \bullet \NBH{\kappa }{2}{1}{\biggl( (S^0_{22}(U^1))^{-1}S^\alpha_{21} (U^1)- (\overline{S}^0_{22})^{-1}\overline{S}^\alpha_{21}\biggr) \pal V^1}\lesssim \NBH{\cd}{2}{1}{V^1} \NBH{\kappa }{2}{1}{\n  V^1},\\
     & \bullet  \NBH{\kappa }{2}{1}{(S^0_{22}(U^1))^{-1}{ \pal (Z^{\alpha\beta}(U^1)) \pbe V^2  } }\lesssim \NBH{\kappa+1}{2}{1}{V^1}\NBH{\cd+1}{2}{1}{V^2} ,\\
     &\bullet  \NBH{\kappa }{2}{1}{(S^0_{22}(U^1))^{-1}{(Z^{\alpha\beta}(U^1) \pal \pbe V^2)  } }\lesssim \NBH{\kappa+2}{2}{1}{V^2}.
 \end{align*}
 In order to handle the term  $ (S^0_{22}(U^1))^{-1} S^\alpha_{22} (U) \pal V^2$, instead of Inequality \eqref{compo:propo:base} we use \eqref{comp:u_v_1:2:BH:ine:prop:1} since $(S^0_{22}(U^1))^{-1} S^\alpha_{22} (U) $ is at most linear wth respect to $U^2$. It holds, for $\alpha=1,\cdots,d,$
 \begin{multline*}
      \NBH{\kappa }{2}{1}{\biggl((S^0_{22}(U^1))^{-1} S^\alpha_{22} (U)- (\overline{S}^0_{22})^{-1}\overline{S}^\alpha_{22}\biggr) \pal V^2}\\\lesssim \biggl( (1+\NBH{\cd}{2}{1}{V^1})\NBH{\cd}{2}{1}{V^2} + \NBH{\cd}{2}{1}{V^1} \biggr)\NBH{\kappa }{2}{1}{\n  V^2}.
 \end{multline*}
 Combining those estimates and using \eqref{smal:cnd:criti:V} gives the second inequality of \eqref{est:pt:V:cri}. Similar arguments lead to the first one.
\end{proof}

\paragraph{$\spadesuit$ \emph{Low frequencies analysis:}} The refinement of Proposition \ref{pro:low:est:Vj:ge} is the following 
\begin{lem}
\label{lem:lf:cri:L2:V}
   Under the assumptions of Theorem \ref{thm:glo:L2:cri} and \eqref{smal:cnd:criti:V}, we have for all  $s\in ]-\frac{d}{2},\frac{d}{2}]$ and $j\in \mathbb{Z}$, 
\begin{multline}
    \label{lo:fr:est:s:cri}
              \frac{d}{dt}  \mathcal{L}_j^l  + \min(1, 2^{2j})  \mathcal{L}_j^l \le 
               c_j \dm{s}\sqrt{\mathcal{L}_j^l}\biggl(
\NBH{\cd}{2}{1}{ V}\NBH{s+1}{2}{1}{V}   +\NBHc{V^1} \lVert V^2 \rVert_{\BH{s+1}{2}{1}\cap \BH{s+2}{2}{1}}\\
+ \NBH{s+1}{2}{1}{V^1} \lVert V^2 \rVert_{\BH{\cd+1}{2}{1}}\biggr)\cdotp
    \end{multline}
\end{lem}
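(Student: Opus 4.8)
The plan is to follow the proof of Proposition \ref{pro:low:est:Vj:ge}, exploiting the refined block structure of Assumption \textbf{E} (and the fact that $f\equiv0$) to replace, wherever possible, the factor $\NBHc{V}$ by the smaller quantity $\NBHc{V^1}$, which is the only norm assumed small here. Since Assumption \textbf{E} is a particular instance of Assumption \textbf{D}, and since the estimates \eqref{est:LV:lf:1} and \eqref{get:diffu:low}---hence the Lyapunov inequality \eqref{0:lya:lf:est}---rely only on Condition (SK) and not on any smallness hypothesis, we still have
\[\frac{d}{dt}\mathcal{L}_j^l+c\min(1,2^{2j})\mathcal{L}_j^l\le C\normede{F_j}\sqrt{\mathcal{L}_j^l},\qquad F=F^1+F^2+F^t,\]
the $f$-contribution being absent by \textbf{(E4)}. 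Writing $\normede{F_j}=c_j\dm{s}\|F\|_{\dot B^{s}_{2,1}}$ as usual, the lemma will follow once we establish, for every $s\in\,]-\tfrac d2,\tfrac d2]$,
\[\|F\|_{\dot B^{s}_{2,1}}\lesssim \NBHc{V}\NBH{s+1}{2}{1}{V}+\NBHc{V^1}\,\lVert V^2\rVert_{\BH{s+1}{2}{1}\cap\BH{s+2}{2}{1}}+\NBH{s+1}{2}{1}{V^1}\,\lVert V^2\rVert_{\BH{\cd+1}{2}{1}}.\]

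For $F^t=(\overline S^0-S^0(U))\pt V$ I would first use \textbf{(E1)}: as $S^0_{11}={\rm Id}$ and $S^0_{22}$ depends smoothly on $U^1$ only, we get $F^t=\bigl(0,(\overline S^0_{22}-S^0_{22}(U^1))\pt V^2\bigr)$, so that \eqref{product_propo2} and the composition estimate \eqref{compo:propo:base} yield $\|F^t\|_{\dot B^{s}_{2,1}}\lesssim\NBHc{V^1}\NBH{s}{2}{1}{\pt V^2}$; it then remains to insert the bound on $\NBH{s}{2}{1}{\pt V^2}$ furnished by Lemma \ref{lem:est:pt:cri} with $\kappa=s$, the smallness assumption \eqref{smal:cnd:criti:V} serving to absorb the terms that are quadratic in $V^1$. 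For $F^2=\sumab\pal(r^{\alpha\beta}\pbe V^2)$ I expand by Leibniz' rule as in \eqref{decom:F2<=Leib} and use \textbf{(E3)}, i.e.\ that $r^{\alpha\beta}=\overline Z^{\alpha\beta}-Z^{\alpha\beta}(U^1)$ depends smoothly on $U^1$ only: then $r^{\alpha\beta}\pal\pbe V^2$ is bounded via \eqref{product_propo2} and \eqref{compo:propo:base} by $\NBHc{V^1}\NBH{s+2}{2}{1}{V^2}$, while $\pal(r^{\alpha\beta})\pbe V^2$ is bounded by $\NBH{s+1}{2}{1}{V^1}\NBH{\cd+1}{2}{1}{V^2}$ (placing $\pbe V^2$ at the critical level $\BH{\cd}{2}{1}$).

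The term $F^1=\suma(\overline S^\alpha-S^\alpha(U))\pal V$ is treated block by block according to \textbf{(E2)}. The $(1,1)$-entry $(\overline S^\alpha_{11}-S^\alpha_{11}(U))\pal V^1$ is \emph{linear} in $V^2$ (since $S^\alpha_{11}$ is affine in $U^2$ and independent of $U^1$), hence a genuine bilinear expression in $(V^2,\pal V^1)$, controlled by $\NBHc{V^2}\NBH{s+1}{2}{1}{V^1}$ and thus by the first admissible term. The off-diagonal entries involve $S^\alpha_{21}$ and $S^\alpha_{12}={}^T\!S^\alpha_{21}$, which are independent of $U^2$ and smooth in $U^1$; composing with $V^1$ and multiplying by $\pal V^1$ or $\pal V^2$ gives contributions $\lesssim\NBHc{V^1}\bigl(\NBH{s+1}{2}{1}{V^1}+\NBH{s+1}{2}{1}{V^2}\bigr)$. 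The $(2,2)$-entry $(\overline S^\alpha_{22}-S^\alpha_{22}(U))\pal V^2$ uses the composition estimate \eqref{comp:u_v_1:2:BH:ine:prop:1} for maps affine in $U^2$ and smooth in $U^1$, together with \eqref{product_propo2} and \eqref{smal:cnd:criti:V}, again producing only admissible terms. Summing the estimates for $F^1,F^2,F^t$ yields the displayed bound on $\|F\|_{\dot B^{s}_{2,1}}$, and the conclusion follows exactly as in Proposition \ref{pro:low:est:Vj:ge}.

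The one genuinely delicate point---and the precise place where Assumption \textbf{E} is needed---is the bookkeeping of the regularity exponents: in every product estimate exactly one factor must sit at the critical level $\BH{\cd}{2}{1}$ so that the whole range $-\tfrac d2<s\le\tfrac d2$ is allowed, and, crucially, every genuinely nonlinear composition must be a composition with $V^1$ alone (the dependence on $V^2$ being at most affine), so that only $\NBHc{V^1}$, and never $\NBHc{V^2}$, has to be small. Lemma \ref{lem:est:pt:cri} absorbs the bulk of the work for the $\pt V$ contributions, so beyond this bookkeeping the argument is a routine application of the product and composition estimates recalled in the Appendix.
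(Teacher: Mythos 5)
Your proposal is correct and follows essentially the same route as the paper: start from the Lyapunov inequality \eqref{0:lya:lf:est} (valid without any smallness assumption, with $f\equiv0$ by (\textbf{E4})), then re-estimate $F^1$, $F^2$ and $F^t$ in $\dot B^{s}_{2,1}$ using the block structure of Assumption \textbf{E}, Lemma \ref{lem:est:pt:cri} for the $\partial_t V^2$ contribution, and Leibniz' rule for $F^2$, exactly as in the paper's proof. The only (harmless) imprecision is that for the term $\partial_\alpha(r^{\alpha\beta})\partial_\beta V^2$ the composition estimate \eqref{compo:propo:base} applies only when $s+1>0$; for $1-d/2<s+1\le 0$ one must instead invoke \eqref{comp:uv:propo:r=1} together with \eqref{smal:cnd:criti:V}, as the paper notes.
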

\begin{proof}
  
Starting from \eqref{0:lya:lf:est}, the proof of \eqref{lo:fr:est:s:cri} is just a matter of re-estimating $F^1_{j}, F^t_{j}$ and $ F^2_{j}$ %, and $f_j$   
in $L^2$, for all $j\in \mathbb{Z}$, by taking into account Assumption \textbf{E}. 
Now, using \eqref{product_propo2} and  \eqref{compo:propo:base} we get for all $\cd<s\le \cd$ and  $\alpha=1,\cdots, d$
\begin{align*}
     \NBH{\cd}{2}{1}{\overline{S}^\alpha_{21}- S^\alpha_{21}(U^1)}\lesssim  \NBH{\cd}{2}{1}{V^1}.
\end{align*}
Since $ S^\alpha_{21}= {}^T\!S^\alpha_{12} $ we get the same estimate for $  \NBH{\cd}{2}{1}{\overline{S}^\alpha_{12}- S^\alpha_{12}(U^1)} $. Next taking into account the structure of $ S^\alpha_{11}$ and $ S^\alpha_{22}$ and using the composition estimate \eqref{comp:u_v_1:2:BH:ine:prop:1} yields
\begin{align}
\label{est:S:11:22:cri}
     \NBH{\cd}{2}{1}{\biggl(\overline{S}^\alpha_{11}- S^\alpha_{11}(U), \overline{S}^\alpha_{22}- S^\alpha_{22}(U)\biggr)}\lesssim  \NBH{\cd}{2}{1}{V},
\end{align}
whence 
\begin{align*}
\normede{F^1_{j}} &\lesssim c_j \dm{s}\suma\NBHc{ (\overline{S}^\alpha-S^\alpha(U))}\NBH{s}{2}{1}{\n V} \lesssim c_j \dm{s}\NBHc{ V}\NBH{s}{2}{1}{\n V}.
\end{align*}
Keeping inequalities \eqref{est:pt:V:cri} (with $\kappa=s$) and \eqref{smal:cnd:criti:V} in mind, we can write
\begin{align*}
   &  \normede{F^t_{j}} 
     \lesssim c_j \dm{s}\NBHc{V^1} \NBH{s}{2}{1}{\pt V}\nonumber\\
     &\lesssim c_j \dm{s}\NBHc{V^1}\left( \NBH{s}{2}{1}{\nabla V^1}(1+\lVert V^2 \rVert_{\BH{\cd}{2}{1}\cap \BH{\cd+1}{2}{1}})+ \NBH{s}{2}{1}{\n V^2}
       \NBH{\cd}{2}{1}{V}+  \lVert V^2 \rVert_{\BH{s+1}{2}{1}\cap \BH{s+2}{2}{1}}
   \right)\cdotp%\\
 %   & \lesssim  c_j 2^{-js} \biggl( \NBH{\cd}{2}{1}{ V}^2  +\NBHc{V^1}\lVert V^2 \rVert_{\BH{\cd}{2}{1}\cap \BH{\cd+1}{2}{1}}\biggr).
\end{align*}
Combining the decomposition \eqref{decom:F2<=Leib} (recall that $  r^{\alpha\beta}(U)$ depend only on $U^1$, with $  r^{\alpha\beta}(\overline{U}^1)=0 $)
and the product estimate \eqref{product_propo2} and the composition law \eqref{compo:propo:base} (if $s+1>0$) or \eqref{comp:uv:propo:r=1} and \eqref{smal:cnd:criti:V} (if $1-d/2<s+1\le 0$), one has
\begin{align}
\label{est:lf:cri:F2:j}
     \normede{F^2_{j}} \lesssim c_j\dm{s} \left( \NBH{s+1}{2}{1}{V^1} \NBH{\cd+1}{2}{1}{V^2}+ \NBH{\frac{d}{2}}{2}{1}{V^1} \NBH{s+2}{2}{1}{V^2}\right)\cdotp
\end{align}
Putting together those estimates gives us \eqref{lo:fr:est:s:cri}.
\iffalse
\begin{comment}

The last can be handled using product estimate \eqref{product_propo2}.
\begin{align*}
    \label{f:est:st}
     \normede{f_{j}} \lesssim c_j\dm{s} (1+\NBHc{V^1}) \NBH{\cd}{2}{1}{\n V^2}\NBH{s}{2}{1}{\n V^2}\lesssim  c_j\dm{s}\NBH{\cd}{2}{1}{\n V^2}\NBH{s}{2}{1}{\n V^2}.
\end{align*}
\end{comment}
\fi
\end{proof}

\paragraph{$\spadesuit$ \emph{High frequencies analysis: Parabolic mode.}}  
\begin{propo}
    \label{prop:para:mode:crit}
    Under the assumptions of Theorem \ref{thm:glo:L2:cri}
    and using the notation \eqref{eq:weight}, we have the following estimate for the parabolic mode $W$ defined in \eqref{W:def}:  for all $j\ge 0$, 
    \begin{multline}
    \label{ine:para:mode:cri}
      2^{j(\cd-1)}   \|{W_j(t)}\|_{L^2_{\overline S_{22}^0}}+ 2^{j(\cd+1)}\overline{c}_1\int^t_0 \normede{W_j}\le 2^{j(\cd-1)}  \|{W_j(0)}\|_{L^2_{\overline S_{22}^0}}\\
    + Cc_j\int^t_0 \biggl( \NBH{\cd}{2}{1}{ V}^2  +\NBHc{V^1}\lVert V^2 \rVert_{\BH{\cd}{2}{1}\cap \BH{\cd+1}{2}{1}}\biggr)
      +C2^{j(\cd-1)}  \int^t_0 \normede{V_j}.
\end{multline}

\end{propo}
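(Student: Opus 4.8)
The plan is to mimic the proof of Proposition~\ref{pro:es:W}, but this time exploiting the stronger structural Assumption~\textbf{E} (in particular $f\equiv0$, the affine dependence of the $S^\alpha_{1k},S^\alpha_{22}$ on $U^2$, the fact that $S^{0}_{11}=\mathrm{Id}$ and $S^0_{22},Z^{\alpha\beta}$ depend only on $U^1$, and $r^{\alpha\beta}$ depends only on $U^1$ with $r^{\alpha\beta}(\overline U^1)=0$) so as to lower the regularity threshold for the high frequencies by one derivative. First I would start from the parabolic equation \eqref{widehatW:eq} for $W$, apply $\DDj$, take the $L^2$ scalar product with $W_j$, and use Fourier–Plancherel together with the strong ellipticity \eqref{strong_elli} exactly as in \eqref{W:es:1}, to get
\begin{equation*}
\dfrac{d}{dt}\intd \overline{S}^0_{22}W_j\cdot W_j+ 2^{2j}\overline{c}_1 \normede{W_j}^2\lesssim \Bigl(2^{-j}\normede{\DDj\pt\left( S_{21}(D) V^1+  S_{22}(D) V^2 \right)}+\normede{h_j}\Bigr)\normede{W_j}.
\end{equation*}
Then Lemma~\ref{lem_der_int} applied with $X=\overline S^0_{22}W_j\cdot W_j$ reduces matters to bounding, after multiplication by $2^{j(\frac d2-1)}$, the two source terms in $L^2$ with the correct decay sequence $c_j$ and the regularity index $\sigma=\frac d2-1$.

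Next I would estimate $h=h^t+h^{21}+h^{22}+h^2$ (no $f^2$ now) term by term, taking $\sigma=\frac d2-1$. The key point is that each piece is at least quadratic and hence can absorb the ``loss'' coming from taking $\sigma=\frac d2-1<\frac d2$: for $h^{21}=\sum_\alpha(\overline S^\alpha_{21}-S^\alpha_{21}(U^1))\pal V^1$ one uses the product estimate \eqref{product_propo2} and the composition estimate \eqref{compo:propo:base} (valid since $\frac d2-1>-\frac d2$ for $d\ge2$) to get $\lesssim c_j 2^{-j(\frac d2-1)}\NBHc{V^1}\NBH{\frac d2}{2}{1}{V^1}$; for $h^{22}$, since $S^\alpha_{22}(U)$ is affine in $U^2$, one invokes \eqref{comp:u_v_1:2:BH:ine:prop:1} as in the proof of Lemma~\ref{lem:est:pt:cri} to bound it by $c_j 2^{-j(\frac d2-1)}(\NBHc{V^1}+\NBHc{V^2})\NBH{\frac d2}{2}{1}{V^2}$; for $h^2=\sum_{\alpha\beta}\pal(r^{\alpha\beta}\pbe V^2)$ one uses Leibniz \eqref{decom:F2<=Leib}, the fact that $r^{\alpha\beta}$ depends only on $U^1$ with $r^{\alpha\beta}(\overline U^1)=0$, and \eqref{product_propo2}, \eqref{compo:propo:base} to get $\lesssim c_j 2^{-j(\frac d2-1)}\NBHc{V^1}\,\lVert V^2\rVert_{\BH{\frac d2}{2}{1}\cap\BH{\frac d2+1}{2}{1}}$; finally $h^t=(\overline S^0_{22}-S^0_{22}(U^1))\pt V^2$ is handled by \eqref{product_propo2}, \eqref{compo:propo:base} and the $\pt V^2$ bound \eqref{est:pt:V:cri} with $\kappa=\frac d2-1$, producing terms of the form $c_j 2^{-j(\frac d2-1)}\NBHc{V^1}\bigl(\NBHc{V^1}\NBH{\frac d2}{2}{1}{V^1}+\ldots+\NBH{\frac d2+1}{2}{1}{V^2}\bigr)$, all of which collapse after the smallness assumption \eqref{smal:cnd:criti:V} into the advertised right-hand side $\NBH{\frac d2}{2}{1}{V}^2+\NBHc{V^1}\lVert V^2\rVert_{\BH{\frac d2}{2}{1}\cap\BH{\frac d2+1}{2}{1}}$.

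For the term $2^{-j}\normede{\DDj\pt(S_{21}(D)V^1+S_{22}(D)V^2)}$ I would use, as in \eqref{est:ptS(D)V=ptV}, that $S_{21}(D),S_{22}(D)$ are $0$-order operators, so it is $\lesssim 2^{-j}\normede{\pt V_j}$; here, rather than expanding $\pt V_j$ into quadratic terms, I keep the \emph{linear} contribution $2^{-j}\normede{\pt V_j}$ and observe (this is the whole point of the extra $2^{-j}$ gain) that, after multiplying by $2^{j(\frac d2-1)}$ and summing, this is controlled by $2^{j(\frac d2-1)}\cdot 2^{-j}\normede{\pt V_j}\lesssim 2^{j(\frac d2-2)}\normede{\pt V_j}$; using \eqref{est:pt:V:cri} with $\kappa=\frac d2-1$ and \eqref{smal:cnd:criti:V}, the linear part of this is $\lesssim C 2^{j(\frac d2-1)}\normede{V_j}$ (the last term on the right-hand side of \eqref{ine:para:mode:cri}), while the quadratic parts and the $V^2$-regularity part $\NBH{\frac d2+1}{2}{1}{V^2}$ recombine into the quadratic right-hand side; one must keep track that the ``bad'' term $\NBH{\frac d2+1}{2}{1}{V^2}$ coming from $\pt V^2$ is multiplied by $2^{-j}$, hence costs only $2^{j(\frac d2-2)}\normede{\DDj\n^2V^2}\sim 2^{j(\frac d2-1)}\normede{V_j}$ in the summed estimate — this is exactly why the statement carries the harmless linear tail $C2^{j(\frac d2-1)}\int_0^t\normede{V_j}$. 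Collecting, multiplying \eqref{W:es:1} by $2^{j(\frac d2-1)}$ and integrating in time via Lemma~\ref{lem_der_int} yields \eqref{ine:para:mode:cri}. The main obstacle, and the place requiring care, is the $h^{22}$ and $h^t$ terms: one cannot afford the naive composition estimate \eqref{compo:propo:base} at index $\frac d2-1$ on the full $U$-dependent matrices, and one must genuinely use the affine-in-$U^2$ structure of Assumption~\textbf{E} together with \eqref{comp:u_v_1:2:BH:ine:prop:1} to keep the regularity of the $V^2$ factor at $\frac d2$ (and not $\frac d2+1$) inside the quadratic bound; everything else is a bookkeeping of the already-proven product, composition and commutator inequalities.
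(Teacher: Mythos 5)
Your proposal follows essentially the same route as the paper's proof: start from \eqref{W:es:1}, apply Lemma \ref{lem_der_int}, bound $h=h^t+h^{21}+h^{22}+h^2$ at regularity index $\cd-1$ using the product/composition estimates together with the affine-in-$U^2$ structure of Assumption \textbf{E} (your bounds for $h^t,h^{21},h^{22},h^2$ are exactly those of the paper), and treat the remaining source term via \eqref{est:ptS(D)V=ptV} and Lemma \ref{lem:est:pt:cri} with $\kappa=\cd-1$.

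The one place where your bookkeeping is off is the purely linear, second-order contribution of $\pt V^2$ to the term $2^{-j}\normede{\pt V_j}$. You assert that, after multiplication by $2^{j(\cd-1)}$, it ``costs only $2^{j(\cd-2)}\normede{\DDj\n^2V^2}\sim 2^{j(\cd-1)}\normede{V_j}$''; in fact $2^{j(\cd-2)}\normede{\DDj\n^2V^2}\simeq 2^{j\cd}\normede{V^2_j}$, which exceeds the advertised linear tail $2^{j(\cd-1)}\normede{V_j}$ by a factor $2^{j}$, so this localized reduction does not close. The paper proceeds differently: it applies Lemma \ref{lem:est:pt:cri} with $\kappa=\cd-1$ wholesale, so that after the multiplication by $2^{j(\cd-1)}$ one gets $2^{j(\cd-2)}\normede{\pt V_j}\lesssim 2^{j(\cd-1)}\normede{V_j}+2^{-j}c_j\bigl(\NBH{\cd}{2}{1}{V}^2+\NBH{\cd+1}{2}{1}{V^2}\bigr)$; the delicate piece is thus the \emph{global} norm $2^{-j}c_j\NBH{\cd+1}{2}{1}{V^2}$ carrying the summable weight $c_j$, not a localized quantity. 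This piece is not pointwise-in-$j$ reducible to the stated right-hand side (and indeed the paper's own statement of \eqref{ine:para:mode:cri} is loose about it); it is rendered harmless only after summation over $j>N_0$, where the factor $2^{-j}$ yields a prefactor $2^{-N_0}$ in front of $\int_0^t\NBH{\cd+1}{2}{1}{V^2}$, which is then absorbed in \eqref{est:Lya:cri:0} by means of \eqref{com:int:W:V:22} (with index $\cd+1$) and the choice of $N_0$ large. So your intuition that the extra $2^{-j}$ is what saves the argument is correct, but the mechanism is absorption after summation, not a pointwise identification with the tail $2^{j(\cd-1)}\normede{V_j}$; as written, that step of your proof would not go through.
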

\begin{proof}
Starting from \eqref{W:es:1}, using  Lemma \ref{lem_der_int} with $X= \intd\overline{S}^0_{22} W_j\cdot W_j$ then reverting to \eqref{est:ptS(D)V=ptV}
we get
 \begin{align}
 \label{est:Wj:cri}
       \|{W_j(t)}\|_{L^2_{\overline S_{22}^0}}+ 2^{2j}\overline{c}_1\int^t_0 \normede{W_j}\leq  \|{W_j(0)}\|_{L^2_{\overline S_{22}^0}}+\int^t_0\left(2^{-j}\normede{\pt V_j}+\normede{h_j }\right)
    \end{align}
with $h=h^t+h^{21}+h^{22}+h^2$ defined in \eqref{def:h}. Taking $ \kappa=\cd-1$ in \eqref{est:pt:V:cri} and using the smallness condition \eqref{smal:cnd:criti:V}, we discover that for all $j\in \mathbb{Z}$
\begin{align*}
 2^{j(\cd-1)}  \normede{\pt V_j} &\lesssim  2^{j\cd}\normede{V_j}+ c_j  \bigl( \NBH{\cd}{2}{1}{ V}^2+ \NBHc{V^1}\lVert V^2 \rVert_{ \BH{\cd+1}{2}{1}}\bigr)+\lVert V^2 \rVert_{ \BH{\cd+1}{2}{1}}\\
  &\lesssim   2^{j\cd}\normede{V_j}+ c_j  \bigl( \NBH{\cd}{2}{1}{ V}^2+\lVert V^2 \rVert_{ \BH{\cd+1}{2}{1}}\bigr)\cdotp
\end{align*}
Bearing in mind Assumption \textbf{E} and using Inequality \eqref{product_propo2} combined with \eqref{compo:propo:base} (or \eqref{comp:u_v:BH:ine:prop:1}),
we have the following outcome:
\begin{align*}
    \normede{ h^t }&\lesssim 2^{-j(\cd-1)}c_j\biggl( \NBH{\cd}{2}{1}{ V}^2  +\NBHc{V^1}\lVert V^2 \rVert_{\BH{\cd}{2}{1}\cap \BH{\cd+1}{2}{1}}\biggr) ,\\
    \normede{h^{21} } &\lesssim 2^{-j(\cd-1)}c_j \NBH{\cd}{2}{1}{V^1} \NBH{\cd-1}{2}{1}{\n V^1},\\
     \normede{h^{22} } &\lesssim 2^{-j(\cd-1)}c_j (\NBH{\cd}{2}{1}{V^1} + \NBH{\cd}{2}{1}{V^2} )\NBH{\cd-1}{2}{1}{\n V^2},\\
      \normede{h^{2} } &\lesssim 2^{-j(\cd-1)}c_j \NBH{\cd}{2}{1}{V^1} \NBH{\cd}{2}{1}{\n V^2}.
\end{align*}
Plugging the above inequalities in \eqref{est:Wj:cri},
multiplying by $2^{j(\frac d2-1)}$
and 
\iffalse
\begin{comment}
    \begin{multline*}
     \NBH{\cd-1}{2}{1}{W(t)}^{h,N_0}+ \overline{c}_1\int^t_0 \NBH{\cd+1}{2}{1}{W}^{h,N_0}\lesssim  \NBH{\cd-1}{2}{1}{W(0)}^{h,N_0}+ \int^t_0 \biggl( \NBH{\cd}{2}{1}{ V}^2  +\NBHc{V^1}\lVert V^2 \rVert_{\BH{\cd}{2}{1}\cap \BH{\cd+1}{2}{1}}\biggr)\\
      +2^{-N_0}  \int^t_0 (\widetilde{H}+ \NBH{\cd+1}{2}{1}{V^2}^h) .
\end{multline*}
\end{comment}
\fi
    % \begin{multline*}
   %2^{j(\cd-1)}    \|{W_j(t)}\|_{L^2_{\overline S_{22}^0}}+ %2^{j(\cd+1)}\overline{c}_1\int^t_0 \normede{W_j}\le 2^{j(\cd-1)}  \|%{W_j(0)}\|_{L^2_{\overline S_{22}^0}}\\
   % + Cc_j (1+2^{-j})\int^t_0 \biggl( \NBH{\cd}{2}{1}{ V}^2  +%\NBHc{V^1}\lVert V^2 \rVert_{\BH{\cd}{2}{1}\cap \BH{\cd+1}{2}{1}}\biggr)
 %     +C2^{j(\cd-1)}  \int^t_0 \biggl( \normede{V_j}\biggr)\cdotp
  %  \end{multline*}
    using the fact that $2^{-j}\le 1$ for all $j\ge 0$ completes the proof of \eqref{ine:para:mode:cri}.
\end{proof}

\medbreak\paragraph{$\spadesuit$ \emph{High frequencies analysis: Estimates of $V^1$.}} 
\iffalse
\begin{comment}
We note that since $ S^0(U^1)={\rm Id_{n_1}}  $, then 

one cannot start from \eqref{hf:lya:redc:ful}. The starting point to bypass this difficulty consist in rewriting the system \eqref{j:full_dif:V1:2} as follows:
    \begin{equation*}
 % \label{j:full_dif:V1:2}
	\pt V^1_j + S^\alpha_{11}(U) \pal V^1_j + S_{12}(D)Z(D)^{-1}S_{21}(D)V^1_j = R^{11}_j+\sum_{k=1}^2 H_j^{k}+\sum_{k=1}^3 G^{1k}_j,
	 \end{equation*}
Then, we slightly modify the definition of the Lyapunov function $\mathcal{L}^h$.  We set for all $j\in \mathbb{Z}$ and $\eta >0$,
  \begin{align}
      \label{def:lya:hf:crit}
     \widetilde{\mathcal{L}}^h_j\defn   \widetilde{\mathcal{V}}_j +\min(2^j,2^{-j})\mathbf{I}_j \esp{with} \widetilde{\mathcal{V}}_j \defn 2^{2j}\normede{V^1_j}^2+\eta^2 \intd \overline{S}^0_{22}(U)V^2_j\cdot V^2_j \simeq \normede{ \mathbf{V} }^2.
  \end{align}
  We recall that  the quantities $ \mathbf{I}_j $ and $ \mathbf{V} $ have been defined in section \ref{subsec:V1:hf}.
\end{comment}
\fi
The substitute of Proposition \ref{pro:est:HF:BFV} is the following.

\begin{propo}
\label{propo:hf:V:cri:}
Under the assumptions of Theorem \ref{thm:glo:L2:cri}, the following inequality holds true:  
 \begin{multline*}
 \frac{d}{dt}\mathcal{L}_j^{1,h} +\min(1,2^{2j}) \mathcal{L}_j^{1,h}
    \lesssim \NBH{\cd+1}{2}{1}{V^2}\mathcal{L}_j^{1,h}
  +\biggl( \normede{(V^2_j, 2^j W_j)}\\ + 2^{-j\cd}c_j \NBH{\cd}{2}{1}{V^1}\NBH{\cd+1}{2}{1}{ V^2} 
     + 2^{-j\cd}c_j \min(2^{2j},1) \NBH{\cd}{2}{1}{V^1}\NBH{\cd}{2}{1}{ V^2} \biggr)\sqrt{\mathcal{L}_j^{1,h}},
    \end{multline*}
 with $\mathcal{L}_j^{1,h}$ defined in \eqref{lya:fun:def:l}.
    \end{propo}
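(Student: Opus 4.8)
The plan is to follow the proof of Proposition~\ref{pro:est:HF:BFV}, keeping track of the simplifications due to Assumption~\textbf{E} and re-estimating the source terms so that only the $\BH{\cd}{2}{1}$-regularity of $V^1$ is used. Since $S^0_{11}\equiv{\rm Id}$ under Assumption~\textbf{E}, one has $\partial_tS^0_{11}(U)\equiv0$, $\wt S^\alpha_{11}=S^\alpha_{11}$, the terms $H^1_j,H^2_j$ of \eqref{df:H^12_j} vanish identically and $G^{15}\equiv0$. Applying $\DDj$ to \eqref{eq:V1=f(W)} (with $S^0_{11}={\rm Id}$), taking the $L^2$ inner product with $V^1_j$, integrating by parts in the symmetric first-order term and using the ellipticity \eqref{strong_elli} of $Z(D)$ gives, as for \eqref{V12:es:hf:j},
\[
\frac12\frac{d}{dt}\normede{V^1_j}^2+\overline{c}_1\normede{S_{21}(D)V^1_j}^2\le\frac12\Big(\sum_\alpha\normeinf{\pal(S^\alpha_{11}(U))}\Big)\normede{V^1_j}^2+\normede{(R^{11}_j,G^{11}_j,G^{12}_j,G^{13}_j)}\normede{V^1_j},
\]
with $R^{11}_j=\sum_\alpha[\DDj,S^\alpha_{11}(U)]\pal V^1$. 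Next, as in Subsection~\ref{subs:SK:Kalman}, I would introduce the functional $\mathcal{L}_j^{1,h}=\normede{V^1_j}^2+\min(2^j,2^{-j})\mathbf{I}_j$, with $\mathbf{I}_j$ built from the matrices $\mathbf{N}_\omega,\mathbf{M}_\omega$ of \eqref{def:matbf:N:M} exactly as in the proof of Proposition~\ref{pro:est:HF:BFV}, apply Lemma~\ref{lem:derI} with $a=1$ and $b=0$ so that $\mathcal{L}_j^{1,h}\simeq\normede{V^1_j}^2$ and the dissipation is quantified, and use that the (SK) condition for $(N_\omega,M_\omega)$ is equivalent to the one for $(\mathbf{N}_\omega,\mathbf{M}_\omega)$ together with the compactness of $\mathbb{S}^{d-1}$ (as in \eqref{sk:appl:HF}) to arrive at
\[
\frac{d}{dt}\mathcal{L}_j^{1,h}+\min(1,2^{2j})\mathcal{L}_j^{1,h}\lesssim\Big(\sum_\alpha\normeinf{\pal(S^\alpha_{11}(U))}\Big)\mathcal{L}_j^{1,h}+\normede{(R^{11}_j,G^{11}_j,G^{12}_j,G^{13}_j)}\sqrt{\mathcal{L}_j^{1,h}}+\min(2^j,2^{-j})\normede{G^1_j}\sqrt{\mathcal{L}_j^{1,h}},
\]
with $G^1=G^{11}+G^{12}+G^{13}+G^{14}$.

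It then remains to estimate each source term, exploiting the structure of Assumption~\textbf{E}. Since $S^\alpha_{11}$ is affine in $U^2$ and independent of $U^1$, its gradient is linear in $\n V^2$, so $\sum_\alpha\normeinf{\pal(S^\alpha_{11}(U))}\lesssim\normeinf{\n V^2}\lesssim\NBH{\cd+1}{2}{1}{V^2}$ by the embedding $\BH{\cd}{2}{1}\hookrightarrow L^\infty$, which yields the term $\NBH{\cd+1}{2}{1}{V^2}\mathcal{L}_j^{1,h}$. The same linearity and the commutator estimate of Proposition~\ref{propo_commutator-BH} give $\normede{R^{11}_j}\lesssim c_j2^{-j\cd}\|\n S^\alpha_{11}(U)\|_{\BH{\cd}{2}{1}}\|V^1\|_{\BH{\cd}{2}{1}}\lesssim c_j2^{-j\cd}\NBH{\cd+1}{2}{1}{V^2}\NBH{\cd}{2}{1}{V^1}$. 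For $G^{11}=\sum_\alpha(\overline{S}^\alpha_{12}-S^\alpha_{12}(U))\pal V^2$, note that $S^\alpha_{12}={}^T\!S^\alpha_{21}$ depends only (and smoothly) on $U^1$, so \eqref{compo:propo:base} yields $\|\overline{S}^\alpha_{12}-S^\alpha_{12}(U)\|_{\BH{\cd}{2}{1}}\lesssim\NBH{\cd}{2}{1}{V^1}$, and the product estimate \eqref{product_propo4} then gives $\normede{G^{11}_j}\lesssim c_j2^{-j\cd}\NBH{\cd}{2}{1}{V^1}\NBH{\cd+1}{2}{1}{V^2}$. Finally, $G^{12},G^{13}$ being respectively a $0$-order multiplier of $V^2$ and a $1$-order multiplier of $W$, Bernstein's inequality gives $\normede{(G^{12}_j,G^{13}_j)}\lesssim\normede{(V^2_j,2^jW_j)}$ as in \eqref{G_2:est:sigma}.

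There remains the term $\min(2^j,2^{-j})\normede{G^1_j}$. The contributions of $G^{11},G^{12},G^{13}$ are already controlled (with the harmless extra factor $\min(2^j,2^{-j})\le1$), so the genuinely new point is $G^{14}=\sum_\alpha(\overline{S}^\alpha_{11}-S^\alpha_{11}(U))\pal V^1$. Here $\overline{S}^\alpha_{11}-S^\alpha_{11}(U)$ is \emph{linear} in $V^2$, and --- because $\BH{\cd+1}{2}{1}$-regularity of $V^1$ is not available in this framework --- the decisive step is to estimate this product in $\BH{\cd-1}{2}{1}$ rather than in $\BH{\cd}{2}{1}$: the product law $\BH{\cd}{2}{1}\times\BH{\cd-1}{2}{1}\hookrightarrow\BH{\cd-1}{2}{1}$ (a case of \eqref{product_propo2}, valid since $d\ge2$) gives $\normede{G^{14}_j}\lesssim c_j2^{-j(\cd-1)}\NBH{\cd}{2}{1}{V^2}\NBH{\cd}{2}{1}{V^1}$, whence, using $\min(2^j,2^{-j})\,2^{-j(\cd-1)}=2^{-j\cd}\min(2^{2j},1)$, the last term of the stated inequality. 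Plugging all these estimates into the differential inequality above proves the proposition. The main obstacle is precisely this last step: closing the estimate with only $\BH{\cd}{2}{1}$-control of $V^1$ forces one to use the affine dependence of $S^\alpha_{11}$ on $U^2$ and the weaker product law, and to keep careful track of how the weight $\min(2^j,2^{-j})$ transforms $2^{-j(\cd-1)}$ into the factor $2^{-j\cd}\min(2^{2j},1)$ appearing in the statement.
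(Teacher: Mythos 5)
Your proposal is correct and follows essentially the same route as the paper: simplify the general high-frequency inequality \eqref{hf:lya:redc:ful} using $S^0_{11}={\rm Id}$ (so $H^1_j,H^2_j,G^{15}$ vanish and $\partial_tS^0_{11}(U)\equiv0$), bound $\normeinf{\pal S^\alpha_{11}(U)}$ by $\NBH{\cd+1}{2}{1}{V^2}$, estimate $R^{11}_j$ with \eqref{comm:est:a:b}, $G^{11}_j$ via the composition/product laws, $(G^{12}_j,G^{13}_j)$ by $\normede{(V^2_j,2^jW_j)}$, and treat $G^{14}_j$ at regularity $\cd-1$ so that the weight $\min(2^j,2^{-j})$ produces the factor $2^{-j\cd}\min(2^{2j},1)$. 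The only cosmetic difference is that the paper also re-estimates $G^{11}_j$ at the lower regularity level, whereas you absorb its contribution in $\min(2^j,2^{-j})\normede{G^1_j}$ using $\min(2^j,2^{-j})\le1$, which equally yields the stated inequality.
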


\begin{proof}
As $S^0_{11}={\rm Id},$
  performing the method leading to \eqref{hf:lya:redc:ful}, we discover that
    \begin{multline*}
 \frac{d}{dt}\mathcal{L}_j^{1,h} +\min(1,2^{2j}) \mathcal{L}_j^{1,h}
    \le C   \sum_{\alpha}\normeinf{\pal (S^\alpha_{11} (U))} \mathcal{L}_j^{1,h}\\
    +\normede{\left(R^{11}_j,G^{11}_j, G^{12}_j,G^{13}_j\right)}\sqrt{\mathcal{L}_j^{1,h}}
     + \min(2^j,2^{-j})\normede{ G^1_j}\sqrt{\mathcal{L}_j^{1,h}}.   
    \end{multline*}
    Since, from Assumption \textbf{E}, $ S^\alpha_{11} (U) $ is affine with respect to $U^2$, we get
    \begin{align*}
        \normeinf{\pal (S^\alpha_{11} (U))} \lesssim \normeinf{\n V^2}\lesssim \NBH{\cd+1}{2}{1}{V^2}.
    \end{align*}
Next, taking advantage of \eqref{comm:est:a:b} the term $\normede{R^{11}_j}$ can be bounded as follows: 
\begin{align*}
    \normede{R^{11}_j}\lesssim 2^{-j\cd}  c_j\NBH{\cd}{2}{1}{\n V^2}\NBH{\cd}{2}{1}{V^1}.
\end{align*}
The fact that $S^\alpha_{12}(U)$  depends only on $U^1$ combined with the Inequalities \eqref{product_propo2}, \eqref{compo:propo:base} and \eqref{smal:cnd:criti:V} ensures that 
\begin{align*}
    \normede{G_j^{11} }\lesssim  2^{-j\cd}  c_j\NBH{\cd}{2}{1}{V^1}\NBH{\cd}{2}{1}{\n V^2} \esp{and}  \normede{G_j^{11} }\lesssim  2^{-j(\cd-1)}  c_j\NBH{\cd}{2}{1}{V^1}\NBH{\cd-1}{2}{1}{\n V^2} .
\end{align*}
Similarly, keeping in mind that $ S^{\alpha}_{11}(U)$ is at most linear with respect to $U^2$ one has
\begin{align*}
    \normede{G_j^{14} } \lesssim 2^{-j(\cd-1)}  c_j\NBH{\cd}{2}{1}{V^2}\NBH{\cd-1}{2}{1}{\n V^1}.
\end{align*}
Combining the previous inequalities and \eqref{G_2:est:sigma}   completes the proof of the proposition.
%\begin{multline*}
 %\frac{d}{dt}\mathcal{L}_j^{1,h} +\min(1,2^{2j}) \mathcal{L}_j^{1,h}
  %  \lesssim \NBH{\cd+1}{2}{1}{V^2}\mathcal{L}_j^{1,h}
  %+\biggl( \normede{(V^2_j, 2^j W_j)} + 2^{-j\cd}c_j \NBH{\cd}{2}{1}{V^1}\NBH{\cd+1}{2}{1}{ V^2} \\
   %  +  \min(2^j,2^{-j})\normede{(V^2_j, 2^j W_j)}+  2^{-j\cd}c_j \min(2^j,2^{-j}) \NBH{\cd}{2}{1}{V^1}\NBH{\cd}{2}{1}{ V^2} \biggr)\sqrt{\mathcal{L}_j^{1,h}}.   
    %\end{multline*}
 \end{proof}

\paragraph{$\spadesuit$ \emph{Conclusion}} 
Taking  $s=\cd-1$ in \eqref{lo:fr:est:s:cri}, then applying Lemma \ref{lem_der_int}, we get for all $j\le N_0$:
$$\sqrt{{\mathcal L}_j^l(t)} + c2^{2j}  \int^t_0 \sqrt{{\mathcal L}_j^l}  \leq  
    \sqrt{{\mathcal L}_j^l(0)}
              + c_j2^{-j(\cd-1)}\int^t_0\biggl(
\NBH{\cd}{2}{1}{ V}^2   +\NBHc{V^1} \lVert V^2 \rVert_{\BH{\cd}{2}{1}\cap \BH{\cd+1}{2}{1}} \biggr)\cdotp
    $$
    At the same time, applying  Lemma \ref{lem_der_int} to the inequality of Proposition \ref{propo:hf:V:cri:} and taking advantage of the fact that $ {\mathcal{L}}_j^1 \simeq  \normede{V^1_j}^2$  yields for all $j\ge N_0>0$
\begin{multline*}
  \sqrt{{\mathcal L}_j^{1,h}(t)}+ \int^t_0   \sqrt{{\mathcal L}_j^{1,h}}
    \leq \sqrt{{\mathcal L}_j^{1,h}(0)} +\int^t_0  \NBH{\cd+1}{2}{1}{V^2}\normede{V^1_j}\\
   +c_j2^{-j\cd} \int^t_0 \NBHc{V^1}\bigl(\NBH{\cd}{2}{1}{V^2}+\NBH{\cd+1}{2}{1}{V^2}\bigr)
    +  \normede{(V^2_j, 2^j W_j)}\cdotp
    \end{multline*}
    For $0<\varepsilon<1$,  let us set 
     \begin{align}
         \label{de:Lya:cri}
         \begin{split}
        &\widetilde{L}\defn \sum_{j\le N_0 }2^{j(\cd-1)}  \sqrt{{\mathcal L}_j^{l}}
            + \sum_{j> N_0} \biggl(2^{j(\cd-1)} \|{W }\|_{L^2_{\bar S^0_{22}}}+ \varepsilon 2^{j\cd}  \sqrt{{\mathcal L}_j^{1,h}}\biggr) \esp{and}\\
        & \widetilde{H} \defn \sum_{j\le N_0 }2^{j(\cd+1)} \normede{V_j}+ \sum_{j> N_0} \left(2^{j(\cd+1)} \normede{W_j}+ \varepsilon 2^{j\cd}\normede{V_j^1}\right)\cdotp
        \end{split}
     \end{align}
Putting together the previous two inequalities and \eqref{ine:para:mode:cri}
and using the notation $\NBH{s}{2}{1}{\cdot}^{h,N_0}$  defined  in \eqref{def:NBH:HF:N0},
we end up with
\begin{multline}
\label{est:Lya:cri:0}
\widetilde{L}(t)+c\int^t_0 \widetilde{H} \leq \widetilde{L}(0) + \int^t_0\Bigl(
\NBH{\cd}{2}{1}{ V}^2   +\NBHc{V^1} \lVert V^2 \rVert_{\BH{\cd}{2}{1}\cap \BH{\cd+1}{2}{1}} \Bigr)\\
 + 2^{-N_0}\int^t_0 (\NBH{\cd+1}{2}{1}{V^2}^{h,N_0} +\NBH{\cd+1}{2}{1}{V^1}^{h,N_0})+ \varepsilon \int^t_0 \NBH{\cd+1}{2}{1}{W}^{h,N_0} \cdotp
\end{multline}
 In order to close our estimate we need to exhibit a bound of  $\int^t_0 \NBH{\cd+1}{2}{1}{V^2}^{h,N_0}$. This can  be accomplished by utilizing Inequality \eqref{com:int:W:V:22}
 (with index $\cd+1$ instead of $\cd+2$). First by choosing $\varepsilon>0$ sufficiently small, the last term on the r.h.s. of the previous Inequality can be absorbed by the left-hand side. Next, by selecting  $N_0 $ large enough and using \eqref{com:int:W:V:22}, the second integral on the r.h.s. of the previous Inequality can also be absorbed into the left-hand side. Owing to the smallness condition \eqref{smal:cnd:criti:V} and interpolation inequality, the term $\int^t_0\NBHc{V^1} \lVert V^2 \rVert_{\BH{\cd+1}{2}{1}} $ can likewise be bounded by the left hand side of \eqref{est:Lya:cri:0}. Therefore, \eqref{est:Lya:cri:0} transforms into:

 \begin{align}
\label{est:Lya:cri:1}
\widetilde{L}(t)+\frac c2\int^t_0 \widetilde{H} \le \widetilde{L}(0) +C \int^t_0
\NBH{\cd}{2}{1}{ V}^2 \cdotp
 \end{align}
Owing to interpolation inequalities, one can prove that the integral on the right-hand side of
\eqref{est:Lya:cri:1} is  dominated  by $\int^t_0 \widetilde{L}\widetilde{H}$.  Hence one can conclude exactly as in the previous section that if  $ \NBH{\cd-1}{2}{1}{V^2_0}+ \lVert V^1_0 \rVert_{\BH{\cd-1}{2}{1}\cap \BH{\cd}{2}{1}} $ is small enough, then there exist  some (new) positive real numbers $c_0$ and $C$ such that
\begin{align}
\label{lya:est:cri}
      \widetilde{L}(t)+ c_0\int^t_0  \widetilde{H} 
      \le  \widetilde{L}(0)\esp{for all} 0\le t\le T\cdotp
\end{align}
Using \eqref{com:int:W:V:22} and \eqref{lya:est:cri} and the fact that $2^{j\cd}\simeq 2^{j(\cd-1)}\simeq 2^{j(\cd+1)}$ for all $0\le j\le N_0$, we deduce that for all $0\le t\le T,$
\begin{multline}
\label{data:est:cri}
\NBH{\cd-1}{2}{1}{V(t)}^l+\NBH{\cd-1}{2}{1}{(W(t),V^2(t))}^h+ \NBH{\cd}{2}{1}{V^1(t)}^h\\+\int^t_0 \biggl( \NBH{\cd+1}{2}{1}{V}^l+\NBH{\cd+1}{2}{1}{(W,V^2)}
+\NBH{\cd}{2}{1}{V^1}^h\biggr)
     \le C\biggl(   \NBH{\cd-1}{2}{1}{V(0)}^l+\NBH{\cd-1}{2}{1}{V^2_0}^h+ \NBH{\cd}{2}{1}{V^1_0}^h\biggr)\cdotp
     \end{multline}
\iffalse
\begin{comment}

    To recover the intermediate terms corresponding to $0\le j\le N_0$ one has to take advantage of \eqref{com:int:W:V^2} and \eqref{est:Lya:cri:0}. Finally we conclude that
     \begin{multline*}
    \NBH{\cd-1}{2}{1}{W(t)}^{h}+ \overline{c}_1\int^t_0 \NBH{\cd+1}{2}{1}{W}^{h}\lesssim  \NBH{\cd-1}{2}{1}{W(0)}^{h}+ \int^t_0 \biggl( \NBH{\cd}{2}{1}{ V}^2  +\NBHc{V^1}\lVert V^2 \rVert_{\BH{\cd}{2}{1}\cap \BH{\cd+1}{2}{1}}\biggr)\\
     + \int^t_0 \NBH{\cd-1}{2}{1}{V^2}\NBH{\cd+1}{2}{1}{V^2} +2^{-N_0}  \int^t_0 \biggl(\NBH{\cd}{2}{1}{V^1}^h+ \NBH{\cd+1}{2}{1}{V^2}^h\biggr)\cdotp
    \end{multline*}
    Using the definition \eqref{W:def} of $W$ and \eqref{est:Lya:cri:0}, the previous inequality also holds true for $V^2$. More precisely, we have
     \begin{multline}
 \label{est:W:cri:1}
    \NBH{\cd-1}{2}{1}{(W(t), V^2(t))}^{h}+ \overline{c}_1\int^t_0 \NBH{\cd+1}{2}{1}{(W,V^2)}^{h}\lesssim  \widetilde{L}(0) + \int^t_0 \biggl( \NBH{\cd}{2}{1}{ V}^2  +\NBHc{V^1}\lVert V^2 \rVert_{\BH{\cd}{2}{1}\cap \BH{\cd+1}{2}{1}}\biggr)\\
     + \int^t_0 \NBH{\cd-1}{2}{1}{V^2}\NBH{\cd+1}{2}{1}{V^2} +2^{-N_0}  \int^t_0 \biggl(\NBH{\cd}{2}{1}{V^1}^h+ \NBH{\cd+1}{2}{1}{V^2}^h\biggr)\cdotp
    \end{multline}
     Choosing suitably the parameter $\eta<1$, the last term may be absorbed by the left-hand side.
\end{comment}
\fi
Since, according to \eqref{W:def}, we have 
\begin{equation}\label{eq:WdtV}
S^0_{22}(U)\partial_tV^2=\Delta Z(D)W+h^{21}+h^{22}+h^2
\end{equation}
with the functions $h^{21},$ $h^{22}$ and $h^2$ defined in \eqref{def:h}, 
taking advantage of the usual product estimates and composition laws
ensures that $\LpNBH{\cd-1}{2}{1}{\partial_tV^2}{1}$ is also 
bounded by the right-hand side of \eqref{data:est:cri}.

From this point, the rest of the proof follows standard methods. We begin by regularizing the initial data as in \eqref{def:smo:data:BH}. 
Then, each regularized initial data
$V_{0,p}$ is in the \emph{nonhomogneous} Besov space $B^\cd_{2,1}\times B^{\cd+1}_{2,1}$ and \cite[Theorem 1.2]{DanADOloc}  provides us with a unique smooth maximal solution $V_p=(V^1_p,V^2_p)$ on $[0,T^p)$ such that for all $T<T^p$, 
\begin{equation}\label{eq:above}V^1_p\in \cC([0,T];B^{\cd+1}_{2,1}),\quad V^2_p\in \cC([0,T];B^{\cd}_{2,1})\cap L^1_T(B^{\cd+2}_{2,1})\esp{and} \partial_t V_p\in L^1([0,T];B^{\cd}_{2,1}).\end{equation}
Since this solution is (relatively) smooth and \eqref{small:data:cri:thm}
holds for all $p\in\mathbb N,$ it satisfies \eqref{data:est:cri}
for all $t<T^p.$  This implies in particular that the smallness condition 
\eqref{smal:cnd:criti:V} holds on $[0,T^p[$ and that
\begin{equation}\label{eq:blowup3}\int_0^{T^p}\|\n V^2_p\|_{\dot B^{\cd}_{2,1}}\,dt<\infty.
\end{equation}
%Moreover, owing to the fact that $V_{0,n}$ is in $L^2,$ one can take advantage 
%of the basic energy estimate \eqref{est:L2:V:base} and replace 
%the homogeneous Besov norm in the  inequality \eqref{data:est:cri} by the corresponding %nonhomogenous one. {\color{blue}Ici, le fait d'avoir $\n V^2_p$ ne suffit pas pour prouver que %$V_p$ est bien dans $L^\infty_{T^n}(L^2)$. En fait, de \eqref{est:L2:V:base} on besoin que $\pal S^\alpha(U)$ soit $L^1_{T^n}(L^\infty)$. Mais $S^\alpha$ depend aussi de $U^1$,  }
%Moreover, taking advantage of the basic energy inequality and of the embedding
%$\dot B^{\cd}_{2,1}\hookrightarrow L^\infty,$ we get a control of $V_p$
%in $L^1(0,T^p;L^\infty)$ and one can thus replace the homogeneous
%norm by the nonhomogeneous one in \eqref{eq:blowup3}.
\smallbreak
In order to prove that $T^p=\infty$, we need the following Lemma.
\begin{lem}
    \label{lem:blow:cri}
     Under Assumption \textbf{E}, let $V= (V^1,V^2)\in \cC([0,T^*);\B{\cd+1}{2}{1}\times\B{\cd}{2}{1})$ be a solution of  \eqref{Eq_b}  with the regularity described at \eqref{eq:above}
     for all $T<T^*.$ There exists a (small) positive constant $\eta$
      such that if 
        %\item $U^1([0,T)\times\Rd)$ is contained in compact subset of $\mathcal{U}^1$,
        \begin{align}\label{eq:blowup1}
         &\int_0^{T^*}\|\n V^2\|_{\dot B^{\cd}_{2,1}}+\int_0^{T^*}\NBH{\cd-1}{2}{1}{\pt V^2}<\infty,\\\label{eq:blowup2}
       \esp{and} &\underset{0\le t<T^*}\sup\|V^1(t)\|_{\dot B^{\cd}_{2,1}}\leq\eta,
         \end{align}
    then $V$ may be continued beyond $T^*$ as a solution with the regularity corresponding
    to \eqref{eq:above}.
\end{lem}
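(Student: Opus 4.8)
The plan is to argue by contradiction: assume $T^*<\infty$ and that \eqref{eq:blowup1}--\eqref{eq:blowup2} hold. We will show that the solution stays bounded in $\B{\cd+1}{2}{1}\times\B{\cd}{2}{1}$ on $[0,T^*)$ and that $\int_0^{T^*}\bigl(\|\n V\|_{L^\infty}^2+\|\pt V\|_{L^\infty}\bigr)<\infty$; granted this, the usual arguments give that $V(t)$ has a limit in $\B{\cd+1}{2}{1}\times\B{\cd}{2}{1}$ as $t\uparrow T^*$ (the time derivative being bounded in a weaker space by the equations), and applying \cite[Theorem 1.2]{DanADOloc} with initial time $t_0$ close to $T^*$ --- whose lifespan is bounded below by a function of the (now bounded) norm of the data --- produces a solution past $T^*$, contradicting maximality. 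On every compact subinterval $[0,T]\subset[0,T^*)$ the extra smoothness \eqref{eq:above} makes all the estimates below rigorous.

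To obtain the uniform bound we run a coupled estimate on $V^1$, $V^2$ and the parabolic mode $W$ defined in \eqref{W:def}. For $V^1$ we use the reformulated symmetric hyperbolic system \eqref{eq:V1=f(W)}: by Assumption \textbf{E} its top-order coefficient $S^\alpha_{11}(U)$ is independent of $U^1$ and affine in $U^2$, so $\|\n(S^\alpha_{11}(U))\|_{\BH{\cd}{2}{1}}\lesssim\|\n V^2\|_{\BH{\cd}{2}{1}}\in L^1(0,T^*)$ by \eqref{eq:blowup1}; the standard quasilinear $\B{\cd+1}{2}{1}$-estimate then carries a \emph{finite} exponential weight, and the source $G^{11}+G^{12}+G^{13}$ is controlled by product and composition laws in terms of $\|\n V^2\|_{\BH{\cd}{2}{1}}$, $\|V^2\|_{\B{\cd+1}{2}{1}}$, $\|W\|_{\B{\cd+2}{2}{1}}$ and the small quantity $\|V^1\|_{\BH{\cd}{2}{1}}$. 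For the high frequencies of $W$ we use parabolic maximal regularity for \eqref{widehatW:eq}: $\pt V^2$ enters the forcing only through the smoothing operator $|D|^{-1}$, so the forcing lies in $L^1_{T^*}(\BH{\cd}{2}{1})$ by virtue of $\pt V^2\in L^1_{T^*}(\BH{\cd-1}{2}{1})$ (from \eqref{eq:blowup1}), of $\|\pt V^1\|_{\BH{\cd-1}{2}{1}}\lesssim\|V^1\|_{\BH{\cd}{2}{1}}+\|\n V^2\|_{\BH{\cd-1}{2}{1}}$ (from the first equation of \eqref{Eq_b:V1:V2}, hence small plus $L^1$), and of the nonlinear terms $h$; for the low frequencies, the bound $\|W_j\|_{L^2}\lesssim 2^{-j}\|V_j\|_{L^2}$ lets us bound $W^l$ by the low-frequency part of $V$, which is propagated by the low-frequency Lyapunov functional $\mathcal{L}^l_j$ of Lemma \ref{lem:lf:cri:L2:V} (its smallness hypothesis \eqref{smal:cnd:criti:V} being guaranteed by \eqref{eq:blowup2}). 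Finally $V^2$ is recovered from $V^2=W-i|D|^{-1}(Z(D))^{-1}\bigl(S_{21}(D)V^1+S_{22}(D)V^2\bigr)$ (the last $V^2$ on the right being absorbed after a frequency cutoff) and from the parabolic second equation of \eqref{Eq_b:V1:V2}, which yields $V^2\in L^\infty_{T^*}(\B{\cd}{2}{1})\cap L^1_{T^*}(\B{\cd+2}{2}{1})$ since its forcing (namely $\sum_\alpha S^\alpha_{21}(U^1)\pal V^1$, $\sum_\alpha S^\alpha_{22}(U)\pal V^2$ and $\sum_{\alpha,\beta}\pal(Z^{\alpha\beta}(U^1))\pbe V^2$) is controlled in $L^1_{T^*}(\B{\cd}{2}{1})$ using \eqref{eq:blowup1} and $T^*<\infty$. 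Adding the three estimates and using $T^*<\infty$ to turn every remaining bounded factor into an $L^1(0,T^*)$ coefficient, a Gronwall argument --- supplemented by a short continuity/bootstrap step to absorb the few genuinely quadratic low-frequency contributions, which all carry a factor $\|V^1\|_{\BH{\cd}{2}{1}}\le\eta$ --- gives $\sup_{[0,T^*)}\bigl(\|V^1\|_{\B{\cd+1}{2}{1}}+\|V^2\|_{\B{\cd}{2}{1}}\bigr)<\infty$; reinserting this into the equations provides the bounds on $\n V$ and $\pt V$ in $L^1_{T^*}(L^\infty)$ needed in the first step.

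The main obstacle is the coupling at the top regularity level: the source $\sum_\alpha\overline{S}^\alpha_{12}\pal V^2$ of the $V^1$-equation nominally requires $V^2$ two derivatives above its natural parabolic regularity, which the hypotheses do not supply directly; replacing it by the parabolic mode $W$ --- which trades this derivative loss for $|D|W$ plus lower-order terms and makes $\pt V^2$ enter only through $|D|^{-1}$ --- is precisely what brings all forcing terms into $L^1(0,T^*)$ under the sole assumptions \eqref{eq:blowup1}--\eqref{eq:blowup2}. The structural content of Assumption \textbf{E} (principal part of the $V^1$-equation independent of $V^1$ and affine in $V^2$) is equally essential, as it is what keeps the exponential weight of the hyperbolic estimate finite without a bootstrap on $\|\n V^1\|$, and the smallness \eqref{eq:blowup2} is what allows one to absorb the quadratic terms that do not carry an $L^1$-in-time factor.
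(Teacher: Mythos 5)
Your overall skeleton (a priori bounds on $[0,T^*)$ plus a continuation/lower-bound-on-lifespan argument) is fine, and it is the same general strategy as the paper, but the mechanism you use for the estimates has a genuine gap at its central step. You route everything through the parabolic mode $W$: to get $V^1\in L^\infty(\B{\cd+1}{2}{1})$ from \eqref{eq:V1=f(W)} you need $W\in L^1(\BH{\cd+2}{2}{1})$ in high frequencies, hence the forcing of \eqref{widehatW:eq} must be in $L^1(\BH{\cd}{2}{1})$. You claim this holds because ``$\pt V^2$ enters the forcing only through the smoothing operator $|D|^{-1}$'' and because $\pt V^2\in L^1(\BH{\cd-1}{2}{1})$. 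That is not true: the forcing of \eqref{widehatW:eq} also contains $h$, and $h^t=(\overline{S}^0_{22}-S^0_{22}(U))\pt V^2$ (see \eqref{def:h}) involves $\pt V^2$ with no smoothing at all. Its $\BH{\cd}{2}{1}$ norm is of size $\eta\,\NBH{\cd}{2}{1}{\pt V^2}$ plus $\NBH{\cd+1}{2}{1}{V^1}\NBH{\cd-1}{2}{1}{\pt V^2}$, and $\NBH{\cd}{2}{1}{\pt V^2}$ is neither assumed integrable (only the $\BH{\cd-1}{2}{1}$ norm is, by \eqref{eq:blowup1}) nor produced on the left-hand side of any of your estimates, so it can be neither Gronwalled nor absorbed as your scheme stands. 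To repair this within your framework you would have to add a control of $\int\NBH{\cd}{2}{1}{\pt V^2}$, e.g.\ via the identity $S^0_{22}(U)\pt V^2=\Delta Z(D)W+h^{21}+h^{22}+h^{2}$ (equation \eqref{eq:WdtV}), and then use the smallness $\eta$ to absorb the resulting $\eta\int\NBH{\cd}{2}{1}{\pt V^2}$ into $\int\NBH{\cd+2}{2}{1}{W}$; as written, the step fails.

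Relatedly, the ``main obstacle'' you identify is a misdiagnosis, and it is what pushes you into the $W$-detour. In the regularity class of the lemma one has $V^2\in\cC([0,T];\B{\cd}{2}{1})$, so the natural parabolic maximal regularity for the second equation is exactly $L^1_T(\B{\cd+2}{2}{1})$, which is precisely what is needed to control the source $\sum_\alpha S^\alpha_{12}(U)\pal V^2$ in the $V^1$-equation; no gain beyond natural regularity is required. This is how the paper proceeds: it estimates $V^1$ directly from the first equation of \eqref{Eq_b:V1:V2} (commutator estimate, using that $S^\alpha_{11}$ is affine in $U^2$ so the exponential weight only involves $\int\NBH{\cd}{2}{1}{\n V^2}$), and $V^2$ from the parabolic equation \eqref{lin:part:V2}, whose maximal regularity estimate \eqref{cri:V2:blow} carries \emph{both} $\int\NBH{\cd+2}{2}{1}{V^2}$ and $\int\NBH{\cd}{2}{1}{\pt V^2}$ on the left; the smallness $\eta$ in \eqref{eq:blowup2} is used exactly to absorb the terms $\eta\NBH{\cd}{2}{1}{\pt V^2}$ (from $h^t$) and $\eta\NBH{\cd+2}{2}{1}{V^2}$ (from $h^2$) into that left-hand side, and then Gronwall with the integrable weight from \eqref{eq:blowup1} closes the argument. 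Neither the parabolic mode $W$ nor the low-frequency Lyapunov functional of Lemma \ref{lem:lf:cri:L2:V} is needed for this continuation criterion, and dropping them removes the difficulty your proposal runs into.
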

\begin{proof}
    The proof is very close to that of \cite[Theorem 1.2]{DanADOloc}. 
    Arguing as for \cite[Proposition 2.1]{DanADOloc}, we can establish that for all $t<T^*$ and 
    $j\in\mathbb Z,$
    we have:
     \begin{multline*}
         \Vert \Dj V^1(t)\Vert_{L^2} \le \normede{\Dj V^1_{0}}+ C\int^t_0 \normeinf{\n V^2} \normede{\Dj V^1}\\+ \int_0^t\Vert ( R^{11}_j ,\suma S_{12}^\alpha(U^1)\pal V^2 )\Vert_{L^2}\ \esp{with}    R_j^{11} \defn \suma[(S_{11}^\alpha(U^2),\Dj](\pal V^1).
     \end{multline*}
 To bound $R^{11}_j$ we use the commutator estimate (A.6) in \cite{DanADOloc} while the last term in the r.h.s. of the previous inequality can be bounded from \eqref{comp:uv:propo:r=1}.
 %(adapted to nonhomogeneous Besov spaces). 
    Owing to \eqref{eq:blowup2}, we finally  get
       \begin{align}
    \label{cri:V1:blow}
        \NBH{\cd+1}{2}{1}{V^1(t)}\le \NBH{\cd+1}{2}{1}{V^1_0}+C\int^t_0\NBH{\cd+1}{2}{1}{V^1} \NBH{\cd}{2}{1}{\n V^2}+ C\int^t_0  \NBH{\cd+2}{2}{1}{V^2}.
    \end{align}
   Next, remembering that $V^2$ satisfies \eqref{lin:part:V2}
   (with  $ h^t,h^{21},h^{22},h^{2}$ defined in \eqref{def:h} and $f^2=0$)
   and following the lines of the proof of \cite[Prop. B.2 or prop. 2.2 ]{DanADOloc},  we discover
   that for all $0<t<T,$
   \begin{multline} \label{cri:V2:blow}
        \NBH{\cd}{2}{1}{V^2(t)}+\int^t_0 \NBH{\cd+2}{2}{1}{V^2}+ \int^t_0 \NBH{\cd}{2}{1}{\pt V^2} \lesssim \NBH{\cd}{2}{1}{V^2_0}\\
        +\suma\int^t_0  \NBH{\cd}{2}{1}{\biggl( \overline{S}_{21}^\alpha\pal V^1, \overline{S}_{22}^\alpha\pal V^2\biggr) }
        + \int^t_0 \NBH{\cd}{2}{1}{(V^2,h^t,h^{21},h^{22},h^{2})}.  \end{multline}
Using the product estimate \eqref{product_propo1} for $h^2$ and estimate (A.15) in \cite{DanADOloc}  for $h^t$, it holds that  
\begin{align*}
    &\NBH{\cd}{2}{1}{h^2}\lesssim \NBH{\cd}{2}{1}{V^1}\NBH{\cd+1}{2}{1}{\nabla V^2}+\NBH{\cd+1}{2}{1}{V^1}\NBH{\cd+1}{2}{1}{V^2} ,\\
    &\NBH{\cd}{2}{1}{h^t}\lesssim\NBH{\cd}{2}{1}{V^1}\NBH{\cd}{2}{1}{\pt V^2}+\NBH{\cd+1}{2}{1}{V^1}\NBH{\cd-1}{2}{1}{\pt V^2}.
\end{align*}
Next, since  $S_{21}^\alpha$ and $S_{21}^\alpha$  are affine with respect to $U^2,$
they are of the form $\mathfrak{S}_1(U^1)V^2+\mathfrak{S}_2(U^1)$ where $ \mathfrak{S}_1  $ and $ \mathfrak{S}_2$ are smooth.
%ith $ \mathfrak{S}_2(\overline{U}^1)=0$. 
Taking advantage of \eqref{product_propo1}, we thus have for $k=1,2$
\begin{align*}
    \NBH{\cd}{2}{1}{S_{2k}^\alpha(U)- \overline{S_{2k}^\alpha}}    
    \lesssim \NBH{\cd}{2}{1}{V^1}+ (1+\NBH{\cd}{2}{1}{V^1})\NBH{\cd}{2}{1}{V^2},\end{align*}
%from which and estimate (A.15) in \cite[Page 37]{DanADOloc} (adapted to homogeneous Besov spaces) we 
which implies, owing to \eqref{eq:blowup2}, that
%\begin{align*}    \NBH{s}{2}{1}{h^{21}}&\lesssim \normeinf{V^2}\NBH{s}{2}{1}{\n V^1}+ \NBH{s+1}{2}{1}{%\mathfrak{S}_1(U^1)V^2}\NBH{\cd-1}{2}{1}{\n V^1}+ \NBH{s}{2}{1}{\n V^1}\\
 %   &+\NBH{\cd-1}{2}{1}{\n V^1}\NBH{s+1}{2}{1}{V^1}.
%\end{align*}
%Similarly, since for all  $\alpha,\beta$, $ S_{21}^\alpha(U) -\overline{S}^\alpha_{21}$ and $ S_{22}^\beta(U) -\overline{S}^\beta_{22}$ have the same structure, we have
%\begin{align*}   & \NBH{s}{2}{1}{\mathfrak{S}_2(U^1)\n V^2}\lesssim \NBH{s}{2}{1}{\n V^2}+ \NBH{s+1}{2}{1}{V^1}\NBH{\cd-1}{2}{1}{\n V^2},\\
 %  & \NBH{s}{2}{1}{\mathfrak{S}_1(U^1)V^2}\lesssim \NBH{s}{2}{1}{V^2}+\NBH{s+1}{2}{1}{V^1}\NBH{\cd-1}{2}{1}{V^2}\\
  %  & \NBH{s}{2}{1}{h^{22}}\lesssim \NBH{s}{2}{1}{\mathfrak{S}_1(U^1)V^2\n V^2}+ \NBH{s}{2}{1}{\mathfrak{S}_2(U^1)\n V^2}\\
   % &\hspace{2cm}\lesssim \normeinf{V^2}\NBH{s}{2}{1}{\n V^2}+\NBH{s}{2}{1}{\mathfrak{S}_1(U^1)V^2}\normeinf{\n V^2}+\NBH{s}{2}{1}{\mathfrak{S}_2(U^1)\n V^2}. 
%\end{align*}
$$\NBH{\cd}{2}{1}{h^{21}}+\NBH{\cd}{2}{1}{h^{22}}\lesssim \NBH\cd{2}{1}V
\NBH\cd{2}{1}{\n V}.$$
Plugging all this information into \eqref{cri:V2:blow} and using \eqref{eq:blowup2} yields
\begin{multline*}
    \NBH{\cd}{2}{1}{V^2(t)}+\int^t_0 \NBH{\cd+2}{2}{1}{V^2}+ \int^t_0 \NB{\cd}{2}{1}{\pt V^2} 
\lesssim \NBH{\cd}{2}{1}{V^2_0}\\
+\int_0^t\Bigl(  \NBH{\cd}{2}{1}{V^2}+\NBH{\cd}{2}{1}{\n V}
+\NBH{\cd+1}{2}{1}{V^1}(\NBH{\cd+1}{2}{1}{V^1}+\NBH{\cd-1}{2}{1}{\pt V^2})+\NBH{\cd}{2}{1}{V}\NBH{\cd}{2}{1}{\n V}\Bigr)\cdotp
\end{multline*}
Using again \eqref{eq:blowup2} and combining with Inequality \eqref{cri:V1:blow}, 
we end up for all $t\in[0,T[$ with 
% \begin{align}   \label{cri:V2:blow:2}
 %  \begin{split}       & \NB{\cd}{2}{1}{V^2(t)}+\int^t_0 \NB{\cd+2}{2}{1}{V^2}+ \int^t_0 \NB{\cd}{2}{1}{\pt V^2} \lesssim \NB{\cd}{2}{1}{V^2_0}+\int^t_0 \normeinf{\n V^2}\NB{s}{2}{1}{V^2}\\
  %     & \int^t_0 \biggl(1+ \normeinf{\n V^2, V^2}+\NBH{\cd-1}{2}{1}{V^2}+ \normeinf{\n V^2,V^2}(\NBH{\cd}{2}{1}{V^1}+\NBH{\cd-1}{2}{1}{V^2})\\
  %     &+\NBH{\cd}{2}{1}{(V^1,V^2)} \biggr)\NBH{s+1}{2}{1}{V^1}+\int^t_0(1+\normeinf{V^2}+\NBH{\cd}{2}{1}{V^2})\NBH{s+1}{2}{1}{V^2}+\int^t_0 \NBH{\cd}{2}{1}{V^2}\normeinf{V^2}.
   %      \end{split}   \end{align}
 % Now combining Inequalities \eqref{cri:V2:blow:2} and \eqref{cri:V1:blow}, using repeatedly the fact that for all $\varepsilon>0$,
  %\begin{align*}
   %   \NB{s+1}{2}{1}{V^2}\le \varepsilon\NB{s+2}{2}{1}{V^2}+C\varepsilon^{-1}\NB{s}{2}{1}{V^2}
  %\end{align*}
  %and eventually the embedding $ \BH{\cd}{2}{1}\hookrightarrow L^\infty$,  yields for all $t<T$
\begin{multline*}
    \NBH{\cd}{2}{1}{\nabla V^1(t)}+\NBH{\cd}{2}{1}{V^2(t)}+\int^t_0 \NBH{\cd+2}{2}{1}{V^2}
\lesssim \NBH{\cd}{2}{1}{\nabla V^1_0}+\NBH{\cd}{2}{1}{V^2_0}
\\+\int_0^t\Bigl(1+ \NBH{\cd}{2}{1}{\n V^2}+\NBH{\cd-1}{2}{1}{\pt V^2}\Bigr)\Bigl(\NBH{\cd+1}{2}{1}{V^1}+\NBH{\cd}{2}{1}{V^2}\Bigr) 
+\int_0^t\NBH{\cd}{2}{1}{\n V^2}.\end{multline*}
% \begin{align*}
 %     \lVert V\rVert_{E^s_t}\defn  \NB{s+1}{2}{1}{V^1(t)}+  \NB{s}{2}{1}{V^2(t)}+\int^t_0 \NB{s+2}{2}{%    +\int^t_0\mathcal{Y} \lVert V\rVert_{\B{s+1}{2}{1}\times\B{s}{2}{1}}+ \int^t_0 \NBH{\cd}{2}{1}{V^2}\normeinf{V^2}. \end{align*}   with $ \mathcal{Y}\in L^1(0,T) $. 
 Remembering \eqref{eq:blowup1} and using  Gronwall Lemma enables us to say that 
 $$\underset{t\in[0,T[}\sup\bigl(\NBH{\cd}{2}{1}{\nabla V^1(t)}+\NBH{\cd}{2}{1}{V^2(t)}\bigr)+\int^T_0 \NBH{\cd}{2}{1}{\nabla^2V^2}<\infty,$$
 and applying \cite[Rem. 1.4]{DanADOloc} and embedding allows to continue the solution beyond $T.$
 \end{proof}
 Combining this Lemma with the global estimates obtained in \eqref{data:est:cri} and the smallness assumption \eqref{small:data:cri:thm}, we conclude that
 the lifespan $T^p$ of $V^p$ is infinite  for all $p.$

To complete the proof of existence, one can argue as in the previous section. For uniqueness, see \cite[Theorem 1.6]{DanADOloc}. In fact, there,  it is shown that uniqueness
holds true (without smallness condition) in the set of functions $(V^1,V^2)$ such that $ V^1\in C([0,T];\BH{\cd}{2}{1})  $ and $V^2\in C([0,T];\BH{\cd-1}{2}{1})\cap L^1_T(\BH{\cd+1}{2}{1})$.

\section{Application to the MHD system}
\label{sec:MHD}

%Let $\Omega$ be some open set of $\mathbb{R}^3$. 

As explained in e.g. \cite[Chapter 6]{Kawashima83} or \cite[Chapter 3]{LiQin12},
the motion of a viscous, compressible, and heat conducting magnetohydrodynamic (abbreviated as MHD)  flow in $\mathbb{R}^3$ can be described by the following full compressible 
MHD equations\footnote{Below, for any differentiable function $S=S(\ro,\theta)$
we use the notation  $S_\ro\defn \frac{\partial S}{\partial \ro}$ and $ S_\theta\defn \frac{\partial S}{\partial \theta}\cdotp$}: 
\begin{align}
    \label{MHD}
     \begin{cases}
         &  \pt \ro+ \dig(\ro u )=0,\\
          &   \ro \pt u + \ro u\cdot \n u- \dig( 2\mu D(u)+ \lambda I_3\dig(u))+\n p=-\frac{1}{\mu_0}(\rota{B}) \times B,\\
          &\ro e_\theta(\pt \theta +u\cdot \n \theta)+\theta p_\theta \dig(u)-\dig(k \n \theta)={\mathbb T}+\frac{1}{\sigma \mu_0^2}|\rota B|^2,\\
          &\pt B +u\cdot\n B+B\dig{u}-B\cdot\n u-\frac{1}{\sigma \mu_0}\Delta B=-\n (\frac{1}{\sigma\mu_0})\times \rota{B}.
     \end{cases}
 \end{align}
% \begin{align}
  %   \label{div:B=0}
   %  \dig{B}=0.
 %\end{align}
Here $u(t,x) \in \mathbb{R}^3$ denotes the velocity field, $\ro=\ro(t,x)\in \mathbb{R}_+$ the density, $p = p(t,x) \in \mathbb{R}$ the pressure,  $e = e(t, x) \in \mathbb{R}$ the internal energy by unit mass, $\theta = \theta (t, x) \in \mathbb{R}_+$ the absolute temperature and $ B(t,x)\in  \mathbb{R}^3 $ the magnetic induction. We denoted by 
 $D(u) \defn \frac{1}{2} (\n u + {}^T\n u)$  the deformation tensor and ${\mathbb T}$ is the viscous dissipation function, given  by
\begin{align}
\label{def:vaphi:Dx}
  {\mathbb T} \defn 
    \frac{\mu}{2}\sum_{i,j=1}^3(\partial_{x_j} u^i+ \partial_{x_i} u^j)^2+ \lambda (\dig u)^2.
\end{align}
The real numbers $\lambda$ and $\mu$ are the viscosity coefficients depending on the thermodynamic quantities $\ro$ and $\theta$; $\sigma=\sigma(\ro,\theta)$ is called the coefficient of electrical conductivity and $\mu_0$ the magnetic permeability.  
\smallbreak
In order to reduce  \eqref{MHD} to a closed system of $8$ equations with the $8$ unknowns $(\ro,u,\theta,B),$ we make the following (physically relevant):
\medbreak\paragraph{\textbf{Assumption G}}
\begin{enumerate}
    \item The thermodynamic quantities $p$ and $e$ are smooth functions 
    of $\ro>0$ and $\theta>0,$  satisfying 
    %the following so-called  first law of thermodynamics (where $s$ is  the entropy):
%\begin{align} \label{Gibs}\theta ds=de+pd\Bigl(\frac{1}{\ro}\Bigr),\end{align }with,  
%for all $\ro>0$ and $\theta>0$,
    \begin{align}
        \label{positive:Pr:eth}
        p_\ro(\ro,\theta) =\frac{\partial p}{\partial\ro}>0 \esp{and} e_\theta(\ro,\theta)= \frac{\partial e}{\partial\theta}>0.
    \end{align}
    \item The coefficients $\mu$, $\lambda$ and  $k$ are smooth functions of $(\ro,\theta)$
that satisfy 
    \begin{align}
        \label{LAme:cnd}
       \mu>0,\esp{ } \nu\defn 2\mu+\lambda>0 \esp{and} k>0.
    \end{align}
    \item The coefficient $\sigma$ is a smooth function of $(\ro,\theta)$ satisfying
    \begin{align}
        \label{positi:sig}
      \sigma>0 .
    \end{align}
\end{enumerate}
System \eqref{MHD} is supplemented with initial data $\rho_0,$ $u_0,$ $\theta_0$ and $B_0$
such that $\dig{B_0}=0.$ Then, for smooth enough solutions, applying the operator $\dig$ on \eqref{MHD}$_4$ and using  the identities: 
  \begin{align*}
    &  -(u\cdot \n) B-B\dig{u}+ (B\cdot\n ) u=\rota{(u\times B)}-u\dig{B},\\
      & \frac{1}{\sigma \mu_0}\Delta B-\nabla (\frac{1}{\sigma \mu_0})\times \rota{B}=-\rota{( \frac{1}{\sigma \mu_0} \rota{B})}+\frac{1}{\sigma \mu_0}\nabla \dig{B},
  \end{align*}
  we discover that $\dig{B}$ is solution of the following transport-diffusion equation
  \begin{align*}
      (\dig{B})_t+\dig{(u\dig{B})}=\dig{\biggl((\frac{1}{\sigma\mu_0})\n \dig{B}\biggr)}\cdotp
  \end{align*}
Hence, MHD system has the following property:
\smallbreak\textbf{(P)} A solution of \eqref{MHD} 
   such that $\dig{B_0}=0$ satisfies $\dig{B}=0$ as long as it remains smooth.
\medbreak
%Setting 
In terms of $U\defn(\ro,u,\theta,B),$ \eqref{MHD} may be seen as a symmetrizable hyperbolic partially diffusive system on the phase space
$\mathcal{U}\defn \{(\ro,u,\theta, B)\in 
    \mathbb{R}^{8}/ \ro>0,\; \theta>0\}.$ Indeed, we have:
%For some positive constants $\overline{\ro}>0$, $\overline{\theta}>0$ and $\overline{B}\in \mathbb{R}^3$, define 
%\begin{align*}    U\defn(\ro,u,\theta,B) ,\;\; \overline{U}\defn %(\overline{\ro},0_{\mathbb{R}^3},\overline{\theta}, \overline{B}) \esp{and} V\defn U-%\overline{U}.\end{align*}
% Indeed, System\eqref{MHD} may be rewritten
\begin{align}
    \label{SHPDS:NS}
    S^0(U)\frac{d}{dt} U+\sum_{\alpha=1}^3 S^\alpha(U)\pal U-\sum_{\alpha,\beta=1}^3 \pal\left( Y^{\alpha\beta}\pbe V\right)=f(U)
\end{align}
where
\begin{align*}
    %\label{def:S0:H:xi}
    S^0(U)\defn 
    \begin{pmatrix}
\frac{p_\ro}{\ro} & 0 & 0&0\\
0 & \ro {I_3} & 0&0\\
0&0 & \frac{\ro e_\theta}{\theta}&0\\
0&0&0& \frac{1}{\mu_0} {I_3}
\end{pmatrix},\;\;\;
 f(U)\defn  \begin{pmatrix}
0\\
0 \\
\frac{1}{\theta}({\mathbb T}+\frac{1}{\sigma\mu_0}|\rota{B}|^2)-k\n \theta\cdot \n (\frac{1}{\theta})\\
-\n(\frac{1}{\sigma\mu_0^2})\rota{B}-\left( \n(\frac{1}{\sigma\mu_0^2})\cdot \n \right)B
\end{pmatrix},
\end{align*}

\begin{align*}
    %\label{def:Salpha:}
\sum_{\alpha=1}^3 S^\alpha(U)\xi_\alpha \defn
 \begin{pmatrix}
\frac{p_\ro}{\ro}u\cdot \xi & p_\ro \xi & 0& 0\\
p_\ro \;^T\xi & \ro(u\cdot \xi) {I_3} & p_\theta\; ^T\xi& \frac{1}{\mu_0}(^T\xi B-(B\cdot \xi) I_3)\\
0& p_\theta \xi & \frac{\ro e_\theta}{\theta}(u\cdot \xi) & 0\\
0 & \frac{1}{\mu_0}(^T\! B\xi -(B\cdot\xi)I_3) & 0 & \frac{1}{\mu_0}(u\cdot \xi)I_3,
\end{pmatrix}
\end{align*}
\begin{align*}
    %\label{Yalbe:xi}
  \esp{and}\!   Y^{\alpha\beta}\defn
    \begin{pmatrix}
0& 0 \\
0 &  Z^{\alpha\beta}
\end{pmatrix}
\!\esp{with}\!
%\label{def:Zal:be:xi}
\sum_{\alpha,\beta=1}^3 Z^{\alpha\beta}\xi_\alpha\xi_\beta\defn 
\begin{pmatrix}
 \mu |\xi|^2 {I_3}+(\mu\!+\!\lambda)\xi\otimes \xi  & 0& 0\\
0 & \frac{k}{\theta} |\xi|^2& 0\\
0&0& \frac{1}{\mu_0^2\sigma}|\xi|^2I_3
\end{pmatrix}
\end{align*}
where the relation $B\times \rota{B}=\frac{1}{2}\n(|B|^2)-(B\cdot \n )B$ has been used.
\medbreak

It is clear that $S^0(U)$ is a positive definite matrix for all $U\in \mathcal{U}$, and that the matrices $S^\alpha(U)$ are real symmetric. Furthermore a simple calculation reveals that
for all  $A=(X_1,Y,X_2)$ with $X_1\in \mathbb{C}^3,$ $X_2\in \mathbb{C}^3$
and $Y\in\mathbb R,$ and all $\xi \in \mathbb{R}^3$ with $|\xi|=1,$ we have 
\begin{align}
    \label{sca:Yal:be:V}
     \sumab \left\langle Z^{\alpha\beta}\xi_\alpha\xi_\beta A, A\right\rangle\ge \min(\mu,\nu)|X_1|^2+\frac{k}{\theta}Y^2+  \frac{1}{\mu_0^2\sigma} |X_2|^2
\end{align}
where $\left\langle \cdot ,\cdot \right\rangle$ denotes the standard Hermitian product in $\mathbb{C}^{7}$. As for the right-hand side~$f$ of \eqref{SHPDS:NS}, it can be regarded as a lower order (quadratic) term satisfying \textbf{(D4)}.
\smallbreak
We claim that Condition (SK) is satisfied at any $U\in\U.$
Indeed, let $\phi=(a,A)$ with $a\in \mathbb{C}$ and consider $\omega\in \mathbb{S}^{2}.$ As $S^0(U)$ is invertible, if $ (S^0)^{-1}(U)(\sum_{\alpha,\beta=1}^3  Y^{\alpha\beta}(U)\omega_\alpha\omega_\beta) \phi=0_{\mathbb{C}^8}  $, then 
$$ \Big(\sum_{\alpha,\beta=1}^3  Z^{\alpha\beta}(U)\omega_\alpha\omega_\beta\Big) A=0_{\mathbb{C}^7}, $$
 which, owing to \eqref{sca:Yal:be:V}, \eqref{LAme:cnd} and \eqref{positi:sig}  implies that $ A =0_{\mathbb{C}^7}$. Now, if there exists $\lambda\in \mathbb{C}$ such that
 $ (S^0)^{-1}(U) (\sum_{\alpha=1}^3 S^\alpha(U)\omega_\alpha )\phi=\lambda\phi $, then  $ a \frac{p_\ro}{\ro}{}^T\!\omega =0_{\mathbb{C}^2} $. Since $ {p_\ro}>0 $ (Assumption \textbf{G}). Then, owing to $\omega\neq 0,$ we have $a=0$ and thus $\phi=0_{\mathbb{C}^8}.$
 %and we conclude that the (SK) condition is satisfied for  any state $U\in \mathbb{U}$. %constant solution $\overline{U}\defn (\overline{\ro},0_{\mathbb{R}^3},\overline{\theta}
%\overline{B})$ with $\overline\ro>0$ and $\overline \theta>0$.
\medbreak
Consequently, we have the following result which is a direct application of Theorem~\ref{thm:glob:cri}:

\begin{thm}[Global existence for (MHD) system]
    \label{thm:glo:MHD}
   Let Assumption \textbf{G} be in force and fix some positive
   real numbers $\overline\theta$ and $\overline B.$ Assume that the initial data $(\ro_0,u_0,\theta_0,B_0)$ are such that    
   $\dig{B_0}=0.$ 
   There exist  positive constants $c$ and $C$ depending only on 
   %$S^0, S^\alpha, Z^{\alpha\beta}$
   %(defined in \eqref{def:S0:H:xi},\eqref{def:Salpha:},\eqref{def:Zal:be:xi}) and $f$ 
  the coefficients of the system such that if
\begin{align}
    \label{small:cnd:whole:space:MHD}
  \Vert \ro_0-\overline{\ro} \Vert_{ \BH{\frac{1}{2}}{2}{1}\cap\BH{\frac{5}{2}}{2}{1} }+  \Vert (u_0,\theta_0-\overline{\theta}, B_0-\overline{B}) \Vert_{ \BH{\frac{1}{2}}{2}{1}\cap\BH{\frac{3}{2}}{2}{1} }\le c
\end{align}
then  System \eqref{MHD} supplemented with initial data $(\ro_0,u_0,\theta_0,B_0)$ admits a unique global-in-time solution $(\ro,u,\theta, B)$
such that  $V=(V^1,V^2)$ with  $V^1\defn\ro-\overline{\ro}$ and $V^2\defn(u,\theta-\overline{\theta},B-\overline{B})$ belongs to the space $E$ of Theorem \ref{thm:glob:cri}, 
and  Inequality \eqref{ine:lya:V:LF:HF:BH} holds. 
%$V^1_0=\ro_0-\overline{\ro}$, $V^2_0=(u_0,\theta_0-\overline{\theta}, B_0-\overline{B})$ and $V_0=(V^1_0,V^2_0)$ and 

If, in addition, $ (\ro_0-\overline{\ro}, u_0,\theta_0-\overline{\theta}, B_0-\overline{B}  )$ belongs to $\BH{-\sigma_1}{2}{\infty}$ for some $-1/2<\sigma_1\le 3/2$ then the solution $ V$ satisfies
\eqref{est:prog:NBH(V):r=inf} and the decay estimates of Theorem \ref{thm:decay:sucri}.
\end{thm}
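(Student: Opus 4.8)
The plan is to verify that the compressible MHD system \eqref{MHD}, rewritten in the quasilinear form \eqref{SHPDS:NS} with the matrices $S^0(U)$, $S^\alpha(U)$, $Y^{\alpha\beta}$ and source term $f$ exhibited above, satisfies the structural hypotheses of Theorems \ref{thm:glob:cri} and \ref{thm:decay:sucri}, and then to invoke those theorems verbatim after dealing with the small issue of the constraint $\dig B=0$. First I would record that the normal-form Assumption \textbf{D} holds at the reference state $\overline U\defn(\overline\ro,0,\overline\theta,\overline B)$: Assumption \textbf{(D1)} follows from the block-diagonal, positive-definite form of $S^0(U)$ under Assumption \textbf{G} (that is, \eqref{positive:Pr:eth} and \eqref{LAme:cnd}), with $n_1=1$ corresponding to the density and $n_2=7$ to $(u,\theta,B)$; Assumption \textbf{(D2)} is the symmetry of the $S^\alpha(U)$, visible from the explicit formula; Assumption \textbf{(D3)} is the block form of $Y^{\alpha\beta}$ together with the strong ellipticity \eqref{strong_elli}, which follows from \eqref{sca:Yal:be:V}, \eqref{LAme:cnd} and \eqref{positi:sig} (here one uses $\mu>0$, $\nu=2\mu+\lambda>0$, $k>0$, $\sigma>0$ to bound the quadratic form below by a positive multiple of $|\xi|^2|\lambda|^2$, with $c_1$ a continuous positive function of $(\ro,\theta)$); and Assumption \textbf{(D4)} holds because, as already noted, $f$ collects only quadratic-in-$\nabla U$ (and lower-order) terms, with vanishing first component, so that it satisfies \textbf{(D4)} — or more precisely the slightly more general form allowed by Remark \ref{rmq:f:gene}.

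Next I would invoke the (SK) verification that is already carried out in the excerpt: for every $U\in\U$ and every $\omega\in\mathbb S^{2}$, the pair $(N_\omega,M_\omega)$ defined in \eqref{def:matbf:N:M} satisfies Condition (SK). The argument there is short — if $M_\omega\phi=0$ then the $(u,\theta,B)$-block $A$ of $\phi$ vanishes by \eqref{sca:Yal:be:V}, and if in addition $\lambda\phi+N_\omega\phi=0$ then the density component $a$ must vanish because $p_\ro>0$ and $\omega\neq0$, so $\phi=0$. This is exactly the hypothesis required by Theorem \ref{thm:glob:cri}. With $d=3$, the critical index is $\cd=3/2$, so $\BH{\cd-1}{2}{1}=\BH{1/2}{2}{1}$, $\BH{\cd}{2}{1}=\BH{3/2}{2}{1}$ and $\BH{\cd+1}{2}{1}=\BH{5/2}{2}{1}$; hence the smallness condition \eqref{small:cnd:whole:space} for $(V^1_0,V^2_0)=(\ro_0-\overline\ro,\,(u_0,\theta_0-\overline\theta,B_0-\overline B))$ is precisely \eqref{small:cnd:whole:space:MHD}, and Theorem \ref{thm:glob:cri} yields the global solution in the space $E$ together with the Lyapunov estimate \eqref{ine:lya:V:LF:HF:BH}. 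The extra decay statement is then an immediate application of Theorem \ref{thm:decay:sucri}, whose only additional hypothesis is $V_0\in\BH{-\sigma_1}{2}{\infty}$ with $1-d/2<\sigma_1\le d/2$, i.e. $-1/2<\sigma_1\le 3/2$ in dimension three.

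The one genuine point that needs care — and the step I expect to be the main (minor) obstacle — is the reconciliation of the divergence-free constraint on $B$ with the abstract framework: the symmetrization leading to \eqref{SHPDS:NS} relies on the algebraic identities involving $\rota B$, $B\cdot\nabla B$ and $\tfrac12\nabla|B|^2$, which are equivalent to the original equations \eqref{MHD} only on the constraint manifold $\{\dig B=0\}$. I would handle this exactly as indicated in the excerpt: property \textbf{(P)} shows that $\dig B=0$ is propagated by smooth solutions (since $\dig B$ solves a homogeneous transport–diffusion equation with zero initial data), so one first solves \eqref{SHPDS:NS} via Theorem \ref{thm:glob:cri}, obtaining a global solution in $E$, and then checks a posteriori that $\dig B\equiv0$, whence the solution of \eqref{SHPDS:NS} is indeed a solution of the original MHD system \eqref{MHD}. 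One should also confirm that the regularizing/truncating procedure used in the construction (the operators $\dot S_p-\dot S_{-p}$ of \eqref{def:smo:data:BH}) preserves $\dig B_0=0$, which it does since they are Fourier multipliers. Uniqueness for MHD then follows from uniqueness in $E$ for \eqref{SHPDS:NS} restricted to the constraint manifold. Everything else is a direct citation of Theorems \ref{thm:glob:cri} and \ref{thm:decay:sucri}.
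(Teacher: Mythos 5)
Your proposal is correct and follows essentially the same route as the paper: verify Assumption \textbf{D} and the (SK) condition for the symmetrized form \eqref{SHPDS:NS} (with $n_1=1$, $n_2=7$, $d=3$ so $\cd=3/2$), then apply Theorems \ref{thm:glob:cri} and \ref{thm:decay:sucri} directly, with the constraint handled through property \textbf{(P)}. The only minor remark is that the identity $B\times\rota B=\tfrac12\n(|B|^2)-(B\cdot\n)B$ holds unconditionally, so \eqref{SHPDS:NS} is equivalent to \eqref{MHD} as written without assuming $\dig B=0$; the constraint (and its propagation via \textbf{(P)}) only serves to identify solutions of \eqref{MHD} with solutions of the physical induction equation, exactly as the paper intends.
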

If we put $\theta=0$ and $B=0$ in \eqref{MHD}
and keep only the first two equations, we get  the so-called barotropic compressible Navier-Stokes system:
\begin{align}
     \label{CNS:ro}
     \begin{cases}
         &  \pt \ro+ u\cdot\n \ro+ \ro\dig(u)=0\\
          &   \ro \pt u + \ro u\cdot \n u- \dig( 2\mu(\ro) D u+ \lambda(\ro) I_d\dig(u))+\n p(\ro)=0.
     \end{cases}
 \end{align}
Without difficulties, we can see that, if 
\begin{align}
    \label{Lame:cnd:mu:nu}
   \mu(\ro)>0,\;\;\; \nu(\ro)>0  \esp{and} p_\ro (\ro)>0 \;\; \text{ for all } \ro>0,
\end{align}
then the system \eqref{CNS:ro} satisfies both Assumption \textbf{E} and the (SK) condition. Then, Theorem \ref{thm:glo:L2:cri} can be applied and we recover the following result first proved by the second author in \cite{Dan00}: 
\begin{thm}
    \label{thm:glo:CNS}
   Let $d\ge 2$ and \eqref{Lame:cnd:mu:nu} be assumed.  Fix some constant and positive reference
   density $\bar\rho.$ 
   There exist  positive constants $c$ and $C$ depending only on 
   $\mu,$ $\lambda,$ $\bar\rho$ and $p$ such that
   if
\begin{align}
    \label{small:cnd:whole:space:NSC}
  \Vert \ro_0-\bar\ro \Vert_{ \BH{\cd-1}{2}{1}\cap\BH{\cd}{2}{1} }+  \Vert u_0 \Vert_{ \BH{\cd-1}{2}{1}}\le c
\end{align}
then System \eqref{CNS:ro} supplemented with initial data $(\ro_0,u_0)$ admits a unique global-in-time solution $(\ro,u)$ with $\ro>0$.  Moreover $(\ro, u)$  belongs to the class $\mathcal{E}$, defined in  Theorem \ref{thm:glo:L2:cri},   and satisfies the estimate \eqref{lya:cri:thm}.
\end{thm}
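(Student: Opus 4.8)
The plan is to verify that the barotropic compressible Navier--Stokes system \eqref{CNS:ro} fits the abstract framework of Theorem \ref{thm:glo:L2:cri}, that is, to check that it can be written in the normal form of System \eqref{Eq_b} under Assumption \textbf{E}, and that the pair $(N_\omega,M_\omega)$ of \eqref{def:matbf:N:M} satisfies the (SK) condition; once this is done, the conclusion follows by direct application of Theorem \ref{thm:glo:L2:cri}, exactly as Theorem \ref{thm:glo:MHD} was deduced from Theorem \ref{thm:glob:cri}. First I would introduce the unknown $U\defn(\ro,u)$ on the phase space $\U\defn\{(\ro,u)\in\mathbb R\times\mathbb R^d: \ro>0\}=\U^1\times\mathbb R^d$ with $n_1=1$, $n_2=d$, and rewrite \eqref{CNS:ro} in quasilinear symmetric form. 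Dividing the density equation by (say) $\ro$ or rather choosing the symmetrizing variable so that $S^0_{11}={\rm Id}$: one takes the first equation as $\pt\ro+u\cdot\n\ro+\ro\,\dig u=0$ and for the momentum equation writes $\ro\pt u+\ro u\cdot\n u-\dig(2\mu(\ro)Du+\lambda(\ro)I_d\dig u)+p'(\ro)\n\ro=0$. The symmetrizer is
\[
S^0(U)=\begin{pmatrix} 1 & 0\\ 0 & \dfrac{\ro}{p'(\ro)}I_d\end{pmatrix}\quad\text{(up to a harmless scalar rescaling of the density equation)},
\]
which is block-diagonal and positive definite thanks to $p'(\ro)>0$; here $S^0_{11}={\rm Id}$ and $S^0_{22}$ depends smoothly only on $U^1=\ro$, so \textbf{(E1)} holds. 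The first-order matrices $S^\alpha(U)$ are then read off: $S^\alpha_{11}(U)=u_\alpha$ is affine (in fact linear) in $U^2=u$ and independent of $\ro$; $S^\alpha_{12}$ and $S^\alpha_{21}$ are (up to the rescaling) $\pm$ a constant multiple of the corresponding coordinate vector, independent of $U^2$ and trivially smooth in $\ro$; and $S^\alpha_{22}(U)=\frac{\ro}{p'(\ro)}u_\alpha I_d$ is affine in $u$. One checks symmetry of $S^\alpha(U)$ and the structure \eqref{def:Y:al:be}: $Y^{\alpha\beta}$ acts only on the velocity block with $Z^{\alpha\beta}$ built from $\mu(\ro),\lambda(\ro)$, independent of $u$, smooth in $\ro$, and strong ellipticity \eqref{strong_elli} follows from $\mu(\ro)>0$ and $\nu(\ro)=2\mu(\ro)+\lambda(\ro)>0$ exactly as in the Lam\'e operator estimate $\sum_{\alpha,\beta}Z^{\alpha\beta}\xi_\alpha\xi_\beta\lambda\cdot\lambda\ge\min(\mu,\nu)|\xi|^2|\lambda|^2$. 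Finally $f\equiv0$, so \textbf{(E4)} holds. This gives Assumption \textbf{E}.

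Next I would verify the (SK) condition at every $U\in\U$ and every $\omega\in\mathbb S^{d-1}$, following verbatim the MHD computation: with $A_\omega=i\suma\overline S^\alpha\omega_\alpha$, $B_\omega=\suma[\beta]\overline Y^{\alpha\beta}\omega_\alpha\omega_\beta$, $N_\omega=(\overline{S^0})^{-1}A_\omega$, $M_\omega=(\overline{S^0})^{-1}B_\omega$, suppose $\phi=(a,A)$ with $a\in\mathbb C$, $A\in\mathbb C^d$, satisfies $M_\omega\phi=0$ and $\lambda\phi+N_\omega\phi=0$ for some $\lambda\in\mathbb R$. Since $\overline{S^0}$ is invertible, $M_\omega\phi=0$ forces $(\suma[\beta]\overline Z^{\alpha\beta}\omega_\alpha\omega_\beta)A=0$, and strong ellipticity gives $A=0$. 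Then $\lambda\phi+N_\omega\phi=0$ restricted to the density component reduces to an equation of the form $\lambda a + (\text{const}>0)\,a\,u_\omega\cdot(\dots)$; more precisely, with $A=0$ the off-diagonal coupling forces $a\,p'(\bar\rho)\,{}^T\omega=0_{\mathbb C^d}$ (the momentum-equation constraint $p'(\bar\rho)\n\ro$ term), and since $p'(\bar\rho)>0$ and $\omega\neq0$ we get $a=0$, hence $\phi=0$. Thus $(N_\omega,M_\omega)$ satisfies (SK) for all $\omega$.

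With Assumption \textbf{E} and the (SK) condition established, I would then simply invoke Theorem \ref{thm:glo:L2:cri}: there exists $\alpha>0$ such that if $V_0=(V_0^1,V_0^2)\defn(\ro_0-\bar\ro,u_0)$ satisfies $\lVert V_0^1\rVert_{\BH{\cd-1}{2}{1}\cap\BH{\cd}{2}{1}}+\NBH{\cd-1}{2}{1}{V_0^2}\le\alpha$, then \eqref{CNS:ro} has a unique global solution in the space $\mathcal E$ with the Lyapunov estimate \eqref{lya:cri:thm}; the smallness condition \eqref{small:cnd:whole:space:NSC} is exactly this, up to renaming $c=\alpha$. The positivity $\ro>0$ is preserved because $\ro_0-\bar\ro\in\BH{\cd}{2}{1}\hookrightarrow L^\infty$ with small norm keeps $\ro$ in a compact subset of $(0,\infty)$, which is also needed to stay inside $\U^1$ in the a priori estimates. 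The only genuinely nontrivial bookkeeping --- and the place where I expect the most care is needed --- is checking that the \emph{affine-in-}$U^2$ clauses of \textbf{(E2)} really hold for the momentum flux after symmetrization, since $S^\alpha_{22}(U)=\frac{\ro}{p'(\ro)}u_\alpha I_d$ involves the product of a function of $\ro$ with a component of $u$; this is still affine in $U^2=u$ with coefficients smooth in $U^1=\ro$, so it is admissible, but one must be mildly careful that no quadratic-in-$u$ terms are hidden by the convective derivative $\ro u\cdot\n u$ after one divides by the symmetrizing weight. Everything else is a routine transcription of the MHD argument with $\theta\equiv0$, $B\equiv0$, and I would not reproduce the functional-analytic machinery, only cite Theorem \ref{thm:glo:L2:cri}.
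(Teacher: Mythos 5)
Your overall route is the same as the paper's: check that \eqref{CNS:ro} fits Assumption \textbf{E} and the (SK) condition, then quote Theorem \ref{thm:glo:L2:cri}. The (SK) part of your sketch is fine, but the verification of Assumption \textbf{E} --- which is the only real content of this proof --- fails as written. With the unknown $U=(\ro,u)$ and the density equation kept as $\pt\ro+u\cdot\n\ro+\ro\dig u=0$ (so that $S^0_{11}={\rm Id}$, as (E1) forces), the off-diagonal block coming from the density equation is $S^\alpha_{12}=\ro\,{}^T\!e_\alpha$, while the momentum equation gives $S^\alpha_{21}=p'(\ro)\,e_\alpha$ if you keep its natural weight $\ro$ (so $S^0_{22}=\ro I_d$), or $S^\alpha_{21}=e_\alpha$ if you divide by $p'(\ro)$ to get your $S^0_{22}=\frac{\ro}{p'(\ro)}I_d$. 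In neither case is $S^\alpha_{12}={}^T\!S^\alpha_{21}$, so the symmetry required in (E2) fails (and these blocks are certainly not constant multiples of coordinate vectors). The escape routes you hint at do not work either: rescaling the density \emph{equation} by a function of $\ro$ destroys (E1), while dividing the momentum equation by $p'(\ro)$ pulls the Lam\'e operator out of divergence form and creates a term quadratic in $(\n\ro,\n u)$, i.e.\ $f\not\equiv 0$, violating (E4); Remark \ref{rmq:f:gene} relaxes the hypotheses on $f$ only for Theorems \ref{thm:glob:cri} and \ref{thm:decay:sucri}, not for Theorem \ref{thm:glo:L2:cri}.

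The missing idea is a nonlinear change of the density \emph{unknown} rather than a rescaling of the equations. Set $U^1\defn c(\ro)$ with $c'(\ro)=\sqrt{p'(\ro)/\ro}$ and keep the momentum equation with weight $\ro$: the system becomes $\pt c+u\cdot\n c+\sqrt{\ro p'(\ro)}\,\dig u=0$ and $\ro\pt u+\ro u\cdot\n u-\dig\bigl(2\mu(\ro)D(u)+\lambda(\ro)\dig u\,I_d\bigr)+\sqrt{\ro p'(\ro)}\,\n c=0$, so that $S^0={\rm diag}(1,\ro I_d)$, $S^\alpha_{11}=u_\alpha$, $S^\alpha_{12}={}^T\!S^\alpha_{21}=\sqrt{\ro p'(\ro)}\,{}^T\!e_\alpha$, $S^\alpha_{22}=\ro u_\alpha I_d$, $Z^{\alpha\beta}=Z^{\alpha\beta}(\ro)$ and $f\equiv0$, with $\ro$ a smooth function of $c$ near $\bar\ro$; every item of Assumption \textbf{E} and the ellipticity \eqref{strong_elli} (from \eqref{Lame:cnd:mu:nu}) then holds, and your (SK) computation applies verbatim with the coupling vector $\sqrt{\bar\ro\, p'(\bar\ro)}\,\omega\neq0$. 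One then applies Theorem \ref{thm:glo:L2:cri} to $(c(\ro)-c(\bar\ro),u)$ and translates the smallness assumption \eqref{small:cnd:whole:space:NSC} and the conclusion back to $\ro-\bar\ro$ via the composition estimates of Proposition \ref{propo_compo_BH}, together with the $L^\infty$ control that keeps $\ro$ bounded away from $0$. With this correction your argument becomes complete and coincides with what the paper intends.
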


%%%%%%%%%%%%%%%%%%%%%%%%%%%%%%%%%%%%%%%%%%%%%%%%%%%%%%%%%
\medbreak
\paragraph{\textbf{Acknowledgments}}\textit{The first author has received funding from the European Union's Horizon 2020 research and innovation programme under the Marie Sk\l{}odowska-Curie grant agreement \textnumero~945332.}

%%%%%%%%%%%%%%%%%%%%%%%%%%%%%%%%%%%%%%%%%%%%%%%%%%%%%%%%%%

%%%%%%%%%%%%%%%%%%%%%%%%%%%%%%%%%%
%\section*{Appendix}
  \appendix
\section{Some inequalities}
The goal of this part is twofold: first, we give an outline of the proof of  Lemma \ref{lem:derI} and
establish a basic linear algebra result which ensures that
if the pair $(N_\omega,M_\omega)$ defined in \eqref{def:matbf:N:M} satisfies Condition (SK), 
then so does $(\mathbf N_\omega,\mathbf M_\omega).$
Then, we recall a result about a differential inequality that has been used repeatedly in the paper. 
\medbreak
Let us start with the justification of Lemma \ref{lem:derI}.
Let $V$ satisfy Equation \eqref{lin:eq:gene}.
Then, performing the change of variable 
 \begin{align}\label{eq:rescaling}
     \tau\defn \frac{t\ro^b}{\kappa}\esp{and} r\defn\kappa\ro^{a-b}\esp{with}  \ro\defn|\xi|,
 \end{align}
 we discover that $v(\tau)\defn \widehat V(t,\xi) $ is solution of
 \begin{align*}
     v'+(r\mathcal{N}_\omega+\mathcal{M}_\omega)v=0\esp{with}   \mathcal{M}_\omega=S^{-1}\mathcal{B}_\omega,\; \mathcal{N}_\omega=S^{-1}\mathcal{A}_\omega\!\!\esp{and}\!\!\mathcal{A}_\omega, \mathcal{B}_\omega \!\!\esp{defined in}\!\!  \eqref{def:A:B:N:M_omega}.
 \end{align*}
 According to  \cite{BaratDan22M,Danc22},
 we can find arbitrarily small positive parameters $\ee_1,\cdots, \ee_{n-1}$ such that 
 \begin{align*}
      \mathcal{J}'_\omega\!+\!\frac{r}{2}  \sum^{n-1}_{k=1} \!\ee_k|\mathcal{M}_\omega \mathcal{N}_\omega^k v|^2\!\le \ee_0\max(r,r^{-1})|\mathcal{M}_\omega v|^2 \!\!\esp{with}\!\!   \mathcal{J}_\omega\defn   \sumk \!\ee_k   \mathcal{R}e (\mathcal{M}_\omega \mathcal{N}_\omega^{k-1} v \cdot \mathcal{M}_\omega \mathcal{N}_\omega^{k} v ).  
 \end{align*}
%It's worth noting  that the authors in \cite{BaratDan22M} have exploited Beauchard-Zuazua's method in \cite{Beau11} in order to establish the above inequality.
Taking the real part of the Hermitian product in $\C^n$ of the following equation
$$  Sv'+(r\mathcal{A}_\omega+\mathcal{B}_\omega)v=0$$
with $v$ and using the properties \eqref{null:A:sky} and \eqref{posi:ope:B:}, we readily get
$$\frac12\frac d{dt}\bigl(Sv\cdot v)+|\mathcal B_\omega v|^2\leq0,$$
whence 
$$\frac12\frac d{dt}\Bigl( Sv\cdot v+2\min(r,r^{-1})\mathcal J_\omega\Bigr)+|\mathcal B_\omega v|^2
+\frac{\min(1,r^2)}2\sum^{n-1}_{k=1} \!\ee_k|\mathcal{M}_\omega \mathcal{N}_\omega^k v|^2\leq
\ee_0|\mathcal{M}_\omega v|^2.$$
Because $|\mathcal M_\omega v|\simeq |\mathcal B_\omega v|,$ using the rescaling \eqref{eq:rescaling}
completes the proof of Lemma \ref{lem:derI}.
\medbreak

We also have the following result, which forms the foundation of Subsection \ref{subsec:V1:hf}.
%\begin{lem}    \label{lem:SK:SK'}
 %   Under the assumptions \eqref{null:A:sky}, \eqref{form:B:mathbb} and \eqref{stron:ell:B22}, the pair $( (S^{})^{-1}\mathcal{A}_\omega, (S^{})^{-1}\mathcal{B}_\omega)$ satisfies the (SK) condition if and only if  the pair $ \biggl((S^{11})^{-1} \mathcal{A}^{11}_\omega,  (S^{11})^{-1} S^{12}_\omega(B^{22}_\omega)^{-1}S^{21}_\omega)\biggr) $ does.\end{lem}
%\begin{proof}    Let $\varphi\in \mathbb{C}^{n_1}$.
    %and assume that the pair $(\mathcal{N}_\omega, \mathcal{N}_\omega)$. satisfies the (SK) condition.
 %   Since $S^{11} $ is invertible, then $ (S^{11})^{-1} S^{12}_\omega(B^{22}_\omega)^{-1}S^{21}_\omega) \varphi= 0$ if and only if $ S^{12}_\omega(B^{22}_\omega)^{-1}S^{21}_\omega) \varphi= 0$. Moreover the  fact that $A_\omega$ is Hermitian ensures that $$ \biggl( S^{12}_\omega(B^{22}_\omega)^{-1}S^{21}_\omega \varphi\cdot \varphi\biggr)= \biggl( (B^{22}_\omega)^{-1}S^{21}_\omega \varphi\cdot S^{21}_\omega\varphi\biggr).$$ Hence Inequality \eqref{stron:ell:B22} guarantees that $  S^{12}_\omega(B^{22}_\omega)^{-1}S^{21}_\omega \varphi =0 $ if and only if $ S^{21}_\omega\varphi =0  $. As $S^{11} $ is invertible,  we conclude that $\biggl( (S^{11})^{-1}A_\omega^{11}, (S^{11})^{-1} S^{12}_\omega(B^{22}_\omega)^{-1}S^{21}_\omega\biggr) $ satisfies (SK) if and only if $ \biggl( (S^{11})^{-1}A_\omega^{11}, (S^{22})^{-1}S^{21}_\omega \biggr)  $ satisfies (SK'). This finishes the proof of the equivalence.    \end{proof}
\begin{lem}    \label{lem:SK:SK'} Let the nonnegative integers $n_1$ and $n_2$ satisfy $n=n_1+n_2.$
Let  $M_{22}\in\cM_{n_2}(\C),$ $N_{11}\in\cM_{n_1}(\C),$ $N_{22}\in\cM_{n_2}(\C),$
$N_{12}\in\cM_{n_1,n_2}(\C)$ and $N_{21}\in\cM_{n_2,n_1}(\C)$ such that ${}^T\!\overline{N_{21}}=N_{12}.$ Let us set
$$ M\defn \begin{pmatrix}
0 & 0\\0 &  M_{22}
\end{pmatrix}\esp{and}  N\defn \begin{pmatrix}
N_{11}& N_{12}\\N_{21} &  N_{22}
\end{pmatrix}\cdotp$$
Finally, let $Q\in\cM_{n_2}(\C)$ satisfy 
\begin{equation}\label{eq:ellip}
 \mathcal{R}e\bigl(Q\eta_2\cdot\eta_2)\neq0\esp{for all}\eta_2\in\C^{n_2}\setminus\{0\}.
\end{equation}
Then, if the pair $(N,M)$ satisfies Condition (SK), so does $(N_{11},N_{12}QN_{21}).$
The converse is true if, furthermore, $M_{22}$ is invertible.
    \end{lem}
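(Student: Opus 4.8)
The statement is a purely linear-algebraic fact relating Condition (SK) for the pair $(N,M)$ on $\C^n=\C^{n_1}\times\C^{n_2}$ to Condition (SK) for the pair $(N_{11},N_{12}QN_{21})$ on $\C^{n_1}$. I would work directly from Definition \ref{dfn:SK}: I must show that the only $\phi_1\in\C^{n_1}$ satisfying simultaneously $N_{12}QN_{21}\phi_1=0$ and $\lambda\phi_1+N_{11}\phi_1=0$ for some real $\lambda$ is $\phi_1=0$, given that the analogous property holds for $(N,M)$ on $\C^n$. The idea is to lift a hypothetical nonzero $\phi_1$ in the kernel to a vector $\phi=(\phi_1,\phi_2)\in\C^n$ that violates (SK) for $(N,M)$.

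\textbf{Key steps.} First I would record the obstruction: if $N_{12}QN_{21}\phi_1=0$, then taking the Hermitian product with $\phi_1$ and using ${}^T\!\overline{N_{21}}=N_{12}$ gives $Q(N_{21}\phi_1)\cdot(N_{21}\phi_1)=0$; combined with the nondegeneracy assumption \eqref{eq:ellip} on the real part of $Q$ (note that \eqref{eq:ellip} forces $\mathcal Re(Q\eta_2\cdot\eta_2)$ to have constant sign on $\C^{n_2}\setminus\{0\}$, hence is nonzero there, so $Q\eta_2\cdot\eta_2\neq0$ for $\eta_2\neq0$), we conclude $N_{21}\phi_1=0$. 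Thus the kernel of $N_{12}QN_{21}$ equals the kernel of $N_{21}$ — this is the key reduction, analogous to the rank identity stated in Subsection \ref{subsec:V1:hf}. Second, I set $\phi\defn(\phi_1,0)\in\C^n$. Then $M\phi=0$ trivially from the block form of $M$. Moreover $N\phi=(N_{11}\phi_1,N_{21}\phi_1)=(N_{11}\phi_1,0)$, so the equation $\lambda\phi+N\phi=0$ on $\C^n$ is equivalent to $\lambda\phi_1+N_{11}\phi_1=0$ on $\C^{n_1}$. Hence if $\phi_1\neq0$ satisfies the (SK)-violating equations for $(N_{11},N_{12}QN_{21})$, then $\phi=(\phi_1,0)\neq0$ violates (SK) for $(N,M)$, a contradiction. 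This proves the forward direction.

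\textbf{The converse.} Now assume $M_{22}$ invertible and $(N_{11},N_{12}QN_{21})$ satisfies (SK); I want (SK) for $(N,M)$. Take $\phi=(\phi_1,\phi_2)\in\C^n$ with $M\phi=0$ and $\lambda\phi+N\phi=0$, $\lambda\in\mathbb R$. From $M\phi=0$ and the block form, $M_{22}\phi_2=0$, so invertibility of $M_{22}$ gives $\phi_2=0$. Then $\phi=(\phi_1,0)$, and $\lambda\phi+N\phi=0$ reads $\lambda\phi_1+N_{11}\phi_1=0$ together with $N_{21}\phi_1=0$. By the first step (kernel of $N_{12}QN_{21}$ = kernel of $N_{21}$), $N_{21}\phi_1=0$ implies $N_{12}QN_{21}\phi_1=0$. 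So $\phi_1$ satisfies both (SK)-equations for $(N_{11},N_{12}QN_{21})$, forcing $\phi_1=0$, hence $\phi=0$.

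\textbf{Expected main obstacle.} There is no deep obstacle here; the only point requiring a little care is the equivalence $\ker(N_{12}QN_{21})=\ker(N_{21})$, where one must use the Hermiticity relation ${}^T\!\overline{N_{21}}=N_{12}$ together with the fact that \eqref{eq:ellip} guarantees $Q\eta_2\cdot\eta_2$ never vanishes on $\C^{n_2}\setminus\{0\}$ (a connectedness argument on $\eta_2\mapsto\mathcal Re(Q\eta_2\cdot\eta_2)$ gives constant, nonzero sign). The inclusion $\ker N_{21}\subset\ker(N_{12}QN_{21})$ is immediate; the reverse uses the above. Everything else is a direct bookkeeping of the $2\times2$ block structure. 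I would present the proof in the order: (i) the kernel identity; (ii) the forward direction via the lift $\phi=(\phi_1,0)$; (iii) the converse via $M_{22}$ invertibility.
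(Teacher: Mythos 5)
Your proposal is correct and follows essentially the same argument as the paper: the kernel reduction $N_{12}QN_{21}\phi_1=0\Rightarrow Q N_{21}\phi_1\cdot N_{21}\phi_1=0\Rightarrow N_{21}\phi_1=0$ via \eqref{eq:ellip}, the lift $\phi=(\phi_1,0)$ for the forward direction, and the use of the invertibility of $M_{22}$ to get $\phi_2=0$ for the converse. The only difference is your connectedness/constant-sign remark about $\mathcal{R}e(Q\eta_2\cdot\eta_2)$, which is unnecessary since \eqref{eq:ellip} is applied directly to $\eta_2=N_{21}\phi_1$.
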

\begin{proof} Let $\phi_1\in\C^{n_1}$ satisfy $N_{12}QN_{21}\phi_1=0$ and
$N_{11}\phi_1=\lambda\phi_1$ for some $\lambda\in\C.$
Then, we have
$$0=N_{12}QN_{21}\phi_1\cdot\phi_1=QN_{21}\phi_1\cdot N_{21}\phi_1,$$
whence $N_{21}\phi_1=0,$ due to \eqref{eq:ellip}.
This implies that $\phi\defn(\phi_1,0)\in\C^n$ satisfies $N\phi=\lambda\phi.$
Since obviously $\phi\in\ker M,$ and  as $(N,M)$ satisfies Condition (SK), one may conclude that $\phi=0.$
\smallbreak
Conversely, if $\phi=(\phi_1,\phi_2)\in \C^{n_1}\times\C^{n_2}$ 
satisfies $M\phi=0$ with $M_{22}$ invertible, then we must have $\phi_2=0.$
If we assume in addition that $N\phi=\lambda\phi$ for some $\lambda\in\C,$ then 
we have $N_{11}\phi_1=\lambda\phi_1$ and $N_{21}\phi_1=0,$ and thus $N_{12}QN_{21}\phi_1=0.$
As $(N_{11},N_{12}QN_{21})$ satisfies Condition (SK), we conclude that $\phi_1=0.$
\end{proof}
 \medbreak

The following classical result has been used
a number of times in this text (see the proof in e.g. the Appendix of \cite{DanADOloc}).
\begin{lem}
\label{lem_der_int}
    Let $X : [0, T]\to \mathbb{R}_+$ be a continuous function such that $X$ is differentiable.
    Assume that there exist a constant $B\ge 0$ and a measurable function $A :\mathbb{R}_+\longrightarrow [0, T] $  such that
\[ \frac{1}{2}\frac{d}{dt} X +BX\le AX^{\frac{1}{2}} \esp{a.e on}   [0, T].\]
Then, for all $t\in [0,T]$, we have
\[  X^{\frac{1}{2}}(t)+B\int^t_0 X^{\frac{1}{2}} \le X^{\frac{1}{2}}(0) + \int^t_0 A.\]
\end{lem}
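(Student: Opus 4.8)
\textbf{Proof of Lemma \ref{lem_der_int}.}

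The plan is to reduce to the strictly positive case by a standard regularization, since the only difficulty is that $X^{1/2}$ need not be differentiable where $X$ vanishes. For $\varepsilon>0,$ set $X_\varepsilon\defn X+\varepsilon^2.$ Then $X_\varepsilon$ is continuous, differentiable (with $X_\varepsilon'=X'$), and bounded from below by $\varepsilon^2,$ so that $t\mapsto X_\varepsilon^{1/2}(t)$ is differentiable with $\frac{d}{dt}X_\varepsilon^{1/2}=\frac{X'}{2X_\varepsilon^{1/2}}.$ Using the hypothesis $\frac12 X'\le -BX+AX^{1/2}$ together with the elementary bounds $X\ge X_\varepsilon-\varepsilon^2$ and $X^{1/2}\le X_\varepsilon^{1/2},$ we get
\[
\tfrac12 X_\varepsilon' + BX_\varepsilon \le A\,X_\varepsilon^{1/2} + B\varepsilon^2 \esp{a.e. on}[0,T].
\]
Dividing by $X_\varepsilon^{1/2}\ge\varepsilon>0$ and using $\varepsilon^2/X_\varepsilon^{1/2}\le\varepsilon$ yields the linear differential inequality
\[
\frac{d}{dt}X_\varepsilon^{1/2} + B\,X_\varepsilon^{1/2} \le A + B\varepsilon \esp{a.e. on}[0,T].
\]

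Next I would integrate this inequality over $[0,t]$ (which is licit since $X_\varepsilon^{1/2}$ is, like $X,$ absolutely continuous on $[0,T]$), obtaining
\[
X_\varepsilon^{1/2}(t) + B\int_0^t X_\varepsilon^{1/2}(s)\,ds \le X_\varepsilon^{1/2}(0) + \int_0^t A(s)\,ds + B\varepsilon\,t.
\]
Finally, let $\varepsilon\to0.$ We have the pointwise convergences $X_\varepsilon^{1/2}(t)\to X^{1/2}(t)$ and $X_\varepsilon^{1/2}(0)\to X^{1/2}(0),$ while $\int_0^t X_\varepsilon^{1/2}\to\int_0^t X^{1/2}$ by monotone convergence (the integrands decrease to $X^{1/2}$ as $\varepsilon\downarrow0$), and $B\varepsilon t\to0.$ This gives the announced inequality
\[
X^{1/2}(t) + B\int_0^t X^{1/2} \le X^{1/2}(0) + \int_0^t A \esp{for all}t\in[0,T].
\]

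The only point requiring care is the lack of differentiability of $X^{1/2}$ at the zeros of $X,$ which is precisely what the shift by $\varepsilon^2$ circumvents; every other step is a routine manipulation of the linear inequality and a passage to the limit. \qed

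\bigbreak

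\emph{Remark.} One may alternatively avoid the regularization by noting that, on any open subinterval where $X>0,$ the computation above applies directly, and by using the continuity of $X$ and $X^{1/2}$ together with the fact that $X^{1/2}$ is $1/2$-Hölder-controlled near its zeros to patch the estimate across the (closed) zero set of $X$; the argument via $X_\varepsilon$ is however shorter and entirely elementary.
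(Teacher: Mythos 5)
Your proof is correct and follows essentially the same standard route as the argument the paper points to (the proof is only cited, in the appendix of \cite{DanADOloc}): regularize to $X_\varepsilon=X+\varepsilon^2$, divide by $X_\varepsilon^{1/2}\ge\varepsilon>0$, integrate the resulting linear differential inequality, and let $\varepsilon\to0$. The only minor over-statement is the claim that $X_\varepsilon^{1/2}$ is absolutely continuous ``like $X$'' -- differentiability alone does not give this -- but with the regularity implicitly assumed when the lemma is applied (e.g.\ $X(t)=\|\dot\Delta_j V(t)\|_{L^2}^2$ for a smooth solution) the integration step is indeed licit.
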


%%%%%%%%%%%%%%%%%%%%%%%%%%%%%%%%%%%%%%%%%%%%%%%%%%%%%%%%%%%%%%%%%%%%%%%%%%%%%

\section{Some properties of Besov spaces}
\label{appendix:LP}
For the reader's convenience, we here recall some results on the Littlewood-Paley decomposition and Besov spaces, the source of which can be found in \cite[Chap. 2]{HajDanChe11}. 
\smallbreak
To define the Littlewood-Paley decomposition, we  fix some smooth radial non increasing function $\chi$ with $ \mathrm{Supp}\,\chi \subset B(0,\frac{4}{3})$  and $\chi \equiv 1$  on $ B(0,\frac{3}{4})$ , then set $\varphi(\xi)=\chi(\frac{\xi}{2})-\chi(\xi)$ so that
\begin{align*}
    \chi+\sum_{j\ge 0} \varphi(2^{-j}\cdot)=1 \;\text{on} \; \Rd \esp{and} \sum_{j\in \mathbb{Z}} \varphi(2^{-j}\cdot) =1  \;\text{on}\; \Rd\backslash\{0\}. 
\end{align*}
For $S$ a function and  $z$ a tempered distribution, we (formally) define   
the pseudo-differential operator $S(D)$ by:
\begin{align}
    \label{def:S(aD)}
    S(D) z\defn \mathcal{F}^{-1}\left( S(\cdot) \mathcal{F}z\right)\cdotp
\end{align}
Next, we introduce the following low frequency cut-off: 
\begin{align}
    \label{def:dot:S:j}
    \begin{split}
   & \Dot{S}_j =\chi(2^{-j}D) \;\; \text{for all}\; j\in \mathbb{Z}\esp{and} S_j\defn\Dot{S}_j \;\; \text{for all}\; j\ge 0,\;\; S_{j}=0 \;\; \text{for all}\; j\le  -1,
     \end{split}
\end{align}
and define the homogeneous dyadic block $\DDj$ and nonhomogeneous dyadic block $\Dj$ as 
\begin{align}
    \label{def:DDj:Dj}
    \begin{split}
    &\DDj \defn \varphi(2^{-j}D) \;\; \text{for all}\; j\in \mathbb{Z} \\
    & \Dj=\DDj  \;\; \text{for all}\; j\ge 0,\;\; \Delta_{-1}= \Dot{S}_0 \esp{and} \Dj=0  \;\; \text{for all}\; j< -1.
    \end{split}
\end{align}
We also consider the subset  $\mathcal{S}'_h$  of tempered distributions $z$ such that 
\begin{align}
    \label{cnd:on S'h}
    \lim_{j \rightarrow -\infty } \Dot{S}_j z=0.
\end{align}
We introduce the homogeneous Besov semi-norms (resp. nonhomogeneous Besov norms):
\begin{align}
    \label{def:NBH:NB}
    \NBH{s}{p}{r}{z} \defn \Vert 2^{js} \Vert\DDj  z\Vert_{L^p}\Vert_{l^r} \quad (\text{ resp. }   \NB{s}{p}{r}{z} \defn \Vert 2^{js} \normep{\DDj z}\Vert_{l^r}).
\end{align}
Then, for any $s\in \mathbb{R}$ and $(p,r)\in [1,\infty]$ we define the homogeneous Besov spaces  $\BH{s}{p}{r}$ (resp. nonhomogeneous Besov spaces $\B{s}{p}{r}$) to be the subset of those $z$ in $\mathcal{S}'_h$ (resp. the subset of those $z$ in the tempered distribution space $\mathcal{S}'$) such that $\NBH{s}{p}{r}{z} $ (resp. $\NB{s}{p}{r}{z}$) is finite.
Any $z\in \mathcal{S}'$  can be decomposed in terms of high and low frequencies parts, 
as follows:
\begin{align}
    \label{def:LF:HF:}
    z^l\defn \sum_{j\le 0} \DDj z \esp{and} z^h \defn \sum_{j> 0} \DDj z.
\end{align}
We constantly used the following Besov semi-norms for low and high frequencies:
\begin{align}
    \label{def:NBH:LF:HF}
    \NBH{s}{2}{1}{z}^l\defn   \sum_{j\le 0} 2^{js}\normede{\DDj z} \esp{and}  \NBH{s}{2}{1}{z}^h\defn   \sum_{j> 0} 2^{js}\normede{\DDj z}.
\end{align}
\iffalse
\begin{comment}
%Especially in the global existence result, we expect $X$ to be a Besov space. 
We are now going to define the Chemin-Lerner spaces in which we have
worked (at least for the local existence part), which are a refinement of the spaces $L^\ro(0,T;X)$ .
\begin{dfn}
    \label{def:Chemin lener}
    Let  $\ro$ in $[1,\infty]$ and time $T \in [0, \infty]$. We set
    \begin{align*}
        \TLpNB{s}{p}{r}{z}{\ro}\defn \Vert 2^{js}\Vert \DDj z\Vert_{L^\ro_T(L^p)}\Vert_{l^r}\esp{with}  \Vert z \Vert_{L^\ro_T(L^p)} \defn \Vert z\Vert_{L^\ro(0,T;L^p)}.
    \end{align*}
    We then define the space $\widetilde{L}^\ro_T(\B{s}{p}{r})$
 as the set of tempered distributions $z$ over
$[0,T]\times \Rd$ such that $ \TLpNB{s}{p}{r}{z}{\ro}<\infty.$
    \end{dfn}
 We set $\widetilde{C}(0,T;\B{s}{p}{r})= \widetilde{L}^\ro_T(\B{s}{p}{r})\cap C(0,T;\B{s}{p}{r}) $.
 Let us emphasize that, according
to the Minkowski inequality, we have:
\begin{align}
    \label{link:CL:Besov:space}
    \TLpNB{s}{p}{r}{z}{\ro}\le  \LpNB{s}{p}{r}{z}{\ro},\; \text{if}\; r\ge \ro \esp{and}
     \LpNB{s}{p}{r}{z}{\ro} \le  \TLpNB{s}{p}{r}{z}{\ro}, \; \text{if}\; r\le \ro.
\end{align}
\end{comment}
\fi

Even though most of the functions considered here have range in the set of
vectors or matrices, we keep the same notation for Besov spaces pertaining to this case. 
\smallbreak

 In order to bound the commutator terms, we used the following results: 
\begin{propo}
	 \label{propo_commutator-BH}
	  %We designate by $\mathbb{B}_{2,r}^{s}$  both $\B{s}{2}{r}$ and $\BH{s}{2}{r}$. 
   The following inequalities hold true:
	   \begin{align}
       \label{comm:est:a:b}
        &\normede{[a,\DDj] b} \le Cc_j2^{-j\sigma} \NB{\cd}{2}{1}{\n  a}\NB{\sigma-1}{2}{1}{b} \!\esp{with}\! \sum_j c_j =1\!\esp{if}\! -d/2 < \sigma \le1+d/2, \\  
       \label{comm:est:a:b:inf}
        &\sup_{j\in{\mathbb Z}}\normede{[a,\DDj] b} \le C2^{-j\sigma} \Vert\n a\Vert_{\Dot{B}^{\cd}_{2,\infty}\cap L^\infty} \Vert b\Vert_{\Dot{B}^{\sigma-1}_{2,\infty}}\quad\hbox{if}
        \ -d/2\le \sigma< 1+d/2.
   \end{align}

 %  Similar results hold  true if we replace $L^2$ by $L^\ro(L^2)$ in the l.h.s and  the spaces $L^\ro_T(\mathbb{B}^{s}_{2,r})$ (or $\widetilde{L}^\ro_T(\mathbb{B}^{s}_{2,r})$) in the r.h.s. 
	 \end{propo}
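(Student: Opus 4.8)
\textbf{Proof proposal for Proposition \ref{propo_commutator-BH}.}
The plan is to prove \eqref{comm:est:a:b} and \eqref{comm:est:a:b:inf} via the standard Bony paraproduct decomposition of the commutator, following the well-established route for commutator estimates in (homogeneous) Besov spaces (see e.g. \cite[Chap. 2]{HajDanChe11}). First I would write
$$[a,\DDj]b = [\dot T_a,\DDj]b + \DDj(\dot T_b a) - \dot T_{\DDj b} a + \DDj(\dot R(a,b)) - \dot R(a,\DDj b),$$
where $\dot T$ and $\dot R$ denote the homogeneous paraproduct and remainder operators, and then estimate each of the five pieces separately.

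The term $[\dot T_a,\DDj]b$ is the genuine commutator and is the only one requiring a cancellation argument: I would expand it as a finite sum over $|j-j'|\le N_0$ of $[\dot S_{j'-1}a,\DDj]\dot\Delta_{j'}b$, write this block commutator as a convolution against a rescaled kernel $h$ (with $\widehat h=\varphi$), and use the first-order Taylor expansion $\dot S_{j'-1}a(x)-\dot S_{j'-1}a(x-y)=\int_0^1 y\cdot\nabla\dot S_{j'-1}a(x-\tau y)\,d\tau$. This produces the gain of one derivative that turns $\|\nabla a\|$ into the relevant factor: by the convolution (or Bernstein) inequalities one gets $\|[\dot S_{j'-1}a,\DDj]\dot\Delta_{j'}b\|_{L^2}\lesssim 2^{-j}\|\nabla\dot S_{j'-1}a\|_{L^\infty}\|\dot\Delta_{j'}b\|_{L^2}$, and then $\|\nabla\dot S_{j'-1}a\|_{L^\infty}\lesssim\|\nabla a\|_{\dot B^{d/2}_{2,1}}$ (embedding $\dot B^{d/2}_{2,1}\hookrightarrow L^\infty$) for \eqref{comm:est:a:b}, respectively $\|\nabla a\|_{\dot B^{d/2}_{2,\infty}\cap L^\infty}$ for \eqref{comm:est:a:b:inf}. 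Summing over $j'$ after multiplying by $2^{j'(\sigma-1)}$ (using $2^j\simeq 2^{j'}$ on the relevant range) gives the desired $2^{-j\sigma}c_j$ bound, the summability constraint on $\sigma$ entering only through the convergence of the geometric-type series in $\dot R$ below.

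The remaining four terms contain no commutator structure and are handled by direct paraproduct/remainder estimates: $\dot T_b a$ and $\dot T_{\DDj b}a$ are bounded using $\|\dot T_u v\|_{\dot B^{s}_{2,1}}\lesssim\|u\|_{L^\infty}\|v\|_{\dot B^{s}_{2,1}}$ together with $\|b\|_{L^\infty}\lesssim\|\nabla a\|_{\dot B^{d/2}_{2,1}}$—wait, rather $\|b\|_{\dot B^{\sigma-1}_{2,1}}$ is the factor that survives and $\|\nabla a\|$ the one playing the role of the $L^\infty$ bound after an integration by parts or a frequency shift; the cleanest is to keep $\nabla a$ explicit by writing $\dot T_ba$ in terms of $\dot T$ acting on $\nabla a$ via the spectral localization, exactly as in the reference. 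The remainder $\dot R(a,b)$ requires $\sigma-1+d/2>0$ i.e. $\sigma>1-d/2$, which after the shift by $\nabla a\in\dot B^{d/2-1}_{2,1}$ becomes the stated lower bound $\sigma>-d/2$; the upper bound $\sigma\le 1+d/2$ (resp. $<1+d/2$) comes from ensuring $\nabla a$ lands in a space of nonpositive-enough order for the paraproduct $\dot T_{\nabla a}$ to be controlled, and the endpoint behavior differs between the $\ell^1$ and $\ell^\infty$ summations, which is why \eqref{comm:est:a:b} allows $\sigma=1+d/2$ while \eqref{comm:est:a:b:inf} is strict there (and conversely \eqref{comm:est:a:b:inf} allows $\sigma=-d/2$). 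I expect the main obstacle to be purely bookkeeping: tracking exactly which endpoint values of $\sigma$ are admissible in each of the five pieces and checking that the constraints intersect in the stated ranges, together with making sure the sequences $(c_j)$ from each piece can be combined into a single summable sequence—this is routine but must be done carefully to match the two slightly different hypotheses.
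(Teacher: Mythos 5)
The paper itself gives no proof of this proposition: it is recalled in the appendix from \cite[Chap.~2]{HajDanChe11}, and your Bony-type decomposition of $[a,\DDj]b$ into a paraproduct commutator plus paraproduct/remainder pieces, with a first-order Taylor expansion of the convolution kernel for $[\dot T_a,\DDj]b$, is exactly the standard argument behind that citation (the signs in your five-term identity are flipped, but that is harmless). For $-d/2<\sigma\le 1+d/2$ in \eqref{comm:est:a:b} (and $-d/2<\sigma<1+d/2$ in \eqref{comm:est:a:b:inf}) the sketch does go through, although the two paraproduct terms are never actually estimated and your bookkeeping of where the constraints come from is off: the upper bound is \emph{not} about $\dot T_{\nabla a}$, it comes from $\|\dot S_{j'-1}b\|_{L^\infty}\lesssim 2^{j'(1+\frac d2-\sigma)}\|b\|_{\dot B^{\sigma-1}_{2,r}}$ inside $\DDj(\dot T_b a)$ (hence $\sigma\le 1+\frac d2$ for $r=1$, strict for $r=\infty$); the lower bound comes from $\DDj\dot R(a,b)$, where the summand after normalization is $2^{(j-j')(\sigma+\frac d2)}c_{j'}$ over $j'\ge j-N_0$, hence $\sigma>-\frac d2$; and the extra $L^\infty$ hypothesis in \eqref{comm:est:a:b:inf} is there to replace the embedding $\dot B^{\frac d2}_{2,1}\hookrightarrow L^\infty$ (false for $\dot B^{\frac d2}_{2,\infty}$) when you bound $\|\nabla\dot S_{j'-1}a\|_{L^\infty}$ and $\sum_{j'\ge j}\|\dot\Delta_{j'}a\|_{L^\infty}$.

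The genuine gap is the endpoint $\sigma=-\frac d2$ of \eqref{comm:est:a:b:inf}, which you dismiss as routine bookkeeping. It is not: for $\DDj\dot R(a,b)$ the two available block estimates (Bernstein $L^1\to L^2$, or $L^\infty\times L^2$) give summands of size $2^{(j-j')(\sigma+\frac d2)}$ and $2^{(j-j')\sigma}$ respectively over $j'\ge j-N_0$, and at $\sigma=-\frac d2$ the first is a divergent sum of $O(1)$ terms while the second is worse; the $L^\infty$ bound on $\nabla a$ gives no additional decay in $j'-j$ for this piece, so your scheme only yields \eqref{comm:est:a:b:inf} for $\sigma>-\frac d2$. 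In fact a lacunary construction (take $a=\sum_{j'}a_{j'}$ and $b=\sum_{j'}\lambda_{j'}a_{j'}$ with wave packets $a_{j'}$ of frequency $\sim 2^{j'}$ placed at widely separated points, normalized so that $\|\nabla a\|_{\dot B^{d/2}_{2,\infty}\cap L^\infty}\simeq\|b\|_{\dot B^{-1-d/2}_{2,\infty}}\simeq 1$ while each diagonal product has unit integral) makes $\|\DDj\dot R(a,b)\|_{L^2}$ at $j=0$ grow like the square root of the number of packets, so the endpoint cannot be reached by this term-by-term route at all for general $b$; it requires an extra idea, e.g. the integration by parts in the remainder that is available when $b$ is itself a derivative (as in the transport-commutator lemma of \cite{HajDanChe11}, and as in the paper's actual application where $b=\pal V^1$), or simply restricting to $\sigma>-\frac d2$. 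Identifying and handling this borderline case is the one non-routine point of the proposition, and your proposal does not address it.
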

  The following product laws in Besov spaces have been used repeatedly.
	 \begin{propo}
	 \label{propo_produc_BH}
	     Let $(s,r)\in ]0,\infty[\times[1,\infty]$. Then $\B{s}{2}{r}\cap L^\infty$ is an algebra and we have 
	     \begin{equation}
	         \label{product_propo1}
	         \NBH{s}{2}{r}{ab} \le C(\normeinf{a}\NBH{s}{2}{r}{b} + \normeinf{b}\NBH{s}{2}{r}{a} ).
	     \end{equation}
	     If, furthermore $-d/2< s \le d/2$, then the following inequalities hold:
	      \begin{equation}
	         \label{product_propo2}
	         \NBH{s}{2}{1}{ab} \le C\NBH{\frac{d}{2}}{2}{1}{a}\NBH{s}{2}{1}{b} 
	     \end{equation}
      and if $  -d/2\le s < d/2 $
       \begin{equation}
	         \label{product:propo:uniq}
	         \NBH{s}{2}{\infty}{ab} \le C\NBH{\frac{d}{2}}{2}{1}{a}\NBH{s}{2}{\infty}{b}.
	     \end{equation}
    %  In particular for all $ s >d/2 $  (or $s= d/2 $ and $r=1$)
	   %  \begin{equation}
	    %     \label{product_propo3}
	     %    \NBH{s}{2}{r}{ab} \le C(\NBH{s}{2}{r}{a}\NBH{s}{2}{r}{b} ).
	     %\end{equation}
     % The estimates \eqref{product_propo1} and \eqref{product_propo2} hold if we replace the nonhomogeneous Besov spaces $\B{s}{2}{1}$ and $\B{\cd}{2}{1}$ respectively by  the homogeneous Besov spaces $\BH{s}{2}{1}$ and $\BH{\cd}{2}{1}$. \\
    %  We have similar results for the spaces $L^\ro_T(\B{s}{2}{r})$ and $\widetilde{L}^\ro_T(\B{s}{2}{r}),$ see \cite{Dan01loc,Haspot11}.
	 \end{propo}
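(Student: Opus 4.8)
The plan is to deduce all three estimates from J.-M.~Bony's paraproduct calculus, exactly as in \cite[Chap.~2]{HajDanChe11}; since the result is classical I would only organise the argument around the decomposition $ab=T_ab+T_ba+R(a,b)$, where $T_ab\defn\sum_{j}\dot S_{j-1}a\,\dot\Delta_jb$ and $R(a,b)\defn\sum_{j'}\dot\Delta_{j'}a\,\widetilde{\dot\Delta}_{j'}b$ with $\widetilde{\dot\Delta}_{j'}\defn\dot\Delta_{j'-1}+\dot\Delta_{j'}+\dot\Delta_{j'+1}$, and then keep careful track of the admissible range of the regularity index in each of the three terms.

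First I would record the model bounds. For the paraproduct, using $\|\dot S_{j-1}a\|_{L^\infty}\le\normeinf a$, the spectral localisation of $\dot S_{j-1}a\,\dot\Delta_jb$ in an annulus $\{|\xi|\sim2^j\}$, and Young's inequality for series, one gets $\NBH{\sigma}{2}{r}{T_ab}\lesssim\normeinf a\,\NBH{\sigma}{2}{r}{b}$ for every $\sigma\in\mathbb R$ and $r\in[1,\infty]$. Estimating instead $\|\dot S_{j-1}a\|_{L^\infty}$ by $\sum_{k\le j-2}\|\dot\Delta_ka\|_{L^\infty}$ and invoking Bernstein's inequality, the same scheme yields $\|\dot S_{j-1}a\|_{L^\infty}\lesssim\NBH{\cd}{2}{1}{a}$ when $a\in\BH{\cd}{2}{1}$, and $\|\dot S_{j-1}a\|_{L^\infty}\lesssim2^{j(\cd-s)}\NBH{s}{2}{1}{a}$ (resp.\ with $\BH{s}{2}{\infty}$ on the right) when $s<\cd$ — the relevant geometric series converging precisely because $\cd-s>0$. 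For the remainder, the product $\dot\Delta_{j'}a\,\widetilde{\dot\Delta}_{j'}b$ is supported in a ball $\{|\xi|\lesssim2^{j'}\}$, so $\dot\Delta_j$ of it vanishes unless $j\le j'+N_0$, and Bernstein's $L^1\to L^2$ inequality together with Hölder's inequality give
\[
\|\dot\Delta_jR(a,b)\|_{L^2}\lesssim 2^{j\frac d2}\sum_{j'\ge j-N_0}\|\dot\Delta_{j'}a\|_{L^2}\,\|\widetilde{\dot\Delta}_{j'}b\|_{L^2};
\]
multiplying by $2^{js}$ and summing in $j$ (again by Young) is possible iff $\sum_{m\le N_0}2^{m(s+\cd)}<\infty$, i.e.\ iff $s>-\cd$, the endpoint $s=-\cd$ becoming admissible only when the outer summation index is $\ell^\infty$ (one then only needs $\sum_{j'}2^{j'\cd}\|\dot\Delta_{j'}a\|_{L^2}<\infty$).

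Then I would assemble the three statements. For \eqref{product_propo1} ($s>0$): bound $T_ab$ and $T_ba$ by the first model estimate, and $R(a,b)$ by placing one factor in $L^\infty$ and the other in $L^2$ (which costs no derivative), the $j$-summation requiring exactly $\sum_{m\le N_0}2^{ms}<\infty$, i.e.\ $s>0$; then symmetrise. For \eqref{product_propo2} ($-\cd<s\le\cd$): control $T_ab$ by $\normeinf a\,\NBH{s}{2}{1}{b}\lesssim\NBH{\cd}{2}{1}{a}\NBH{s}{2}{1}{b}$ via $\BH{\cd}{2}{1}\hookrightarrow L^\infty$; control $T_ba$ by the refined paraproduct bound (the case $s<\cd$ by the gain above, the endpoint $s=\cd$ by $\|\dot S_{j-1}b\|_{L^\infty}\lesssim\NBH{\cd}{2}{1}{b}$); and control $R(a,b)$ by the remainder estimate with $a$ carrying $\cd$ derivatives and $b$ carrying $s$ derivatives, which forces precisely $s>-\cd$. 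Inequality \eqref{product:propo:uniq} is handled identically with $\ell^\infty$ replacing $\ell^1$ for the $b$-factor: the remainder then tolerates the endpoint $s=-\cd$, while the endpoint $s=\cd$ must now be excluded, because bounding $\|\dot S_{j-1}b\|_{L^\infty}$ for $b\in\BH{\cd}{2}{\infty}$ loses a logarithmic factor.

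The only real obstacle is this bookkeeping: checking that each of $T_ab$, $T_ba$, $R(a,b)$ is summable in the correct $\ell^1$ or $\ell^\infty$ sense, and thereby pinning down the sharp endpoints $s>-\cd$ (resp.\ $s\ge-\cd$) coming from the remainder and $s\le\cd$ (resp.\ $s<\cd$) coming from the paraproduct $T_ba$. Everything else is a routine application of Bernstein's, Hölder's and Young's inequalities.
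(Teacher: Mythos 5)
Your proposal is correct: the Bony decomposition $ab=T_ab+T_ba+R(a,b)$ with the standard paraproduct/remainder bounds, and your endpoint bookkeeping ($s>-\cd$ for the remainder in the $\ell^1$ case, $s\ge-\cd$ allowed for $\ell^\infty$, $s\le\cd$ vs.\ $s<\cd$ for $T_ba$) is exactly right. This is essentially the same route as the paper, which proves nothing itself and simply refers these product laws to \cite[Chap.~2]{HajDanChe11}, where the argument is the one you outline.
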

   Inequality \eqref{product_propo1} is often combined with the embedding $ \BH{\cd}{2}{1}\hookrightarrow L^\infty$. Then for $ s > 0 $ and $1\le r\le \infty$, Inequality \eqref{product_propo1} becomes 
      \begin{equation}
	         \label{product_propo4}
	         \NBH{s}{2}{r}{ab} \le C\NBH{\frac{d}{2}}{2}{1}{a}\NBH{s}{2}{r}{b}+\NBH{s}{2}{r}{a}\NBH{\frac{d}{2}}{2}{1}{b}.
	     \end{equation}

  Among the results necessary to prove  Theorems \ref{thm:glob:cri}, \ref{thm:glo:L2:cri} and \ref{thm:decay:sucri}, we have the following   one.
	 \begin{propo}
	 \label{propo_compo_BH}
 %  We designate by $\mathbb{B}_{2,r}^{s}$  both $\B{s}{2}{r}$ and $\BH{s}{2}{r}$.
 % Then, for every real-valued function $u$ in $\B{s}{2}{r}$, the function $f\circ u $ belongs to $\B{s}{2}{r}$ and we have.
 	Let $f$ be a function in $C^\infty(\mathbb{R})$. Then, the following inequalities hold:
    \begin{itemize}
     \item  If $-\cd< s \le \frac{d}{2}$, then
       \begin{align}
          \label{comp:uv:propo:r=1}
         \NBH{s}{2}{1}{f\circ u- f\circ v } \le C(f', \normeinf{u,v})(1+\NBH{\frac{d}{2}}{2}{1}{u}+ \NBH{\frac{d}{2}}{2}{1}{u})\NB{s}{2}{1}{ u-v}.
      \end{align}
 \item If $-\cd\le s < \frac{d}{2},$ then 
      \eqref{comp:uv:propo:r=1}  remains valid for $r=\infty$, that is,
         \begin{align}
          \label{comp:uv:propo:inf}
         \NBH{s}{2}{\infty}{f\circ u- f\circ v } \le C(f', \normeinf{u,v})(1+\NBH{\frac{d}{2}}{2}{1}{u}+ \NBH{\frac{d}{2}}{2}{1}{u})\NBH{s}{2}{\infty}{ u-v}.
      \end{align}
 \item   If  $f(0)=0$ and $s>0$, then
       \begin{align}
           \label{compo:propo:base}
          \Vert  f\circ u\Vert_{\Dot{B}_{2,r}^{s}} \le C(f', \normeinf{u})\ \Vert u \Vert_{\Dot{B}_{2,r}^{s}}.
       \end{align}   \end{itemize}
    	 \end{propo}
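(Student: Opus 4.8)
The three bounds are classical composition estimates, and the plan is to establish the last one first and then reduce the two difference estimates to it via Taylor's formula, using the product laws of Proposition~\ref{propo_produc_BH} as black boxes.

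To prove \eqref{compo:propo:base}, assuming $f(0)=0$ and $s>0$, the starting point is the telescoping identity
\[
f\circ u=\sum_{j\in\mathbb Z}\bigl(f(\dot S_{j+1}u)-f(\dot S_j u)\bigr)=\sum_{j\in\mathbb Z}m_j\,\DDj u,\qquad m_j:=\int_0^1 f'\bigl(\dot S_j u+t\,\DDj u\bigr)\,dt,
\]
which is legitimate because $u\in\mathcal S'_h$, so that $\dot S_j u\to0$ in $\mathcal S'$ as $j\to-\infty$, and because $f$ is smooth with $f(0)=0$; the series converges in $\mathcal S'_h$. One then estimates the $L^2$ norm of each dyadic block of $f\circ u$ by splitting this series according to whether the summation index lies above or below the block index: in the first regime one uses $\|m_j\|_{L^\infty}\le C(f',\normeinf{u})$ together with Bernstein's inequality, while in the second one exploits the spectral support of $m_j\,\DDj u$ and the bound $\|\nabla m_j\|_{L^\infty}\lesssim C(f',\normeinf{u})\,2^{j}\|\DDj u\|_{L^\infty}$, obtained by differentiating under the integral sign. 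Multiplying by the appropriate power of $2$, summing, and using $s>0$ to sum the resulting geometric factors yields the estimate; this is the homogeneous counterpart of the argument in \cite[Thm.~2.61]{HajDanChe11}, which I would adapt essentially verbatim.

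For \eqref{comp:uv:propo:r=1} and \eqref{comp:uv:propo:inf}, the plan is to write, by the fundamental theorem of calculus,
\[
f\circ u-f\circ v=(u-v)\,G(u,v),\qquad G(a,b):=\int_0^1 f'\bigl(b+t(a-b)\bigr)\,dt=f'(0)+G_0(a,b),
\]
where $G_0$ is smooth in $(a,b)$ and vanishes at the origin. The contribution of $f'(0)(u-v)$ is just $|f'(0)|\,\NB{s}{2}{1}{u-v}$. For the term $G_0(u,v)\,(u-v)$ one applies the product law \eqref{product_propo2} (valid for $-\cd<s\le\cd$) to bound it by $\NBHc{G_0(u,v)}\,\NB{s}{2}{1}{u-v}$, and then \eqref{compo:propo:base} applied to the smooth origin-vanishing function $(a,b)\mapsto G_0(a,b)$ (the integration in $t$ being harmless, since all constants are uniform for $t\in[0,1]$) controls $\NBHc{G_0(u,v)}$ by $C(f',\normeinf{u,v})(\NBHc{u}+\NBHc{v})$. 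Inequality \eqref{comp:uv:propo:inf} follows in the same way, using the product law \eqref{product:propo:uniq} in place of \eqref{product_propo2}, which is why its admissible range narrows to $-\cd\le s<\cd$.

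The step I expect to require the most care is the homogeneous functional setting rather than any computation: one must check that $f\circ u-f\circ v$ and $G_0(u,v)$, which need not individually lie in $\mathcal S'_h$, can legitimately be placed in homogeneous Besov spaces, that the telescoping series above converges in the correct sense, and that the product and composition laws are invoked only inside their valid regularity windows; the open/closed endpoint behaviour at $s=-\cd$ is exactly what produces the three slightly different ranges in the statement. Everything else is the routine Littlewood-Paley bookkeeping underlying Proposition~\ref{propo_produc_BH}.
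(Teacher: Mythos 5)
The paper offers no self-contained proof of this proposition --- it simply points to pages 94, 104 and 449 of \cite{HajDanChe11} --- and your proposal reconstructs exactly the arguments of that reference: Meyer's first linearization $f\circ u=\sum_j m_j\dot\Delta_j u$ for \eqref{compo:propo:base}, and the fundamental-theorem-of-calculus factorization $f\circ u-f\circ v=(u-v)\bigl(f'(0)+G_0(u,v)\bigr)$ combined with the product laws \eqref{product_propo2} and \eqref{product:propo:uniq} (whose admissible windows are precisely what produce the ranges $-\frac d2<s\le\frac d2$ and $-\frac d2\le s<\frac d2$) for the difference estimates, so the approach is essentially the paper's. Two small repairs when writing it up: the gradient bound should be $\|\nabla m_j\|_{L^\infty}\le C(f,\|u\|_{L^\infty})\,2^{j}$ (the term $\nabla\dot S_j u$ contributes $2^{j}\|u\|_{L^\infty}$, not $2^{j}\|\dot\Delta_j u\|_{L^\infty}$), and a first-order bound on $m_j$ alone only yields $0<s<1$, so for general $s>0$ you need $\|\partial^\alpha m_j\|_{L^\infty}\lesssim 2^{j|\alpha|}$ for all $\alpha$ (the Meyer-multiplier lemma, as in the cited Theorem 2.61 you invoke); finally, \eqref{compo:propo:base} must be used in its multivariate form for $(a,b)\mapsto G_0(a,b)$, which is indeed how the paper employs such composition estimates throughout.
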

  \begin{proof}
      The proof of \eqref{comp:uv:propo:r=1} and \eqref{compo:propo:base} can be found in \cite[pages 94 and 104] {HajDanChe11} while  \eqref{comp:uv:propo:inf} and \eqref{comp:uv:propo:r=1} can be obtained by adapting the proof of first inequality of \cite[page 449]{HajDanChe11}.
  \end{proof}
    Let us finally state some other composition results:
  \begin{propo}
	 \label{propo_Compo_u_v_BH}
  Let $0\le n_1\le n$ be two integers. 	     Let $f: (X,Y)\in \mathbb{R}^{n_1}\times\mathbb{R}^{n-n_1} \mapsto f(X,Y) $ be a smooth function on $\mathbb{R}^n$ vanishing at $0$. Assume that $f$ is linear with respect to $Y$.
  
   Then,  the following inequalities hold true:
  \begin{align}
      \label{comp:u_v:BH:ine:prop:1}
      \NBH{s}{2}{1}{f(u,v)}\le   C(f', \normeinf{u})( \NBH{s}{2}{1}{ v}(1+\NBH{\cd}{2}{1}{u})+\NBH{s}{2}{1}{ u} )\esp{for any} 0<s\le \cd\cdotp
  \end{align}
%     \begin{align}  \label{comp:u_v:BH:ine:prop:inf}
 %     \NB{s}{2}{\infty}{f(u,v)}\le   C(f', \normeinf{u})( \NB{s}{2}{\infty}{ v}\NB{\cd}{2}{1}{u}+\NB{s}{2}{\infty}{ u} )
 % \end{align}
  Furthermore if $-\cd< s\le \cd$ we have
     \begin{multline}
      \label{comp:u_v_1:2:BH:ine:prop:1}
    \NBH{s}{2}{1}{f(u_1,v_1)-f(u_2, v_2)}\le  C  \NBH{s}{2}{1}{ v_2-v_1}(1+\NBH{\cd}{2}{1}{u_2})\\
      +C(1+\NBH{\cd}{2}{1}{u_1}+\NBH{\cd}{2}{1}{u_2})\Bigl( \NBH{\cd}{2}{1}{u_2-u_1}\NBH{s}{2}{1}{ v_1}+ \NBH{s}{2}{1}{ u_1-u_2}\Bigr)\cdotp
  \end{multline}
    Finally,  if   $ -\cd\le  s< \frac{d}{2}$ then we have
  \begin{multline}
       \label{comp:u_v_1:2:BH:ine:prop:inf}
      \NBH{s}{2}{\infty}{f(u_1,v_1)-f(u_2, v_2)}\le  C  \NBH{s}{2}{\infty}{ v_2-v_1}(1+\NBH{\cd}{2}{1}{u_2})\\
      +C(1+\NBH{\cd}{2}{1}{u_1}+\NBH{\cd}{2}{1}{u_2}) \Bigl(\NBH{s}{2}{\infty}{u_2-u_1}\NBH{\cd}{2}{1}{ v_1}+ \NBH{s}{2}{\infty}{ u_1-u_2}\Bigr)\cdotp
         \end{multline}
  In the above two inequalities, $C= C(f', \normeinf{u_1,u_2}) $.
	 \end{propo}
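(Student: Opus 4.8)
The plan is to use in an essential way the hypothesis that $f$ is affine with respect to its second group of variables. Writing $f=f(X,Y)$ with $X\in\mathbb R^{n_1}$ and $Y\in\mathbb R^{n-n_1}$, and using that $f$ is (at most) of degree one in $Y$ and vanishes at the origin, we have the exact (remainder‑free) identity
$$f(X,Y)=f(X,0)+\sum_{k=1}^{n-n_1}Y_k\,g_k(X),\qquad g_k\defn\partial_{Y_k}f(\cdot,0),$$
where $X\mapsto f(X,0)$ is smooth and vanishes at $X=0$, and each $g_k$ is smooth (the maps $\partial_{Y_k}f$ are independent of $Y$). Denoting by $u$ the $X$‑slot and by $v=(v_k)$ the $Y$‑slot, this gives $f(u,v)=f(u,0)+\sum_k v_k\,g_k(u)$, and the two pieces are handled respectively by the composition laws of Proposition \ref{propo_compo_BH} and by a combination of the product laws of Proposition \ref{propo_produc_BH} with the same composition laws.

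To get \eqref{comp:u_v:BH:ine:prop:1} (so $0<s\le\cd$): since $f(\cdot,0)$ vanishes at $0$ and $s>0$, the composition estimate \eqref{compo:propo:base} yields $\NBH{s}{2}{1}{f(u,0)}\lesssim\NBH{s}{2}{1}{u}$. For the term $v_kg_k(u)$ we split $g_k(u)=g_k(0)+\bigl(g_k(u)-g_k(0)\bigr)$: the contribution of $g_k(0)v_k$ is $\lesssim\NBH{s}{2}{1}{v}$, while for $\bigl(g_k(u)-g_k(0)\bigr)v_k$ we apply the product law \eqref{product_propo2} with $g_k(u)-g_k(0)$ in $\BH{\cd}{2}{1}$ and $v_k$ in $\BH{s}{2}{1}$, then \eqref{compo:propo:base} at index $\cd$ to bound $\NBH{\cd}{2}{1}{g_k(u)-g_k(0)}\lesssim\NBH{\cd}{2}{1}{u}$; summing over $k$ produces exactly $\NBH{s}{2}{1}{v}(1+\NBH{\cd}{2}{1}{u})+\NBH{s}{2}{1}{u}$, the constants from the composition laws depending only on $f'$ and on $\normeinf{u}$ (which controls the range of $u$).

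For the difference estimates one proceeds along the same lines, using
$$f(u_1,v_1)-f(u_2,v_2)=\bigl(f(u_1,0)-f(u_2,0)\bigr)+\sum_k\Bigl(g_k(u_2)\,(v_{1,k}-v_{2,k})+\bigl(g_k(u_1)-g_k(u_2)\bigr)v_{1,k}\Bigr).$$
In the range $-\cd<s\le\cd$ (estimate \eqref{comp:u_v_1:2:BH:ine:prop:1}): the term $f(u_1,0)-f(u_2,0)$ is bounded by the composition difference estimate \eqref{comp:uv:propo:r=1}, giving the summand $C(1+\NBH{\cd}{2}{1}{u_1}+\NBH{\cd}{2}{1}{u_2})\NBH{s}{2}{1}{u_1-u_2}$; the term $g_k(u_2)(v_{1,k}-v_{2,k})$, after the splitting $g_k(u_2)=g_k(0)+(g_k(u_2)-g_k(0))$ and the use of \eqref{product_propo2} together with \eqref{compo:propo:base} at index $\cd$, is $\lesssim\NBH{s}{2}{1}{v_1-v_2}(1+\NBH{\cd}{2}{1}{u_2})$; and $\bigl(g_k(u_1)-g_k(u_2)\bigr)v_{1,k}$, by \eqref{product_propo2} with the factor $g_k(u_1)-g_k(u_2)$ placed in $\BH{\cd}{2}{1}$ and estimated through \eqref{comp:uv:propo:r=1} at index $\cd$, contributes $C(1+\NBH{\cd}{2}{1}{u_1}+\NBH{\cd}{2}{1}{u_2})\NBH{\cd}{2}{1}{u_1-u_2}\NBH{s}{2}{1}{v_1}$. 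Collecting these bounds gives \eqref{comp:u_v_1:2:BH:ine:prop:1}. The endpoint variant \eqref{comp:u_v_1:2:BH:ine:prop:inf} is obtained verbatim on the range $-\cd\le s<\cd$, replacing \eqref{product_propo2} by its $\ell^\infty$ counterpart \eqref{product:propo:uniq} and the composition difference estimate \eqref{comp:uv:propo:r=1} by \eqref{comp:uv:propo:inf}.

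The only genuinely delicate point — and the step I expect to be the main obstacle — is the low‑regularity range $s\le0$ that occurs in the difference estimates, where the "vanishing‑at‑$0$" composition bound \eqref{compo:propo:base} is no longer available: this is precisely why every composition of $u$ must be kept in difference form ($g_k(u)-g_k(0)$, resp. $g_k(u_1)-g_k(u_2)$) before a product law is applied, so that all composition inputs enter only at the positive index $\cd$, for which \eqref{compo:propo:base} and \eqref{comp:uv:propo:r=1} do apply. The remaining work — keeping track that the constants depend solely on $f'$ and on the $L^\infty$ norms of the $X$‑variables, and matching homogeneous with (harmlessly) nonhomogeneous norms in the composition estimates — is routine.
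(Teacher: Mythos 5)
Your argument is correct in substance and, unlike the paper, actually self-contained: the paper's ``proof'' of Proposition \ref{propo_Compo_u_v_BH} is a one-line deferral to the nonhomogeneous analogue in \cite{DanADOloc}, whereas you reconstruct the expected argument directly, via the exact decomposition $f(X,Y)=f(X,0)+\sum_k Y_k\,g_k(X)$ afforded by the affine dependence on $Y$, and then reduce everything to the product laws \eqref{product_propo2}, \eqref{product:propo:uniq} and the composition laws \eqref{compo:propo:base}, \eqref{comp:uv:propo:r=1}, \eqref{comp:uv:propo:inf}. Your handling of the first two estimates is exactly right, including the key precaution of only ever composing $u$ in difference form (so that compositions enter at the positive index $\tfrac d2$, or through the difference estimates), which is what makes the negative range of $s$ harmless; this is surely the content of the proof in the cited reference as well.

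One step, however, is not quite ``verbatim'': in the endpoint estimate \eqref{comp:u_v_1:2:BH:ine:prop:inf} the cross term is $\NBH{s}{2}{\infty}{u_2-u_1}\NBH{\cd}{2}{1}{v_1}$, with the $u$-difference at index $s$ and $v_1$ at index $\tfrac d2$ --- the opposite placement from the $\ell^1$ case \eqref{comp:u_v_1:2:BH:ine:prop:1}. If you replace \eqref{product_propo2} by \eqref{product:propo:uniq} and \eqref{comp:uv:propo:r=1} by \eqref{comp:uv:propo:inf} mechanically, the term $\bigl(g_k(u_1)-g_k(u_2)\bigr)v_{1,k}$ comes out as $\NBH{\cd}{2}{1}{u_1-u_2}\NBH{s}{2}{\infty}{v_1}$, which is a valid inequality but not the one stated, and neither bound dominates the other. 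To get the stated form you must instead apply \eqref{product:propo:uniq} with $v_{1,k}$ as the $\BH{\cd}{2}{1}$ factor and $g_k(u_1)-g_k(u_2)$ as the $\BH{s}{2}{\infty}$ factor, then estimate the latter by \eqref{comp:uv:propo:inf} at index $s$ itself (this is precisely why that estimate is stated on $-\tfrac d2\le s<\tfrac d2$). This is a one-line fix within your framework, but as written the last sentence of your proof does not deliver \eqref{comp:u_v_1:2:BH:ine:prop:inf} as stated.
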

	\begin{proof}
	  The reader can refer to \cite{DanADOloc} for the proof in the nonhomogeneous Besov spaces case, 
   the adaptation  to our framework being straightforward.
	\end{proof}
 
\bibliographystyle{plain} % Le style est mis entre accolades.
\bibliography{boblio}
\end{document}